\documentclass[a4paper, fleqn]{article}
\usepackage[english]{babel}
\usepackage{graphicx}
\usepackage{amsmath, amssymb, graphics, float, fullpage}
\usepackage{amsthm}
\usepackage{makeidx}
\usepackage{lipsum}
\usepackage{tikz}
\usepackage{amssymb, amsfonts, amsmath, amsthm, latexsym}
\usepackage{epsfig} 
\usepackage{graphicx}
\usepackage{float,fullpage}
\usepackage{mathtools}
\usepackage{xparse}
\usepackage{makeidx}
\usepackage{lipsum}
\usepackage{tikz}
\usepackage{comment}

\numberwithin{equation}{section}

\newtheorem{theorem}{Theorem}[section]
\newtheorem{lemma}[theorem]{Lemma}
\newtheorem{proposition}[theorem]{Proposition}
\newtheorem{corollary}[theorem]{Corollary}
\newtheorem{definition}[theorem]{Definition}
\newtheorem{remark}[theorem]{Remark}

\newtheorem{rhproblem}[theorem]{Riemann-Hilbert Problem}
\newtheorem{assumption}[theorem]{Assumption}
\newtheorem{convention}[theorem]{Convention}

\providecommand\phantomsection{}

\makeindex

\sloppy

\newcommand\MeijerG[7]{ G^{\,#1,#2}_{#3,#4}\MeijerM*{#5}{#6}{#7}}

\begin{document}

%% MEIJER G
\DeclarePairedDelimiterX\MeijerM[3]{\lparen}{\rparen}{\begin{matrix}#1 \\ #2\end{matrix}\delimsize\vert\,#3}

%\pagenumbering{gobble}
%\clearpage
%\thispagestyle{empty}

\title{The local universality of Muttalib-Borodin ensembles when the parameter $\theta$ is the reciprocal of an integer}
\author{L. D. Molag}
\maketitle
\begin{center}
KU Leuven, Department of Mathematics,\\
Celestijnenlaan 200B box 2400, BE-3001 Leuven, Belgium.\\
E-mail: leslie.molag@kuleuven.be
\vspace{1.0cm}
\begin{abstract}
The Muttalib-Borodin ensemble is a probability density function for $n$ particles on the positive real axis that depends on a parameter $\theta$ and a weight $w$. We consider a varying exponential weight that depends on an external field $V$. 
In a recent article, the large $n$ behavior of the associated correlation kernel at the hard edge was found for $\theta=\frac{1}{2}$, where only few restrictions are imposed on $V$. In the current article we generalize the techniques and results of this article to obtain analogous results for $\theta=\frac{1}{r}$, where $r$ is a positive integer. The approach is to relate the ensemble to a type II multiple orthogonal polynomial ensemble with $r$ weights, which can then be related to an $(r+1)\times (r+1)$ Riemann-Hilbert problem. The local parametrix around the origin is constructed using Meijer G-functions. We match the local parametrix around the origin with the global parametrix with a double matching, a technique that was recently introduced.
\end{abstract}
\end{center}

\tableofcontents

%\pagenumbering{arabic}

\section{Introduction and main result}

\subsection{Introduction} 

The Muttalib-Borodin ensemble (MBE) with parameter $\theta>0$ and weight $w$ is defined by the following joint probability density function for particles on the positive half-line. 
\begin{align} \label{ch4:eq:defMBE}
& \frac{1}{Z_n} \prod_{1\leq i<j\leq n} (x_i-x_j)(x_i^\theta-x_j^\theta) \prod_{j=1}^n w(x_j), & x_1,\ldots,x_n> 0.
\end{align}
Here $Z_n>0$ is a normalization constant. We consider an $n$-dependent weight
\begin{align} \label{ch4:eq:defMBEw}
w(x)=w_\alpha(x)= x^\alpha e^{-n V(x)},
\end{align}
where $\alpha>-1$ and $V:[0,\infty)\to \mathbb R$ is an external field that has enough increase at infinity. The latter is imposed to assure that \eqref{ch4:eq:defMBE} is integrable and thus normalizable. A sufficient condition would be to have $V(x)\geq \frac{1+\theta}{2} \log(x)$ for $x$ big enough. We put $V(0)=0$ without loss of generality.
%VOOR THETA = 1 IS DEZE GROEICONDITIE EIGENLIJK NET NIET STERK GENOEG VOOR DE ORTHOGONALITEITSRELATIES VAN DE POLYNOMEN. 

In 1995 Muttalib introduced the model as a simplified model for disordered conductors in the metallic regime \cite{Mu}. This type of disordered conductors was not accurately described by the existing random matrix models. A few years later Borodin obtained interesting results for several specific choices of the weight $w_\alpha$ \cite{Bo}, most notably for the Laguerre case, i.e., when $V$ is linear. For linear external fields he found a new scaling limit, that we turn to in a second. The model has seen a revival of interest, as it became clear in recent years that the MBE is connected to several random matrix models \cite{FoWa, Ch, BeGeSz, AkIpKi}, where it describes either the eigenvalue density or the density of the squared singular values. We also mention the recent results on the corresponding large gap probabilities of the MBE \cite{ClGiSt,ChLeMa}. See \cite{YaAlMuWa} for a recent attempt of Yadav, Muttalib et. al. to model certain physical systems with a generalization of the MBE.

The MBE is a determinantal point process and thus it has an associated correlation kernel $K_{V,n}^{\alpha,\theta}$. In fact, it is a biorthogonal ensemble \cite{Bo}, and this implies that we may take
\begin{align}
\label{ch4:defK}
K_{V,n}^{\alpha,\theta}(x,y) = w_\alpha(x) \sum_{j=0}^{n-1} p_j(x) q_j(x^\theta),
\end{align}
where $p_j$ and $q_j$ are polynomials of degree $j$ that satisfy
\begin{align}
\label{ch4:defpnqn}
\int_0^\infty p_j(x) q_k(x^\theta) w_\alpha(x) dx &= \delta_{ij}, & j=0,1,\ldots
\end{align}
In the large $n$ limit the particles, corresponding to the weight \eqref{ch4:eq:defMBEw}, behave almost surely according to a limiting empirical measure $\mu_{V,\theta}^*$ that minimizes a corresponding equilibrium problem, as was shown by Claeys and Romano in \cite{ClRo}. Namely, $\mu_{V,\theta}^*$ minimizes the functional
\begin{align} 
\label{ch4:defmuVtheta}
\frac{1}{2} \iint \log \frac{1}{|x-y|} d\mu(x) d\mu(y)
+ \frac{1}{2} \iint \log \frac{1}{|x^\theta-y^\theta|} d\mu(x) d\mu(y)
+ \int V(x) d\mu(x).
\end{align}
The Euler-Lagrange variational conditions (see \cite{ClRo}) corresponding to this minimization problem take the form
\begin{align} \label{ch4:eq:varCon}
\int\log|x-s| d \mu_{V,\theta}^* + \int\log|x^\theta -s^\theta| d \mu_{V,\theta}^* 
\left\{\begin{array}{ll}
= V(x) + \ell, & x\in\operatorname{supp}(\mu_{V,\theta}^*),\\
\leq V(x) + \ell, & x\in [0,\infty),
\end{array}\right.
\end{align}
where $\ell$ is a real constant. For specific choices of $V$ we know how the correlation kernel behaves around the origin in the large $n$ limit. In particular, Borodin \cite{Bo} calculated the hard edge scaling limit of \eqref{ch4:defK} for the Laguerre case, i.e., where $V(x)=x$. His result translates to
\begin{align} \label{ch4:eq:scalingLimitK}
\lim_{n\to\infty} \frac{1}{n^{1+\frac{1}{\theta}}} K_{V,n}^{\alpha,\theta}\left(\frac{x}{n^{1+\frac{1}{\theta}}},\frac{y}{n^{1+\frac{1}{\theta}}}\right)
= \mathbb{K}^{(\alpha,\theta)}(x,y),
\end{align}
where 
\begin{align} \label{ch4:eq:scalingLimitIK}
\mathbb{K}^{(\alpha,\theta)}(x,y) = 
\theta y^\alpha \int_0^1 J_{\frac{\alpha+1}{\theta},\frac{1}{\theta}}(ux) J_{\alpha+1,\theta}\left((uy)^\theta\right) u^\alpha du,
\end{align}
and $J_{a,b}$ is Wright's generalized Bessel function. The scaling limit \eqref{ch4:eq:scalingLimitK} is valid for any fixed $\theta>0$. When either $\theta$ or $1/\theta$ is a positive integer the limiting kernel coincides (up to rescaling and a gauge factor) with the so-called Meijer G-kernel \cite{AkIpKi, KuSt}. In \cite{KuMo} we conjectured that one would obtain the scaling limit \eqref{ch4:eq:scalingLimitK} for a much larger class of external fields, for any fixed $\theta>0$. Such universality was well-known for $\theta=1$, where one obtains the Bessel kernel. Indeed, the Bessel kernel coincides with \eqref{ch4:defK} when $\theta=1$. In \cite{KuMo}, the conjecture was proved for $\theta=\frac{1}{2}$. 

\subsection{Statement of results}

In this paper we will go a step further and prove the conjecture for all $\theta=\frac{1}{r}$ with $r$ a positive integer. There are several advantages when we restrict to such $\theta$. First of all, it is then known that the biorthogonal ensemble can be related to a multiple orthogonal polynomial ensemble (MOP) with $r$ weights $w_\alpha, w_{\alpha+\frac{1}{r}}, \ldots, w_{\alpha+\frac{r-1}{r}}$ (see \cite{KuMo}), Lemma 2.1). That is, we can take $p_n$, as in \eqref{ch4:defpnqn}, to be the unique monic polynomial that satisfies
\begin{align}
\label{ch4:defMOP}
\int_0^\infty p_n(x) x^k w_{\alpha+\frac{j-1}{r}}(x) dx &= 0, & j=1,2,\ldots,r, \quad k = 0,1,\ldots,\left\lfloor \frac{n-j}{r} \right\rfloor.
\end{align} 
Secondly, it was shown \cite{Ku} by Kuijlaars that there is, besides the equilibrium problem as in \eqref{ch4:defmuVtheta}, also a corresponding \textit{vector equilibrium problem} consisting of $r$ measures. We describe this vector equilibrium problem in Section \ref{ch4:sec:normalization2}. It is interesting that such a vector equilibrium problem also exists when $\theta$ is assumed to be rational (although it is unclear which multiple orthogonal polynomials, if any, would correspond to that situation). 

Our result will be valid under a generic restriction (see \cite{ClRo}, Theorem 1.8) on the external field. As in \cite{KuMo}, we call $V$ one-cut $\theta$-regular when the equilibrium measure $\mu_{V,\theta}^*$ is supported on one interval $[0,q]$, for some $q>0$, has a density that is positive on $(0,q)$ and that behaves near the endpoints as
\begin{align} \label{ch4:eq:onecutthetabehav}
\frac{d \mu_{V,\theta}^*(s)}{ds} =
\left\{\begin{array}{ll}
c_{0,V} (1+o(1)) s^{-\frac{1}{\theta+1}}, & s\downarrow 0,\\
c_{1,V} (1+o(1)) (q-s)^\frac{1}{2}, & s\uparrow q
\end{array}\right. 
\end{align}
for some positive constants $c_{0,V}$ and $c_{1,V}$, and in addition we demand that the inequality in \eqref{ch4:eq:varCon} is strict for $x>q$.  This last condition is not essential, but it will make our derivation cleaner. The one-cut condition,  added for convenience as well, is also not absolutely necessary. The main result holds as long as the support of the equilibrium measure contains a closed interval with left-end point $0$ (and \eqref{ch4:eq:onecutthetabehav} is satisfied). A sufficient condition for $V$ to be one-cut $\frac{1}{r}$-regular is that it is twice differentiable on $[0,\infty)$ and that $x V'(x)$ is increasing for $x>0$. A proof for this can be found in Proposition 2.4 in \cite{KuMo}, the proof is for $\theta=\frac{1}{2}$ but with a mild modification it also works for all rational $\theta>0$. Notice in particular, that linear external fields are one-cut $\frac{1}{r}$-regular. The main result of this paper is the following. 

\begin{theorem}
\label{ch4:mainThm}
Let $\alpha>-1$ and let $\theta=\frac{1}{r}$, where $r$ is a positive integer.  Let $V:[0,\infty)\to\mathbb{R}$ be a one-cut $\theta$-regular external field which is real analytic on $[0,\infty)$. Then for $x,y\in (0,\infty)$ we have 
\begin{align} \label{ch4:eq:mainResult}
\lim_{n\to\infty} \frac{1}{(c n)^{r+1}} K_{V,n}^{\alpha,\frac{1}{r}}\left(\frac{x}{(c n)^{r+1}},\frac{y}{(c n)^{r+1}}\right)
& = \mathbb K^{(\alpha,\frac{1}{r})}(x,y),
\end{align}
uniformly on compact sets, where $c = \pi c_{0,V}/\sin\frac{\pi}{r+1}$ with $c_{0,V}$ as in \eqref{ch4:eq:onecutthetabehav}.
\end{theorem}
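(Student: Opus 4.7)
The plan is to adapt and extend the Deift--Zhou steepest descent machinery that was used in \cite{KuMo} for $\theta=\tfrac12$ to the general case $\theta=\tfrac1r$. By \eqref{ch4:defMOP} the polynomial $p_n$ appearing in the kernel \eqref{ch4:defK} is a type II multiple orthogonal polynomial with respect to the $r$ weights $w_{\alpha+\frac{j-1}{r}}$. This gives access to an $(r+1)\times(r+1)$ Riemann--Hilbert problem $Y$ of Van Assche--Geronimo--Kuijlaars type whose $(1,1)$-entry is $p_n$ and whose entries encode the Cauchy transforms that appear in the kernel. The first task is therefore to rewrite $K_{V,n}^{\alpha,1/r}(x,y)$ as a finite sum of products of entries of $Y$ (and $Y^{-1}$) evaluated at $x$ and $y^{1/r}$, and then track how these entries behave under the steepest descent transformations $Y\mapsto T\mapsto S\mapsto R$.

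The first transformation $Y\mapsto T$ uses the $r$ minimizers of the vector equilibrium problem of \cite{Ku} (recalled in Section \ref{ch4:sec:normalization2}) to normalize all entries at infinity, at the cost of introducing jumps whose diagonal entries are oscillatory or exponentially close to the identity depending on the location on the relevant branches of the underlying Riemann surface. One-cut $\theta$-regularity of $V$ guarantees (via \eqref{ch4:eq:varCon} and \eqref{ch4:eq:onecutthetabehav}) that the variational inequalities are strict away from the support $[0,q]$, so the standard opening of lenses $T\mapsto S$ produces a problem whose jumps are close to the identity except on $[0,q]$, near the soft edge $q$, and near the hard edge $0$. Away from the endpoints the global parametrix $N$ is built from the $r+1$-sheeted Riemann surface associated with the vector equilibrium problem, exactly as in \cite{Ku}; at the soft edge $q$ one uses a standard Airy parametrix because of the square root vanishing in \eqref{ch4:eq:onecutthetabehav}.

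The main obstacle, and the genuine novelty, is the local parametrix $P$ around the origin. The factor $s^{-1/(\theta+1)}=s^{-r/(r+1)}$ in \eqref{ch4:eq:onecutthetabehav} makes the kernel concentrate on the $n^{-(r+1)}$ scale, which matches the asymptotics of the Meijer G-kernel $\mathbb K^{(\alpha,1/r)}$. Accordingly $P$ will be built from a model Riemann--Hilbert problem whose solution is constructed explicitly out of Meijer G-functions, mirroring the Bessel parametrix at $r=1$ and the construction in \cite{KuMo} for $r=2$; the constant $c=\pi c_{0,V}/\sin\frac{\pi}{r+1}$ will arise naturally from the local change of variable that sends the conformal coordinate at the origin to the Meijer G-model coordinate $n^{r+1}\zeta$. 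Because the jump matrices of the model problem only decay like powers of $\zeta$ on the rays entering the origin, a \emph{single} matching condition on a circle around $0$ is not strong enough: this forces the double matching technique of \cite{KuMo}, in which $P$ is chosen so that $P N^{-1}$ is analytic on an auxiliary disk and matches $N$ both on an inner and on an outer boundary. Verifying that this double matching can be carried out for arbitrary $r$, and produces jumps for the final ratio $R=SP^{-1}$ (or $SN^{-1}$) that are uniformly close to the identity, is the technically demanding step.

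Once the small-norm problem gives $R=I+O(n^{-1})$ uniformly, one traces the sequence of transformations back. Substituting the local parametrix for $S$ in the disk around the origin into the kernel formula and changing variables $x\mapsto x/(cn)^{r+1}$, $y\mapsto y/(cn)^{r+1}$ collapses everything to an explicit contour integral of Meijer G-functions which, after the gauge removal carried out in \cite{KuSt} for the Laguerre case, coincides with $\mathbb K^{(\alpha,1/r)}(x,y)$ given by \eqref{ch4:eq:scalingLimitIK}. Uniformity on compact subsets of $(0,\infty)$ follows from the uniform estimates on $R$ together with the analyticity of the model kernel. The only ingredients that change from $r=2$ to general $r$ are the explicit $(r+1)$-dimensional algebra of the global parametrix and of the Meijer G-model problem, and the explicit form of the conformal map at the origin, both of which are purely computational once the framework is set up.
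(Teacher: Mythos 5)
Your proposal correctly identifies the overall architecture: relate the biorthogonal ensemble to the $(r+1)\times(r+1)$ VAGK Riemann--Hilbert problem, normalize with the $g$-functions of Kuijlaars' vector equilibrium problem, open a lens, build a global parametrix from an $(r+1)$-sheeted algebraic Riemann surface, build a Meijer-$G$ local parametrix at the origin with a double matching, and an Airy parametrix at $q$. That is indeed the route the paper takes. However, there is a genuine gap in the final step of your argument that invalidates it for $r\geq 3$.

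You write that once the small-norm argument gives $R=\mathbb I+\mathcal O(n^{-1})$ uniformly, one simply "traces the sequence of transformations back" and the kernel collapses to the Meijer-$G$ kernel by a "purely computational" argument. That is exactly the approach the paper explicitly shows does \emph{not} work for $r\geq 3$. The reason is the double matching: the local parametrix is $P=E_n^0\mathring P$ near the origin, and when the kernel is unwound one must control the factor $E_n^0(y_n)^{-1}R(y_n)^{-1}R(x_n)E_n^0(x_n)$. The analytic prefactor $E_n^0$ grows like $n^{r/2}$ on the inner circle $\partial D(0,r_n)$ (this is forced by Lemma \ref{ch4:eq:estimateEd}), whereas the naive Cauchy-formula estimate on $R(y_n)^{-1}R(x_n)$ from $R=\mathbb I+\mathcal O(1/n)$ only gives a gain of $n^{-(r+3)/2}$. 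The combination is $\mathbb I+\mathcal O(n^{(r-3)/2}(x-y))$, which is useless for $r\geq 3$. The paper resolves this by invoking the sharper conclusion of \cite[Theorem 3.1]{Mo}, a result tailored to exactly this situation in larger-size RH problems, to obtain $E_n^0(y_n)^{-1}R(y_n)^{-1}R(x_n)E_n^0(x_n)=\mathbb I+\mathcal O(n^{-1/2}(x-y))$. Your proof plan omits this entirely and, as written, would fail at this step.

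A second, smaller omission: between the RHP for $Y$ and the normalization you skip the transformation $Y\mapsto X$ (Definition \ref{ch4:def:X}), which uses the Fourier-type matrices $U^\pm$, $D^\pm$ to convert the rank-one jump of $Y$ on $(0,\infty)$ into a direct sum of $1\times1$ and $2\times 2$ blocks and to make the jump on $(-\infty,0)$ tractable. Without this block structure the lens-opening and the global parametrix construction do not go through, and it is what forces the specific off-diagonal form of the ensuing RHPs. This is not "purely computational"; it is a structural step. Similarly, your explanation of why double matching is needed ("jump matrices only decay like powers of $\zeta$") is not the reason given in \cite{Mo}; the obstruction is rather the mismatch between the fractional-power expansion of the bare parametrix at infinity and the analyticity required of the prefactor $E$, which is a size-dependent phenomenon.
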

We remark that the substitution $x\to x^{\frac{1}{\theta}}$ changes the MBE to one with a different input. Namely, we should then substitute $\theta, \alpha$ and $V(x)$ by $1/\theta, (1+\alpha)/\theta -1$ and $V(x^\frac{1}{\theta})$ respectively. This means that the main result is also true when we replace $r$ by $\frac{1}{r}$ everywhere in its formulation, but with the altered condition that $V(x^\frac{1}{r})$ should be one-cut $\frac{1}{r}$-regular. Then $V(x)$ should have a power series expansion evaluated in $x^r$, and this severely restricts what type of external fields can be treated. There does not appear to be a simple way to relax this restriction, although we believe that the main result should hold without it. 

Our approach is conceptually the same as in \cite{KuMo}. The main difference is that the calculations become more technical and involved. The MOP ensemble \eqref{ch4:defMOP} is related to an $(r+1)\times (r+1)$ Riemann-Hilbert problem (RHP) which we will present in the next section. We analyze this RHP using the Deift-Zhou method of nonlinear steepest descent and this will allow us to prove Theorem \ref{ch4:mainThm}. To make our derivation cleaner we will assume that $n$ is divisible by $r$, but we explain in Appendix \ref{ch:appendixA} how the case where $n$ is not divisible by $r$ is treated. Notice that $\left\lfloor \frac{n-j}{r} \right\rfloor=\frac{n}{r}-1$ for all $j=1,2,\ldots,r$ in the case that $n$ is divisible by $r$. 

The local parametrix at the hard edge is constructed with the help of Meijer G-functions. This was to be expected, Zhang showed in \cite{Zh} that the limiting correlation kernel in \eqref{ch4:eq:scalingLimitIK} can be expressed with the help of Meijer G-functions when either $\theta$ or $1/\theta$ is a positive integer. The local parametrix that we find shows great similarity with the bare Meijer G-parametrix from \cite{BeBo}, although there does not appear to be a simple transformation that relates these two local parametrix problems. As in other larger size RHPs (e.g., see \cite{BeBo} and \cite{KuMFWi}), we will not be able to match the local parametrix with the global parametrix in the usual way. In order to match the global and local parametrix we are going to use a double matching. This double matching procedure was introduced in \cite{KuMo} and was later applied in \cite{SiZh}. In \cite{Mo}, the double matching procedure was refined and a general framework was put forward. The current paper will be the first instance were this general framework for the double matching procedure is utilized. As a convenience to the reader, we repeat the main result concerning the double matching of \cite{Mo} in Section \ref{ch4:sec:matching} (see \text{Theorem \ref{lem:matching}}).

Having done the RHP analysis, one can also calculate the scaling limits of the correlation kernel in the bulk and at the soft edge $q$ to be the sine and Airy kernel respectively. We omit the details. 

We do not believe it to be reasonable to expect that our method can be generalized to all $\theta>0$, but it might be possible to adapt it to obtain the same results for rational $\theta$. In particular, it was shown in \cite{Ku} that there exists an underlying vector equilibrium problem when $\theta$ is rational. The measures of its solution might be used to construct $g$-functions for a corresponding RHP, although, at the moment, it is not clear to us what this RHP would look like. To prove the conjecture for irrational $\theta$ we would suspect that an entirely new approach has to be invented, although a density or continuity argument might do the trick once the conjecture is proved for rational $\theta$.

Another question for further research, is whether the real analyticity of $V$ can be relaxed. Our current approach can not deal with the situation where $V$ is is not real analytic. To be specific, equation \eqref{ch4:eq:varphijcrelation} would not necessarily be valid anymore for $j=0$. This means that our Szeg\H{o} function as defined in \eqref{ch4:eq:defD0} does not actually lead to a local parametrix problem with constant jumps (see Section \ref{ch4:sec:szegoconstantjumps}). Furthermore, the map $f$ as defined in \eqref{ch4:eq:conformalf} will not be analytic, hence not conformal. It has been suggested to us that the $\overline{\partial}$-method as introduced by McLaughlin and Miller \cite{McMi}, adapted to larger size RHPs, might be able to deal with more general external fields.\\

The following three expressions will be used repeatedly throughout this paper.
\begin{align}
\label{ch4:defab}
\beta = \alpha + \frac{r-1}{2 r}, \quad\quad\Omega = e^\frac{2\pi i}{r}, \quad\quad\omega = e^\frac{2\pi i}{r+1}.
\end{align}
We will use these notations without reference henceforth. Throughout this paper we use principle branches for fractional powers and logarithms, i.e., we pick the argument of $z$ between $-\pi$ and $\pi$. In the few cases were we have no choice but to deviate from this convention, it will be explicitly mentioned what branch we take.

%VOLGENS MIJ ZIJN ONDERSTAANDE TO DO S ALLEMAAL VOLDOENDE AFGEHANDELD. 

%TO Do:
%Zeg relatie tot singular value product random matrices. Of geef meer references boven, by renewed interest. 
%Er moet nog een equivalent van propositie 2.4 (KuMo) komen. 
%Link naar artikel Christof et.  moet nog. Ook Dries Manuela etc. 
%ik moet ook overal nog figuren toevoegen (de meeste kunnen uit KuMo). 
%MSS IS HET TOCH HANDIGER OM HIER EEN EQUILIBRIUM PROBLEM FORMULERING ETC AL TE HEBBEN, EN LINEAR V.

%DEZE TO DO MOET NOG WEL VAN ARNO WAARSCHIJNLIJK:
%PLOT EEN PLAATJE VAN RIEMANN SURFACE VAN XI VOOR R=3 EN R=4. 

\section{The Riemann-Hilbert problem}

\subsection{Introduction of the Riemann-Hilbert problem} \label{ch4:sec:theRHP}

As mentioned in the introduction we will assume that $n$ is divisible by $r$. This choice is made because the intuition behind some of the formulae that we will encounter might be obscured if we include the $n$ that are not divisible by $r$. Most of the RH analysis is identical for such $n$ though, see Appendix \ref{ch:appendixA}. The MOP ensemble defined in \eqref{ch4:defMOP} is related to an $(r+1)\times (r+1)$ RHP \cite{VAGeKu}, which takes the form

\begin{rhproblem} \label{ch4:RHPforY} \
\begin{description}
\item[RH-Y1] $Y : \mathbb{C}\setminus [0,\infty)\to \mathbb{C}^{(r+1)\times (r+1)}$ is analytic.
\item[RH-Y2] $Y$ has boundary values for $x\in (0,\infty)$, denoted by $Y_{+}(x)$ (from the upper half plane) and $Y_{-}(x)$ (from the lower half plane), and for such $x$ we have the jump condition
\begin{align} \label{ch4:RHY2}
Y_{+}(x) = Y_{-}(x) \begin{pmatrix} 1 & w_\alpha(x) & w_{\alpha+\frac{1}{r}}(x) & \ldots &w_{\alpha+\frac{r-1}{r}}(x) \\ 
0 & 1 & 0 & \ldots & 0\\
0 & 0 & 1 & \ldots & 0\\
\vdots & & & & \vdots\\
0 & 0 & 0 & \ldots & 1 \end{pmatrix}.
\end{align}
\item[RH-Y3] As $|z|\to\infty$
\begin{align} \label{ch4:RHY3}
Y(z) = \left(\mathbb{I}+\mathcal{O}\left(\frac{1}{z}\right)\right) 
\begin{pmatrix} 
z^{n} & 0 & 0 & \ldots & 0\\ 
0 & z^{-\frac{n}{r}} & 0 & \ldots & 0\\ 
0 & 0 & z^{-\frac{n}{r}} & \ldots & 0\\
\vdots & & & & \vdots\\
0 & 0 & 0 & \ldots & z^{-\frac{n}{r}}
\end{pmatrix}.
\end{align}
\item[RH-Y4] As $z\to 0$
\begin{align}
\label{ch4:RHY4}
Y(z) = \mathcal{O}\begin{pmatrix} 1 & h_{\alpha}(z) & h_{\alpha+\frac{1}{r}}(z) & \ldots & h_{\alpha+\frac{r-1}{r}}(z)\\ 
1 & h_{\alpha}(z) & h_{\alpha+\frac{1}{r}}(z) & \ldots & h_{\alpha+\frac{r-1}{r}}(z)\\ 
\vdots & & & & \vdots\\
1 & h_{\alpha}(z) & h_{\alpha+\frac{1}{r}}(z) & \ldots & h_{\alpha+\frac{r-1}{r}}(z)\end{pmatrix}
\text{ with } \, h_{\alpha}(z) = 
\begin{cases} |z|^{\alpha}, & \text{if } \alpha < 0, \\ 
	\log{|z|}, & \text{if } \alpha = 0,\\ 
	1, & \text{if } \alpha > 0.
	\end{cases}
\end{align}
\end{description}
The $\mathcal{O}$ condition in \eqref{ch4:RHY3} and \eqref{ch4:RHY4} is to be taken entry-wise.
\end{rhproblem}

It will be convenient to use the following convention for our RH analysis, which is indeed seen to be in agreement with RH-Y2. 
\begin{convention} \label{ch4:con:orientation}
Any jump curve that touches the origin is oriented away from the origin.
\end{convention}

%Notice that $Y$ has determinant $1$.\\

Notice that this means, perhaps contrary to intuition, that the $+$ and $-$ signs are in the lower and upper half-plane respectively, when we consider jumps on the negative real axis. We will never deviate from Convention \ref{ch4:con:orientation}. \\

RH-Y has a unique solution $Y(z)$, which is related to the multiple orthogonal polynomials in the following way. The first row of $Y(z)$ can be expressed as
\begin{multline*}
\hspace{-0.4cm}\left(
p_n(z) \quad \displaystyle\frac{1}{2\pi i} \int_0^\infty \frac{p_n(x) w_\alpha(x)}{x-z} dx
\quad \displaystyle\frac{1}{2\pi i} \int_0^\infty \frac{p_n(x) w_{\alpha+\frac{1}{r}}(x)}{x-z} dx
\quad \hdots\right. \\
\left. \hdots\quad \displaystyle\frac{1}{2\pi i} \int_0^\infty \frac{p_n(x) w_{\alpha+\frac{r-1}{r}}(x)}{x-z} dx\right).
\end{multline*}
The other rows are similar, but are expressed with different though similar multiple orthogonal polynomials (see \cite[Theorem 3.1]{VAGeKu}). 
It is known \cite{DaKu} that the correlation kernel \eqref{ch4:defK} can be conveniently expressed in terms of $Y$ by
\begin{align} \label{ch4:eq:KinY}
K_{V,n}^{\alpha,\frac{1}{r}}(x,y)
&= \frac{1}{2\pi i(x-y)} 
\begin{pmatrix}
0 & w_\alpha(y) & w_{\alpha+\frac{1}{r}}(y) & \hdots & w_{\alpha+\frac{r-1}{r}}(y)
\end{pmatrix}
Y_+^{-1}(y) Y_+(x)
\begin{pmatrix}
1 \\ 0 \\ \vdots \\ 0
\end{pmatrix}\\ \nonumber
&= \frac{w_0(y)}{2\pi i(x-y)}
\begin{pmatrix}
0 & y^\alpha & y^{\alpha+\frac{1}{r}} & \hdots & y^{\alpha+\frac{r-1}{r}}
\end{pmatrix}
Y_+^{-1}(y) Y_+(x)
\begin{pmatrix}
1 \\ 0 \\ \vdots \\ 0
\end{pmatrix}
\end{align}
for $x,y>0$. Hence, to obtain the large $n$ behavior of the correlation kernel it will suffice to determine the large $n$ behavior of $Y$. 

\subsection{First transformation $Y\mapsto X$} \label{ch4:sec:firstTransformation}

In order to be able to normalize the RHP properly we will need a first transformation that will turn the jumps into a direct sum of $1\times 1$ and $2\times 2$ jumps. Here and in the rest of this paper we shall often use a block form notation, similarly as in \cite{BeBo}. As in \cite{BeBo}, we also often write a diagonal matrix as a direct sum of $1\times 1$ blocks, in cases where formulae tend to become big. We mention that (matrix) multiplication has a higher precedence than the direct sum in the order of operations.  

We remind the reader that $\Omega=e^\frac{2\pi i}{r}$. We introduce the $r\times r$ matrices
\begin{align}
\label{ch4:defU+}
U^+ &= \frac{1}{\sqrt r}
\begin{pmatrix}
1 & 1 & 1 & 1 & 1 & 1 & \hdots \\
1 & \Omega & \Omega^{-1} & \Omega^2 & \Omega^{-2} & \Omega^3 & \hdots \\
1 & \Omega^2 & \Omega^{-2} & \Omega^4 & \Omega^{-4} & \Omega^6 & \hdots \\
\vdots & & & & & & \vdots\\
1 & \Omega^{r-1} & \Omega^{-(r-1)} & \Omega^{2(r-1)} & \Omega^{-2(r-1)} & \Omega^{3(r-1)} & \hdots 
\end{pmatrix}\\ \label{ch4:defU-}
U^- &= \overline{U^+}.
\end{align}
Here $\overline{U^+}$ denotes the complex conjugate of $U^+$. Since $U^+$ is unitary we have that $\overline{U^+}$ can also be written as $(U^+)^{-t}$ where $t$ denotes transposition. We also define the $r\times r$ diagonal matrices
\begin{align} \label{ch4:eq:defDpm}
D^+ &=  \operatorname{diag}\left(1,\Omega^{\frac{1}{2}},-\Omega^{-\frac{1}{2}}, -\Omega^{\frac{2}{2}}, \Omega^{-\frac{2}{2}}, \Omega^{\frac{3}{2}}, -\Omega^{-\frac{3}{2}}, \ldots\right),\\
D^- &=  \operatorname{diag}\left(1,-\Omega^{-\frac{1}{2}},-\Omega^{\frac{1}{2}}, \Omega^{-\frac{2}{2}}, \Omega^{\frac{2}{2}},-\Omega^{-\frac{3}{2}}, -\Omega^{-\frac{3}{2}},\ldots\right).
\end{align}
\begin{definition} \label{ch4:def:X}
We define $X:\mathbb C\setminus \mathbb R\to\mathbb C^{(r+1)\times (r+1)}$ by the transformation 
\begin{align} \label{ch4:defX}
X(z) = Y(z) 
\left(r^\frac{r}{2r+2} \oplus r^{-\frac{1}{2r+2}} z^{\frac{r-1}{2 r}} \bigoplus_{j=1}^r z^{-\frac{j-1}{r}} \right)
\times \left\{\begin{array}{ll}
1 \oplus U^+ D^+, & \operatorname{Im}(z) >0,\\
1 \oplus U^- D^-, & \operatorname{Im}(z) <0.
\end{array}
\right.
\end{align}
\end{definition}

For clarity, we emphasize that $1\oplus U^\pm D^\pm$ is the direct sum of the $1\times 1$ block with component $1$ and the $r\times r$ block $U^\pm D^\pm$. Now $X$ satisfies

\begin{rhproblem} \label{ch4:RHPforX} \
\begin{description}
\item[RH-X1] $X$ is analytic on $\mathbb C\setminus \mathbb R$.
\item[RH-X2] $X$ has boundary values for $x\in (-\infty, 0)\cup (0,\infty)$.
\begin{align} \nonumber
X_+(x) &= X_-(x) \\ \nonumber
&\hspace{-0.5cm}\times \left\{\begin{array}{ll} 
\begin{pmatrix}
1 & w_{\beta}(x)\\
0 & 1
\end{pmatrix} \oplus
\bigoplus_{j=1}^{\frac{r}{2}-1} 
\begin{pmatrix}
0 & 1\\
-1 & 0
\end{pmatrix}
\oplus 1, & r \equiv 0 \mod 2,\\
\begin{pmatrix}
1 & w_{\beta}(x)\\
0 & 1
\end{pmatrix} \oplus
\bigoplus_{j=1}^{\frac{r-1}{2}} 
\begin{pmatrix}
0 & 1\\
-1 & 0
\end{pmatrix}
, & r \equiv 1 \mod 2,
\end{array}
\right. \\ \label{ch4:RHX2}
&\hspace{6.5cm} x > 0,\\ \nonumber
X_+(x) &= X_-(x)\\ \nonumber
&\hspace{-0.5cm}\times \left\{\begin{array}{ll}
1\oplus
\bigoplus_{j=1}^{\frac{r}{2}} 
\begin{pmatrix}
0 & 1\\
-1 & 0
\end{pmatrix}, & r \equiv 0 \mod 2,\\
1\oplus
\bigoplus_{j=1}^{\frac{r-1}{2}} 
\begin{pmatrix}
0 & 1\\
-1 & 0
\end{pmatrix}
\oplus 1, & r \equiv 1 \mod 2,\\
\end{array}
\right. \\ \label{ch4:RHX2b}
&\hspace{6.5cm} x < 0.
\end{align}
\item[RH-X3] As $|z|\to\infty$
\begin{align} \label{ch4:RHX3}
X(z) = \left(\mathbb{I}+\mathcal{O}\left(\frac{1}{z}\right)\right) 
\left(1 \oplus z^{\frac{r-1}{2 r}} 
\bigoplus_{j=1}^r z^{-\frac{j-1}{r}}\right) 
\times \left\{\begin{array}{ll}
z^n \oplus z^{-\frac{n}{r}} U^+ D^+
, & \operatorname{Im}(z) >0,\\
z^n \oplus z^{-\frac{n}{r}} U^- D^-
, & \operatorname{Im}(z) <0.
\end{array}
\right.
\end{align}
\item[RH-X4] As $z\to 0$
\begin{align}
\label{ch4:RHX4}
X(z) = \mathcal{O}\begin{pmatrix} 1 & z^{-\frac{r-1}{2r}} h_{\alpha+\frac{r-1}{r}}(z) & \ldots & z^{-\frac{r-1}{2r}} h_{\alpha+\frac{r-1}{r}}(z)\\ 
\vdots & & & \vdots\\
1 & z^{-\frac{r-1}{2r}} h_{\alpha+\frac{r-1}{r}}(z) & \ldots & z^{-\frac{r-1}{2r}} h_{\alpha+\frac{r-1}{r}}(z)\end{pmatrix}.
\end{align}
\end{description}
\end{rhproblem}

\begin{proof}
RH-X2 requires verification. As an intermediate step we define
\begin{align}
\label{ch4:defZ}
Z(z) = Y(z) 
\left(r^\frac{r}{2r+2} \oplus r^{-\frac{1}{2r+2}} z^{\frac{r-1}{2 r}} 
\bigoplus_{j=1}^r z^{-\frac{j-1}{r}}\right).
\end{align}
Then we have
\begin{align} \label{ch4:eq:defXasZ}
X(z) = Z(z) 
\times \left\{\begin{array}{ll}
1 \oplus U^+ D^+, & \operatorname{Im}(z) >0\\
1 \oplus U^- D^-, & \operatorname{Im}(z) <0.
\end{array}
\right.
\end{align}
$Z$ has a jump on $(0,\infty)$ and $(-\infty,0)$ which we denote by $J$. One easily checks that
\begin{align} \label{ch4:eq:jumpZ}
J = \left\{\begin{array}{ll}
\begin{pmatrix} 
1 & \frac{1}{\sqrt r} w_{\beta}(x) & \frac{1}{\sqrt r} w_{\beta}(x) & \hdots & \frac{1}{\sqrt r} w_{\beta}(x)\\
0 & 1 & 0 & \hdots & 0\\
0 & 0 & 1 & \hdots & 0\\
\vdots & & & \ddots & \vdots\\
0 & 0 & 0 & \hdots & 1
\end{pmatrix}, & x >0,\\
\begin{pmatrix} 
1 & 0 & 0 & \hdots & 0\\
0 & -\Omega^\frac{1}{2} & 0 & \hdots & 0\\
0 & 0 & -\Omega^\frac{1}{2} \Omega & \hdots & 0\\
\vdots & & & \ddots & \vdots\\
0 & 0 & 0 & \hdots & -\Omega^\frac{1}{2} \Omega^{r-1}
\end{pmatrix}, & x <0.
\end{array} \right.
\end{align}
We remind the reader, once more, that we use the convention that jump curves that touch the origin are oriented away from the origin. Let us now prove the jumps for $X$. In what follows we let the $(r+1)\times (r+1)$ matrices have indices ranging from $0$ to $r$, we make this choice because then we can label the entries of the $r\times r$ matrices $U^\pm$ and the $r\times r$ diagonal matrices $D^\pm$ with indices ranging from $1$ to $r$. Notice that the entries of $U^+$ can be written explicitly as follows.
\begin{align} \label{ch4:eq:U+-evenandodd}
\begin{array}{lll}
U^+_{i,2j} &= \frac{1}{\sqrt r}\Omega^{(i-1)j}, & i=1,2,\ldots,r; j=1,2,\ldots, \left\lfloor \frac{r}{2}\right\rfloor\\
U^+_{i,2j-1} &= \frac{1}{\sqrt r} \Omega^{-(i-1)(j-1)}, & i=1,2,\ldots,r; j=1,2,\ldots, \left\lfloor \frac{r+1}{2} \right\rfloor.
\end{array}
\end{align}
Let us check the jump of $X$ for $x>0$. By using the block form we notice, using \eqref{ch4:eq:defXasZ} and \eqref{ch4:eq:jumpZ}, that
\begin{align*} 
\left(X_-(x)^{-1} X_+(x)\right)_{00} &= \sum_{k,l=0}^r (1\oplus U^- D^-)^{-1}_{0k} J_{kl} (1\oplus U^+ D^+)_{l0}
= J_{00} = 1,\\ 
\left(X_-(x)^{-1} X_+(x)\right)_{01} &= \sum_{k,l=0}^r (1\oplus U^- D^-)^{-1}_{0k} J_{kl} (1\oplus U^+  D^+)_{l1}
= \sum_{l=1}^r J_{0l} U^+_{l1}= w_{b-a}(x).
\end{align*}
Here we used in the last line that $J_{0l}=\frac{1}{\sqrt r} w_{\beta}(x)$ and that $U^+_{l1}=\frac{1}{\sqrt r}$ for all $l=1,2,\ldots,r$. We have also used the diagonal form of the matrices $D^\pm$ in both lines. On the other hand, we have for $i=1,2,\ldots,r$ and $j=2,\ldots,r$ that
\begin{align*} 
\left(X_-(x)^{-1} X_+(x)\right)_{i0} &= \sum_{k,l=0}^r (1\oplus U^- D^-)^{-1}_{ik} J_{kl} (1\oplus U D^+)_{l0}\\
&= (D^-)^{-1}_{ii} (1\oplus U^-)^{-1}_{i0} J_{00} = 0,\\ 
\left(X_-(x)^{-1} X_+(x)\right)_{0j} &= \sum_{k,l=0}^r (1\oplus U^- D^-)^{-1}_{0k} J_{kl} (1\oplus U^+ D^+)_{lj}\\
&= D^+_{jj} \sum_{l=0}^r J_{0l} U^+_{lj} = D_{jj}^+ \frac{w_{\beta}(x)}{\sqrt r} \sum_{l=1}^r  U^+_{lj} = 0.
\end{align*}
For indices $i,j=1,2,\ldots,r$ we have that
\begin{align*} 
\left(X_-(x)^{-1} X_+(x)\right)_{ij} &= \sum_{k,l=0}^r (1\oplus U^- D^-)^{-1}_{ik} J_{kl} (1\oplus U^+ D^+)_{lj}\\ 
&= \sum_{k,l=1}^r (1\oplus U^- D^-)^{-1}_{ik} J_{kl} (1\oplus U^+ D^+)_{lj}\\
&= (D^-)^{-1}_{ii} D^+_{jj} \sum_{k=1}^r (U^+)^t_{ik} U^+_{kj}\\
&= (D^-)^{-1}_{ii} D^+_{jj} \sum_{k=1}^r U^+_{ki} U^+_{kj}.
\end{align*}
Here we have used that $(U^-)^{-1} = (U^+)^t$. Now we have four cases depending on the parity of $i$ and $j$. We will use \eqref{ch4:eq:U+-evenandodd} for all of them. Suppose that $i$ and $j$ are both odd. Then we can write $i=2A-1$ and $j=2B-1$. Thus we find
\begin{align*}
\sum_{k=1}^r U^+_{ki} U^+_{kj} = \frac{1}{r} \sum_{k=1}^r \Omega^{-(k-1)(A+B-2)}
= \left\{
\begin{array}{ll}
1, & i=j=1\\
0, & \text{otherwise.}
\end{array}
\right.
\end{align*}
When $i=j=1$ we indeed have $(D^-)^{-1}_{ii}D^+_{jj} = 1$. 
Now let us suppose that $i=2A$ and $j=2B$. Then we have
\begin{align*}
\sum_{k=1}^r U^+_{ki} U^+_{kj} = \frac{1}{r} \sum_{k=1}^r \Omega^{(k-1)(A+B)}
= \left\{
\begin{array}{ll}
1, & i=j=r\text{ and }r\equiv 0\mod 2\\
0, & \text{otherwise.}
\end{array}
\right.
\end{align*}
When we are in the situation of $i=j=r$ and $r$ is even we obtain $(D^-)^{-1}_{ii}D^+_{jj} = (-1)^\frac{r}{2} \Omega^{\frac{r}{4}} (-1)^{\frac{r}{2}-1}\Omega^{\frac{r}{4}} = -\Omega^{\frac{r}{2}}=1$, as we should have.
Lastly, but perhaps most importantly, we check the case where $i$ and $j$ have a different parity. Let us write $i=2A-1$ and $j=2B$. Then we have
\begin{align*}
\sum_{k=1}^r U^+_{ki} U^+_{kj} = \frac{1}{r} \sum_{k=1}^r \Omega^{(k-1)(B-A+1)}
= \left\{
\begin{array}{ll}
1, & i=j+1\\
0, & \text{otherwise.}
\end{array}
\right.
\end{align*}
When $i=j+1$ we have $$(D^-)^{-1}_{ii} D^+_{jj} = (-1)^{A-1} \Omega^{-\frac{A-1}{2}} (-1)^{B-1} \Omega^{\frac{B}{2}} = -\Omega^{\frac{j+1-i}{4}} = -1.$$
For $i=2A$ and $j=2B-1$ we get
\begin{align*}
\sum_{k=1}^r U^+_{ki} U^+_{kj} = \frac{1}{r} \sum_{k=1}^r \Omega^{(k-1)(A-B-1)}
= \left\{
\begin{array}{ll}
1, & i=j-1\\
0, & \text{otherwise.}
\end{array}
\right.
\end{align*}
When $i=j-1$ we have $$(D^-)^{-1}_{ii} D^+_{jj} = (-1)^A \Omega^{\frac{A}{2}} (-1)^{B-1} \Omega^{-\frac{B-1}{2}} = \Omega^{\frac{i-j+1}{4}} = 1.$$
Having found all the components of the jump matrix for $x>0$, we conclude that \eqref{ch4:RHX2} holds.\\

Let us now turn to the jump for $x<0$. Here we may ignore the $0$ indices altogether due to the particular block form of $J$. For $i,j=1,2,\ldots,r$ we have
\begin{align*}
\left(X_-(x)^{-1} X_+(x)\right)_{ij} &= (D^+)^{-1}_{ii} D^-_{jj}\sum_{k,l=1}^r (U^+)^{-1}_{ik} J_{kl} U^-_{lj}
= - (D^+)^{-1}_{ii} D^-_{jj} \Omega^\frac{1}{2} \sum_{k=1}^r U^-_{ki} \Omega^{k-1} U^-_{kj}.
\end{align*}
Again we treat the four cases for $i$ and $j$ depending on the parity. Let $i=2A-1$ and $j=2B-1$, then
\begin{align*}
\sum_{k=1}^r U^-_{ki} \Omega^{k-1} U^-_{kj} &= \frac{1}{r} \sum_{k=1}^r \Omega^{(k-1)(A+B-1)}
= \left\{
\begin{array}{ll}
1, & i=j=r\text{ and }r=1\mod 2\\
0, & \text{otherwise.}
\end{array}
\right.
\end{align*}
In the case that $i=j=r$ and $r$ odd we indeed have $$(D^+)^{-1}_{ii} D^-_{jj} = (-1)^\frac{r-1}{2} \Omega^{\frac{r-1}{4}} (-1)^\frac{r-1}{2} \Omega^{\frac{r-1}{4}} = -\Omega^{-\frac{1}{2}},$$ as it should be (it should exactly cancel the $-\Omega^\frac{1}{2}$ factor).\\ 
Let $i=2A$ and $j=2B$, then
\begin{align*}
\sum_{k=1}^r U^-_{ki} \Omega^{k-1} U^-_{kj} = \frac{1}{r} \sum_{k=1}^r \Omega^{-(k-1)(A+B-1)}
=0.
\end{align*}
Let $i=2A-1$ and $j=2B$, then
\begin{align*}
\sum_{k=1}^r U^-_{ki} \Omega^{k-1} U^-_{kj} = \frac{1}{r} \sum_{k=1}^r \Omega^{(k-1)(A-B)}
= \left\{
\begin{array}{ll}
1, & i=j-1\\
0, & \text{otherwise.}
\end{array}
\right.
\end{align*}
In the case that $i=j-1$ we have $$(D^+)^{-1}_{ii} D^-_{jj} = (-1)^{A-1} \Omega^{\frac{A-1}{2}} (-1)^B \Omega^{-\frac{B}{2}} = -\Omega^\frac{i-j+1}{4} = -\Omega^{-\frac{1}{2}}.$$
Let $i=2A$ and $j=2B-1$, then
\begin{align*}
\sum_{k=1}^r U^-_{ki} \Omega^{k-1} U^-_{kj} = \frac{1}{r} \sum_{k=1}^r \Omega^{(k-1)(B-A)}
= \left\{
\begin{array}{ll}
1, & i=j+1\\
0, & \text{otherwise.}
\end{array}
\right.
\end{align*}
In the case that $i=j+1$ we have $$(D^+)^{-1}_{ii} D^-_{jj} = (-1)^{A-1} \Omega^{-\frac{A}{2}} (-1)^{B-1} \Omega^{\frac{B-1}{2}} = \Omega^\frac{j-i-1}{4} = \Omega^{-\frac{1}{2}}.$$
We conclude that we get the jump on $x<0$ as in \eqref{ch4:RHX2b}.

RH-X4 follows from the observation that as $z\to 0$
\begin{align} \label{ch4:eq:RH-X4proof}
z^{-\frac{j}{r}} h_{\alpha+\frac{j}{r}}(z) = \mathcal O\left(z^{-\frac{r-1}{r}} h_{\alpha+\frac{r-1}{r}}(z)\right),
\hspace{2cm}j=0,1,\ldots,r-1.
\end{align} 
Indeed, we have as $z\to 0$ that
\begin{align*}
z^{-\alpha-\frac{j}{r}} h_{\alpha+\frac{j}{r}}(z) = h_{-\alpha-\frac{j}{r}}(z) &= \mathcal O\left(h_{-\alpha-\frac{r-1}{r}}(z)\right)
= \mathcal O\left(z^{-\alpha-\frac{r-1}{r}} h_{\alpha+\frac{r-1}{r}}(z)\right)
\end{align*}
and this leads to \eqref{ch4:eq:RH-X4proof} after multiplication by $z^\alpha$. 
\end{proof}

\section{Normalization and opening of the lens} \label{ch4:sec:normalization}

Our next task is to normalize the RHP. That is, we should eliminate the $z^n$ behavior in RH-X3, without making the jumps too cumbersome. As usual, we will use $g$-functions to perform this normalization. 

\subsection{Vector equilibrium problem and definition of the $g$-functions} \label{ch4:sec:normalization2}

In order to construct the $g$-functions we need a vector equilibrium problem corresponding to the MBE. Fortunately, this has been studied by Kuijlaars in detail in \cite{Ku}. According to \cite{Ku} we have a vector of measures $(\mu_0,\mu_1,\ldots,\mu_{r-1})$ that minimizes the energy functional
\begin{align} \label{ch4:eq:eqProblem}
\sum_{j=0}^{r-1} \iint \log \frac{1}{|x-y|} d\mu_j(x) d\mu_j(y)
- \sum_{j=0}^{r-2} \iint \log \frac{1}{|x-y|} d\mu_j(x) d\mu_{j+1}(y)
+ \int V(x) d\mu(x).
\end{align}
under the condition that $\mu_j$ has support in $[0,\infty)$ for even $j$ and $(-\infty,0]$ for odd $j$, and the condition that the total mass of the measures is given by
\begin{align} \label{ch4:eq:totalMass}
\mu_j(\operatorname{supp}(\mu_j)) = \frac{r-j}{r}.
\end{align} 
The main result of \cite{Ku} is then that $\mu_0$ coincides with the equilibrium measure $\mu_{V,\theta}^*$ from \eqref{ch4:defmuVtheta}. Furthermore, we have $\operatorname{sup}(\mu_j)=\Delta_{j}$, where
\begin{align} \label{ch4:eq:defDeltaj}
\Delta_j = \left\{\begin{array}{rl}
[0,q] \text{ for some }q>0, & j=0,\\
(-\infty,0], & j \equiv 1 \mod 2,\\ 
{}[0,\infty), & j \equiv 0 \mod 2 \text{ and }j\neq 0.
\end{array}\right.
\end{align}
That the support for $\mu_0$ is an interval $[0,q]$ is dictated by the one-cut $\frac{1}{r}$-regularity assumption on $V$. For convenience we also define $\mu_r=0$. 
In addition, the measures satisfy the following variational conditions.
\begin{align} \label{ch4:varCon1}
& 2\int \log \frac{1}{|s-x|} d\mu_0(s) - \int \log \frac{1}{|s-x|} d\mu_1(s) + V(x) 
\left\{\begin{array}{ll}
= -\ell, & x\in [0,q]\\ 
\geq -\ell, & x>q
\end{array}\right.\\ \label{ch4:varCon2}
&- \int \log \frac{1}{|s-x|} d\mu_{j-1}(s)+2\int \log \frac{1}{|s-x|} d\mu_j(s) 
- \int \log \frac{1}{|s-x|} d\mu_{j+1}(s) = 0, \quad j=1,\ldots,r-1.
\end{align}
Here $\ell$ is some real constant (that does not necessarily equal the one in \eqref{ch4:eq:varCon}). The derivation for all the properties in this section up to this point can be found in \cite{Ku}. As stated in the introduction, we make an additional assumption on the solution to the variational equations. Namely, we assume that the inequality in \eqref{ch4:eq:varCon} is strict, which is equivalent to the following assumption. 

\begin{assumption} \label{ch4:assump:varStrict}
The inequality in \eqref{ch4:varCon1} for $x>q$ is strict.
\end{assumption}

We now define the $g$-functions by
\begin{align} \label{ch4:eq:defgfunctions}
g_j(z) = \int_{\Delta_{j}} \log(z-s) d\mu_j(s), \quad j=0,1,\ldots,r.
\end{align}
For convenience we also put $g_{-1}=g_r=0$. We remind the reader that the logarithm is taken with the principle branch, as always. It follows immediately from the definition \eqref{ch4:eq:defgfunctions} that for real $x$
\begin{align}
\label{ch4:g0jump}
g_{0,+}(x) - g_{0,-}(x) &= 2\pi i \int_x^q d\mu_0(s), & x\in [0,q],
\end{align}
and 
\begin{align} \label{ch4:g0jump2}
g_{0,+}(x) - g_{0,-}(x) &\equiv 0 \mod 2\pi i, & x\in \mathbb R\setminus [0,q],\\
\label{ch4:g1jump}
g_{1,+}(x) - g_{1,-}(x) &= 0, & x\in \mathbb R\setminus \Delta_1,\\
\label{ch4:grjump2}
g_{r-1,+}(x) - g_{r-1,-}(x) &= ((-1)^r-1) \frac{\pi i}{r}, & x\in \mathbb R\setminus \Delta_{r-1}.
\end{align}
%Here the superscript $c$ denotes the complement in $\mathbb R$. 
Similar formulae hold for the other $g$-functions but we will not need these. One can also deduce from the variational conditions that
\begin{align}
\label{ch4:g1g2jump}
& g_{0,-}(x)+g_{0,+}(x) - g_{1,+}(x) - V(x) 
\left\{\begin{array}{ll}
= \ell, & x\in [0,q]\\ 
< \ell, & x>q
\end{array}\right.\\ \label{ch4:g1g2jumpb}
&- g_{j-1,-}(x)+g_{j,-}(x)+g_{j,+}(x) - g_{j+1,+} 
= ((-1)^j-1) \frac{\pi i}{r}, \qquad j=1,\ldots,r-1; x\in\Delta_{j}. 
\end{align}
In particular, the equations on the negative real axis, i.e., for odd $j$, yield an equality with $-\frac{2\pi i}{r}$. 

\subsection{Asymptotic behavior of the $g$-functions}

In general one uses the matrix
\begin{align}
\label{ch4:defG}
G(z) = \bigoplus_{j=0}^{r} e^{n (g_{j-1}(z)-g_j(z))}
\end{align}
to normalize the RHP. Then we need to understand the asymptotics of the $g$-functions as $z\to\infty$. The following two propositions will provide these. 

\begin{proposition} \label{ch4:asympgfunctions}
For $a>0$ let
\begin{align} \label{ch4:eq:defma}
m_a = \int_0^q s^a d\mu_{V,\theta}^*(s).
\end{align}
As $z\to \infty$ we have
\begin{align} \label{ch4:eq:asympg0infty}
g_0(z) &= \log(z) + O\left(\frac{1}{z}\right),
\end{align}
and for $j=1,\ldots,r$ and $\pm \operatorname{Im}(z)>0$, we have
\begin{align} \label{ch4:eq:asympgjinfty}
g_{j-1}(z)-g_{j}(z) &= \frac{1}{r} \log(z) - \sum_{k=1}^{r-1} m_{\frac{k}{r}} \Omega^{\pm (-1)^{j}\lfloor \frac{j}{2}\rfloor k} z^{-\frac{k}{r}} + O\left(\frac{1}{z}\right).
\end{align}
\end{proposition}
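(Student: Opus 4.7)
The statement splits into handling $g_0$ and the differences $g_{j-1}-g_j$ with $j\geq 1$. The first is elementary. Since $\mu_0=\mu_{V,\theta}^*$ is supported on the compact set $[0,q]$ with total mass $1$ by \eqref{ch4:eq:totalMass}, the expansion $\log(z-s)=\log z-\sum_{k\geq 1}s^k/(kz^k)$ converges uniformly in $s\in[0,q]$ for $|z|>q$, and integrating term by term against $d\mu_0$ gives
\[
g_0(z)=\log z-\sum_{k=1}^{\infty}\frac{m_k}{kz^k}=\log z+O(1/z),
\]
which is \eqref{ch4:eq:asympg0infty}.

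The case $j\geq 1$ is harder because the measures $\mu_j$ are supported on the unbounded sets $\Delta_j$, so a direct Taylor expansion of $\log(z-s)$ does not converge. The plan is to pass to derivatives and exploit the algebraic structure of the vector equilibrium problem of \cite{Ku}. Writing $\xi_j(z):=g_{j-1}'(z)-g_j'(z)$, the differentiated jump relations \eqref{ch4:g1jump}--\eqref{ch4:grjump2} combined with the differentiated variational conditions \eqref{ch4:varCon2} imply that $G_0=g_0'$ together with $\xi_1,\ldots,\xi_r$ are the $r+1$ sheets of a single algebraic function $\xi$ on a Riemann surface whose branching at infinity is of order $r$, with local uniformizer $w=z^{-1/r}$ and cyclic $\Omega$-action $w\mapsto\Omega w$ on the $r$ nontrivial sheets.

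The Puiseux expansion at infinity is pinned down by the physical sheet: $G_0(z)=\sum_{k\geq 0}m_kz^{-k-1}$ converges in $|z|>q$. Using the pushforward of $\mu_0$ under $s\mapsto s^{1/r}$, whose $k$-th integer moment equals the fractional moment $m_{k/r}$, together with the cyclic $\Omega$-symmetry permuting branches, the remaining branches read off as
\[
\xi_j(z)=\frac{1}{rz}+\frac{1}{rz}\sum_{k=1}^{r-1}k\,m_{k/r}\,\Omega^{\pm(-1)^j\lfloor j/2\rfloor k}z^{-k/r}+O(z^{-2}),\qquad \pm\operatorname{Im} z>0.
\]
Antidifferentiating yields \eqref{ch4:eq:asympgjinfty}: the term $1/(rz)$ integrates to $\tfrac{1}{r}\log z$, each $\tfrac{k}{r}z^{-k/r-1}$ integrates to $-z^{-k/r}$ with the $k/r$ factor exactly cancelling, and the contributions with $k\geq r$ fit in the $O(1/z)$ remainder. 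The integration constant vanishes because $g_{j-1}(z)-g_j(z)-\tfrac{1}{r}\log z\to 0$ as $|z|\to\infty$, by the defining integrals \eqref{ch4:eq:defgfunctions} together with the total mass identities \eqref{ch4:eq:totalMass}.

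The main technical obstacle will be pinning down the specific exponent $\pm(-1)^j\lfloor j/2\rfloor$. This is essentially a combinatorial bookkeeping problem: the alternating pattern of supports $\Delta_j$ in \eqref{ch4:eq:defDeltaj} (positive real axis for even $j>0$, negative for odd $j$) forces the cyclic ordering of the branches of $\xi$ around $\infty$ to alternate between the two halves of the real line, which, after tracking how far one has rotated, produces the factor $(-1)^j\lfloor j/2\rfloor$; and the $\pm$ distinguishing upper from lower half-planes matches the complex-conjugation symmetry $\Omega\leftrightarrow\Omega^{-1}$ of the Riemann surface. One verifies that $j\mapsto\pm(-1)^j\lfloor j/2\rfloor$ is a bijection of $\{1,\ldots,r\}$ onto $\mathbb{Z}/r\mathbb{Z}$, confirming that each branch is accounted for exactly once.
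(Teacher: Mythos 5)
The treatment of $g_0$ is identical to the paper's and fine. For the differences $g_{j-1}-g_j$, your approach diverges from the paper, and there is a genuine gap.

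You claim that $g_0'$ together with $\xi_j:=g_{j-1}'-g_j'$, $j=1,\dots,r$, are the $r+1$ sheets of a single \emph{algebraic} function on a Riemann surface, with a Puiseux expansion at infinity whose coefficients are the fractional moments $m_{k/r}$. The first part is not true for general $V$: the differentiated version of \eqref{ch4:varCon1} on $\Delta_0=[0,q]$ reads $F_{0,+}+F_{0,-}-F_1=V'$, which involves $V'$, so the sheet containing $g_0'=F_0$ does not join the others algebraically across $\Delta_0$ (that happens only for the linear field, which is the setting of Section~4.2). Among the sheets $\xi_1,\dots,\xi_r$ you do get a bona fide branched surface (via \eqref{ch4:varCon2} on the unbounded cuts $\Delta_1,\dots,\Delta_{r-1}$), with an order-$r$ branch point at infinity, so a Puiseux expansion in $z^{-1/r}$ exists. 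But the existence of the expansion pins down nothing about its coefficients.

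Your claim that the coefficients are exactly the $m_{k/r}$ is precisely where the real content lies, and it is asserted rather than derived. The mechanism by which $\mu_0$-moments enter the large-$z$ behaviour of $\mu_j$ for $j\geq 1$ is the explicit superposition formula \eqref{ch4:explicitMeasures} from \cite{Ku}, $d\mu_j(s)=\int_0^q d\nu^a_j(s)\,d\mu_{V,1/r}^*(a)$, which expresses each $\mu_j$ as an integral over $\mu_0$ of balayage-type measures $\nu^a_j$ on the auxiliary $r$-sheeted surface. The paper's proof is essentially just this formula together with the Stieltjes-transform identity \eqref{ch4:eq:StieltjesIdj} and the elementary expansion of the explicit $\Psi^a_j$ in \eqref{ch4:eq:defPsijExplicit}. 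Your appeal to ``the pushforward of $\mu_0$ under $s\mapsto s^{1/r}$'' and ``cyclic $\Omega$-symmetry'' gestures at the right answer but does not substitute for \eqref{ch4:explicitMeasures}; without it, there is no reason the moments of $\mu_j$ (an unboundedly supported measure determined only implicitly by the equilibrium problem) should reduce to moments of $\mu_0$. To repair the proof you must cite and use \eqref{ch4:explicitMeasures}, at which point the Riemann-surface language becomes superfluous and the argument collapses to the paper's.

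One incidental observation: you reproduce the coefficient $m_{k/r}$ as in the proposition's statement, whereas the paper's own proof, after expanding $\log\bigl(z^{1/r}-\Omega^{m}a^{1/r}\bigr)$, yields a $\tfrac{1}{k}m_{k/r}$. This looks like a typo in the paper (the statement agrees with \eqref{ch4:eq:zetajAsympInfty}, the proof does not), but it does not affect the assessment above.
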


\begin{proof}
For the asymptotics of $g_0$ we can use the compactness of the support of $d\mu_{V,\theta}^*$ to immediately conclude that
\[
g_0(z) = \int_{0}^q \left(\log(z) + \mathcal O(1/z))\right) d\mu_{V,\theta}^*(s) = \log(z) + \mathcal O(1/z),
\]
as $z\to\infty$. For the asymptotics of $g_j$ with $j=1,\ldots,r-1$ we will have to look at the specific construction of the measures as presented in \cite{Ku}. For a fixed $a>0$ one considers the rational function
\begin{align*} 
\Psi^a(w) = \frac{1}{r \Omega^{r-1} (w-a^\frac{1}{r})}, \quad\quad z=w^r,
\end{align*}
on an $r$-sheeted Riemann surface, that has cuts $\Delta_j$ (as defined in \eqref{ch4:eq:defDeltaj}, but excluding $\Delta_0$), which connect the $j$-th sheet to the $(j+1)$-st sheet, for $j=1,\ldots,r-1$. This is done in such a way that $w=z^\frac{1}{r}$ is taken with the principle branch on the first sheet. This uniquely determines the branches that we should take on the other sheets. Explicitly, we have for $j=1,\ldots,r$
\begin{align} \label{ch4:eq:defPsijExplicit}
\Psi^a_{j}(z) = \left\{\begin{array}{ll}
\displaystyle\frac{1}{r z^{1-\frac{1}{r}}(z^\frac{1}{r}- \Omega^{(-1)^{j}\lfloor \frac{j}{2}\rfloor} a^\frac{1}{r})}, & \operatorname{Im}(z)>0,\\
\displaystyle\frac{1}{r z^{1-\frac{1}{r}}(z^\frac{1}{r}- \Omega^{(-1)^{j-1}\lfloor \frac{j}{2}\rfloor} a^\frac{1}{r})}, & \operatorname{Im}(z)<0.
\end{array} \right.
\end{align}
%hier zou ik nog een plaatje van bijv n=4 bij kunnen doen ter verduidelijking. 
We remind the reader that $z^{1-\frac{1}{r}}$ and $z^\frac{1}{r}$ are taken with the principle branch, as usual. Now, following \cite{Ku}, we construct some auxiliary measures $\nu^a_1, \ldots, \nu^a_{r-1}$ out of these, namely 
\begin{align} \label{ch4:eq:defAuxMeas}
d\nu^a_{j}(s) = \frac{\Psi^a_{j+}(s)-\Psi^a_{j-}(s)}{2\pi i} ds, \quad\quad s\in\Delta_j. 
\end{align}
It is a known fact from \cite{Ku} that the $d\nu^a_{j}$ are bonafide positive measures. 

By formula (3.7) of \cite{Ku} and the first formula in the proof of Proposition 3.2 of \cite{Ku} we then have for $j=1,\ldots,r-1$ that 
\begin{align} \label{ch4:explicitMeasures}
d\mu_j(s) =\int_0^q d\nu^a_j(s) d\mu_{V,\frac{1}{r}}^*(a).
\end{align}
We remind the reader that $(\mu_0,\mu_1,\ldots,\mu_{r})$ solves our vector equilibrium problem mentioned in the beginning of this section. We denote the Stieltjes transforms of the auxiliary measures by
\begin{align*}
F^a_j(z) = \int_{\Delta_j} \frac{d\nu^a_{j}(s)}{z-s}, \quad\quad j=1,\ldots,r-1.
\end{align*}
Then according to \cite{Ku} we have
\begin{align*}
F^a_1(z) &= \frac{1}{z-a} - \Psi^a_{1}(z)\\
F^a_{r-1}(z) &= \Psi^a_{r}(z)
\end{align*}
and, in particular, for $j=2,\ldots,r-1$
\begin{align} \label{ch4:eq:StieltjesIdj}
F^a_{j-1}(z) - F^a_j(z) = \Psi^a_j(z).
\end{align}
Using \eqref{ch4:explicitMeasures} and \eqref{ch4:eq:StieltjesIdj}, we see that for $j=2,\ldots,r-1$
\begin{align*}
\int_{\Delta_{j-1}} \frac{d\mu_{j-1}(s)}{z-s}  - \int_{\Delta_{j}} \frac{d\mu_{j}(s)}{z-s} 
&= \int_0^q \left(F^a_{j-1}(z) - F^a_{j}(z)\right) d\mu_{V,\frac{1}{r}}^*(a)
= \int_0^q \Psi^a_{j}(z) d\mu_{V,\frac{1}{r}}^*(a).
\end{align*}
Integrating this with respect to $z$, using the explicit formula in \eqref{ch4:eq:defPsijExplicit}, then yields for $\pm\operatorname{Im}(z)>0$
\begin{align*}
g_{j-1}(z) - g_{j}(z) &= \int_0^q \log(z^\frac{1}{r}- \Omega^{\pm (-1)^{j}\lfloor \frac{j}{2}\rfloor} a^\frac{1}{r}))
 d\mu_{V,\theta}^*(a)\\
 &= \frac{1}{r}\log(z) - \sum_{k=1}^\infty \frac{1}{k} \Omega^{\pm (-1)^{j}\lfloor \frac{j}{2}\rfloor k} z^{-\frac{k}{r}} \int_0^q a^\frac{k}{r} d\mu_{V,\theta}^*(a)\\
 &= \frac{1}{r}\log(z) - \sum_{k=1}^{r-1} \frac{1}{k} \Omega^{\pm (-1)^{j}\lfloor \frac{j}{2}\rfloor k} z^{-\frac{k}{r}} m_{\frac{k}{r}} + \mathcal O\left(\frac{1}{z}\right)
\end{align*}
as $z\to\infty$. A similar reasoning will prove the case for $j=r$.
%HIER NOG FIGUUR RIEMANN SURFACE BIJ DOEN?
\end{proof}

\begin{definition} \label{ch4:def:Cn}
We define the $r\times r$ upper-triangular matrix
\begin{align} \label{ch4:eq:defCn}
C_n = 
\begin{pmatrix}
1 & a_1 & a_2 & a_3 & \cdots & a_{r-1}\\
0 & 1 & a_1 & a_2 & \cdots & a_{r-2}\\
0 & 0 & 1 & a_1 & \cdots & a_{r-3}\\
\vdots & & & \ddots & \ddots\\
0 & 0 & 0 & 0 & 1 & a_1\\
0 & 0 & 0 & 0 & \cdots & 1
\end{pmatrix},
\end{align}
where, with $m_\frac{1}{r}, m_\frac{2}{r}, \ldots, m_\frac{r-1}{r}$ as in \eqref{ch4:eq:defma}, we take
\begin{align}
a_j = \sum_{l=1}^j  \frac{(-n)^l}{l!} \sum_{k_1+\ldots+k_l=j} m_\frac{k_1}{r} \cdots m_\frac{k_l}{r}.
\end{align}
\end{definition}

\begin{lemma} \label{ch4:prop:Cn}
With $C_n$ as in Definition \ref{ch4:def:Cn} we have
\begin{align} \label{ch4:eq:prop:Cn}
z^{-\frac{n}{r}} \left(\bigoplus_{j=1}^r z^{-\frac{j-1}{r}}\right) 
U^\pm D^\pm
\bigoplus_{j=1}^{r} e^{n(g_{j-1}(z)-g_j(z))}
= \left(C_n
+\mathcal O(1/z)\right)
\left(\bigoplus_{j=1}^r z^{-\frac{j-1}{r}}\right)
U^\pm D^\pm
\end{align}
as $z\to\infty$, for $\pm \operatorname{Im}(z)>0$.
\end{lemma}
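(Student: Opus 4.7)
The plan is to substitute the asymptotic expansion of $g_{j-1}-g_j$ from Proposition \ref{ch4:asympgfunctions} into the product on the left-hand side of \eqref{ch4:eq:prop:Cn}, expand the exponentials as formal power series in $z^{-1/r}$ truncated at order $z^{-1}$, and then use an algebraic identity for $U^\pm$ that turns multiplication by $(\omega_j^\pm)^K$ on the right into a row-shift on the left, which reconstitutes the matrix $C_n$. Throughout, write $\omega_j^\pm=\Omega^{\pm(-1)^j\lfloor j/2\rfloor}$, so that the Laurent coefficients in \eqref{ch4:eq:asympgjinfty} take the compact form $m_{k/r}(\omega_j^\pm)^k$.

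The key structural observation, which I would establish first, is that the columns of $U^\pm$ are geometric progressions: inspecting \eqref{ch4:eq:U+-evenandodd} case-by-case, for odd $j'=2j-1$ one has $(-1)^{j'}\lfloor j'/2\rfloor=-(j'-1)/2$ and for even $j'=2j$ one has $(-1)^{j'}\lfloor j'/2\rfloor=j'/2$, so in both cases
\begin{equation*}
U^\pm_{i,j}=\tfrac{1}{\sqrt r}(\omega_j^\pm)^{i-1},\qquad i,j=1,\ldots,r.
\end{equation*}
The immediate corollary is the row-shift identity $(\omega_j^\pm)^K(U^\pm D^\pm)_{ij}=(U^\pm D^\pm)_{i+K,j}$, valid whenever $i+K\leq r$.

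Next, I would use Proposition \ref{ch4:asympgfunctions} to write
\begin{equation*}
e^{n(g_{j-1}(z)-g_j(z))}=z^{n/r}\exp\!\Bigl(-n\sum_{k=1}^{r-1}m_{k/r}(\omega_j^\pm)^k z^{-k/r}\Bigr)\bigl(1+\mathcal{O}(z^{-1})\bigr),
\end{equation*}
so the factor $z^{n/r}$ cancels the $z^{-n/r}$ prefactor in \eqref{ch4:eq:prop:Cn}. Expanding the exponential as a multinomial series and collecting according to the total exponent $K=k_1+\cdots+k_l$, the coefficient of $z^{-K/r}$ is exactly $(\omega_j^\pm)^K$ times the quantity $a_K$ of Definition \ref{ch4:def:Cn}; all contributions with $K\geq r$ are absorbed into $\mathcal{O}(z^{-1})$, giving
\begin{equation*}
z^{-n/r}e^{n(g_{j-1}-g_j)}=\sum_{K=0}^{r-1}a_K(\omega_j^\pm)^K z^{-K/r}+\mathcal{O}(z^{-1}).
\end{equation*}

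For the final assembly I would compute the $(i,j)$ entry of the left-hand side of \eqref{ch4:eq:prop:Cn}, which equals $z^{-(i-1)/r}(U^\pm D^\pm)_{ij}\sum_{K=0}^{r-1}a_K(\omega_j^\pm)^K z^{-K/r}+\mathcal{O}(z^{-1})$. Applying the row-shift identity — and noting that terms with $i+K>r$ carry a factor $z^{-(i-1+K)/r}$ with exponent $\geq 1$, hence fall into the error — collapses this to $\sum_{K=0}^{r-i}a_K z^{-(i-1+K)/r}(U^\pm D^\pm)_{i+K,j}+\mathcal{O}(z^{-1})$. Re-indexing $k=i+K$ and invoking $(C_n)_{ik}=a_{k-i}$ for $k\geq i$ (and $0$ otherwise) identifies this with $\bigl(C_n\,\bigl(\bigoplus_k z^{-(k-1)/r}\bigr)\,U^\pm D^\pm\bigr)_{ij}$, which is the $(i,j)$ entry of the right-hand side. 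The only real obstacle is the combinatorial verification that my collected coefficients reproduce the exact formula in Definition \ref{ch4:def:Cn}; the rest is the shift identity plus careful truncation bookkeeping.
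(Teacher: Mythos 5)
Your proposal is correct and follows essentially the same route as the paper's proof. Your row-shift identity $(\omega_j^\pm)^K U^\pm_{ij}=U^\pm_{i+K,j}$ is exactly the transposed, entry-wise form of the paper's conjugation identity \eqref{ch4:eq:U+-omegaU} stating that $U^\pm$ diagonalises the cyclic permutation matrix, and the remaining steps (expanding $z^{-n/r}e^{n(g_{j-1}-g_j)}$ in powers of $z^{-1/r}$ to produce the $a_K(\omega_j^\pm)^K$ coefficients, then shifting rows / conjugating to reassemble the Toeplitz matrix $C_n$ with wrap-around terms falling into $\mathcal O(1/z)$) agree with the paper, only with you working entry-by-entry where the paper works directly with the conjugated matrix product.
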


\begin{proof}
So let $\pm\operatorname{Im}(z)>0$. Notice that we may omit the $D^\pm$ factors in what follows, due to their diagonal form. It follows from \eqref{ch4:eq:asympgjinfty}, and some straightforward algebra, that
\begin{align}
z^{-\frac{n}{r}} e^{n(g_1(z)-g_2(z))} = a_0 + a_1 z^{-\frac{1}{r}} + \ldots + a_{r-1} z^{-\frac{r-1}{r}}+ \mathcal O\left(\frac{1}{z}\right)
\end{align}
as $z\to\infty$, where $a_0=1$ and $a_1,\ldots,a_{r-1}$ are as in Definition \ref{ch4:def:Cn}. The components of $G$ further down the diagonal have a similar expansion but with (effectively) the properly chosen branch of the $z^{-\frac{1}{r}}$ term, namely
\begin{align}\label{ch4:eq:Gdiagm}
z^{-\frac{n}{r}} 
\bigoplus_{j=1}^{r} e^{n(g_{j-1}(z)-g_j(z))} 
= \sum_{m=0}^{r-1} a_m z^{-\frac{m}{r}}
\left(\bigoplus_{j=1}^r \Omega^{\pm (-1)^{j} \lfloor \frac{j}{2}\rfloor} \right)^m
+\mathcal O\left(\frac{1}{z}\right)
\end{align}
as $z\to\infty$. Again, this follows from \eqref{ch4:eq:asympgjinfty}. Let's look at $m=1$ first. A simple calculation shows that
\begin{align} \label{ch4:eq:U+-omegaU}
U^\pm \left(\bigoplus_{j=1}^r \Omega^{\pm (-1)^{j} \lfloor \frac{j}{2}\rfloor} \right) (U^\pm)^{-1}
= \begin{pmatrix}
0 & 1 & 0 & \hdots & 0\\
0 & 0 & 1 & \hdots & 0\\
\vdots & & & \ddots & \vdots\\
0 & 0 & 0 &  \hdots & 1\\
1 & 0 & 0 &  \hdots & 0
\end{pmatrix}.
\end{align}
For the other powers $m$ in \eqref{ch4:eq:Gdiagm} we would have to take a power of this cyclic permutation matrix. 
Now from \eqref{ch4:eq:U+-omegaU} we arrive at
\begin{multline*}
z^{-\frac{1}{r}}
\left(\bigoplus_{j=1}^r z^{-\frac{j-1}{r}}\right)
U^\pm \left(\bigoplus_{j=1}^r \Omega^{\pm (-1)^{j} \lfloor \frac{j}{2}\rfloor} \right) (U^\pm)^{-1}
\left(\bigoplus_{j=1}^r z^{-\frac{j-1}{r}}\right)^{-1}\\
= \begin{pmatrix}
0 & 1 & 0 & \hdots & 0\\
0 & 0 & 1 & \hdots & 0\\
\vdots & & & \ddots & \vdots\\
0 & 0 & 0 &  \hdots & 1\\
z^{-1} & 0 & 0 &  \hdots & 0
\end{pmatrix}
= \begin{pmatrix}
0 & 1 & 0 & \hdots & 0\\
0 & 0 & 1 & \hdots & 0\\
\vdots & & & \ddots & \vdots\\
0 & 0 & 0 &  \hdots & 1\\
0 & 0 & 0 &  \hdots & 0
\end{pmatrix}
+\mathcal O\left(\frac{1}{z}\right)
\end{multline*}
as $z\to\infty$. An analogous argument works for the other powers $m$ in \eqref{ch4:eq:Gdiagm} and we obtain that
\begin{multline*}
z^{-\frac{n}{r}}   \left(\bigoplus_{j=1}^r z^{-\frac{j-1}{r}}\right) U^\pm
\bigoplus_{j=1}^{r} e^{n(g_{j-1}(z)-g_j(z))}
(U^\pm)^{-1} \left(\bigoplus_{j=1}^r z^{-\frac{j-1}{r}}\right)^{-1}\\ 
=  \begin{pmatrix}
1 & a_1 & a_2 & a_3 & \cdots & a_{r-1}\\
0 & 1 & a_1 & a_2 & \cdots & a_{r-2}\\
0 & 0 & 1 & a_1 & \cdots & a_{r-3}\\
\vdots & & & \ddots & \ddots\\
0 & 0 & 0 & 0 & 1 & a_1\\
0 & 0 & 0 & 0 & \cdots & 1
\end{pmatrix}
+\mathcal O\left(\frac{1}{z}\right)
\end{multline*}
as $z\to\infty$ and, after rearranging the factors and reinserting the factors $D^\pm$, we obtain \eqref{ch4:eq:prop:Cn} with $C_n$ as in \eqref{ch4:eq:defCn} from Definition \ref{ch4:def:Cn}. 
\end{proof}

We will also need the asymptotics of the $g$-functions as $z\to 0$. 
%HIER VEEG IK HET GEDRAG ROND Q ONDER DE MAT, DAAR EERSTE G FUNCTIE OOK BEGRENSD. 
\begin{proposition} \label{ch4:prop:gfunctionsbounded0}
The $g$-functions are bounded near the origin.
\end{proposition}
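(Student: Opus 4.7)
The plan is to handle $g_0$ and the $g_j$ with $1 \leq j \leq r-1$ separately, and to note that $g_r = 0$ requires nothing. In each remaining case the goal is to show that $\log|z-s|$ is integrable against the relevant density uniformly for $z$ in a neighborhood of $0$.

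For $g_0$, the one-cut $\theta$-regularity hypothesis \eqref{ch4:eq:onecutthetabehav} together with $\theta = 1/r$ (so $1/(\theta+1) = r/(r+1)$) gives $d\mu_0/ds = O(s^{-r/(r+1)})$ as $s \downarrow 0$, and the support $[0,q]$ is compact. Since $r/(r+1) < 1$, the function $|s|^{-\gamma}\,|\log|z-s||$ is locally integrable for any $\gamma < 1$ uniformly in $z$ near $0$, so $\int_0^q \log(z-s)\,d\mu_0(s)$ is uniformly bounded there.

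For $g_j$ with $1 \leq j \leq r-1$ I would exploit the integral representation \eqref{ch4:explicitMeasures} of $\mu_j$ in terms of the auxiliary measures $\nu^a_j$ and the probability measure $\mu_{V,\theta}^*$. A direct computation of the jump in \eqref{ch4:eq:defAuxMeas} from the explicit formula \eqref{ch4:eq:defPsijExplicit}, combining both the intrinsic branch cuts of $z^{1/r}$ and $z^{1-1/r}$ along $\Delta_j$ and the different formulas on the upper and lower half planes, shows that $d\nu^a_j/ds$ grows like $|s|^{-(r-1)/r}$ as $s \to 0$ and decays like $|s|^{-(r+1)/r}$ as $|s| \to \infty$, with constants that are bounded uniformly for $a$ in compact subsets of $(0,q]$. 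Both rates render $\log|z-s|$ integrable against $d\nu^a_j$ locally near $0$ and globally on $\Delta_j$. Integrating over $a$ by Fubini, using that $\mu_{V,\theta}^*$ is a probability measure on $[0,q]$ whose own singularity at $0$ is no worse than $a^{-r/(r+1)}$, transfers these bounds to $d\mu_j$ and yields the desired boundedness of $g_j$ near the origin.

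The main obstacle is the density analysis for $\nu^a_j$: one has to carefully combine the principal branches of the fractional powers in \eqref{ch4:eq:defPsijExplicit} on both sides of $\Delta_j$ to extract the leading behavior of $\Psi^a_{j,+} - \Psi^a_{j,-}$ at both endpoints, while keeping the constants uniform in $a$. Once these asymptotics are in place, the conclusion is a routine application of dominated convergence and the elementary integrability of $|s|^{-\gamma}\,|\log|s||$ for $\gamma < 1$.
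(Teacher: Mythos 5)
Your overall strategy — handle $g_0$ directly via the one-cut regularity and handle the intermediate $g_j$ via the representation $\mu_j = \int \nu^a_j\, d\mu^*_{V,\theta}(a)$ — is the same as the paper's. The treatment of $g_0$ is fine. The gap is in the $\nu^a_j$ step, and it is a genuine one.

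The density $d\nu^a_j/ds$ is, up to constants, $\displaystyle \frac{a^{1/r}}{|s|^{1-1/r}\,\big(|s|^{1/r}+a^{1/r}\big)^2}$. Your claim that it grows like $|s|^{-(r-1)/r}$ near $s=0$ is true for a \emph{fixed} $a$, but the implied constant is of order $a^{-1/r}$. You acknowledge this implicitly by restricting to "compact subsets of $(0,q]$," but the integral over $a$ against $d\mu^*_{V,\theta}(a)\sim a^{-r/(r+1)}da$ does reach $a=0$, and the naive bound
\[
\frac{d\mu_j}{ds}(s)\;\lesssim\;|s|^{-(r-1)/r}\int_0^q a^{-1/r}\,a^{-r/(r+1)}\,da
\]
diverges because $\tfrac1r+\tfrac{r}{r+1}>1$ for every $r\ge 1$. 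Worse, the rate $|s|^{-(r-1)/r}$ is not even the pointwise worst case: at $a\asymp s$ the density is of order $1/|s|$, which is not integrable. The correct bound $d\mu_j/ds = \mathcal O(s^{-r/(r+1)})$ emerges only after one exploits the scaling structure of $\nu^a_j$ in the Fubini step, for instance by the substitution $a\mapsto s\,a'$ (equivalently $a=s u^r$); this is exactly what the paper does in \eqref{ch4:eq:intintint2}–\eqref{ch4:eq:Psiestimate}. Without that rescaling (or a comparable argument that tracks the $a$-dependence through the log-integral, e.g.\ showing $\int |\log|z-s||\,d\nu^a_j(s)\lesssim 1+|\log a|$ uniformly and then integrating against $a^{-r/(r+1)}$), the passage from the $\nu^a_j$ estimates to boundedness of $g_j$ does not go through.
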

%Of zou ik dit toch liever over de densities willen laten gaan, ipv de g-functies?
\begin{proof}
We claim that for all $j=0,1,\ldots,r$
\begin{align} \label{ch4:eq:behavmujs}
\frac{d\mu_j(s)}{ds} &= \mathcal O\left(s^{-\frac{r}{r+1}}\right), & s\to 0. 
\end{align}
For $j=0$ this is nothing else than what the one-cut $\frac{1}{r}$-regularity of $V$ prescribes. So let us consider $j=1,\ldots,r-1$.  From the previous proof we recall (see \eqref{ch4:eq:defPsijExplicit}, \eqref{ch4:eq:defAuxMeas} and \eqref{ch4:explicitMeasures}) that
\begin{align} \label{ch4:eq:intintint}
\frac{d\mu_j(s)}{ds} &= \int_0^q \frac{\Psi_{j+}^a(s)-\Psi_{j-}^a(s)}{2\pi i} d\mu_{V,\frac{1}{r}}^*(a).
\end{align}
It follows from \eqref{ch4:eq:behavmujs} for $j=0$ that there exists a $0<\delta<\min(1,q)$ and a $c>0$ such that for $s\in(0,\delta]$
\begin{align} \label{ch4:eq:dmuVbehav0}
\frac{d\mu_{V,\frac{1}{r}}^*(s)}{ds} \leq c s^{-\frac{r}{r+1}}.  
\end{align}
Now we rewrite \eqref{ch4:eq:intintint} as 
\begin{align} \label{ch4:eq:intintint2}
\frac{d\mu_j(s)}{ds}= \int_0^{\delta/s} \frac{\Psi_{j+}^{s a}(s)-\Psi_{j-}^{s a}(s)}{2\pi i} d\mu_{V,\frac{1}{r}}^*(s a)
+ \int_\delta^q \frac{\Psi_{j+}^a(s)-\Psi_{j-}^a(s)}{2\pi i} d\mu_{V,\frac{1}{r}}^*(a).
\end{align}
By some straightforward algebra we find for $a\in (0,\delta]$ and $s>0$ the estimate
\begin{align} \label{ch4:eq:Psiestimate}
\left|\frac{\Psi_{j+}^{s a}(s)-\Psi_{j-}^{s a}(s)}{2\pi i}\right| &= \frac{|\operatorname{Im}(\Omega^{(-1)^{j}\lfloor \frac{j}{2}\rfloor})| a^\frac{1}{r}}{\pi s} 
\left|1-\Omega^{(-1)^{j}\lfloor \frac{j}{2}\rfloor} a^\frac{1}{r}\right|^{-2} 
\leq \frac{a^{\frac{1}{r}}}{(1-\delta^\frac{1}{r})^2 \pi s}.
\end{align}
Then it follows from \eqref{ch4:eq:dmuVbehav0} and \eqref{ch4:eq:Psiestimate} that
\begin{align*}
\left|\int_0^{\delta/s} \frac{\Psi_{j+}^{s a}(s)-\Psi_{j-}^{s a}(s)}{2\pi i} d\mu_{V,\frac{1}{r}}^*(s a)\right|
&\leq \int_0^{\delta/s} \frac{a^{\frac{1}{r}}}{(1-\delta^\frac{1}{r})^2 \pi s} c s^{-\frac{r}{r+1}} a^{-\frac{r}{r+1}} s da\\
&= \frac{r^2+r}{2r+1}\frac{\delta^{\frac{1}{r}+\frac{1}{r+1}} c}{(1-\delta^\frac{1}{r})^2 \pi } s^{\frac{1}{r}+\frac{1}{r+1}-\frac{r}{r+1}}.
\end{align*}
Using the one-cut $\frac{1}{r}$-regularity of $V$, but now for the behavior around $q$, we can show that the remaining integral in the right-hand side of \eqref{ch4:eq:intintint2} is bounded. We conclude that
\begin{align} \label{ch4:eq:behavmujs2}
\frac{d\mu_j(s)}{ds} = \mathcal O\left(s^{\frac{1}{r}+\frac{1}{r+1}-\frac{r}{r+1}}\right) + \mathcal O(1)
\end{align}
as $s\to 0$, and this is even better than \eqref{ch4:eq:behavmujs}. Notice that we cannot ignore the $\mathcal O(1)$ term in the case $r=2$. The claim is proved. Plugging \eqref{ch4:eq:behavmujs} in the definition \eqref{ch4:eq:defgfunctions} of the $g$-functions, we find with standard arguments that the $g$-functions are bounded near the origin.
\end{proof}

\subsection{Normalization $X\mapsto T$}

For the next two transformations of our RHP it will turn out to be convenient to define the following function $\varphi$. 
\begin{align} \label{ch4:eq:defvarphi}
\varphi(z) = -g_0(z)+\frac{1}{2} g_{1}(z) + \frac{1}{2}\left(V(z)+\ell\right).
\end{align}
Due to our assumption that $V$ is real analytic on $[0,\infty)$ we know that there exists an open neighborhood $O_V$ of $[0,\infty)$ to which $V$ has an analytic continuation. Thus $\varphi$ is analytic on $O_V\setminus (-\infty,q]$. By \eqref{ch4:g1g2jump} we have for $x>q$ that
\begin{align} \label{ch4:eq:varphiqinfty}
\varphi(x) = -\frac{1}{2}\left(g_{0,+}(x)+g_{0,-}(x)-g_{1,+}(x)- V(x)-\ell\right) >0
\end{align}
and also by \eqref{ch4:g1g2jump} we have for $x\in(0,q)$ that
\begin{align} \label{ch4:eq:varphi0q}
\varphi_\pm(x) = \mp \pi i \int_x^q d\mu_0(s). 
\end{align}

\begin{definition} \label{ch4:def:T}
With $G$ as in \eqref{ch4:defG} and $C_n$ as in Definition \ref{ch4:def:Cn}, we define
\begin{align}
\label{ch4:defT}
T(z) = L^{-1} (1\oplus C_n^{-1}) X(z) G(z) L,
\end{align}
where
%MOET IK NIET AANGEVEN WAAR DIE $\ell$ VANDAAN KOMT?
\begin{align} \label{ch4:eq:defL}
L = \operatorname{diag}(e^{n \frac{r\ell}{r+1}}, e^{-n \frac{\ell}{r+1}},\ldots, e^{-n \frac{\ell}{r+1}}).
\end{align}
\end{definition}

Then $T$ satisfies the following RHP. 

\begin{rhproblem} \label{ch4:RHPforT} \
\begin{description}
\item[RH-T1] $T$ is analytic on $\mathbb C\setminus \mathbb R$.
\item[RH-T2] $T$ has boundary values for $x\in (0,-\infty)\cup (0,q) \cup (q,\infty)$.
\begin{align} \nonumber
T_+(x) &= T_-(x)\\ \nonumber
&\hspace{-1.7cm}\times \left\{\begin{array}{ll} 
\begin{pmatrix}
e^{2n\varphi_+(x)} & x^{\beta}\\
0 & e^{2n\varphi_-(x)}
\end{pmatrix} \oplus
\bigoplus_{j=1}^{\frac{r}{2}-1} 
\begin{pmatrix}
0 & 1\\
-1 & 0
\end{pmatrix}
\oplus 1, & r \equiv 0 \mod 2,\\
\begin{pmatrix}
e^{2n\varphi_+(x)} & x^{\beta}\\
0 & e^{2n\varphi_-(x)}
\end{pmatrix} \oplus
\bigoplus_{j=1}^{\frac{r-1}{2}} 
\begin{pmatrix}
0 & 1\\
-1 & 0
\end{pmatrix}
, & r \equiv 1 \mod 2,
\end{array}
\right. \\
&\hspace{6cm} x\in(0,q),\\ \nonumber
T_+(x) &= T_-(x)\\
&\hspace{-1cm}\times \left\{\begin{array}{ll} 
\begin{pmatrix}
1 & x^{\beta} e^{-2n\varphi(x)}\\
0 & 1
\end{pmatrix} \oplus
\bigoplus_{j=1}^{\frac{r}{2}-1} 
\begin{pmatrix}
0 & 1\\
-1 & 0
\end{pmatrix}
\oplus 1, & r \equiv 0 \mod 2,\\
\begin{pmatrix}
1 & x^{\beta} e^{-2n\varphi(x)}\\
0 & 1
\end{pmatrix} \oplus
\bigoplus_{j=1}^{\frac{r-1}{2}} 
\begin{pmatrix}
0 & 1\\
-1 & 0
\end{pmatrix}
, & r \equiv 1 \mod 2,
\end{array}
\right. \\ \nonumber
&\hspace{6cm} x>q,\\ \nonumber
T_+(x) &= T_-(x) \hspace{0.1cm}\times\hspace{0.1cm}
\left\{\begin{array}{ll}
1 \oplus
\bigoplus_{j=1}^{\frac{r}{2}} 
\begin{pmatrix}
0 & 1\\
-1 & 0
\end{pmatrix}, & r \equiv 0 \mod 2,\\
1\oplus
\bigoplus_{j=1}^{\frac{r-1}{2}} 
\begin{pmatrix}
0 & 1\\
-1 & 0
\end{pmatrix}
\oplus 1, & r \equiv 1 \mod 2,\\
\end{array}
\right. \\
&\hspace{6cm} x<0.
\end{align}
\item[RH-T3] As $|z|\to\infty$
\begin{align} \label{ch4:RHT3}
T(z) = \left(\mathbb{I}+\mathcal{O}\left(\frac{1}{z}\right)\right) 
\left(1 \oplus z^{\frac{r-1}{2 r}} 
\bigoplus_{j=1}^r z^{-\frac{j-1}{r}}\right)
\times\left\{\begin{array}{ll}
1 \oplus U^+ D^+
, & \operatorname{Im}(z) >0,\\
1 \oplus U^- D^-
, & \operatorname{Im}(z) <0.
\end{array}
\right.
\end{align}
\item[RH-T4] As $z\to 0$
\begin{align}
\label{ch4:RHT4}
T(z) = \mathcal{O}\begin{pmatrix} 1 & z^{-\frac{r-1}{2r}} h_{\alpha+\frac{r-1}{r}}(z) & \ldots & z^{-\frac{r-1}{2r}} h_{\alpha+\frac{r-1}{r}}(z)\\ 
\vdots & & & \vdots\\
1 & z^{-\frac{r-1}{2r}} h_{\alpha+\frac{r-1}{r}}(z) & \ldots & z^{-\frac{r-1}{2r}} h_{\alpha+\frac{r-1}{r}}(z)\end{pmatrix}.
\end{align}
\end{description}
\end{rhproblem}

\begin{proof}
We prove RH-T2 for $r\equiv 1\mod 2$. %For convenience, we will set $g_r=0$ in what follows. 
For $x>0$ we see, using RH-X2, that
\begin{multline*}
T_-(x)^{-1} T_+(x)\\
= L^{-1} 
\bigoplus_{j=0}^r e^{n (g_{j,-}(x)-g_{j-1,-}(x))}
\left(\begin{pmatrix}
1 & w_{\beta}(x)\\
0 & 1
\end{pmatrix} \oplus
\bigoplus_{j=1}^{\frac{r-1}{2}} 
\begin{pmatrix}
0 & 1\\
-1 & 0
\end{pmatrix}\right)\\
\bigoplus_{j=0}^r e^{n (g_{j-1,+}(x)-g_{j,+}(x))} L\\
=
\begin{pmatrix}
e^{n(g_{0,-}(x)-g_{0,+}(x))} & x^\beta e^{n (-V(x) + g_{0,-}(x) + g_{0,+}(x) - g_{1,+}(x) - \ell)}\\
0 & e^{n(-g_{0,-}(x) + g_{0,+}(x)+g_{1,-}(x)-g_{1,+}(x))}
\end{pmatrix}\\ 
\oplus
\bigoplus_{j=1}^{\frac{r-1}{2}} 
\begin{pmatrix}
0\hspace{1.7cm} e^{n(g_{2j,-}(x) - g_{2j-1,-}(x) + g_{2j,+}(x) - g_{2j+1,+}(x))}\\
-e^{n(g_{2j+1,-}(x)-g_{2j,-}(x)+g_{2j-1,+}(x)-g_{2j,+}(x))}  \hspace{1.7cm}0
\end{pmatrix}
\end{multline*}
Now RH-T2, both for $x\in(0,q)$ and for $x>q$, is a consequence of \eqref{ch4:eq:varphi0q}, \eqref{ch4:eq:defvarphi}, \eqref{ch4:g0jump}, \eqref{ch4:g0jump2}, \eqref{ch4:g1jump}, \eqref{ch4:g1g2jumpb} and the complex conjugated version of the latter. 

Let us now look at the jump for $x<0$. Similarly as before, we have
\begin{multline*}
T_-(x)^{-1} T_+(x)
= e^{n(g_{0,-}(x)-g_{0,+}(x))}\\
\oplus
\bigoplus_{j=1}^{\frac{r-1}{2}} 
\begin{pmatrix}
0 \hspace{1.7cm} e^{n(g_{2j-1,-}(x) - g_{2j-2,-}(x) + g_{2j-1,+}(x) - g_{2j,+}(x))}\\
-e^{n(g_{2j,-}(x)-g_{2j-1,-}(x)+g_{2j-2,+}(x)-g_{2j-1,+}(x))} \hspace{1.7cm} 0
\end{pmatrix}\\
\oplus e^{n(g_{r-1,-}(x)-g_{r-1,+}(x))}
\end{multline*}
Now RH-T2 follows from \eqref{ch4:g0jump2}, \eqref{ch4:grjump2}, \eqref{ch4:g1g2jumpb} and the complex conjugated version of the latter. Here we are using that $n$ is divisible by $r$, and thus $e^{-\frac{2\pi i}{r} n}=1$. The case where $r\equiv 0\mod 2$ is analogous. 

Let us now prove RH-T3. Using \eqref{ch4:eq:asympg0infty}, Lemma \ref{ch4:prop:Cn} and RH-X3 we have as $z\to\infty$ that
\begin{align*}
T(z) &= L^{-1} \left(1\oplus C_n^{-1}\right) \left(\mathbb I + \mathcal O\left(\frac{1}{z}\right)\right) \\
&\hspace{1cm}\left((e^{-n\frac{r\ell}{r+1}}+\mathcal O(1/z))\oplus (C_n+\mathcal O(1/z)) z^\frac{r-1}{2r} 
\left(\bigoplus_{j=1}^r z^{-\frac{j-1}{r}}\right) U^\pm D^\pm e^{n\frac{\ell}{r+1}}\right)\\
&= L^{-1} \left(1\oplus C_n^{-1}\right) L \left(\mathbb I + \mathcal O\left(\frac{1}{z}\right)\right) 
\left(1\oplus C_n z^\frac{r-1}{2r} \left(\bigoplus_{j=1}^r z^{-\frac{j-1}{r}}\right) U^\pm D^\pm\right)\\
&=\left(\mathbb I + \mathcal O\left(\frac{1}{z}\right)\right)  \left(1\oplus C_n^{-1}\right) 
\left(1\oplus C_n z^\frac{r-1}{2r} \left(\bigoplus_{j=1}^r z^{-\frac{j-1}{r}}\right) U^\pm D^\pm\right)
\end{align*}
for $\pm\operatorname{Im}(z)>0$ and we obtain RH-T3. 

RH-T4 follows from the fact that the $g$-functions are bounded around $z=0$ (see Proposition \ref{ch4:prop:gfunctionsbounded0}). 
\end{proof}

When $n$ is not divisible by $r$ we can nevertheless arrive at the same RHP by using an additional transformation, see Appendix \ref{ch:appendixA}. From this point onwards the RHPs do not depend on the particular modulo class $r$ that $n$ is in. 

\subsection{Opening of the lens $T\mapsto S$}

We will open a lens from $0$ to $q$ with the $\varphi$-function as defined in \eqref{ch4:eq:defvarphi}, as usual. 
We denote the upper and lower lips of this lens by $\Delta_0^+$ and $\Delta_0^-$ respectively. The direction of the lips of the lens near the origin is perpendicular to the real line for now (see Figure \ref{ch4:FigS}), but later on, in Section \ref{ch4:sec:defLocalP}, we will slightly deform the lips. 

\begin{figure}[t]
\begin{center}
\resizebox{10.5cm}{6cm}{
\begin{tikzpicture}[>=latex]
	\draw[-] (-7,0)--(7,0);
	\draw[-] (-3,-4)--(-3,4);
	\draw[fill] (-3,0) circle (0.1cm);
	\draw[fill] (3,0) circle (0.1cm);
	
	\node[above] at (-3.25,0) {\large 0};	
	\node[above] at (3.1,0.05) {\large $q$};	
	\node[above] at (-3.5,1.8) {\large $\Delta_0^+$};	
	\node[below] at (-3.5,-1.8) {\large $\Delta_0^-$};	
	\node[above] at (-5,0.05) {\large $\Delta_1$};	
	\node[above] at (0,0.05) {\large $\Delta_0$};
	
	\draw[->, ultra thick] (-3,0) -- (-3,1.2);
	\draw[-, ultra thick] (-3,0) -- (-3,2);
	
	\draw[->, ultra thick] (-3,0) -- (-3,-1.2);
	\draw[-, ultra thick] (-3,0) -- (-3,-2);
	
	\draw[->, ultra thick] (3,0) -- (5,0);
	\draw[->, ultra thick] (-3,0) -- (0,0);
	\draw[-, ultra thick] (-3,0) -- (7,0);
	
	\draw[->, ultra thick] (0,0) -- (-5,0);
	\draw[-, ultra thick] (-7,0) -- (-3,0);
	
	\draw[-, ultra thick] (-3,2) to [out=0, in=135] (3,0);
	\draw[-, ultra thick] (-3,-2) to [out=0, in=225] (3,0);

	\draw[->, ultra thick] (-0.9,1.88) to (-0.7,1.85);
	\draw[->, ultra thick] (-0.9,-1.88) to (-0.7,-1.85);
	
\end{tikzpicture}
}
\caption{Contour $\Sigma_{S} = \mathbb{R} \cup \Delta_{0}^{\pm}$ for the RHP for $S$. The
	lens around $\Delta_0$ is contained in the domain $O_V$ where $V$ is analytic. \label{ch4:FigS}}
\end{center}
\end{figure}
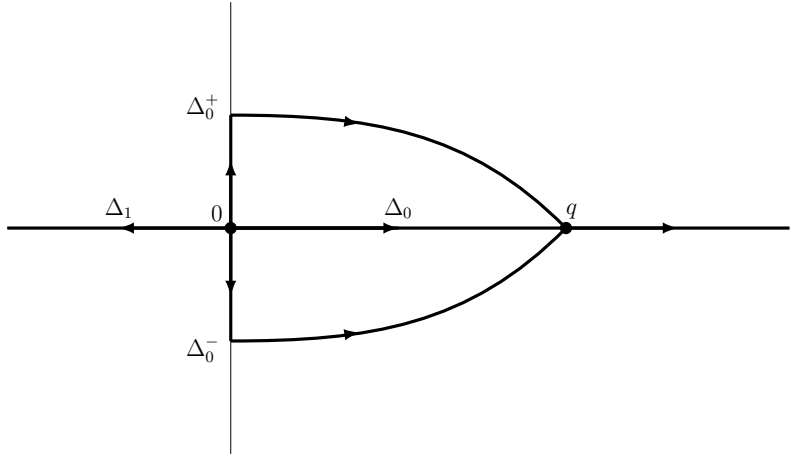

\begin{definition} \label{ch4:def:S}
$S$ is defined by
\begin{align*}
S(z) &= T(z) \left(\begin{pmatrix} 1 & 0\\ -z^{-\beta} e^{2 n \varphi(z)} & 1\end{pmatrix} \oplus \mathbb I_{(r-1)\times (r-1)}\right),
&z \text{ in the upper part of the lens}\\
S(z) &= T(z) \left(\begin{pmatrix} 1 & 0\\ z^{-\beta} e^{2 n \varphi(z)} & 1\end{pmatrix} \oplus \mathbb I_{(r-1)\times (r-1)}\right),
&z \text{ in the lower part of the lens}\\
S(z) &= T(z), &\text{elsewhere.}
\end{align*}
\end{definition}

Then $S$ satisfies the following RHP. 

\begin{rhproblem} \label{ch4:RHPforS} \
\begin{description}
\item[RH-S1] $S$ is analytic on $\mathbb C\setminus \Sigma_S$.
\item[RH-S2] $S$ has boundary values for $x\in \Sigma_S$. 
\begin{align} \nonumber
S_+(x) &= S_-(x)\\ \nonumber
&\hspace{-0.5cm}\times \left\{\begin{array}{ll} 
\begin{pmatrix}
0 & x^{\beta}\\
-x^{-\beta} & 0
\end{pmatrix} \oplus
\bigoplus_{j=1}^{\frac{r}{2}-1} 
\begin{pmatrix}
0 & 1\\
-1 & 0
\end{pmatrix}
\oplus 1, & r \equiv 0 \mod 2,\\
\begin{pmatrix}
0 & x^{\beta}\\
-x^{-\beta} & 0
\end{pmatrix} \oplus
\bigoplus_{j=1}^{\frac{r-1}{2}} 
\begin{pmatrix}
0 & 1\\
-1 & 0
\end{pmatrix}
, & r \equiv 1 \mod 2,
\end{array}
\right. \\
&\hspace{6cm} x \in (0,q),\\  \nonumber
S_+(x) &= S_-(x)\\  \nonumber
&\hspace{-1.1cm}\times \left\{\begin{array}{ll} 
\begin{pmatrix}
1 & x^{\beta} e^{-2n\varphi(x)}\\
0 & 1
\end{pmatrix} \oplus
\bigoplus_{j=1}^{\frac{r}{2}-1} 
\begin{pmatrix}
0 & 1\\
-1 & 0
\end{pmatrix}
\oplus 1, & r \equiv 0 \mod 2,\\
\begin{pmatrix}
1 & x^{\beta} e^{-2n\varphi(x)}\\
0 & 1
\end{pmatrix} \oplus
\bigoplus_{j=1}^{\frac{r-1}{2}} 
\begin{pmatrix}
0 & 1\\
-1 & 0
\end{pmatrix}
, & r \equiv 1 \mod 2,
\end{array}
\right. \\
&\hspace{6cm} x>q,\\ \nonumber
S_+(x) &= S_-(x) \hspace{0.1cm}\times
\left\{\begin{array}{ll}
1 \oplus
\bigoplus_{j=1}^{\frac{r}{2}} 
\begin{pmatrix}
0 & 1\\
-1 & 0
\end{pmatrix}, & r \equiv 0 \mod 2,\\
1\oplus
\bigoplus_{j=1}^{\frac{r-1}{2}} 
\begin{pmatrix}
0 & 1\\
-1 & 0
\end{pmatrix}
\oplus 1, & r \equiv 1 \mod 2,\\
\end{array}
\right. \\
&\hspace{6cm} x < 0,\\  \nonumber
S_+(z) &= S_-(z)  
\begin{pmatrix}
1 & 0\\
z^{-\beta} e^{2 n \varphi(z)} & 1
\end{pmatrix} 
\oplus \mathbb I_{(r-1)\times (r-1)}\\
&\hspace{6cm} z \in \Delta_0^\pm.
\end{align}
\item[RH-S3] As $|z|\to\infty$
\begin{align} \label{ch4:RHS3}
S(z) = \left(\mathbb{I}+\mathcal{O}\left(\frac{1}{z}\right)\right) 
\left(1 \oplus z^{\frac{r-1}{2 r}} 
\bigoplus_{j=1}^r z^{-\frac{j-1}{r}}\right)
\times\left\{\begin{array}{ll}
1 \oplus U^+ D^+
, & \operatorname{Im}(z) >0,\\
1 \oplus U^- D^-
, & \operatorname{Im}(z) <0.
\end{array}
\right.
\end{align}
\item[RH-S4] As $z\to 0$
\begin{align}
\nonumber
S(z) &= \mathcal{O}\begin{pmatrix} z^{-\frac{r-1}{2r}} h_{\alpha+\frac{r-1}{r}}(z) & \ldots & z^{-\frac{r-1}{2r}} h_{\alpha+\frac{r-1}{r}}(z)\\ 
\vdots & & \vdots\\
z^{-\frac{r-1}{2r}} h_{\alpha+\frac{r-1}{r}}(z) & \ldots & z^{-\frac{r-1}{2r}} h_{\alpha+\frac{r-1}{r}}(z)\end{pmatrix}\\ \label{ch4:RHS4a}
&\hspace{4cm} \text{for $z$ to the right of }\Delta_0^\pm\\
\nonumber
S(z) &= \mathcal{O}\begin{pmatrix} 1 & z^{-\frac{r-1}{2r}} h_{\alpha+\frac{r-1}{r}}(z) & \ldots & z^{-\frac{r-1}{2r}} h_{\alpha+\frac{r-1}{r}}(z)\\ 
\vdots & & & \vdots\\
1 & z^{-\frac{r-1}{2r}} h_{\alpha+\frac{r-1}{r}}(z) & \ldots & z^{-\frac{r-1}{2r}} h_{\alpha+\frac{r-1}{r}}(z)\end{pmatrix}\\ \label{ch4:RHS4b}
&\hspace{4cm} \text{for $z$ to the left of }\Delta_0^\pm
\end{align}
\end{description}
\end{rhproblem}
We omit a proof, since the procedure is standard.

\section{Global parametrix}

The jump matrix on $\Delta_0^\pm$ and $(q,\infty)$ will tend to the unit matrix as $n\to\infty$. For $z\in\Delta_0^\pm$ we have that $\operatorname{Re} \varphi(z)<0$ if the distance of the lips of the lens to $(0,q)$ is small enough (we have the freedom to make this distance as small as we want),  one can argue this with the Cauchy-Riemann equations and \eqref{ch4:eq:varphi0q}, in the usual way. Thus there are no jumps on $\Delta_0^\pm$ in the limit that $n\to\infty$. On $(q,\infty)$ one may use \eqref{ch4:eq:varphiqinfty} to see that the upper-left $2\times 2$ block tends to the unit matrix as $n\to\infty$. The global parametrix problem is the problem where we replace the jumps on these contours by their large $n$ limit. It should be a good approximation of $S$ away from the end points $0$ and $q$ for large $n$.

\subsection{The global parametrix problem}

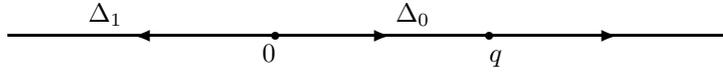
\begin{figure}[h]
\centering
\begin{picture}(300,50)(10,30)
\thicklines
\put(20,50){\line(1,0){270}}
\put(120,50){\circle*{3}}
\put(200,50){\circle*{3}}
\put(70,50){\vector(-1,0){3}}
\put(160,50){\vector(1,0){3}}
\put(245,50){\vector(1,0){3}}
\put(115,40){$0$}
\put(200,40){$q$}
\put(50,55){$\Delta_1$}
\put(165,55){$\Delta_0$}
\end{picture}
\caption{Contour $\Sigma_{N}$ for the RHP for $N$. \label{ch4:FigN}}
\end{figure}

The global parametrix problem takes the following form. 

\begin{rhproblem} \label{ch4:RHPforN} \
\begin{description}
\item[RH-N1] $N$ is analytic on $\mathbb C\setminus \mathbb R$.
\item[RH-N2] $N$ has boundary values for $x\in \Sigma_N$ (see Figure \ref{ch4:FigN}), and we have the jumps
\begin{align} \nonumber
N_+(x) &= N_-(x)\\ \nonumber
&\hspace{-0.5cm}\times \left\{\begin{array}{ll} 
\begin{pmatrix}
0 & x^{\beta}\\
-x^{-\beta} & 0
\end{pmatrix} \oplus
\bigoplus_{j=1}^{\frac{r}{2}-1} 
\begin{pmatrix}
0 & 1\\
-1 & 0
\end{pmatrix}
\oplus 1, & r \equiv 0 \mod 2,\\
\begin{pmatrix}
0 & x^{\beta}\\
-x^{-\beta} & 0
\end{pmatrix} \oplus
\bigoplus_{j=1}^{\frac{r-1}{2}} 
\begin{pmatrix}
0 & 1\\
-1 & 0
\end{pmatrix}
, & r \equiv 1 \mod 2,
\end{array}
\right. \\ 
&\hspace{6cm} x \in (0,q),\\  \nonumber
N_+(x) &= N_-(x)\\ \nonumber
&\hspace{0.4cm}\times \left\{\begin{array}{ll} 
\begin{pmatrix}
1 & 0\\
0 & 1
\end{pmatrix} \oplus
\bigoplus_{j=1}^{\frac{r}{2}-1} 
\begin{pmatrix}
0 & 1\\
-1 & 0
\end{pmatrix}
\oplus 1, & r \equiv 0 \mod 2,\\
\begin{pmatrix}
1 & 0\\
0 & 1
\end{pmatrix} \oplus
\bigoplus_{j=1}^{\frac{r-1}{2}} 
\begin{pmatrix}
0 & 1\\
-1 & 0
\end{pmatrix}
, & r \equiv 1 \mod 2,
\end{array}
\right. \\ %\label{ch4:RHN2}
&\hspace{6cm} x > q,\\ \nonumber
N_+(x) &= N_-(x) \\ \nonumber
&\hspace{1.5cm}\times \left\{\begin{array}{ll}
1 \oplus
\bigoplus_{j=1}^{\frac{r}{2}} 
\begin{pmatrix}
0 & 1\\
-1 & 0
\end{pmatrix}, & r \equiv 0 \mod 2,\\
1\oplus
\bigoplus_{j=1}^{\frac{r-1}{2}} 
\begin{pmatrix}
0 & 1\\
-1 & 0
\end{pmatrix}
\oplus 1, & r \equiv 1 \mod 2,\\
\end{array}
\right. \\
&\hspace{6cm} x < 0.
\end{align}
\item[RH-N3] As $|z|\to\infty$
\begin{align} \label{ch4:RHN3}
N(z) = \left(\mathbb{I}+\mathcal{O}\left(\frac{1}{z}\right)\right) 
\left(1 \oplus z^{\frac{r-1}{2 r}} 
\bigoplus_{j=1}^r z^{-\frac{j-1}{r}}\right)
\times \left\{\begin{array}{ll}
1 \oplus U^+ D^+
, & \operatorname{Im}(z) >0,\\
1 \oplus U^- D^-
, & \operatorname{Im}(z) <0,
\end{array}
\right.
\end{align}
\end{description}
\end{rhproblem}

We have some freedom in our choice of the behavior of $N$ as $z\to 0$ and $z\to q$.\\

In \cite{KuMo} we were lucky in that we could use a minor modification of the global parametrix from \cite{KuMFWi}. For $r>2$, we really have to solve the global parametrix problem. \\

\begin{remark} In what follows, we opt to find a solution to RH-N with an appropriate algebraic equation and corresponding Riemann surface (e.g., see \cite{KuMFWi}). There is an alternative method though. After making the jumps constant with Szeg\H{o} functions, one can use the method with differentials introduced by Kuijlaars and Mo in \cite{KuMo} (see \cite{KuLo} for the larger size case, and in particular on how to obtain the correct asymptotic behavior as $z\to\infty$). The latter has the advantage that technical calculations can mostly be avoided. There is a trade-off however, in that the formulae seem to get more explicit in the first method. In the end however, all that really matters to us, is how $N$ behaves near $0$ and $q$ (which we describe in RH-N4 later). The choice for the first method is one of personal preference. 
\end{remark}

\subsection{An $r+1$ sheeted Riemann surface}

First we will solve RH-N for $\beta=0$. We need an $r+1$ sheeted Riemann surface. To find out what Riemann surface will help us, we let the associated vector equilibrium problem for linear external fields guide us. Let $\mu_0',\mu_1',\ldots,\mu_{r-1}'$ be the equilibrium measures of \eqref{ch4:eq:eqProblem} corresponding to a lineair external field $x$ (see \cite[Proposition 5.1]{Ku}). Using  \cite[Theorem 1.8]{ClRo} for $\theta=r$ and external field $x^r$ we see after a little calculation that $\mu_0'$ has support $[0,\frac{2r}{r+1}]$. For this we used the duality between $\theta=r$ and $\theta=\frac{1}{r}$ of the MBE. 
We consider the Stieltjes transforms
\begin{align*}
F_j(z) &= \displaystyle \int_{\Delta_j'} \frac{d\mu_j'(s)}{z-s}, & j=1,2,\ldots,r,
\end{align*}
where $\Delta_0'=[0,\frac{2r}{r+1}]$ and $\Delta_j'=\Delta_j$ as in \eqref{ch4:eq:defDeltaj} for $j=1,\ldots,r$. 
%HIER WS NOG EEN FIGUUR VAN RIEMANN SURFACE PLAATSEN?
%HIER NOG OPMERKING DAT DAN INDERDAAG ONE CUT THETA REGULAR ETC? oF EERDER?
From these we construct a function on a Riemann surface as follows
\begin{align*}
\zeta(z) = \left\{\begin{array}{ll}
\zeta_0(z) = 1 - F_1(z), & z\in \mathfrak R_0\\
\zeta_j(z) = F_j(z) - F_{j+1}(z), & z\in \mathfrak R_j, j=1,\ldots,r-1,\\
\zeta_r(z) = F_r(z), & z\in \mathfrak R_r.
\end{array}\right.
\end{align*}
It is known from \cite[Proposition 5.1]{Ku} that $\zeta$ defines a meromorphic function from the $r+1$ sheeted Riemann surface with cut $\Delta_j'$ between sheet $\mathfrak R_j$ and sheet $\mathfrak R_{j+1}$, where $j=0,1,\ldots,r-1$, to the extended complex plane. We can get the asymptotic behavior as $z\to\infty$ immediately from \text{Proposition \ref{ch4:asympgfunctions}}, by taking the derivative (linear external fields are one-cut $\frac{1}{r}$-regular). Namely, we have as $z\to \infty$ 
\begin{align} \label{ch4:eq:zeta0AsympInfty}
\zeta_0(z) &= 1-\frac{1}{z} + O\left(\frac{1}{z^2}\right),
\end{align}
and for $j=1,\ldots,r$ and $\pm \operatorname{Im}(z)>0$, we have
\begin{align} \label{ch4:eq:zetajAsympInfty}
\zeta_{j}(z) &= \frac{1}{r z} 
\left(1+\sum_{k=1}^{r-1} k m'_{\frac{k}{r}} \Omega^{\pm (-1)^{j}\lfloor \frac{j}{2}\rfloor k} z^{-\frac{k}{r}} + O\left(\frac{1}{z}\right)\right),
\end{align}
where $m_\frac{1}{r}',\ldots,m_\frac{r-1}{r}'$ correspond to the external field $x$, the accent is added to avoid confusion with $m_\frac{1}{r},\ldots,m_\frac{r-1}{r}$ from Proposition \ref{ch4:asympgfunctions} corresponding to our general external field $V$.\\

\begin{proposition} \label{ch4:prop:zetaAsympInfty}
$\zeta$ satisfies the algebraic equation 
\begin{align} \label{ch4:eq:AlgeqforZeta}
\zeta^{r+1} = \left(\zeta-\frac{1}{r z}\right)^r.
\end{align}
\end{proposition}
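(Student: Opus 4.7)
The plan is to form the symmetric product over all sheets of the polynomial $P(\zeta,z):=\zeta^{r+1}-(\zeta-\tfrac{1}{rz})^{r}$ and use Liouville's theorem. Consider $G(z):=\prod_{j=0}^{r}P(\zeta_j(z),z)$. Being symmetric in the branches $\zeta_0,\ldots,\zeta_r$, this function is invariant under monodromy around every branch point of $\mathfrak R$, hence single-valued on $\mathbb C$. At the finite branch point $z=q'$ and at the branch point at infinity the branches merely permute and stay bounded, so $G$ is meromorphic on $\mathbb C$ with a possible pole only at $z=0$.

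Next I would bound the order of $G$ at both $0$ and $\infty$. For the behavior at infinity, \eqref{ch4:eq:zeta0AsympInfty} and a direct expansion give $\zeta_0^{r+1}=1-(r+1)/z+O(z^{-2})$ and $(\zeta_0-\tfrac{1}{rz})^{r}=1-(r+1)/z+O(z^{-2})$, so $P(\zeta_0,z)=O(z^{-2})$. On each sheet $j\geq 1$, \eqref{ch4:eq:zetajAsympInfty} gives $\zeta_j-\tfrac{1}{rz}=O(z^{-1-1/r})$, hence both $(\zeta_j-\tfrac{1}{rz})^{r}$ and $\zeta_j^{r+1}$ are $O(z^{-r-1})$, yielding $P(\zeta_j,z)=O(z^{-r-1})$. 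Multiplying over all $r+1$ sheets, $G(z)=O(z^{-(r^{2}+r+2)})$ as $z\to\infty$. For the behavior at the origin, the one-cut $\tfrac{1}{r}$-regularity of the linear external field combined with the reasoning in the proof of Proposition \ref{ch4:prop:gfunctionsbounded0} yields $d\mu_j'/ds=O(s^{-r/(r+1)})$, and a standard Stieltjes transform estimate then shows $\zeta_j(z)=O(z^{-r/(r+1)})$ as $z\to 0$. Since $r/(r+1)<1$, the term $\tfrac{1}{rz}$ dominates $\zeta_j$ near the origin, so $(\zeta_j-\tfrac{1}{rz})^{r}=O(z^{-r})$ and $\zeta_j^{r+1}=O(z^{-r})$; hence $P(\zeta_j,z)=O(z^{-r})$ and $G(z)=O(z^{-r(r+1)})$.

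Combining these, the function $z^{r(r+1)}G(z)$ is holomorphic on $\mathbb C$ (the singularity at $0$ is removable) and $O(z^{-2})$ at infinity, so Liouville's theorem forces $G\equiv 0$. Since the field of meromorphic functions on $\mathbb C$ is an integral domain, at least one factor $P(\zeta_{j_0}(z),z)$ must vanish identically. Now $p\mapsto P(\zeta(p),\pi(p))$ is a meromorphic function on the connected Riemann surface $\mathfrak R$ which vanishes on the sheet $\mathfrak R_{j_0}$, so by the identity theorem it vanishes on all of $\mathfrak R$, establishing \eqref{ch4:eq:AlgeqforZeta}. The most delicate step is the estimate of $\zeta_j$ near $z=0$; the crucial point is that $r/(r+1)<1$, which makes $\tfrac{1}{rz}$ the dominant term in $\zeta_j-\tfrac{1}{rz}$ and provides the extra orders of decay needed for the Liouville argument to close.
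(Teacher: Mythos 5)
Your proposal is correct, and it takes a genuinely different route from the paper. The paper's proof works directly with the elementary symmetric polynomials $e_k(\zeta_0,\ldots,\zeta_r)$: it reads off their leading asymptotics at $\infty$ from \eqref{ch4:eq:zeta0AsympInfty}--\eqref{ch4:eq:zetajAsympInfty}, argues by symmetry that each $e_k$ is single-valued and meromorphic with a possible pole only at $z=0$, uses the pole bound at $0$ coming from $\zeta_0\sim -(r+1)d_1 z^{-r/(r+1)}$ and $\zeta_j=\mathcal O(z^{-(r-1)/r})$ for $j\ge 1$ to conclude (by a Liouville-type argument) that the asymptotic series terminate, and finally reads off the coefficients of the polynomial $\prod_j(\zeta-\zeta_j)$. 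You instead symmetrize the candidate polynomial itself: you form $G=\prod_j P(\zeta_j,z)$ with $P(\zeta,z)=\zeta^{r+1}-(\zeta-\tfrac{1}{rz})^r$, verify the decay $G(z)=\mathcal O(z^{-(r^2+r+2)})$ at $\infty$ (with the extra cancellation in $P(\zeta_0,z)=\mathcal O(z^{-2})$ providing the crucial extra two orders) and the pole bound $G(z)=\mathcal O(z^{-r(r+1)})$ at $0$, apply Liouville to $z^{r(r+1)}G(z)$ to conclude $G\equiv 0$, and then use the integral-domain/identity-theorem step on $\mathfrak R$. Both proofs consume the same input (asymptotics of $\zeta$ at $0$ and $\infty$ plus Liouville); you concentrate the Liouville step into one auxiliary function rather than distributing it across the $r+1$ symmetric polynomials, at the cost of the extra factorization argument at the end. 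Two small remarks: (a) the phrase ``the field of meromorphic functions on $\mathbb C$'' is slightly off, since the individual factors $P(\zeta_j(z),z)$ are only defined away from the cuts; one should invoke the integral-domain property of holomorphic functions on a fixed connected cut-free open set $U$, after which the identity theorem on $\mathfrak R$ finishes as you say; (b) your bound $\zeta_j(z)=\mathcal O(z^{-r/(r+1)})$ near $0$ is weaker than the $\mathcal O(z^{-(r-1)/r})$ the paper uses for $j\ge 1$, but it is sufficient for the Liouville count since $r(r+1)<r^2+r+2$, and the key observation that $r/(r+1)<1$ so that $\tfrac{1}{rz}$ dominates is exactly the right point to emphasize.
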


\begin{proof}
We can read off from the asymptotic behaviors \eqref{ch4:eq:zeta0AsympInfty} and \eqref{ch4:eq:zetajAsympInfty} that $\zeta$ has a zero of order $r$ at infinity. The equilibrium measure $\mu_0'$ behaves as $d_1 s^{-\frac{r}{r+1}}\left(1+\mathcal O\left(x^\frac{1}{r+1}\right)\right)$ as $s\to 0^+$ for some constant $d_1>0$. To see this, one combines Theorem 1.8 and Remark 1.9 from \cite{ClRo} with the duality between $\theta=r$ and $\theta=\frac{1}{r}$ of the MBE. 
%bewijzen? 
It follows from the behavior of the equilibrium measure that $\zeta_0(z) \sim - (r+1) d_1 z^{-\frac{r}{r+1}}$ as $z\to 0$. This implies that $\zeta$ has a pole of order $r$ at $z=0$, since $z=0$ is a branch point of order $r$ of the Riemann surface. We conclude that there cannot be any other poles. 
%Is het inderdaad die orde? Hoort het niet van order r+1 te zijn?
From the asymptotic behaviors \eqref{ch4:eq:zeta0AsympInfty} and \eqref{ch4:eq:zetajAsympInfty} we also read off that as $z\to\infty$
\begin{align} \label{ch4:zzzzz}
\zeta_0(z)+\zeta_1(z)+\ldots+\zeta_r(z) &= 1+\mathcal O\left(\frac{1}{z^{2}}\right)\\
\zeta_0(z)\zeta_1(z) + \zeta_0(z)\zeta_2(z) + \ldots + \zeta_{r-1}(z)\zeta_r(z) &= \frac{1}{z} + \mathcal O\left(\frac{1}{z^{2}}\right)\\
\zeta_0(z)\zeta_1(z)\zeta_2(z)+\zeta_0(z)\zeta_1(z)\zeta_3(z) + \ldots + \zeta_{r-2}(z)\zeta_{r-1}(z)\zeta_r(z)
&= \frac{1}{r} \binom{r}{2} \frac{1}{z^2} + \mathcal O\left(\frac{1}{z^{3}}\right)\\ \nonumber
&\vdots\\ \label{ch4:zzzzz2}
\zeta_0(z)\zeta_1(z)\cdots \zeta_r(z) &= \frac{1}{r^{r}} \binom{r}{r} \frac{1}{z^r} + \mathcal O\left(\frac{1}{z^{r+1}}\right).
\end{align}
No calculation is needed to argue that there are only integer powers of $z$. Each formula in \eqref{ch4:zzzzz}-\eqref{ch4:zzzzz2} represents an elementary symmetric polynomial, and is thus invariant under any permutation of $\zeta_0,\ldots,\zeta_r$. Then the expressions in \eqref{ch4:zzzzz}-\eqref{ch4:zzzzz2} do not have jumps, and the asymptotic behaviors can only contain integer powers of $z$. Additionally, it implies that \eqref{ch4:zzzzz}-\eqref{ch4:zzzzz2} represent meromorphic functions in the full complex plane with only a possible pole at $z=0$. It follows from \eqref{ch4:eq:behavmujs}, with external field $x$, that we have for $j=1,\ldots,r$ that
\begin{align*}
\zeta_j(z) = \mathcal O\left(z^{-\frac{r-1}{r}}\right)
\end{align*}
as $z\to 0$. Then it actually follows that the $\mathcal O$ terms in \eqref{ch4:zzzzz}-\eqref{ch4:zzzzz2} vanish. We conclude that
\begin{align*}
\prod_{j=0}^r (\zeta - \zeta_j(z)) &= \zeta^{r+1}+\sum_{k=0}^{r} \zeta^{j} \frac{(-1)^{r+1-j}}{r^{r-j}} \binom{r}{j} \frac{1}{z^{r-j}}
= \zeta^{r+1} - \left(\zeta-\frac{1}{r z}\right)^r
\end{align*}
and we arrive at \eqref{ch4:eq:AlgeqforZeta}.
\end{proof}

We mention that \eqref{ch4:eq:AlgeqforZeta} is in agreement with (2.15) from \cite{KuMo}  when $r=2$. 

To solve the global parametrix problem, it is convenient to modify $\zeta$. Namely, we define
\begin{align} \label{ch4:eq:defXi}
\xi_j(z) &= 1 - \frac{1}{r z \zeta_j\left(\displaystyle\frac{c_q}{r} z\right)}, & c_q = \frac{r+1}{2 q}, \hspace{1cm} j=0,1,\ldots,r.
\end{align}
Then $\xi$ is a meromorphic function on the $(r+1)$-sheeted Riemann surface with cuts $\Delta_0,\ldots,\Delta_r$ as in \eqref{ch4:eq:defDeltaj}. Proposition \ref{ch4:prop:zetaAsympInfty} immediately implies the following algebraic equation for $\xi$. 

\begin{corollary}
$\xi$ satisfies
\begin{align} \label{ch4:eq:xiAlg}
c_q z = \frac{1}{(1-\xi) \xi^r}.
\end{align}
\end{corollary}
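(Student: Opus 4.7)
The plan is to derive \eqref{ch4:eq:xiAlg} as a direct consequence of Proposition \ref{ch4:prop:zetaAsympInfty}. First I would substitute $z \mapsto \frac{c_q}{r}z$ in the algebraic equation \eqref{ch4:eq:AlgeqforZeta}, which—since the shift $\frac{1}{rz}$ scales accordingly—gives
\begin{align*}
\zeta_j\!\left(\tfrac{c_q}{r}z\right)^{r+1} = \left(\zeta_j\!\left(\tfrac{c_q}{r}z\right) - \tfrac{1}{c_q z}\right)^r
\end{align*}
on every sheet $\mathfrak R_j$ of the Riemann surface. The defining formula \eqref{ch4:eq:defXi} can then be inverted to express $\zeta_j\!\left(\frac{c_q}{r}z\right)$ entirely in terms of $\xi_j(z)$ as a rational function, and this substitution will be fed into the displayed equation.

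After the substitution the right-hand side involves a difference of the form $\frac{\text{const}}{z(1-\xi)} - \frac{1}{c_q z}$; placing it over a common denominator produces a numerator that, thanks to the precise normalization $c_q = \frac{r+1}{2q}$, collapses to a constant times $\xi$. Raising the result to the $r$-th power and comparing with the $(r+1)$-th power on the other side, one factor of $(1-\xi)$ in the denominator cancels, and after clearing the remaining constants one recovers the symmetric form $c_q z = \frac{1}{(1-\xi)\xi^r}$ claimed in \eqref{ch4:eq:xiAlg}.

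Since $\zeta_j$ is meromorphic on the $(r+1)$-sheeted Riemann surface by Proposition \ref{ch4:prop:zetaAsympInfty}, and \eqref{ch4:eq:defXi} is a rational transformation, $\xi_j$ is also meromorphic there; once the identity is verified on any nonempty open set of a single sheet it extends globally by analytic continuation, so there is no additional analytic content to check.

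The sole obstacle is careful bookkeeping of the interaction between the three constants $r$, $c_q$, and $q$: one must track them faithfully through the common-denominator step to confirm that the numerator is indeed a clean multiple of $\xi$ rather than a more complicated combination. This is a pure algebraic computation without conceptual difficulty, so a brief, one-line reference to ``direct substitution and simplification'' should suffice, with the computation itself occupying at most a few lines of display math.
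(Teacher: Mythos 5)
Your route---substituting $z\mapsto\frac{c_q}{r}z$ into \eqref{ch4:eq:AlgeqforZeta}, then using \eqref{ch4:eq:defXi} to eliminate $\zeta_j$ in favor of $\xi_j$, and simplifying---is precisely what the paper means by ``immediately implies,'' so the approach is the same. However, the key step of your plan is mis-diagnosed: the numerator does \emph{not} collapse to a multiple of $\xi$ ``thanks to the precise normalization $c_q=\frac{r+1}{2q}$.'' The value of $c_q$ is entirely irrelevant to that collapse. Writing $\widetilde z=\frac{c_q}{r}z$, the definition \eqref{ch4:eq:defXi} reads $1-\xi_j=\frac{1}{r\widetilde z\,\zeta_j(\widetilde z)}=\frac{1}{c_q z\,\zeta_j(\widetilde z)}$, while the shift term in \eqref{ch4:eq:AlgeqforZeta} at argument $\widetilde z$ is $\frac{1}{r\widetilde z}=\frac{1}{c_q z}$. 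Both carry the identical prefactor $\frac{1}{c_q z}$, so
\begin{align*}
\zeta_j(\widetilde z)-\frac{1}{r\widetilde z}
=\frac{1}{c_q z}\left(\frac{1}{1-\xi_j}-1\right)
=\frac{\xi_j}{c_q z(1-\xi_j)},
\end{align*}
and the ``collapse'' is the scalar-free identity $\frac{1}{1-\xi}-1=\frac{\xi}{1-\xi}$. Substituting into $\zeta^{r+1}=\bigl(\zeta-\frac{1}{r\widetilde z}\bigr)^r$ and cancelling $(c_q z)^{-r}(1-\xi)^{-r}$ yields $\frac{1}{c_q z(1-\xi)}=\xi^r$, which is \eqref{ch4:eq:xiAlg}. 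Any nonzero constant in place of $c_q$ would give the same computation, merely replacing $c_q$ on the left of \eqref{ch4:eq:xiAlg}; the specific value $\frac{r+1}{2q}$ serves a different purpose (placing the branch point of $\xi_0$ at $z=q$).

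A related caution: had you tried the computation reading the ``$rz$'' in \eqref{ch4:eq:defXi} as $r$ times the \emph{outer} variable $z$, you would have obtained $\zeta_j(\widetilde z)=\frac{1}{rz(1-\xi_j)}$, and then $\zeta_j(\widetilde z)-\frac{1}{r\widetilde z}=\frac{c_q-r(1-\xi_j)}{rc_q z(1-\xi_j)}$, whose numerator $c_q-r+r\xi_j$ is \emph{not} proportional to $\xi_j$ for $c_q\ne r$---and $c_q=\frac{r+1}{2q}\ne r$ generically. No choice of $c_q$ rescues that reading. The reading that works is the composite one above, with $r\widetilde z=c_q z$. Your assertion that the normalization of $c_q$ drives the cancellation is a symptom of not having pinned this down; what actually makes the numerator collapse is that the same factor $r$ appears multiplying $\widetilde z$ in the definition of $\xi$ and in the shift $\frac{1}{r\widetilde z}$ of \eqref{ch4:eq:AlgeqforZeta}.
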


It is a direct consequence of the asymptotic behaviors of $\zeta_0,\ldots,\zeta_r$ in \eqref{ch4:eq:zeta0AsympInfty} and \eqref{ch4:eq:zetajAsympInfty} that as $z\to\infty$
\begin{align} \label{ch4:eq:behavInftyXi1}
\xi_0(z) &= 1 - \frac{r}{c_q z}+\mathcal O\left(\frac{1}{z^2}\right),\\ \label{ch4:eq:behavInftyXi2}
\xi_j(z) &= 
m'_{\frac{1}{r}} \displaystyle\left(\frac{r}{c_q}\right)^\frac{1}{r} \Omega^{\pm (-1)^{j}\lfloor \frac{j}{2}\rfloor} z^{-\frac{1}{r}} + \mathcal O\left(z^{-\frac{2}{r}}\right)
\end{align}
for $\pm\operatorname{Im}(z)>0$. In principle, we could write down the expansion of $\xi_j(z)$ completely in powers of $z^{-1/r}$ but we will not need it. The equation \eqref{ch4:eq:behavInftyXi2} illuminates why we chose the particular modification $\zeta\to \xi$ as in \eqref{ch4:eq:defXi}, we want $\xi_1,\ldots,\xi_r$ to correspond intuitively to (a multiple of) $z^{-\frac{1}{r}}$ for large $z$. This will be key to getting the asymptotics of the global parametrix as $z\to\infty$ right, as shall be clear shortly. 

From \eqref{ch4:eq:xiAlg} we may also deduce the asymptotic behavior as $z\to 0$. Namely, for $j=0,1,\ldots,r$, we have as $z\to 0$ that
\begin{align} \label{ch4:eq:behav0Xi}
\xi_j(z) = \mathcal O\left(z^{-\frac{1}{r+1}}\right).
\end{align}
We shall be more precise about the behavior around the origin in a moment. 

\subsection{Properties of $\xi$}

We prove some properties for $\xi$ that will be practical later on. 
%HIER NOG EEN BEWIJS VAN GEVEN?
\begin{lemma} \label{ch4:prop:propXi}
For $j=0,1,\ldots,r$ we have
\begin{itemize}
\item[(i)] $\xi_{j,\pm}(z) = \xi_{j+1,\mp}(z)$ for all $z\in \Delta_j$ (and $j<r$). 
\item[(ii)] $\xi_j(\overline{z}) = \overline{\xi_j(z)}$ for all $z\in\mathbb C\setminus \Delta_j$. 
\item[(iii)] $\operatorname{sgn}\operatorname{Im}(\xi_j(z)) = (-1)^j\operatorname{sgn}\operatorname{Im}(z)$ for all $z\in \mathbb C\setminus \mathbb R$.
\end{itemize}
Furthermore, we have
\begin{align*}
\xi_0((q,\infty))=\left(\frac{r}{r+1},1\right)\quad \text{and}\quad \xi_0((-\infty,0))=(1,\infty).
\end{align*} 
\end{lemma}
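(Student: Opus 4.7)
I would deduce (i) and (ii) directly from the construction of $\xi$ in terms of the Stieltjes transforms $F_k$ via the rescaling $\xi_j(z) = 1 - 1/(rz\,\zeta_j(c_q z/r))$. Property (i) is the standard sheet-gluing relation for the Riemann surface of $\zeta$: on each cut $\Delta_j'$ one has $\zeta_{j,\pm}(w) = \zeta_{j+1,\mp}(w)$, an immediate consequence of the Plemelj--Sokhotski formula applied to the combinations $\zeta_j = F_j - F_{j+1}$ (together with $\zeta_0 = 1 - F_1$ and $\zeta_r = F_r$). After the rescaling $w = c_q z/r$ the cut $\Delta_j'$ is mapped into $\Delta_j$, and since the factor $rz$ carries no jump, this transfers to $\xi_{j,\pm}(z) = \xi_{j+1,\mp}(z)$ on $\Delta_j$. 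For (ii), each measure $\mu_k'$ is real and supported on $\mathbb R$, so $F_k(\overline w) = \overline{F_k(w)}$, and this Schwarz symmetry descends to $\zeta_j$ and then to $\xi_j$.

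For (iii) the cleanest route is via the algebraic equation \eqref{ch4:eq:xiAlg}. Suppose $z$ had nonzero imaginary part while $\xi_j(z)$ were real. Then by \eqref{ch4:eq:xiAlg} the quantity $c_q z = 1/((1-\xi_j(z))\xi_j(z)^r)$ would be real, forcing $z$ to be real, a contradiction. The exceptional values $\xi_j = 0$ and $\xi_j = 1$ make the right-hand side of \eqref{ch4:eq:xiAlg} blow up and hence occur only at $z = \infty$, so they do not interfere with the argument on the open upper or lower half planes. Consequently $\operatorname{sgn}\operatorname{Im}(\xi_j(z))$ is constant on each of the open half planes, and the specific sign is fixed by plugging $z = iR$ with $R$ large into the asymptotic expansions \eqref{ch4:eq:behavInftyXi1}--\eqref{ch4:eq:behavInftyXi2}: for $\xi_0$ one reads $\operatorname{Im}\xi_0(iR) \sim r/(c_q R) > 0$, and for $j \ge 1$ the leading term is a positive multiple of $\Omega^{(-1)^j\lfloor j/2\rfloor}(iR)^{-1/r}$, whose imaginary part has sign $(-1)^j$ after a short check of the argument.

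For the image claims, $\xi_0$ is real-analytic on $(q, \infty) \cup (-\infty, 0)$ by (ii) since those intervals lie outside the cut $\Delta_0$. The asymptotic $\xi_0(z) = 1 - r/(c_q z) + O(z^{-2})$ gives $\xi_0 \to 1^-$ as $z \to +\infty$ and $\xi_0 \to 1^+$ as $z \to -\infty$. At $z = q$ the sheets $\mathfrak R_0$ and $\mathfrak R_1$ merge, so $\xi_0(q)$ is the unique critical value of $w\mapsto (1-w)w^r$ inside $(0, 1)$, namely $r/(r+1)$. As $z \to 0^-$, \eqref{ch4:eq:behav0Xi} gives $|\xi_0(z)| \to \infty$, and since $\xi_0 > 1$ on $(-\infty, 0)$ by continuity, $\xi_0(z) \to +\infty$. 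Monotonicity on each of the two intervals follows from implicit differentiation of \eqref{ch4:eq:xiAlg}, because $\xi_0'(z)$ can only vanish when $w \mapsto (1-w)w^r$ has a critical point at $w = \xi_0(z)$, i.e.\ at $\xi_0 \in \{0, r/(r+1)\}$, neither of which is attained on $(r/(r+1), 1)$ or $(1, \infty)$. Combining these facts produces the two stated images. The main subtlety I anticipate lies in (iii): carefully ruling out $\xi_j \in \{0, 1\}$ on the open sheets, and in verifying that the rescaling built into $c_q = (r+1)/(2q)$ really does match the endpoint $z = q$ of the cut $\Delta_0$ with the branch point where $\xi_0 = \xi_1 = r/(r+1)$.
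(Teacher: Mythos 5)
Your proposal is correct in substance and differs from the paper on several of the subclaims, which is worth noting. For (i) both you and the paper appeal to the fact that $\xi$ inherits the sheet structure of $\zeta$, so there is nothing to compare. For (ii), however, you invoke Schwarz symmetry of the Stieltjes transforms $F_k$ of the real measures $\mu_k'$ and let it descend to $\zeta_j$ and then $\xi_j$; the paper instead conjugates the algebraic equation \eqref{ch4:eq:xiAlg}, obtains that $\overline{\xi_j(\overline z)}$ is some $\xi_{\sigma(j)}(z)$, and then uses the asymptotics \eqref{ch4:eq:behavInftyXi1}--\eqref{ch4:eq:behavInftyXi2} to pin down $\sigma=\mathrm{id}$. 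Your route is more direct and avoids the need to rule out a nontrivial permutation. For (iii), you obtain $\operatorname{Im}\xi_j(z)\neq 0$ for $z\notin\mathbb R$ directly from \eqref{ch4:eq:xiAlg} (reality of $\xi_j(z)$ would force reality of $c_q z$, modulo the exceptional values $0,1,\infty$ which occur only at $z\in\{0,\infty\}$), while the paper deduces it from (ii) together with the injectivity of $\xi$ as a map on the Riemann surface. Both are fine, and in fact your argument is self-contained and does not rely on (ii). Your explicit sign check via $z=iR$ for each $j$ is correct (I verified that $\arg\bigl(\Omega^{(-1)^j\lfloor j/2\rfloor}e^{-i\pi/(2r)}\bigr)$ stays strictly inside $(-\pi,\pi)$ for all $j=1,\dots,r$), where the paper computes the sign only for $j=0$ and propagates to $j>0$ through the gluing in (i).

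For the image claims, your argument is also genuinely different: you identify $\xi_0(q)=\xi_1(q)$ as a critical value of $\xi\mapsto z=1/\bigl(c_q(1-\xi)\xi^r\bigr)$ and use monotonicity obtained by implicit differentiation, while the paper observes that the local extremum of $1/\bigl((1-\xi)\xi^r\bigr)$ at $\xi=r/(r+1)$ must correspond, by bijectivity of $\xi$ on the Riemann surface, to an endpoint of a cut, and excludes $z=0$ and $z=\infty$ via \eqref{ch4:eq:behav0Xi} and \eqref{ch4:eq:behavInftyXi1}--\eqref{ch4:eq:behavInftyXi2}. Two small remarks on your version. First, the qualifier ``the unique critical value of $w\mapsto(1-w)w^r$ inside $(0,1)$'' is not needed and is a little circular as phrased; the cleaner statement is that the only critical value of $z(\xi)$ achieved at a \emph{finite, nonzero} $z$ is $\xi=r/(r+1)$, since $\xi=0$ and $\xi=\infty$ give $z=\infty$ and $z=0$ respectively. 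That immediately forces $\xi_0(q)=r/(r+1)$ without mentioning the interval $(0,1)$. Second, the caveat you flag at the end about the rescaling $c_q$ is actually dissolved by the same observation: the Riemann surface on which $\xi$ lives has, by construction, a square-root branch point at $z=q$ joining sheets $0$ and $1$, and since the rational function $z(\xi)$ has exactly one simple branch point over a finite nonzero $z$ (namely at $\xi=r/(r+1)$), the two must coincide, whatever the precise value of $c_q$. This is precisely the bijectivity argument the paper uses, so you do not need an independent verification of $c_q$. With these two clarifications your proof closes without gaps.
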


\begin{proof}
$\xi$ inherits its jumps from $\zeta$ (though with a rescaled cut on the first and second sheet), hence (i) holds trivially. 

Let us prove (ii). By complex conjugating both \eqref{ch4:eq:xiAlg} and $z$ we find that
\begin{align*} 
c_q z = \frac{1}{(1-\overline{\xi(\overline{z})}) \overline{\xi(\overline{z})}^r}. 
\end{align*}
Then on any small neighborhood in, say, the upper half-plane we find a number $\sigma(j)\in\{0,1,\ldots,r\}$ such that
\begin{align*}
\overline{\xi_j(\overline{z})} = \xi_{\sigma(j)}(z). 
\end{align*}
By analytic continuation this must hold on the entire upper half-plane. Of course, a similar argument works in the lower half-plane. Then it follows from either \eqref{ch4:eq:behavInftyXi1} or \eqref{ch4:eq:behavInftyXi2} that this can only be true if $\sigma(j)=j$ for all $j=0,1,\ldots,r$. The equality extends to $\mathbb C\setminus \Delta_j$ by continuity. 

Let us prove (iii). First we prove that the sign of $\operatorname{Im}(\xi_j(z))$ is fixed in the upper half-plane and the lower half-plane. Suppose that $z_1$ and $z_2$ are two points in the upper half-plane such that $\operatorname{Im}(\xi_j(z_1))\neq\operatorname{Im}(\xi_j(z_2))$. Then, by continuity, there must exist a $z_3$ in the segment $[z_1,z_2]$, and in the upper half-plane in particular, such that $\operatorname{Im}(\xi_j(z_3))=0$. By (ii) this means that $\xi_j(z_3) = \xi_j(\overline{z_3})$. This is a contradiction since it would mean that $\xi$ is not an isomorphism (it has to be, since $\zeta$ is). We conclude that the sign of $\operatorname{Im}(\xi_j(z))$ is fixed in the upper half-plane. Of course a similar argument applies to the lower half-plane. 
Let us now view the case $j=0$. By \eqref{ch4:eq:behavInftyXi1} it should hold that $\operatorname{Im}(\xi_0(z))$ and $\operatorname{Im}(z)$ have the same sign for large $z$. Since the sign is fixed in the upper half-plane and the lower half-plane, as we just proved, this actually holds for all $z\in \mathbb C\setminus\mathbb R$. Thus we proved (iii) for $j=0$. The cases $j>0$ now simply follow from the cuts in (i). 

For the remaining part of the proposition we first prove that $\xi_0(q)=\frac{r}{r+1}$. It follows from (ii) and the bijectivity of $\xi$ that $\xi_0$ and $\xi_r$ are the only functions amongst $\xi_0,\ldots,\xi_r$ that can attain real values. Let us look at the function in the right-hand side of \eqref{ch4:eq:xiAlg}. It has a local minimum at $\xi = \frac{r}{r+1}$ and this is its only extremum. Since our map $\xi$ is bijective this local minimum must correspond to an endpoint of either $\Delta_0$ or $\Delta_r$. It follows from the asymptotic behaviors \eqref{ch4:eq:behavInftyXi1}, \eqref{ch4:eq:behavInftyXi2} and \eqref{ch4:eq:behav0Xi} that it cannot correspond to either $\infty$ or $0$. We must conclude that $\xi_0(q) = \frac{r}{r+1}$.
%hier mist eigenlijk nog een klein argumentje 
Now using this, \eqref{ch4:eq:behavInftyXi1} and the fact that $\xi_0((q,\infty))\subset \mathbb R$ we infer that $\xi_0((q,\infty)) = (\frac{r}{r+1},1)$. Then we cannot have $\xi_0((-\infty,0))=(-\infty,1)$ and we must conclude that $\xi_0((-\infty,0))=(1,\infty)$. For this, we also used \eqref{ch4:eq:behav0Xi}. 
\end{proof}

\begin{corollary} \label{ch4:cor:behavxi0}
Let $j=0,1,\ldots,r$. As $z\to 0$ we have
\begin{align} \label{ch4:eq:behav0Xi2}
\xi_j(z)
= \left\{\begin{array}{ll}  \displaystyle\left(\frac{r}{c_q}\right)^\frac{1}{r+1} \omega^{(-1)^{j}(\lfloor \frac{j+1}{2}\rfloor + \frac{1}{2})}z^{-\frac{1}{r+1}} + \mathcal O\left(z^{-\frac{2}{r+1}}\right), & \operatorname{Im}(z)>0,\\
\displaystyle\left(\frac{r}{c_q}\right)^\frac{1}{r+1} \omega^{(-1)^{j-1}(\lfloor \frac{j+1}{2}\rfloor + \frac{1}{2})} z^{-\frac{1}{r+1}} + \mathcal O\left(z^{-\frac{2}{r+1}}\right), & \operatorname{Im}(z)<0.
\end{array} \right.
\end{align}
\end{corollary}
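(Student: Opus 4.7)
The strategy is to extract the leading behavior from the algebraic equation \eqref{ch4:eq:xiAlg} and then pin down the branch of the $(r+1)$-th root for each sheet $j$ and each half-plane using the structural results of Lemma \ref{ch4:prop:propXi}. Rewriting \eqref{ch4:eq:xiAlg} in the form $\xi^{r+1}(1-\xi^{-1}) = -1/(c_q z)$, and invoking the bound $\xi_j(z) = O(z^{-1/(r+1)})$ from \eqref{ch4:eq:behav0Xi} (equivalently $\xi_j(z)^{-1} = O(z^{1/(r+1)})$), one obtains
\[
\xi_j(z)^{r+1} \;=\; -\frac{1}{c_q z}\bigl(1 + O(z^{1/(r+1)})\bigr)\qquad (z\to 0).
\]
Extracting the $(r+1)$-th root (the sub-leading correction producing the $O(z^{-2/(r+1)})$ error) yields
\[
\xi_j(z) \;=\; \Bigl(\tfrac{r}{c_q}\Bigr)^{\!1/(r+1)} \eta_j^{\pm}\, z^{-1/(r+1)} + O\bigl(z^{-2/(r+1)}\bigr),
\]
where $\eta_j^{\pm}$ (depending on $j$ and on the sign of $\operatorname{Im} z$) is one of the $(r+1)$-th roots of $-1$, i.e.\ $\eta_j^{\pm} = \omega^{k_j^{\pm}+1/2}$ for some integer $k_j^{\pm}$.

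The identification of the integers $k_j^{\pm}$ begins with the case $j=0$. Since $\xi_0$ is analytic across the negative real axis (its only cut being $\Delta_0 = [0,q]$) and Lemma \ref{ch4:prop:propXi} gives $\xi_0((-\infty,0)) = (1,\infty)$, the leading expression must be real and positive on the negative real axis approached from above. On that ray the principal branch $z^{-1/(r+1)}$ has argument $-\pi/(r+1)$, so the factor $\omega^{k_0^{+}+1/2}$ must cancel this argument, forcing $\eta_0^{+} = \omega^{1/2}$. By the Schwarz-reflection symmetry in Lemma \ref{ch4:prop:propXi}(ii) one then gets $\eta_0^{-} = \omega^{-1/2}$. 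The remaining $\eta_j^{\pm}$ are determined by induction on $j$ using the matching relations $\xi_{j,\pm} = \xi_{j+1,\mp}$ on $\Delta_j$ from Lemma \ref{ch4:prop:propXi}(i): evaluating both sides at points of $\Delta_j$ arbitrarily close to the origin and using that the principal branch $z^{-1/(r+1)}$ is continuous across the positive real axis but picks up a factor of $\omega$ across the negative real axis, one obtains a two-term recursion for the exponents $k_j^{\pm}$ whose closed-form solution is exactly the exponent appearing in the statement.

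The main obstacle is the careful bookkeeping in this last step: because the cuts $\Delta_j$ alternate between the positive real axis (for even $j\geq 2$, where $z^{-1/(r+1)}$ is continuous) and the negative real axis (for odd $j$, where $z^{-1/(r+1)}$ jumps by $\omega$), the recursion takes different forms depending on the parity of $j$. The alternation is precisely what produces the slightly awkward combinatorial expression $(-1)^j(\lfloor(j+1)/2\rfloor+\tfrac12)$ rather than something cleaner. Beyond this parity bookkeeping there is no deep difficulty: the asymptotic follows essentially from the algebraic equation alone, with Lemma \ref{ch4:prop:propXi} serving only to select the correct branch on each sheet.
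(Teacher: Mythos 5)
Your strategy is essentially the same as the paper's: use the algebraic equation $c_q z = 1/((1-\xi)\xi^r)$ plus the fact that $\xi_0((-\infty,0))=(1,\infty)$ to fix the branch of $\xi_0$, and then propagate to $\xi_1,\dots,\xi_r$ via the sheet identifications $\xi_{j,\pm}=\xi_{j+1,\mp}$ of Lemma~\ref{ch4:prop:propXi}(i). The paper's own proof is just a terser version of the same plan, with the Schwarz symmetry invoked implicitly.

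One caution about the final sentence of your sketch, where you assert that the parity recursion produces ``exactly the exponent appearing in the statement.'' You do not actually carry out the recursion, and if you do it carefully it does \emph{not} reproduce the stated exponent for odd $j$. Starting from $\xi_0(z)\sim C\omega^{1/2}z^{-1/(r+1)}$ in $\operatorname{Im}z>0$, the matching $\xi_{0,+}=\xi_{1,-}$ on $\Delta_0=[0,q]$ (where the principal branch $z^{-1/(r+1)}$ is continuous) forces $\xi_1(z)\sim C\omega^{1/2}z^{-1/(r+1)}$ in $\operatorname{Im}z<0$, hence by Schwarz reflection $\xi_1(z)\sim C\omega^{-1/2}z^{-1/(r+1)}$ in $\operatorname{Im}z>0$; the stated formula instead gives $\omega^{\mp 3/2}$ for $j=1$. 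Tracking the alternation through all $j$ yields the exponent $(-1)^{j}\bigl(\lfloor j/2\rfloor+\tfrac12\bigr)$, not $(-1)^{j}\bigl(\lfloor(j+1)/2\rfloor+\tfrac12\bigr)$; the former, unlike the latter, enumerates the $(r+1)$ distinct $(r+1)$-th roots of $-1$ without repetition and is consistent with the pattern $\lfloor j/2\rfloor$ used everywhere else in the paper (e.g.\ in \eqref{ch4:eq:defPsijExplicit}, \eqref{ch4:eq:behavInftyXi2}, and Proposition~\ref{ch4:prop:sumfm}). So the endpoint you are targeting is very likely a typo in the paper's statement; the corollary is only ever applied for the order estimate $\xi_j(z)=\mathcal O(z^{-1/(r+1)})$, where the branch is irrelevant, so the rest of the paper is unaffected, but your proof would produce a slightly different (and correct) exponent than the one written.
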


We remind the reader that $\omega$ is as in \eqref{ch4:defab}. 

\begin{proof}
Since $\xi_0((-\infty,0))=(1,\infty)$ we must have, using \eqref{ch4:eq:xiAlg}, that
$$\xi_0(z) = \left(\frac{r}{c_q}\right)^\frac{1}{r+1}(-x)^{-\frac{1}{r+1}}$$
as $x\to 0^-$. Thus we have
$\xi_0(z) = \omega^{\pm \frac{1}{2}} z^{-\frac{1}{r+1}}$
as $z\to 0$ for $\pm\operatorname{Im}(z)>0$. The behaviors for $\xi_1,\ldots,\xi_r$ follow from \ref{ch4:prop:propXi}(i). 
\end{proof}

\subsection{Solution of the global parametrix for $\beta=0$}

We first find a solution to the global parametrix problem for $\beta=0$. 

\begin{theorem} \label{ch4:thm:globalParam0}
For $\beta=0$ the global parametrix problem is solved by
\begin{align} \label{ch4:eq:defN0}
N_0(z) = 
\begin{pmatrix}
p_0(\xi_0(z)) F(\xi_0(z)) & p_0(\xi_1(z)) F(\xi_1(z)) & \cdots & p_0(\xi_r(z)) F(\xi_r(z))\\
p_1(\xi_0(z)) F(\xi_0(z)) & p_1(\xi_1(z)) F(\xi_1(z)) & \cdots & p_1(\xi_r(z)) F(\xi_r(z))\\
\vdots & & & \vdots\\
p_r(\xi_0(z)) F(\xi_0(z)) & p_r(\xi_1(z)) F(\xi_1(z)) & \cdots & p_r(\xi_r(z)) F(\xi_r(z))
\end{pmatrix},
\end{align}
where $p_0,\ldots,p_r$ are unique polynomials of (at most) degree $r$ and where
\begin{align} \label{ch4:eq:defF}
F(\xi) = \frac{1}{\sqrt{(r+1)\xi^r-r \xi^{r-1}}}.
\end{align}
In the definition of $F$ the square root is taken to have the $r$ cuts $\xi_{0,+}(\Delta_0), \xi_{1,+}(\Delta_1), \ldots, \xi_{r-1,+}(\Delta_{r-1})$ and it is positive for large positive values of $\xi$. 
\end{theorem}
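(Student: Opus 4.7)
The plan is to verify the three defining properties RH-N1, RH-N2, RH-N3 directly for the ansatz $N_0(z)$, treating the polynomials $p_0,\ldots,p_r$ of degree at most $r$ as free parameters that will be determined uniquely by matching the asymptotic condition at infinity.

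\textbf{Analyticity and jumps.} Each $\xi_k$ is analytic on $\mathbb C\setminus\mathbb R$, and by design the $r$ cuts $\xi_{j,+}(\Delta_j)$ chosen for $F$ are exactly the $\xi$-images of the jump contours of the $\xi_k$'s, so $F(\xi_k(z))$ is holomorphic on $\mathbb C\setminus\mathbb R$, which gives RH-N1. For RH-N2 the central input is Lemma \ref{ch4:prop:propXi}(i), $\xi_{j,\pm}(x) = \xi_{j+1,\mp}(x)$ on $\Delta_j$, so the polynomial factors satisfy $p_l(\xi_{j,+}(x)) = p_l(\xi_{j+1,-}(x))$. For the $F$-factor, one argues that $\xi_j(\mathbb H^+)$ and $\xi_{j+1}(\mathbb H^-)$ lie on opposite sides of the cut $\xi_{j,+}(\Delta_j)$ of $F$ (these are the two regions glued across that cut by the Riemann surface of $\xi$), whereas $\xi_{j,-}(\Delta_j) = \xi_{j+1,+}(\Delta_j)$ is \emph{not} a cut of $F$. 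This forces
\[
F(\xi_{j,+}(x)) = -F(\xi_{j+1,-}(x)), \qquad F(\xi_{j+1,+}(x)) = +F(\xi_{j,-}(x)),
\]
which is precisely the block $\begin{pmatrix} 0 & 1 \\ -1 & 0 \end{pmatrix}$ acting on columns $(j, j+1)$; the remaining columns carry no jump on $\Delta_j$. Assembling the blocks from all $\Delta_j$'s crossed by a given interval of $\mathbb R\setminus\{0,q\}$ reproduces the direct-sum structures in RH-N2 with $\beta=0$.

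\textbf{Asymptotics and determination of $p_l$.} For RH-N3, I would insert the expansions \eqref{ch4:eq:behavInftyXi1}-\eqref{ch4:eq:behavInftyXi2} into $p_l(\xi_k(z)) F(\xi_k(z))$ using $F(\xi) = (-r)^{-1/2} \xi^{-(r-1)/2}(1+O(\xi))$ for small $\xi$, and expand in powers of $z^{-1/r}$. A direct comparison with \eqref{ch4:eq:U+-evenandodd} shows that the $\Omega$-phase of $\xi_k^{(l-1)-(r-1)/2}$ differs from that of $U^\pm_{l,k}$ by exactly $\mp\tfrac{r-1}{2}(-1)^k\lfloor k/2\rfloor$, which coincides modulo $r$ with the phase in $D^\pm_{k,k}$; the remaining sign pattern in $D^\pm$ is precisely what the chosen branch of $F$ produces on each sheet. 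Imposing $p_l(\xi_k(z))F(\xi_k(z))=(M(z))_{l,k}+O(z^{-(r+1)/(2r)})$ for all $k=0,1,\ldots,r$ forces the coefficient of $\xi^m$ in $p_l$ to vanish for every $0\le m\le r-1$ with $m\neq l-1$; the coefficient of $\xi^{l-1}$ is then pinned down by any single column $k\ge 1$, while the condition $p_l(1)=\delta_{l,0}$ extracted from column $k=0$ fixes the coefficient of $\xi^r$. This gives, uniquely, $p_0(\xi)=\xi^r$ and $p_l(\xi)=c_l(\xi^{l-1}-\xi^r)$ for $l\geq 1$ with explicit $c_l$.

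\textbf{Main obstacle.} The technical heart of the proof is the global sign bookkeeping. Establishing rigorously that $\xi_j(\mathbb H^+)$ and $\xi_{j+1}(\mathbb H^-)$ sit on opposite sides of the $F$-cut $\xi_{j,+}(\Delta_j)$, while the twin curve $\xi_{j,-}(\Delta_j)$ is not a cut, requires a precise picture of how the regions $\xi_j(\mathbb H^\pm)$ tile $\mathbb C_\xi$ into $2(r+1)$ pieces meeting at $\xi = r/(r+1)$ and at infinity. The same tiling also governs the branch of $F$ on each sheet, and hence the alignment with the explicit sign pattern of $D^\pm$ in the asymptotic matching. Once this is in place, the uniqueness of the $p_l$'s reduces to the (easy) linear algebra sketched above.
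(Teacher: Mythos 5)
Your overall strategy—verify RH-N1, RH-N2, RH-N3 directly, using Lemma \ref{ch4:prop:propXi}(i) and the chosen cut structure of $F$ for the jumps, and the expansions \eqref{ch4:eq:behavInftyXi1}–\eqref{ch4:eq:behavInftyXi2} for the asymptotics—is the same as the paper's, and the RH-N2 discussion (the sign coming from whichever of $\xi_{j,+}(\Delta_j)$, $\xi_{j+1,+}(\Delta_j)$ is an actual cut of $F$) is essentially correct, though stated more tersely than the paper's $(-1)^{\sigma(\xi)}$ bookkeeping.

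The gap is in the determination of the polynomials. You claim that the asymptotic condition forces every coefficient of $\xi^m$ in $p_l$ with $0\le m\le r-1$, $m\neq l-1$, to vanish, yielding $p_0(\xi)=\xi^r$ and $p_l(\xi)=c_l(\xi^{l-1}-\xi^r)$. This does not follow, and is false for general $r$. The phase argument you invoke only fixes the \emph{leading} monomial $\xi^{l-1}$: it guarantees that the order-$z^{(r+1-2l)/(2r)}$ term in $p_l(\xi_k(z))F(\xi_k(z))$ carries the phase of $(U^\pm D^\pm)_{lk}$. But RH-N3 also imposes conditions at every subleading order $z^{(r+1-2l)/(2r) - m/r}$ for $m=1,\dots,r-l$, and these conditions do not force the higher coefficients $a_l^{[l]},\dots,a_{r-1}^{[l]}$ to vanish; rather, they force them to equal the Taylor coefficients (at $\xi=0$) of $\text{const}\cdot(1-\xi)^{(l-1)/r} h(\xi)^{-1}$, where $h$ is the analytic factor absorbing the subleading behaviour of $F(\xi_j(z)) = D^\pm_{jj} z^{\frac{r-1}{2r}}h(\xi_j(z))$ (see \eqref{ch4:eq:asympFxi} and the surrounding discussion). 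Since $h$ is not a constant—its Taylor series involves $\bigl(1-\tfrac{r+1}{r}\xi\bigr)^{-1/2}$ through the decomposition \eqref{ch4:eq:decompF}—and $(1-\xi)^{(l-1)/r}$ is not a constant either, these intermediate coefficients are generically nonzero. The paper deliberately keeps them abstract as $a_m^{[k]}$ in \eqref{ch4:eq:defpkbk} and does not (and cannot) reduce $p_l$ to a binomial. What you have correctly identified is the \emph{lowest} nonzero coefficient ($a_m^{[l]}=0$ for $m<l-1$, coming from \eqref{ch4:eq:zktoxi}), and the constraint $p_l(1)=\delta_{l,0}$ from the first column; the intermediate coefficients are then determined, not annihilated, by the matching. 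The uniqueness claim survives, but the explicit formula does not, and the argument "phase mismatch forces $a_m=0$" needs to be replaced by "the condition at each order $z^{-m/r}$ determines $a_m$ as a Taylor coefficient of a non-constant analytic function."
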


\begin{proof} We start by proving that RH-N3 is satisfied. It turns out to be convenient to write the asymptotics in terms of $\xi_j(z)$ rather than $z$, for $j=1,\ldots,r$. Namely, it follows from the algebraic equation \eqref{ch4:eq:xiAlg} and the asymptotics \eqref{ch4:eq:behavInftyXi2} that 
\begin{align} \label{ch4:eq:xitoz1}
c_q^\frac{k}{r} \xi_j^k (1-\xi_j)^\frac{k}{r} = 
\left\{\begin{array}{rl} 
\Omega^{(-1)^{j}\lfloor \frac{j}{2}\rfloor k}z^{-\frac{k}{r}}, & \operatorname{Im}(z)>0,\\
\Omega^{(-1)^{j-1}\lfloor \frac{j}{2}\rfloor k}z^{-\frac{k}{r}}, & \operatorname{Im}(z)<0,
\end{array}\right.
\end{align} 
for large enough $z$. With large enough $z$ we mean that we should have $|\xi_j(z)|<1$(see \eqref{ch4:eq:behavInftyXi2}) so that $(1-\xi_j)^\frac{k}{r}$ is well-defined. Then by \eqref{ch4:eq:xitoz1} we have for $\pm\operatorname{Im}(z)>0$ that
\begin{align} \nonumber
\Omega^{\pm (-1)^{j}\lfloor \frac{j}{2}\rfloor k} z^{-\frac{k}{r}} &= c_q^\frac{k}{r} \xi^k \sum_{m=0}^\infty \binom{\frac{k}{r}}{m} (-1)^m \xi^m\\ \label{ch4:eq:zktoxi}
&= c_q^\frac{k}{r} \sum_{m=k}^{r-1} \binom{\frac{k}{r}}{m-k} (-1)^{m-k} \xi^m + \mathcal O(\xi^r).
\end{align}
as $z\to \infty$. We will use this expansion in a moment. 

We know that $\xi_0(q)=\frac{r}{r+1}$ (see Lemma \ref{ch4:prop:propXi}). We may decompose $F$ as 
\begin{align} \label{ch4:eq:decompF}
F(\xi) = \frac{1}{\sqrt{r+1}} \frac{1}{\sqrt{\xi-\xi_0(q)}}  (-1)^{\sigma(\xi)} \xi^\frac{1-r}{2},
\end{align}
where the square root function has the cut $\xi_{0,+}(\Delta_0)$, is positive for large positive values of $\xi$, and, as usual, $\xi^\frac{1-r}{2}$ is taken with the principle branch (when $r\equiv 0\mod 2$). $\sigma(\xi)$ counts the number of cuts among $\xi_{1,+}(\Delta_1), \ldots, \xi_{r-1,+}(\Delta_{r-1})$ that have been touched if we go with a circular arc from $|\xi|$ to $\xi$, let's say that this arc is in the upper half-plane if $\operatorname{Im}(\xi)>0$ and in the lower half-plane if $\operatorname{Im}(\xi)<0$. We then have $\sigma(\xi)=0$ for $\xi$ in the lower half-plane, because $\xi_{1,+}(\Delta_1), \ldots, \xi_{r-1,+}(\Delta_{r-1})$ are all in the upper half-plane by Lemma \ref{ch4:prop:propXi}(c). We find that for $j=1,\ldots,r$
\begin{align} \label{ch4:eq:1-1jsigma}
\sigma(\xi_j(z)) = \left\{\begin{array}{rl}
\frac{1+(-1)^j}{2} (j-1), & \operatorname{Im}(z)>0,\\
\frac{1-(-1)^j}{2} (j-1), & \operatorname{Im}(z)<0.
\end{array}\right.
\end{align}
Now using \eqref{ch4:eq:behavInftyXi2} and \eqref{ch4:eq:1-1jsigma} we find for $j=1,\ldots,r$ that
\begin{align*}
F(\xi_j(z)) = i\frac{r^\frac{r-1}{2r} c_q^{-\frac{r-1}{2r}}}{\sqrt r (m_\frac{1}{r}')^\frac{r-1}{2}}\frac{1}{\sqrt{1-\frac{r+1}{r}\xi_j(z)}}  (-1)^{\sigma(j)+\left\lfloor\frac{j}{2}\right\rfloor} \Omega^{\pm (-1)^{j} \frac{1}{2}\left\lfloor\frac{j}{2}\right\rfloor} z^{-\frac{r-1}{2 r}} 
\left(1 + \mathcal O\left(z^{-\frac{1}{r}}\right)\right)
\end{align*}
for $\pm\operatorname{Im}(z)>0$ as $z\to\infty$. The expression between brackets on the far right can be written as
\begin{align*}
f\left(\Omega^{\pm (-1)^{j}\left\lfloor\frac{j}{2}\right\rfloor} z^{-\frac{1}{r}}\right),
\end{align*}
for some function $f$ with $f(0)=1$ that is analytic around $0$, and does not depend on $j$, which is clear from \eqref{ch4:eq:zetajAsympInfty} and \eqref{ch4:eq:defXi}. Then there exists a function $h$ with $h(0)\neq 0$ that is analytic around $0$, and does not depend on $j$, such that  for $j=1,\ldots,r$
\begin{align} \label{ch4:eq:Fxiasymph}
F(\xi_j(z)) = (-1)^{\sigma(j)+\left\lfloor\frac{j}{2}\right\rfloor} \Omega^{\pm (-1)^{j} \frac{1}{2}\left\lfloor\frac{j}{2}\right\rfloor} z^{-\frac{r-1}{2 r}}  h(\xi_j(z))
\end{align}
for $\pm\operatorname{Im}(z)>0$ as $z\to\infty$. One can verify that the power of $-1$ in \eqref{ch4:eq:Fxiasymph} follows exactly the patern of the matrix $D^\pm$ as defined in $\eqref{ch4:eq:defDpm}$. We conclude that
\begin{align} \label{ch4:eq:asympFxi}
F(\xi_j(z)) = D^\pm_{jj} z^{\frac{r-1}{2 r}} h(\xi_j(z))
\end{align}
as $z\to\infty$, for $j=1,\ldots,r$. 

Now, considering only the lower-right $r\times r$ block, RH-N3 demands that
\begin{multline*}
\begin{pmatrix}
p_1(\xi_1(z)) & p_1(\xi_2(z)) & \hdots & p_1(\xi_r(z))\\
p_2(\xi_1(z)) & p_2(\xi_2(z)) & \hdots & p_2(\xi_r(z))\\
\vdots & & & \vdots\\
p_r(\xi_1(z)) & p_r(\xi_2(z)) & \hdots & p_r(\xi_r(z))
\end{pmatrix}
\bigoplus_{j=1}^r F(\xi_j(z))\\
= \left(\mathbb I +\mathcal O\left(\frac{1}{z}\right)\right) z^{\frac{r-1}{2 r}}
\left(\bigoplus_{j=1}^r z^{-\frac{j-1}{r}}\right) U^\pm D^\pm.
\end{multline*}
as $z\to\infty$ for $\pm\operatorname{Im}(z)>0$. Using \eqref{ch4:eq:asympFxi} we see that this implies that we should have
\begin{multline*}
\left(\mathbb I +\mathcal O\left(\frac{1}{z}\right)\right)
\begin{pmatrix}
p_1(\xi_1(z)) & p_1(\xi_2(z)) & \hdots & p_1(\xi_r(z))\\
p_2(\xi_1(z)) & p_2(\xi_2(z)) & \hdots & p_2(\xi_r(z))\\
\vdots & & & \vdots\\
p_r(\xi_1(z)) & p_r(\xi_2(z)) & \hdots & p_r(\xi_r(z))
\end{pmatrix}\\
= i \left(\bigoplus_{j=1}^r z^{-\frac{j-1}{r}}\right) U^\pm
\left(\bigoplus_{j=1}^r h(\xi_j(z))\right)^{-1}.
\end{multline*}
as $z\to\infty$ for $\pm\operatorname{Im}(z)>0$. Looking in the $k$-th  column and the $j$-th row this means that $p_k$ should satisfy
\begin{align} \label{ch4:eq:asympforpk}
p_k(\xi_j(z)) + \mathcal O\left(\frac{1}{z}\right) = i z^{-\frac{k}{r}} \Omega^{\pm (-1)^{j} \lfloor \frac{j}{2}\rfloor k} h(\xi_j(z))^{-1} 
\end{align}
as $z\to\infty$. Now by \eqref{ch4:eq:zktoxi} and the fact that $h(0)\neq 0$, we infer that there exist coefficients $a_0^{[k]}, a_1^{[k]}, \ldots$, not depending on $j$, such that
\begin{align*}
 i z^{-\frac{k}{r}} \Omega^{\pm (-1)^{j} \lfloor \frac{j}{2}\rfloor k} h(\xi_j(z))^{-1} 
 = a_0^{[k]} + a_1^{[k]} \xi_j(z) + a_2^{[k]} \xi_j(z)^2 + \ldots + a_{r-1}^{[k]} \xi_{j}(z)^{r-1} + \mathcal O\left(\xi_j(z)^r\right)
\end{align*}
as $z\to\infty$. In principle these coefficients can be determined explicitly with the help of a taylor series, but we shall not need an explicit description. Looking at \eqref{ch4:eq:asympforpk}, we infer that we obtain RH-N3 in the lower-right $r\times r$ block if we define
\begin{align} \label{ch4:eq:defpkbk}
p_k(\xi) = a_0^{[k]} + a_1^{[k]} \xi + a_2^{[k]} \xi^2 + \ldots + a_{r-1}^{[k]} \xi^{r-1} + b_k \xi^r,
\end{align}
where we still have some freedom in choosing $b_k$.

Let us now focus on getting the correct asymptotics RH-N3 in the first colum. 
By \eqref{ch4:eq:decompF} and \eqref{ch4:eq:behavInftyXi1} we have as $z\to\infty$ that
\begin{align} \label{ch4:eq:Fxi0asymp}
F(\xi_0(z)) = 1+ \mathcal O\left(\frac{1}{z}\right).
\end{align}
We should have for all $k=1,\ldots,r$ that
\begin{align*}
p_k(\xi_0(z)) F(\xi_0(z)) = \mathcal O\left(\frac{1}{z}\right)
\end{align*}
as $z\to\infty$. Using the asymptotics of $\xi_0$ from \eqref{ch4:eq:behavInftyXi1} and \eqref{ch4:eq:Fxi0asymp} this means that we should have 
$p_k(1) = 0$. This can easily be achieved, simply by choosing $b_k = -a_0^{[k]} - a_1^{[k]} - a_2^{[k]} - \ldots - a_{r-1}^{[k]}$ in \eqref{ch4:eq:defpkbk}. This fixes the definition of $p_1,\ldots,p_r$ and we have RH-N3 for all rows except the first row. 
We should have 
\begin{align*}
p_0(\xi_0(z))F(\xi_0(z)) &= 1+\mathcal O\left(\frac{1}{z}\right),\\
p_0(\xi_j(z)) F(\xi_j(z)) &=\mathcal O\left(\frac{1}{z}\right),
\end{align*}
as $z\to\infty$, for all $j=1,\ldots,r$. In view of \eqref{ch4:eq:Fxi0asymp}, and the asymptotics \eqref{ch4:eq:behavInftyXi1} and \eqref{ch4:eq:behavInftyXi2}, this is achieved if we define
\begin{align*}
p_0(\xi) = \xi^r. 
\end{align*}
We conclude that RH-N3 is satisfied with our particular choice of polynomials $p_0, p_1, \ldots, p_r$. It is clear that, when we impose that $p_0, p_1, \ldots, p_r$ have degree at most $r$, we have no other choice for their coefficients, and the uniqueness follows. 

It remains to prove that RH-N2 is satisfied. These are more or less immediate due to the cuts of the Riemann surface associated with $\xi$ (see Lemma \ref{ch4:prop:propXi}(i)), except that we should argue that the minus signs are in the correct place, i.e., in the lower-left component of each $2\times 2$ blocks. For $x>0$ a particular $2\times 2$ block in the jump of $N_0$ takes the form
\begin{align}
\begin{pmatrix} \label{ch4:eq:2by2blockwithFF}
0 & F(\xi_{2j+1})_+(x)/F(\xi_{2j})_-(x)\\
F(\xi_{2j})_+(x)/F(\xi_{2j+1})_-(x) & 0
\end{pmatrix}
\end{align}
where $j=0,1,\ldots,\lfloor \frac{r-1}{2} \rfloor$. We know that $\xi_{2j+1,+}((0,\infty))$ does not intersect with any of the cuts of $F$, hence we will not get a minus sign. However, we know that $\xi_{2j,+}([0,\infty))= \xi_{2j,+}(\Delta_{2j})$ is one of the cuts, hence we get a factor $-1$ in the lower left component of \eqref{ch4:eq:2by2blockwithFF}. In the case that $r$ is even, the last block is a $1\times 1$ block. Indeed we have
\begin{align*}
F(\xi_r)_+(x)/F(\xi_r)_-(x) = 1
\end{align*}
because $\xi_{r}((0,\infty))$ does not intersect with any of the cuts of $F$.

An analogous argument works for the jump with $x<0$. 
\end{proof}

\subsection{Definition of the global parametrix for general $\beta$}

With the global parametrix $N_0$ for $\beta=0$, and the functions $\xi_0,\ldots,\xi_r$ at our disposal, we are ready to define the global parametrix for general $\beta$ (or $\alpha$ equivalently). 

\begin{definition} \label{ch4:def:N}
We define the global parametrix by
\begin{align} \label{ch4:eq:defN}
N(z) = C_\beta N_0(z) (z^{-\beta} \oplus \mathbb I_{r\times r})
\bigoplus_{j=0}^r e^{-\beta \log(1-\xi_j(z))},
\end{align}
where $C_\beta$ is the matrix given by
\begin{multline} \label{ch4:eq:defGammabeta}
r \operatorname{diag}\left(1,c_q, c_q^2, \ldots, c_q^r\right)
\begin{pmatrix}
r^{\beta-1} c_q^{-\beta} & 0 & 0 & 0 & \hdots & 0\\
0 & 1 & \binom{\beta+\frac{1}{r}}{1} & \binom{\beta+\frac{2}{r}}{2} & \hdots & \binom{\beta+\frac{r-1}{r}}{r-1}\\
0 & 0 & 1 & \binom{\beta+\frac{1}{r}}{1} & \hdots & \binom{\beta+\frac{r-2}{r}}{r-2}\\
0 & 0 & 0 & 1 & \hdots & \binom{\beta+\frac{r-3}{r}}{r-3}\\
\vdots & & & & \ddots & \vdots\\
0 & 0 & 0 & 0 & \hdots & 1
\end{pmatrix}
\operatorname{diag}\left(1,c_q, c_q^2, \ldots, c_q^r\right)^{-1}.
\end{multline}
\end{definition}

\begin{theorem}
The global parametrix problem, for general $\alpha>-1$, is solved by $N$ as in Definition \ref{ch4:def:N}. 
\end{theorem}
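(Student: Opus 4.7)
The plan is to verify the three conditions RH-N1, RH-N2, RH-N3 of the global parametrix problem for $N$ as in Definition~\ref{ch4:def:N}, leveraging Theorem~\ref{ch4:thm:globalParam0}, which handles the case $\beta=0$. For RH-N1, $N_0$ is already analytic on $\mathbb{C}\setminus\mathbb{R}$, the factor $z^{-\beta}\oplus\mathbb{I}$ is analytic on $\mathbb{C}\setminus(-\infty,0]$, and $h_j(z):=e^{-\beta\log(1-\xi_j(z))}$ is analytic wherever $1-\xi_j(z)$ avoids the cut of $\log$. By Lemma~\ref{ch4:prop:propXi}(iii), $\xi_j$ for $j\geq 1$ stays strictly in one half-plane off the real axis, so $1-\xi_j$ never touches $(-\infty,0]$ there; while $\xi_0$ is analytic on $\mathbb{C}\setminus[0,q]$ with $1-\xi_0$ crossing the negative real axis only on $(-\infty,0)$. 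Hence any jumps of $\bigoplus_{j=0}^r h_j$ lie entirely on $\mathbb{R}$ and will be absorbed into RH-N2.

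For RH-N2 I check the jumps on $(0,q)$, $(q,\infty)$, and $(-\infty,0)$ separately. On $(0,q)=\Delta_0$, Lemma~\ref{ch4:prop:propXi}(i) yields $h_{0,+}=h_{1,-}$ and $h_{1,+}=h_{0,-}$; a direct $2\times 2$ computation combining this swap with $z^{-\beta}\oplus\mathbb{I}$ and the $N_0$-jump $\begin{pmatrix}0&1\\-1&0\end{pmatrix}$ turns the upper-left block into $\begin{pmatrix}0&x^\beta\\-x^{-\beta}&0\end{pmatrix}$, while the other blocks inherit directly from $N_0$. On $(-\infty,0)$, since $\xi_0((-\infty,0))=(1,\infty)$ the function $1-\xi_0$ crosses the cut of $\log$ and $h_0$ jumps by $e^{2\pi i\beta}$, exactly canceling the $e^{-2\pi i\beta}$ jump of $z^{-\beta}$; the remaining pairs $h_{2k},h_{2k+1}$ swap compatibly with the permutation structure of the $N_0$-jump. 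On $(q,\infty)$, $\xi_0((q,\infty))\subset(r/(r+1),1)$ so $1-\xi_0>0$ and both $z^{-\beta}$ and $h_0$ are continuous, while the same swap argument handles the remaining blocks.

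For RH-N3 I analyze $N(z)=C_\beta N_0(z)D(z)H(z)$ as $|z|\to\infty$, where $D=z^{-\beta}\oplus\mathbb{I}$ and $H=\bigoplus_{j=0}^r h_j$. The algebraic equation $\xi_j^r(1-\xi_j)=1/(c_qz)$ from~\eqref{ch4:eq:xiAlg}, together with~\eqref{ch4:eq:behavInftyXi1}, shows that $z^{-\beta}h_0=(c_q/r)^\beta+O(1/z)$, while for $j\geq 1$, $h_j=(1-\xi_j)^{-\beta}$ expands in powers of $\zeta_j z^{-1/r}$ with $\zeta_j=\Omega^{\pm(-1)^j\lfloor j/2\rfloor}$ and coefficients independent of $j$ (a structure forced by the algebraic equation). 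Invoking the cyclic identity $U^\pm\bigl(\bigoplus_j\zeta_j^k\bigr)=S^kU^\pm$ already used in the proof of Lemma~\ref{ch4:prop:Cn}, the series in $z^{-k/r}$ shifts through $U^\pm D^\pm$ as $\sum_k g_k(c_qz)^{-k/r}S^k$, and conjugation by $\Lambda_r=\bigoplus_{j=1}^r z^{-(j-1)/r}$ collapses it modulo $O(1/z)$ to a constant upper-triangular Toeplitz matrix. The matrix $C_\beta$ in~\eqref{ch4:eq:defGammabeta}, including the scaling by $\operatorname{diag}(1,c_q,\ldots,c_q^r)$, is precisely the inverse of this block (together with $(c_q/r)^\beta$ in position $(0,0)$), so that $N$ exhibits the asymptotic behavior demanded by RH-N3.

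The main obstacle will be the bookkeeping in RH-N3: one must expand $(1-\xi_j)^{-\beta}$ in $\xi_j$, and then $\xi_j$ itself in $z^{-1/r}$ (for instance via Lagrange inversion applied to $u=\xi(1-\xi)^{1/r}$), in order to identify the coefficients $g_k c_q^{-k/r}$ with the entries $\binom{\beta+k/r}{k}c_q^{-k}$ in~\eqref{ch4:eq:defGammabeta} (the diagonal conjugation by $\operatorname{diag}(1,c_q,\ldots,c_q^r)$ then accounting for the remaining factors). In addition, when $\Lambda_rS^m\Lambda_r^{-1}$ is computed for $m\geq r$, wrap-around contributes an extra factor of $z^{-1}$, so such terms are $O(1/z)$ and do not spoil the leading-order match.
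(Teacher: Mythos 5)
Your overall strategy matches the paper's: verify RH-N1/N2 via the cut structure of the $\xi_j$ and the Szeg\H{o}-type factors $h_j=e^{-\beta\log(1-\xi_j)}$, then verify RH-N3 by an expansion in $z^{-1/r}$ and a matching of the constant matrix $C_\beta$. Two points are worth flagging.

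\textbf{A genuine (but fixable) gap in RH-N2.} For the blocks in the jump that are not permuted by the $\xi_j\leftrightarrow\xi_{j+1}$ swap --- e.g.\ $h_1$ on $(q,\infty)$, or $h_r$ on $(-\infty,0)$ when $r$ is odd --- you must show that $h_j$ itself has \emph{no} jump there, which requires $\xi_j(x)\notin[1,\infty)$ for $x\in\mathbb R\setminus\Delta_j$. Your appeal to Lemma \ref{ch4:prop:propXi}(iii) only controls $\xi_j$ off the real axis; on $\mathbb R\setminus\Delta_j$ the value $\xi_j(x)$ is real, and (iii) alone does not preclude $\xi_j(x)>1$, which would introduce an extra $e^{\mp 2\pi i\beta}$ discontinuity in $h_j$ from the logarithm. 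The paper closes this with a bijectivity argument: since $\xi_0$ already sweeps out all of $(\tfrac{r}{r+1},\infty)$ on $(-\infty,0)\cup(q,\infty)$ and $\xi$ is a conformal bijection from the Riemann surface to the sphere, no other $\xi_j$ can attain values in $(\tfrac{r}{r+1},\infty)$, so $1-\xi_j$ stays away from $(-\infty,0]$ everywhere for $j\ge 1$. You should add this observation before invoking the swap argument.

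\textbf{A different but legitimate route for RH-N3.} You compute the leading Toeplitz block by expanding each $(1-\xi_j)^{-\beta}$ in $\zeta_j (c_q z)^{-1/r}$ with $j$-independent coefficients, shift these through $U^\pm$ via the cyclic identity $U^\pm\bigoplus_j\zeta_j^k=S^kU^\pm$ of Lemma \ref{ch4:prop:Cn}, and then identify $C_\beta$ as the inverse of the resulting upper triangular Toeplitz matrix via Lagrange inversion. The paper instead rearranges the asymptotic condition to
\[
\Gamma_\beta + \mathcal O(1/z) = \Bigl(\bigoplus z^{-\frac{j-1}{r}}\Bigr)\,U^\pm\Bigl(\bigoplus(1-\xi_j)^\beta\Bigr)(U^\pm)^{-1}\Bigl(\bigoplus z^{\frac{j-1}{r}}\Bigr),
\]
observes that each entry is a symmetric function of $\xi_1,\dots,\xi_r$ hence jump-free and an integer-power Laurent series, and extracts $\Gamma_\beta$ as the constant term by a contour integral. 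The two computations are algebraically equivalent (the inverse of an upper triangular Toeplitz matrix built from a power series $f$ is the Toeplitz matrix built from $1/f$), but the paper's permutation-symmetry argument avoids the explicit Lagrange inversion and is somewhat slicker; your route requires more bookkeeping to land exactly on the binomial coefficients in \eqref{ch4:eq:defGammabeta}, and the deferred wrap-around estimate for $m\geq r$ that you note is indeed where the $\mathcal O(1/z)$ error enters.
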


\begin{proof}
We know from Lemma \ref{ch4:prop:propXi} that $\xi_0((-\infty,0))=(1,\infty)$ and $\xi_0((q,\infty))=(\frac{r}{r+1},1)$.  Then the values $(\frac{r}{r+1},\infty)$ are not attained by the other $\xi_j$. Hence $1-\xi_j(\mathbb C\setminus \Delta_j)$ does not intersect with $(-\infty,0)$ when $j=1,\ldots,r$. Thus of all the components of the diagonal matrix in \eqref{ch4:eq:defN} only $z^{-\beta} e^{-\beta \log(1-\xi_0(z))}$ could possibly be a case where the cut of the logarithm is intersected. Using Lemma \ref{ch4:prop:propXi}(iii) we infer that for $x<0$ we get the jump
\begin{align*}
\left(x^{-\beta} e^{-\beta \log(1-\xi_0(x))}\right)_\pm &= e^{\pm \pi i \beta} |x|^{-\beta} e^{\mp \pi i\beta} |1-\xi_0(x)|^{-\beta}
= |x|^{-\beta} |1-\xi_0(x)|^{-\beta}.
\end{align*}
We conclude that $z^{-\beta} e^{-\beta \log(1-\xi_0(z))}$ does not have a jump on $(-\infty,0)$. Combining this with Lemma \ref{ch4:prop:propXi}(i), we infer that $N$ satisfies RH-N2. Notice that the factor $z^{-\beta}$ in \eqref{ch4:eq:defN} does indeed yield the correct powers of $z$ in the upper-left $2\times 2$ block of the jump for $x\in (0,q)$. 

It remains to show that RH-N3 is satisfied if we choose $C_\beta$ correctly. It follows from \eqref{ch4:eq:behavInftyXi1} that
\begin{align} \label{ch4:eq:rbetaelogxi0}
r^\beta c_q^{-\beta} e^{-\beta \log(1-\xi_0(z))} = 1 + \mathcal O\left(\frac{1}{z}\right)
\end{align}
as $z\to\infty$. For $j=1,\ldots,r$ we may use \eqref{ch4:eq:behavInftyXi2} to conclude that
\begin{align} \label{ch4:eq:rbetaelogxij}
e^{-\beta \log(1-\xi_j(z))} = (1-\xi_j(z))^{-\beta}
\end{align}
for $z$ large enough. We are going to find $C_\beta$ in the form $C_\beta = 1 \oplus \Gamma_\beta$, where $\Gamma_\beta$ is an $r\times r$ matrix that only depends on $\beta$. Then, in view of \eqref{ch4:eq:rbetaelogxi0} and \eqref{ch4:eq:rbetaelogxij}, we obtain RH-N3 if for $\pm\operatorname{Im}(z)>0$ we have as $z\to\infty$ that
\begin{align*}
\Gamma_\beta z^\frac{r-1}{2r} \left(\bigoplus_{j=1}^r z^{-\frac{j-1}{r}}\right) U^\pm D^\pm
\left(\bigoplus_{j=1}^r 1-\xi_j(z)\right)^{-\beta}
= \left(\mathbb I + \mathcal O\left(\frac{1}{z}\right)\right)
z^\frac{r-1}{2r} \left(\bigoplus_{j=1}^r z^{-\frac{j-1}{r}}\right) U^\pm D^\pm.
\end{align*}
Thus we should have that
\begin{align} \label{ch4:eq:Gammabetaimplicit}
\Gamma_\beta +\mathcal O\left(\frac{1}{z}\right) 
= \left(\bigoplus_{j=1}^r z^{-\frac{j-1}{r}}\right) U^\pm
\left(\bigoplus_{j=1}^r 1-\xi_j(z)\right)^{\beta}
(U^\pm)^{-1} \bigoplus_{j=1}^r z^{\frac{j-1}{r}}.
\end{align}
for $\pm\operatorname{Im}(z)>0$ as $z\to\infty$. Using \eqref{ch4:eq:xitoz1} we can write the component in the $k$-th column and $l$-th row of the right-hand side of \eqref{ch4:eq:Gammabetaimplicit} as
\begin{align*}
c_q^\frac{k-l}{r} \sum_{m=1}^r \xi_m(z)^{k-l} (1-\xi_m(z))^{\frac{k-l}{r}+\beta}. 
\end{align*}
Then we must have
\begin{align} \label{ch4:eq:GammabetaxiO}
\Gamma_{\beta,kl} &= c_q^\frac{k-l}{r} \sum_{m=1}^r \xi_m(z)^{k-l} (1-\xi_m(z))^{\frac{k-l}{r}+\beta} + \mathcal O\left(\frac{1}{z}\right)
\end{align}
as $z\to\infty$. The expression on the right-hand side of \eqref{ch4:eq:GammabetaxiO} (without the $\mathcal O$ term) is invariant under permutations of $\xi_1,\ldots,\xi_r$. Then it cannot have a jump for large enough $z$ ($\xi_0$ does not enter the equation when $|z|>q$) hence it must have a Laurent series expansion (of integer powers of $z$) around $z=0$. All these powers have to be non-positive due to \eqref{ch4:eq:behavInftyXi2}. Indeed, we always have $\xi_m(z)^{k-l}=\mathcal O\left(z^{-\frac{r}{r-1}}\right)$ thus there is no positive (integer) power of $z$ in the Laurent series. Hence we deduce from \eqref{ch4:eq:GammabetaxiO} that we obtain RH-N3 if we take
\begin{align*} 
C_{\beta,kl} &= c_q^\frac{k-l}{r} \lim_{R\to \infty} \oint_{|z|=R} \sum_{m=1}^r \xi_m(z)^{k-l} (1-\xi_m(z))^{\frac{k-l}{r}+\beta}  \frac{dz}{z}\\
&= \left\{\begin{array}{ll} r \binom{\frac{k-l}{r}+\beta}{l-k} c_q^{\frac{k-l}{r}}, & k\leq l\\
0, & k>l.\end{array}\right.
\end{align*}
for $k,l=1,\ldots,r$, and this is in agreement with \eqref{ch4:eq:defGammabeta}. In the last step we have again used the argument of invariance under permutations of $\xi_1,\ldots,\xi_r$ to argue that certain expressions are analytic and thus vanish when integrated over. 
\end{proof}

\subsection{Behavior of the global parametrix near the hard and soft edge}

\begin{proposition}
The global parametrix $N$ has the following behavior near the branch points. 
\begin{align} \label{ch4:eq:behavNasz0}
N(z) & \operatorname{diag}\left(z^\frac{r\beta}{r+1}, z^{-\frac{\beta}{r+1}},\ldots,z^{-\frac{\beta}{r+1}}\right) = \mathcal O\left(z^{-\frac{r}{2(r+1)}}\right), & \text{as } z\to 0.\\ \label{ch4:eq:behavNaszq}
N(z) &= \mathcal O\begin{pmatrix}(z-q)^{-\frac{1}{4}} & (z-q)^{-\frac{1}{4}} & 1 & \hdots & 1\\
\vdots & & & & \vdots\\
(z-q)^{-\frac{1}{4}} & (z-q)^{-\frac{1}{4}} & 1 & \hdots & 1
\end{pmatrix}, & \text{as } z\to q.
\end{align}
\end{proposition}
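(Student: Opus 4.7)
The plan is to unpack the definition \eqref{ch4:eq:defN} of $N$ and track the order of each factor separately near each endpoint. Since $C_\beta$ is a constant invertible matrix it does not affect order estimates, so effectively I need only analyze $N_0(z)\,(z^{-\beta}\oplus\mathbb{I}_{r\times r})\,\bigoplus_{j=0}^r e^{-\beta\log(1-\xi_j(z))}$.

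\emph{Behavior as $z\to 0$.} By Corollary \ref{ch4:cor:behavxi0}, every $\xi_j(z)$ is $\mathcal{O}(z^{-1/(r+1)})$. Since each $p_k$ is a polynomial of degree at most $r$, this gives $p_k(\xi_j(z))=\mathcal{O}(z^{-r/(r+1)})$. The explicit form \eqref{ch4:eq:defF} shows $F(\xi)\sim\bigl((r+1)\xi^r\bigr)^{-1/2}$ as $|\xi|\to\infty$, and hence $F(\xi_j(z))=\mathcal{O}(z^{r/(2(r+1))})$. Multiplying, every entry of $N_0(z)$ is $\mathcal{O}(z^{-r/(2(r+1))})$. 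Moreover, since $|1-\xi_j(z)|\sim|\xi_j(z)|$,
\begin{equation*}
e^{-\beta\log(1-\xi_j(z))}=(1-\xi_j(z))^{-\beta}=\mathcal{O}(z^{\beta/(r+1)}).
\end{equation*}
Inserting the factor $z^{-\beta}\oplus\mathbb{I}_{r\times r}$, the first column of $N(z)$ is $\mathcal{O}(z^{-r/(2(r+1))-r\beta/(r+1)})$ and the remaining columns are $\mathcal{O}(z^{-r/(2(r+1))+\beta/(r+1)})$. Right-multiplying by $\operatorname{diag}(z^{r\beta/(r+1)}, z^{-\beta/(r+1)}, \ldots, z^{-\beta/(r+1)})$ then eliminates the $\beta$-dependent exponents uniformly in all columns, producing \eqref{ch4:eq:behavNasz0}.

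\emph{Behavior as $z\to q$.} By Lemma \ref{ch4:prop:propXi}, $\xi_0(q)=r/(r+1)$; combined with the cut relation $\xi_{0,\pm}=\xi_{1,\mp}$ on $\Delta_0=[0,q]$ and the fact that $q$ is the right endpoint of $\Delta_0$, this forces $\xi_1(q)=r/(r+1)$ as well. Implicit differentiation of the algebraic equation \eqref{ch4:eq:xiAlg} shows that $dz/d\xi$ vanishes at $\xi=r/(r+1)$, so $\xi_0$ and $\xi_1$ both have square-root branch points at $z=q$:
\begin{equation*}
\xi_{0,1}(z)-\tfrac{r}{r+1}=\mathcal{O}((z-q)^{1/2}).
\end{equation*}
Since the radicand $(r+1)\xi^r-r\xi^{r-1}$ has a simple zero at $\xi=r/(r+1)$, the function $F$ has a $(\xi-r/(r+1))^{-1/2}$ singularity there, whence $F(\xi_{0,1}(z))=\mathcal{O}((z-q)^{-1/4})$. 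For $j\geq 2$, $\xi_j(z)$ stays away from the branch points of $F$, so $F(\xi_j(z))=\mathcal{O}(1)$. All remaining factors $p_k(\xi_j(z))$, $(1-\xi_j(z))^{-\beta}$ and $z^{-\beta}$ are bounded near $z=q$, so only the first two columns of $N(z)$ exhibit $(z-q)^{-1/4}$ behavior, giving \eqref{ch4:eq:behavNaszq}.

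The main subtlety is the $z\to 0$ case, where one must check carefully that the $\beta$-dependent exponents in $(1-\xi_j(z))^{-\beta}$ really combine with the factor $z^{-\beta}$ and the right-multiplied diagonal scaling to yield a single uniform exponent across all columns; the $z\to q$ case reduces to the standard observation that $z=q$ is simultaneously a quadratic branch point of $\xi_0,\xi_1$ and a simple zero of the radicand defining $F$, which together give the $-1/4$ exponent.
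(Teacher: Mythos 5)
Your proof is correct and takes essentially the same route as the paper: at $z\to 0$ you bound $p_k(\xi_j)$ and $F(\xi_j)$ via Corollary \ref{ch4:cor:behavxi0} and then check that the $\beta$-dependent powers from $z^{-\beta}$, $(1-\xi_j)^{-\beta}$, and the right-multiplied diagonal cancel column by column, while at $z\to q$ you isolate the square-root branch of $\xi_0,\xi_1$ together with the simple zero of the radicand of $F$. The only stylistic difference is that you identify the square-root branch via implicit differentiation of \eqref{ch4:eq:xiAlg} ($dz/d\xi$ vanishing at $\xi=r/(r+1)$), whereas the paper argues by noting $q$ is a regular point of the Riemann surface where exactly two sheets coalesce; both yield the same conclusion.
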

%IK HEB DIE FACTOR HIER IN DE VERKEERDE VOLGORDE GEDAAN, GAAT DAT CONSEQUENTIES HEBBEN VOOR DE MATCHING? NEE DIT IS GOED GEGAAN!

\begin{proof}
It follows from \eqref{ch4:eq:behav0Xi} that for all $k,j=0,1,\ldots,r$
\begin{align} \label{ch4:eq:pkxijbehavat0}
p_k(\xi_j(z))=\mathcal O\left(z^{-\frac{r}{r+1}}\right)
\end{align}
as $z\to 0$, where $p_0,\ldots,p_r$ are the polynomials of degree $r$ in the definition of $N_0$ in Theorem \ref{ch4:thm:globalParam0}. 
On the other hand, we have for $j=0,1,\ldots,r$ that
\begin{align} \label{ch4:eq:Fxijbehavat0}
F(\xi_j(z)) = \mathcal O\left(z^{\frac{r}{2(r+1)}}\right)
\end{align}
as $z\to 0$, where $F$ is as in Theorem \ref{ch4:thm:globalParam0}. Then, plugging \eqref{ch4:eq:pkxijbehavat0} and \eqref{ch4:eq:Fxijbehavat0} in \eqref{ch4:eq:defN}, we have as $z\to 0$
\begin{align} \label{ch4:eq:CbetaN0behav0}
C_\beta N_0(z) =\mathcal O\left(z^{-\frac{r}{2(r+1)}}\right).
\end{align}
Using Corollary \ref{ch4:cor:behavxi0} we infer that for $j=0,1,\ldots,r$ as $z\to 0$
\begin{align*} %\label{ch4:eq:ebetalogxiat0}
e^{-\beta \log(1-\xi_j(z))} = \mathcal O\left(z^{\frac{\beta}{r+1}}\right).
\end{align*}
From this equation it follows that
\begin{align*}
(z^{-\beta} \oplus \mathbb I_{r\times r})
\bigoplus_{j=0}^r e^{-\beta \log(1-\xi_j(z))} \operatorname{diag}\left(z^\frac{r\beta}{r+1}, z^{-\frac{\beta}{r+1}},\ldots,z^{-\frac{\beta}{r+1}}\right)
= \mathcal O\left(1\right)
\end{align*}
as $z\to 0$, and if we combine this with \eqref{ch4:eq:CbetaN0behav0} and \eqref{ch4:eq:defN} then we arrive at \eqref{ch4:eq:behavNasz0}. 

To prove \eqref{ch4:eq:behavNaszq} we first notice that $F$, as defined in Theorem \ref{ch4:thm:globalParam0}, satisfies
\begin{align} \label{ch4:eq:behavFxiq}
F(\xi) = \mathcal O\left(\left(\xi-\frac{r}{r+1}\right)^{-\frac{1}{2}}\right)
\end{align}
as $z\to \frac{r}{r+1}$. We know from Lemma \ref{ch4:prop:propXi} that $\xi_0(q) = \frac{r}{r+1}$ and consequently also $\xi_1(q) = \frac{r}{r+1}$. The point $q$ corresponding to the first two sheets of the Riemann surface associated with \eqref{ch4:eq:xiAlg} is a regular point of the Riemann surface, and only $\xi_0$ and $\xi_1$ can approach $\frac{r}{r+1}$. Then we must conclude that there is a square root branch at $q$, i.e., we have
\begin{align*}
\xi_j(z)-\frac{r}{r+1} = \mathcal O\left(\sqrt{z-q}\right)
\end{align*}
as $z\to q$, for $j=0$ and $j=0$. Plugging this in \eqref{ch4:eq:behavFxiq} we find that
\begin{align*} 
F(\xi_j(z)) = \mathcal O\left(\left(z-q\right)^{-\frac{1}{4}}\right)
\end{align*}
as $z\to q$, for $j=0$ and $j=1$. For $j=2,\ldots,r-1$ the functions $\xi_j$ are bounded around $q$ and their limiting values are not $\frac{r}{r+1}$ or $0$, hence $F(\xi_j(z))$ is bounded around $z=q$. It's a simple task to verify that all the other expressions in the definition of $N$ are bounded, and \eqref{ch4:eq:behavNaszq} follows. 
\end{proof}

\section{Local parametrices}

Close to the end points $0$ and $q$ the global parametrix cannot be a good approximation. This means that we have to consider local parametrix problems around $z=0$ and $z=q$. The local parametrix problem around $z=q$ is standard and we omit the details, but the local parametrix problem around $z=0$ is new (when $r>2$) and we work it out in detail. It shows similarities with the bare Meijer-G parametrix from \cite{BeBo}, although I do not believe that there is a (simple) way to map the local parametrix problems to each other. 

\subsection{The local parametrix problem around the hard edge $z=0$} \label{ch4:sec:localParamSetUp}

The assumption that $V$ is real analytic on $[0,\infty)$ implies that there is an open neighborhood $O_V$ of $[0,\infty)$ on which $V$ can be analytically continued. We now consider a disk $D(0,r_0)\subset O_V$ around the origin of radius $r_0$. Here $r_0$ is a positive number that we shall eventually fix (see Section \ref{ch4:sec:conformalf}). It is assumed that $r_0$ is sufficiently small, such that the lips of the lens inside $D(0,r_0)$ are on the imaginary axis (see Figure \ref{ch4:FigS} also). We shall orient the boundary circle of any disk positively. The \textit{initial local parametrix problem} (we explain this terminology in a moment) is as follows. 

\begin{rhproblem} \label{ch4:RHPforMathringP} \
\begin{description}
\item[RH-$\mathring{\text{P}}$1] $\mathring P$ is analytic on $D(0,r_0) \setminus \Sigma_S$.
\item[RH-$\mathring{\text{P}}$2] $\mathring P$ has the same jumps as $S$ has on $D(0,r_0) \setminus \Sigma_S$.
\item[RH-$\mathring{\text{P}}$3] $\mathring P$ has the same asymptotics as $S$ has near the origin. 
\end{description}
\end{rhproblem}

The matching condition is usually given as RH-$\mathring{\text{P4}}$. In larger size RHPs obtaining the matching is often a major technical issue, and ours is no exception. We will therefore use a double matching (see \cite{Mo}) instead of an ordinary matching. Then there is also a jump on a shrinking circle inside $D(0,r_0)$, in our case this circle turns out to be $\partial D(0,r_n)$ where
\begin{align*}
r_n &=n^{-\frac{r+1}{2}}, & n=1,2,\ldots
\end{align*}
On the other hand, in the annulus $r_n<|z|<r_0$, denoted $A(0;r_n,r_0)$, the local parametrix will not have a jump on the lips of the lens anymore. Hence the actual local parametrix $P$, i.e., the one that we will use in the final transformation, satisfies an altered version of RH-$\mathring{\text{P}}$ (and RH-$\mathring{\text{P}}$2 in particular). Indeed, this is why we called the local parametrix problem for $\mathring P$ the initial local parametrix problem. In general, we will use the same notations and terminology as in \cite{Mo} as much as possible. We will first find a solution $\mathring P$ to RH-$\mathring{\text{P}}$ and then, using the double matching approach, we will construct $P$ as
\begin{align*}
P(z) = \left\{\begin{array}{ll}
E_n^0(z) \mathring P(z), & z\in D(0,r_n),\\
E_n^\infty(z) N(z), & z\in A(0;r_n,r_0),
\end{array}\right.
\end{align*}
where $E_n^0$ and $E_n^\infty$ are analytic prefactors that we shall obtain from Theorem 1.2 in \cite{Mo}. Then it will turn out that $P$ satisfies a double matching of the form
\begin{align*}
P_+(z) N(z)^{-1} &= \mathbb I + \mathcal O\left(\frac{1}{n}\right), & \text{uniformly for }z\in\partial D(0,r_0),\\
P_+(z) P_-(z)^{-1} &= \mathbb I + \mathcal O\left(\frac{1}{n^{r+2}}\right), & \text{uniformly for }z\in\partial D(0,r_n),
\end{align*}
as $n\to\infty$. For convenience to the reader, we repeat Theorem 1.2 of \cite{Mo} in Section \ref{ch4:sec:matching}, as \text{Theorem \ref{lem:matching}}. Much of our approach concerning the local parametrix is parallel to the approach in \cite{KuMo}, where the case $r=2$ was treated. 

\subsection{Reduction to constant jumps}

The first step to solving a local parametrix problem is generally to transform it to a problem with constant jumps. To that end we define $\varphi$-functions as follows.

\subsubsection{Definition of the $\varphi$-functions}

\begin{definition}
For $z\in O_V \setminus \mathbb R$ with $\pm\operatorname{Im}(z)>0$ we define
\begin{align} \label{ch4:eq:defvarphi0}
\varphi_0(z) &= - g_0(z) + \frac{1}{2} g_1(z) + \frac{1}{2}(V(z)+\ell) \pm \pi i,\\ \nonumber
\varphi_j(z) &= \frac{1}{2} g_{j-1}(z) - g_j(z) + \frac{1}{2} g_{j+1}(z) \pm (-1)^j \frac{r-j}{r} \pi i,\\ \label{ch4:eq:defvarphij}
&\hspace{4cm} j=1,\ldots,r-2,\\  \label{ch4:eq:defvarphir-1}
\varphi_{r-1}(z) &= \frac{1}{2} g_{r-2}(z) - g_{r-1}(z) \pm (-1)^{r-1} \frac{\pi i}{r}.
\end{align}
Here $g_0, g_1,\ldots, g_{r-1}$ are the $g$-functions as in \eqref{ch4:eq:defgfunctions}. For convenience, we also define $\varphi_{-1}=\varphi_r=0$. 
\end{definition}

Notice that we have $\varphi_0(z) = \varphi(z) \pm \pi i$ according to \eqref{ch4:eq:defvarphi}. The explicit form of the $\varphi$-functions is dictated by the variational equations \eqref{ch4:varCon1} and \eqref{ch4:varCon2}. Notice that the definition of $\varphi_2, \ldots, \varphi_{r-1}$ makes sense on $\mathbb C\setminus \mathbb R$, for our purposes it will suffice to let them have domain $O_V\setminus \mathbb R$ though. 

\begin{lemma} \label{ch4:prop:phiRelations}
For all $j=0,1,\ldots,r-1$ we have for $x\in \Delta_j\cap O_V$ that
\begin{align} \label{ch4:eq:varphij+=varphij-}
\varphi_{j,+}(x) &= -\varphi_{j,-}(x). 
\end{align} 
Furthermore, for all $j=0,\ldots,r-1$ we have for $x\in (\mathbb R\cap O_V)\setminus \Delta_j$ that
\begin{align} \label{ch4:eq:varphijcrelation}
\varphi_{j+}(x) &= \varphi_{j-1,-}(x)+\varphi_{j-}(x)+\varphi_{j+1,-}(x).
\end{align}
\end{lemma}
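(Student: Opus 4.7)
My plan is to reduce both parts to the Euler--Lagrange variational equations \eqref{ch4:varCon1}, \eqref{ch4:varCon2} via the elementary identities
\[
g_{k,+}(x)+g_{k,-}(x)=2U^{\mu_k}(x),\qquad g_{k,+}(x)-g_{k,-}(x)=2\pi i\, M_k(x),
\]
valid for $x\in\mathbb{R}$, where $U^{\mu_k}(x)=\int\log|x-s|\,d\mu_k(s)$ and $M_k(x)=\mu_k\bigl((x,\infty)\cap\Delta_k\bigr)$. Both identities follow from $g_k(z)=\int_{\Delta_k}\log(z-s)\,d\mu_k(s)$ with the principal branch of the logarithm.

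For part (i), I would sum $\varphi_{j,+}+\varphi_{j,-}$ directly from the definitions \eqref{ch4:eq:defvarphi0}--\eqref{ch4:eq:defvarphir-1}. The imaginary constants appear with opposite signs on the two half-planes and therefore cancel in the sum. For $j=0$ and $x\in\Delta_0$, using that $g_1$ is continuous across $(0,q)$, the sum reduces to $-2U^{\mu_0}(x)+g_{1,+}(x)+V(x)+\ell$, which vanishes by \eqref{ch4:varCon1}. For $j\geq 1$ and $x\in\Delta_j$ the sum collapses to $U^{\mu_{j-1}}(x)-2U^{\mu_j}(x)+U^{\mu_{j+1}}(x)$, which vanishes by \eqref{ch4:varCon2}. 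This gives \eqref{ch4:eq:varphij+=varphij-}.

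For part (ii), I would rewrite the claim as $\varphi_{j,+}-\varphi_{j,-}=\varphi_{j-1,-}+\varphi_{j+1,-}$ (with the convention $\varphi_{-1}=\varphi_r=0$) and split into real and imaginary parts. The left-hand side is purely imaginary, equal to $\pi i M_{j-1}(x)-2\pi i M_j(x)+\pi i M_{j+1}(x)$ plus twice the imaginary constant from the definition of $\varphi_j$. On the right-hand side, the real part is a linear combination of neighboring logarithmic potentials (together with a $\frac{1}{2}(V+\ell)$ term when $j=1$), and I would show it vanishes by adding the variational equalities on the two adjacent supports $\Delta_{j-1}$ and $\Delta_{j+1}$: e.g.\ for $j=1$ on $(0,q)$ by combining \eqref{ch4:varCon1} on $\Delta_0$ with \eqref{ch4:varCon2} at $j=2$ on $\Delta_2\supset(0,q)$, and for $j=0$ on $(-\infty,0)$ by invoking \eqref{ch4:varCon2} at $j=1$ on $\Delta_1$. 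The imaginary parts then have to be matched, which using the total-mass identities $\mu_k(\Delta_k)=(r-k)/r$ reduces to a simple algebraic identity between the coefficients $(-1)^j(r-j)/r$ and the $M_k$'s.

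The main obstacle is the case analysis in (ii): the identity genuinely depends on which of the $\Delta_k$ contain the point $x$, since this dictates both which tail-mass terms $M_k(x)$ are nonzero and which variational equalities may be invoked. The imaginary constants $\pm(-1)^j(r-j)\pi i/r$ in \eqref{ch4:eq:defvarphij} (and the $\pm\pi i$ in \eqref{ch4:eq:defvarphi0}) were engineered precisely so that the imaginary-part bookkeeping closes up once the flanking variational equalities on $\Delta_{j\pm 1}$ are added; identifying the correct combination in each case is the only subtle point, and the remaining verification is purely algebraic.
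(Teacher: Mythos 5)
Your proposal is correct and takes essentially the same approach as the paper: both reduce (i) to the Euler--Lagrange equations \eqref{ch4:varCon1}--\eqref{ch4:varCon2} (the paper computes the boundary values $\varphi_{j,\pm}(x)=\pm\pi i\,\mu_j(\cdot)$ explicitly, you compute the sum $\varphi_{j,+}+\varphi_{j,-}$ --- equivalent bookkeeping), and prove (ii) by computing the jump $\varphi_{j,+}-\varphi_{j,-}$ from the $g$-function jumps and matching it against the expressions for $\varphi_{j\pm1,-}$ obtained in (i). One minor point to watch: the paper's Convention \ref{ch4:con:orientation} flips the $\pm$ labels on $(-\infty,0)$, so your formula $g_{k,+}-g_{k,-}=2\pi i\,M_k(x)$ picks up an extra sign there; this is harmless here because the identities are compatible with the orientation swap via part (i), but it should be stated explicitly to avoid confusion in the case analysis for $x<0$.
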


\begin{proof}
First we prove the relation \eqref{ch4:eq:varphij+=varphij-}. For $x\in (0,q) \cap O_V$ and $j=0$ we have 
%ZIJN DEZE WEL GOED GEORIENTEERD?
\begin{align} \nonumber
\varphi_{0,\pm}(x) &= - \int_0^q \log|x-s| d\mu_0(s) \mp \pi i \int_x^q d\mu_{0}(s) + \frac{1}{2} \int_{-\infty}^0 \log|x-s| d\mu_1(s)
+ \frac{1}{2}(V(x)+\ell) \pm \pi i\\ \label{ch4:eq:varphieven0}
&= \pm\mu_0([0,x]),
\end{align}
where we have used the variational conditions \eqref{ch4:varCon1}, and the fact that $\mu_0$ has total mass $1$. Similarly, we have by \eqref{ch4:varCon2} for even $j>0$ and $x>0$  that
\begin{align} \nonumber
\varphi_{j,\pm}(x) &= \frac{1}{2} \int_{-\infty}^0 \log|x-s| d\mu_{j-1}(s) - \int_0^\infty \log|x-s| d\mu_j(s) \\ \nonumber
&\hspace{0.5cm}\mp \pi i \int_x^\infty d\mu_j(s)+ \frac{1}{2} \int_{-\infty}^0 \log|x-s| d\mu_{j+1}(s) \pm \frac{r-j}{r} \pi i\\ \nonumber
&= \mp \pi i \mu_j([x,\infty]) \pm \frac{r-j}{r} \pi i\\ \label{ch4:eq:varphievenj}
&= \pm \pi i \mu_j([0,x]).
\end{align}
Here we have used that $\mu_j$ has total mass $\frac{r-j}{r}$, see \eqref{ch4:eq:totalMass}. For odd $j$ we have for $x<0$ that
\begin{align} \nonumber
\varphi_{j,\pm}(x) &= \frac{1}{2} \int_{0}^\infty \log|x-s| d\mu_{j-1}(s) \mp \frac{\pi i}{2} \int_0^\infty d\mu_{j-1}(s)\\ \nonumber
&\quad - \int_0^\infty \log|x-s| d\mu_j(s) \pm \pi i \int_x^0 d\mu_j(s)\\ \nonumber
&\quad + \frac{1}{2} \int_{0}^\infty \log|x-s| d\mu_{j+1}(s) \mp \int_0^\infty d\mu_{j+1}(s) \pm \frac{r-j}{r} \pi i\\ \nonumber
&= \mp \frac{\pi i}{2} \left(\frac{r-j+1}{r}+\frac{r-j-1}{r}\right) \pm \mu_j([x,0]) \pm \frac{r-j}{r} \pi i\\ \label{ch4:eq:varphioddj}
&= \pm \mu_j([x,0]),
\end{align}
where we used that $\mu_{j-1}$ and $\mu_{j+1}$ have total mass $\frac{r-j+1}{r}$ and $\frac{r-j-1}{r}$ respectively.
We conclude that $\varphi_{j+}(x) = -\varphi_{j-}(x)$ for $x\in \Delta_j \cap O_V$ for all $j=0,1,\ldots,r-1$. 

Now we prove \eqref{ch4:eq:varphijcrelation}. For $x<0$ we have
\begin{align*}
\varphi_{0,+}(x) - \varphi_{0,-}(x) &= - (g_{0+}(x)-g_{0-}(x)) + \frac{1}{2} (g_{1+}(x)-g_{1-}(x)) - 2\pi i\\
&= 2\pi i \int_0^q d\mu_0(s) - \pi i \int_x^0 d\mu_1(s) - 2\pi i\\
&= -\pi i \mu_{1}([x,0])\\
&= \varphi_{1,-}(x),
\end{align*}
where we have used that $V$ is analytic and that $\mu_0$ has total mass $1$, and \eqref{ch4:eq:varphioddj} with $j=1$ in the last line. Similarly we have for even $j>0$ and $x<0$ that
\begin{align*}
&\varphi_{j,+}(x) - \varphi_{j,-}(x)\\
&= \frac{1}{2} (g_{j-1,+}(x) - g_{j-1,-}(x))
- (g_{j,+}(x) - g_{j,-}(x)) 
+ \frac{1}{2} (g_{j+1,+}(x) - g_{j+1,-}(x)) - 2\pi i \frac{r-j}{r}\\
&= -\pi i\int_x^0 d\mu_{j-1}(s) + 2\pi i\int_0^\infty d\mu_j(s) - \pi i\int_x^0 d\mu_{j+1}(s) - 2\pi i \frac{r-j}{r}\\
&= -\pi i\mu_{j-1}([x,0]) + 2\pi i \frac{r-j}{r} - \pi i\mu_{j+1}([x,0]) - 2\pi i \frac{r-j}{r}\\
&= \varphi_{j-1,-}(x) + \varphi_{j+1,-}(x),
\end{align*}
where we used that $\mu_j$ has total mass $\frac{r-j}{r}$, and we used \eqref{ch4:eq:varphioddj} in the last line. For odd $j$ and $x>0$ we find
\begin{align*}
\varphi_{j,+}(x) - \varphi_{j,-}(x) &= \pi i\int_x^\infty d\mu_{j-1}(s) + \pi i\int_x^\infty d\mu_{j+1}(s) - 2\pi i \frac{r-j}{r}\\
&= \pi i\mu_{j-1}([x,\infty)) + \pi i\mu_{j+1}([x,\infty)) - 2\pi i \frac{r-j}{r}\\
&= - \pi i\mu_{j-1}([0,x]) - \pi i\mu_{j+1}([0,x])\\
&= \varphi_{j-1,-}(x) + \varphi_{j+1,-}(x),
\end{align*}
where we used \eqref{ch4:eq:varphievenj}, and \eqref{ch4:eq:varphieven0} for $j=1$, in the last line. We conclude that $$\varphi_{j,+}(x)=\varphi_{j-1,-}(x) + \varphi_{j,-}(x)+ \varphi_{j+1,-}(x)$$ for $x\in (\mathbb R\cap O_V)\setminus \Delta_j$ for all $j=0,1,\ldots,r-1$. 
\end{proof}

\subsubsection{Analytic functions constructed out of the $\varphi$-functions}

From the $\varphi$-functions we construct functions $f_1,\ldots,f_m$ that we will eventually use to reduce the jumps of the local parametrix problem to constant jumps. These functions are the generalization of the analytic functions $f_1$ and $f_2$ from Section 5.3 in \cite{KuMo}. 

\begin{definition} \label{ch4:def:fm}
Let $m=1,2,\ldots,r$. We define for $z\in D(0,q) \cap O_V$
\begin{align} \label{ch4:eq:deffm}
f_m(z) = -z^{-\frac{m}{r+1}} 
\left\{\begin{array}{ll}
\displaystyle\sum_{j=0}^{r-1} \left(\sum_{k=0}^j \omega^{m ((-1)^{k} \lfloor \frac{k+1}{2}\rfloor+\frac{1}{2})} \right) \varphi_j(z), & \operatorname{Im}(z)>0,\\
\displaystyle\sum_{j=0}^{r-1} \left(\sum_{k=0}^j \omega^{m ((-1)^{k-1} \lfloor \frac{k+1}{2}\rfloor - \frac{1}{2})} \right) \varphi_j(z), & \operatorname{Im}(z)<0.
\end{array} \right.
\end{align}
\end{definition}

\begin{proposition} \label{ch4:prop:fmanalytic}
$f_m$ defines an analytic function for every $m=1,2,\ldots,r$.
\end{proposition}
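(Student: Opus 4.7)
The plan is to verify that $f_m$, though defined by two different formulas in the upper and lower half-planes, produces matching boundary values on the real axis within $D(0,q)\cap O_V$. Once this is established on $(0,q)$ and on $(-q,0)\cap O_V$ separately, Morera's theorem yields analyticity on $(D(0,q)\cap O_V)\setminus\{0\}$; the behavior at $z=0$ is then controlled by Proposition~\ref{ch4:prop:gfunctionsbounded0}, which gives $\varphi_j(z)=\mathcal{O}(1)$ near the origin. I would write $c_j^{(m,+)}$ and $c_j^{(m,-)}$ for the partial-sum coefficients of $\omega$-powers that appear in Definition~\ref{ch4:def:fm} (so the upper/lower definitions read $f_m(z)=-z^{-m/(r+1)}\sum_{j=0}^{r-1} c_j^{(m,\pm)}\varphi_j(z)$), and note the symmetry $c_j^{(m,-)}=\overline{c_j^{(m,+)}}$.

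For $x\in(0,q)$ the factor $z^{-m/(r+1)}$ is continuous, so the matching $f_{m,+}(x)=f_{m,-}(x)$ is equivalent to
\begin{equation*}
\sum_{j=0}^{r-1} c_j^{(m,+)}\varphi_{j,+}(x) \;=\; \sum_{j=0}^{r-1} c_j^{(m,-)}\varphi_{j,-}(x).
\end{equation*}
Here I would apply Lemma~\ref{ch4:prop:phiRelations}: since $(0,q)\subset\Delta_j$ for $j=0$ and for even $j\ge 2$, we have $\varphi_{j,+}=-\varphi_{j,-}$ on this set, while for odd $j$, $(0,q)\not\subset\Delta_j$ so $\varphi_{j,+}=\varphi_{j-1,-}+\varphi_{j,-}+\varphi_{j+1,-}$. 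Substituting and collecting the coefficient of each $\varphi_{k,-}(x)$ reduces the identity to a purely algebraic statement relating the $c_j^{(m,+)}$ to the $c_k^{(m,-)}$.

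For $x\in(-q,0)\cap O_V$ the factor $z^{-m/(r+1)}$ acquires the branch jump $z_+^{-m/(r+1)}=\omega^{-m}z_-^{-m/(r+1)}$, and the roles of even and odd indices in the jump relations swap: $\varphi_{0,+}=\varphi_{0,-}+\varphi_{1,-}$ (using $\varphi_{-1}\equiv 0$), $\varphi_{j,+}=\varphi_{j-1,-}+\varphi_{j,-}+\varphi_{j+1,-}$ for even $j\ge 2$ (with $\varphi_r\equiv 0$ when applicable), and $\varphi_{j,+}=-\varphi_{j,-}$ for odd $j$. The matching condition then becomes
\begin{equation*}
\omega^{-m}\sum_{j=0}^{r-1} c_j^{(m,+)}\varphi_{j,+}(x) \;=\; \sum_{j=0}^{r-1} c_j^{(m,-)}\varphi_{j,-}(x),
\end{equation*}
which once more reduces to an algebraic identity among the $c_j^{(m,\pm)}$.

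The main obstacle is the combinatorial verification that these two algebraic identities actually hold. The $r$ numbers $\omega^{m((-1)^k\lfloor(k+1)/2\rfloor+1/2)}$ for $k=0,\ldots,r-1$ enumerate a specific ordered set of $2(r+1)$-th roots of unity, and the identities ultimately express the fact that multiplication by $\omega^m$ cyclically shifts this set by one position. In practice I would split by parity of $k$, recognize the resulting expressions as finite geometric series in $\omega^{\pm m}$, and match coefficients of each $\varphi_{k,-}$; the appearance of $\omega^{-m}$ in the $(-q,0)$ case is precisely accounted for by the cyclic shift that moves $\varphi_{r-1}$-type terms into the position of $\varphi_0$-type terms under the swapped jump pattern.
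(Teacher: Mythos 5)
Your plan matches the paper's proof: use Lemma~\ref{ch4:prop:phiRelations} to compute boundary values of $\sum_j c_j^{(m,\pm)}\varphi_j$ on each of $(0,q)$ and $(-q,0)$, reduce the matching condition $f_{m,+}=f_{m,-}$ to algebraic identities among the partial sums $c_j^{(m,\pm)}$ (which are sums of roots of unity), and close the argument at the origin via boundedness of the $\varphi_j$. You also correctly observe that the identities on the two rays are complex conjugates of one another, a shortcut the paper exploits as well. The only thing you do not supply is the actual verification of those identities---the paper's \eqref{ch4:eq:firstWeirdOmegaIdentity}--\eqref{ch4:eq:secondWeirdOmegaIdentity} and their geometric-series proof---which you correctly identify as the main computational obstacle but only sketch.
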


\begin{proof}
We only prove it for the case $r\equiv 0\mod{2}$, the case $r\equiv 1\mod{2}$ is analogous.

For $x\in (0,q) \cap O_V$ we have by \eqref{ch4:eq:varphij+=varphij-} and \eqref{ch4:eq:varphijcrelation} that
\begin{align*} 
-\omega^{-\frac{m}{2}} x^\frac{m}{r+1} f_{m+}(x)
&= \sum_{j=0}^{\frac{r}{2}-1} \left(\sum_{k=0}^{2j} \omega^{m (-1)^{k} \lfloor \frac{k+1}{2}\rfloor} \right) \varphi_{2j+}(x) 
+\sum_{j=0}^{\frac{r}{2}-1} \left(\sum_{k=0}^{2j+1} \omega^{m (-1)^{k} \lfloor \frac{k+1}{2}\rfloor} \right) \varphi_{2j+1,+}(x)\\
&= -  \sum_{j=0}^{\frac{r}{2}-1} \left(\sum_{k=0}^{2j} \omega^{m (-1)^{k} \lfloor \frac{k+1}{2}\rfloor} \right) \varphi_{2j-}(x)\\
& \quad +  \sum_{j=0}^{\frac{r}{2}-1} \left(\sum_{k=0}^{2j+1} \omega^{m (-1)^{k} \lfloor \frac{k+1}{2}\rfloor} \right) (\varphi_{2j,-}(x)+\varphi_{2j+1,-}(x)+\varphi_{2j+2,-}(x))
\end{align*}
Notice that we have multiplied $f_m$ with an appropriate factor, $-\omega^{-\frac{m}{2}} x^\frac{m}{r+1}$. This is simply a pragmatic choice that makes our equations look nicer. When we shift the summation index for $\varphi_{2j+2,-}(x)$ we can write this as
\begin{align*} \nonumber
& (-1 + 1 + \omega^{-m}) \varphi_{0,-}(x)\\ \nonumber
&+  \sum_{j=1}^{\frac{r}{2}-1} 
\left(-\left(\sum_{k=0}^{2j} \omega^{m (-1)^{k} \lfloor \frac{k+1}{2}\rfloor} \right)
+ \left(\sum_{k=0}^{2j+1} \omega^{m (-1)^{k} \lfloor \frac{k+1}{2}\rfloor} \right)\right. 
\left. +\left(\sum_{k=0}^{2j-1} \omega^{m (-1)^{k} \lfloor \frac{k+1}{2}\rfloor} \right)\right) \varphi_{2j-}(x)\\
&\hspace{4.7cm} + \sum_{j=0}^{\frac{r}{2}-1} \left(\sum_{k=0}^{2j+1} \omega^{m (-1)^{k} \lfloor \frac{k+1}{2}\rfloor} \right) \varphi_{2j+1,-}(x)\\
&= \sum_{j=0}^{\frac{r}{2}-1} \left(\omega^{-m (j+1)}+\sum_{k=0}^{2j-1} \omega^{m (-1)^{k} \lfloor \frac{k+1}{2}\rfloor}\right)\\
&\hspace{4.7cm} + \sum_{j=0}^{\frac{r}{2}-1} \left(\sum_{k=0}^{2j+1} \omega^{m (-1)^{k} \lfloor \frac{k+1}{2}\rfloor} \right) \varphi_{2j+1,-}(x).
\end{align*}
To prove that $f_m$ has no jump for $x\in (0,\infty)\cap O_V$ it then suffices to show the following two identities,
\begin{align} \label{ch4:eq:firstWeirdOmegaIdentity}
 \sum_{k=0}^{2j+1} \omega^{m (-1)^{k} \lfloor \frac{k+1}{2}\rfloor} 
 =  \sum_{k=0}^{2j+1} \omega^{m ((-1)^{k-1} \lfloor \frac{k+1}{2}\rfloor - 1)}
\end{align}
and 
\begin{align} \label{ch4:eq:secondWeirdOmegaIdentity}
\omega^{-m (j+1)}+\sum_{k=0}^{2j-1} \omega^{m (-1)^{k} \lfloor \frac{k+1}{2}\rfloor}
=\sum_{k=0}^{2j} \omega^{m ((-1)^{k-1} \lfloor \frac{k+1}{2}\rfloor - 1)},
\end{align}
because then we would obtain the coefficients as in \eqref{ch4:eq:deffm} for $\operatorname{Im}(z)<0$. To prove the first identity we notice that
\begin{align*} %\label{ch4:eq:OmegaSums}
\sum_{k=0}^{2j+1} \omega^{m (-1)^{k} \lfloor \frac{k+1}{2}\rfloor} 
&= \sum_{k=0}^j \omega^{2 m k} 
+  \sum_{k=0}^j \omega^{-m (2k+1)}\\
&= \frac{1-\omega^{2m (j+1)}}{1-\omega^{2m}} + \omega^{-m} \frac{1-\omega^{-2m (j+1)}}{1-\omega^{-2m}}\\
&= \frac{1-\omega^m - \omega^{m(2j+2)}+\omega^{-m(2j+1)}}{1-\omega^{2m}}.
\end{align*}
Indeed, then, complex conjugating and multiplying by $\omega^{-m}$, we have
\begin{align} \nonumber
\sum_{k=0}^{2j+1} \omega^{m ((-1)^{k-1} \lfloor \frac{k+1}{2}\rfloor - 1)}
&= \omega^{-m} \frac{1-\omega^{-m} - \omega^{-m(2j+2)}+\omega^{m(2j+1)}}{1-\omega^{-2m}}\\ \nonumber
&= \frac{-\omega^m (1-\omega^{-m} - \omega^{-m(2j+2)}+\omega^{m(2j+1)})}{1-\omega^{2m}}\\ \label{ch4:eq:OmegaSums2}
&= \sum_{k=0}^{2j+1} \omega^{m (-1)^{k} \lfloor \frac{k+1}{2}\rfloor}.
\end{align}
The first identity \eqref{ch4:eq:firstWeirdOmegaIdentity} is proved. To prove the second identity \eqref{ch4:eq:secondWeirdOmegaIdentity} we use \eqref{ch4:eq:OmegaSums2} to see that
\begin{align*}
\sum_{k=0}^{2j-1} & \omega^{m ((-1)^{k-1} \lfloor \frac{k+1}{2}\rfloor -1)}+ \omega^{-m (j+1)}\\
&= \sum_{k=0}^{2j-1} \omega^{m ((-1)^{k-1} \lfloor \frac{k+1}{2}\rfloor -1)}  + \omega^{m (-\lfloor \frac{2j+1}{2}\rfloor -1)}\\
&= \sum_{k=0}^{2j} \omega^{m ((-1)^{k-1} \lfloor \frac{k+1}{2}\rfloor -1)}.
\end{align*}
Then \eqref{ch4:eq:secondWeirdOmegaIdentity} is also proved. We conclude that $f_m$ does not have a jump on $(0,q)\cap O_V$. 

Now we will prove that it also does not have a jump on $(-q,0)\cap O_V$. For $x<0$, we have, again using \eqref{ch4:eq:varphij+=varphij-} and \eqref{ch4:eq:varphijcrelation}, that
\begin{align*} \nonumber
- |x|^\frac{m}{r+1} f_{m+}(x)
&= \sum_{j=0}^{\frac{r}{2}-1} \left(\sum_{k=0}^{2j} \omega^{m (-1)^{k-1} \lfloor \frac{k+1}{2}\rfloor} \right)  (\varphi_{2j-1,-}(x)+\varphi_{2j,-}(x)+\varphi_{2j+1,-}(x))\\ \nonumber
& \quad -  \sum_{j=0}^{\frac{r}{2}-1} \left(\sum_{k=0}^{2j+1} \omega^{m (-1)^{k-1} \lfloor \frac{k+1}{2}\rfloor} \right) \varphi_{2j+1,-}(x)\\ \nonumber
&= \sum_{j=0}^{\frac{r}{2}-1} \left(\sum_{k=0}^{2j} \omega^{m (-1)^{k-1} \lfloor \frac{k+1}{2}\rfloor} \right) \varphi_{2j,-}(x)\\ \nonumber
& \quad + \sum_{j=0}^{\frac{r}{2}-1} \left( \left(\sum_{k=0}^{2j+2} \omega^{m (-1)^{k-1} \lfloor \frac{k+1}{2}\rfloor} \right)
-\left(\sum_{k=0}^{2j+1} \omega^{m (-1)^{k-1} \lfloor \frac{k+1}{2}\rfloor} \right)
\right. \\
&\hspace{4cm}\left. +\left(\sum_{k=0}^{2j} \omega^{m (-1)^{k-1} \lfloor \frac{k+1}{2}\rfloor} \right)
\right) \varphi_{2j+1,-}(x)\\ \nonumber
&= \sum_{j=0}^{\frac{r}{2}-1} \left(\sum_{k=0}^{2j} \omega^{m (-1)^{k-1} \lfloor \frac{k+1}{2}\rfloor} \right) \varphi_{2j,-}(x)\\
&\hspace{2cm}+ \sum_{j=0}^{\frac{r}{2}-1} \left(\sum_{k=0}^{2j} \omega^{m (-1)^{k-1} \lfloor \frac{k+1}{2}\rfloor} + \omega^{-m (j+1) } \right) \varphi_{2j+1,-}(x)
\end{align*}
Here we have used for the second equality that
\begin{align*}
\sum_{j=0}^{2(\frac{r}{2}-1)+2}  \omega^{m (-1)^{k-1} \lfloor \frac{k+1}{2}\rfloor} = \sum_{j=0}^r \omega^{j} = 0. 
\end{align*}On the other hand, we have
\begin{align*}
- |x|^\frac{m}{r+1} f_{m-}(x)
= \sum_{j=0}^{r-1} \left(\sum_{k=0}^j \omega^{m ((-1)^{k} \lfloor \frac{k+1}{2}\rfloor + 1)} \right) \varphi_{j-}(x).
\end{align*}
To see that $f_m$ does not have a jump on $(-q,0)\cap O_V$ we should then have the two identities
\begin{align*}
\sum_{k=0}^{2j} \omega^{m (-1)^{k-1} \lfloor \frac{k+1}{2}\rfloor}
= \sum_{k=0}^{2j} \omega^{m ((-1)^{k} \lfloor \frac{k+1}{2}\rfloor+1)}
\end{align*}
and
\begin{align*}
\sum_{k=0}^{2j+1} \omega^{m (-1)^{k-1} \lfloor \frac{k+1}{2}\rfloor}
= \sum_{k=0}^{2j} \omega^{m ((-1)^{k} \lfloor \frac{k+1}{2}\rfloor+1)} + \omega^{-m (j+1) }.
\end{align*}
These identities are simply the complex conjugate of the identities \eqref{ch4:eq:firstWeirdOmegaIdentity} and \eqref{ch4:eq:secondWeirdOmegaIdentity} that we found before. 
We conclude that $f_m$ has no jumps in $D(0,q)\cap O_V$. Since the $g$ functions (see Proposition \ref{ch4:prop:gfunctionsbounded0}) and $V$ are bounded on $D(0,q)$ we conclude that $f_m$ is analytic. 
\end{proof}

\begin{proposition} \label{ch4:prop:sumfm}
Let $j=0,1,\ldots,r$. We have for $z\in D(0,q)\cap O_V$ and $\pm\operatorname{Im}(z)>0$ that
\begin{align} \label{ch4:eq:sumfm}
\sum_{m=1}^r \omega^{\pm (-1)^{l-1} (\frac{1}{2}+\lfloor\frac{l}{2}\rfloor) m} z^\frac{m}{r+1} f_m(z)
= \sum_{j=0}^{r-1} (j+1) \varphi_j(z) - (r+1) \sum_{j=l}^{r-1} \varphi_j(z).
\end{align}
\end{proposition}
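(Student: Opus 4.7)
The plan is to expand the left-hand side of \eqref{ch4:eq:sumfm} using Definition \ref{ch4:def:fm} and interchange the summations so that the sum over $m$ becomes innermost. Working first with $\operatorname{Im}(z)>0$, the coefficient of $\varphi_j(z)$ in the LHS becomes
$$-\sum_{k=0}^{j}\sum_{m=1}^{r}\omega^{m(B_k-B_l)},\qquad B_k:=(-1)^{k}\lfloor \tfrac{k+1}{2}\rfloor,$$
once one verifies that the combined exponent $(-1)^{l-1}(\tfrac{1}{2}+\lfloor\tfrac{l}{2}\rfloor)+(-1)^k\lfloor \tfrac{k+1}{2}\rfloor+\tfrac{1}{2}$ simplifies to the integer $B_k-B_l$ (the two half-integer pieces combine to $-B_l$).

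Next I will evaluate the inner geometric sum. Since $\omega$ is a primitive $(r+1)$-th root of unity,
$$\sum_{m=1}^{r}\omega^{m(B_k-B_l)}=\begin{cases}r,&(r+1)\mid (B_k-B_l),\\ -1,&\text{otherwise.}\end{cases}$$
The heart of the argument is then a small combinatorial observation: for indices in $\{0,1,\ldots,r-1\}$, the map $k\mapsto B_k\bmod (r+1)$ is injective. Indeed, the sequence $B_0,B_1,B_2,\ldots$ equals $0,-1,1,-2,2,\ldots$, which is already injective on $\mathbb{Z}_{\ge 0}$, and the values $B_0,\ldots,B_{r-1}$ all lie in an integer interval of length at most $r-1<r+1$, so congruence modulo $r+1$ forces equality. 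Therefore $B_k=B_l$ if and only if $k=l$, and the inner sum contributes $r$ exactly when $k=l$ and $-1$ in all remaining cases.

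Collecting terms, the coefficient of $\varphi_j(z)$ simplifies to
$$-\sum_{k=0}^{j}\bigl((r+1)\mathbf{1}_{k=l}-1\bigr)=(j+1)-(r+1)\mathbf{1}_{l\le j},$$
which, summed against $\varphi_j(z)$ for $j=0,\ldots,r-1$, reproduces exactly the right-hand side of \eqref{ch4:eq:sumfm}. The edge case $l=r$ is covered by the same calculation, since then $B_k=B_l$ has no solution with $k\in\{0,\ldots,r-1\}$ and the rightmost sum in \eqref{ch4:eq:sumfm} is empty (consistent with our convention $\varphi_r=0$).

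For $\operatorname{Im}(z)<0$ the argument is identical using the second branch of \eqref{ch4:eq:deffm}; the exponent in the inner geometric sum becomes $B_l-B_k$, which evaluates to the same thing, yielding the identity with the opposite sign in $\pm$. I do not expect any real obstacle: once the substitution $B_k=(-1)^k\lfloor(k+1)/2\rfloor$ is in place, the statement reduces to the standard orthogonality relations for $(r+1)$-th roots of unity, and the only care required is in the bookkeeping of the floor-function exponents.
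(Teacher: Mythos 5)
Your proof is correct and follows essentially the same route as the paper's: expand via Definition~\ref{ch4:def:fm}, interchange the sums, reduce the inner sum over $m$ to a geometric sum of $(r+1)$-th roots of unity, and invoke the combinatorial fact that the combined exponent vanishes modulo $r+1$ precisely when $k=l$. Your $B_k$ reformulation together with the interval-length argument for the injectivity of $k\mapsto B_k\bmod(r+1)$ is a cleaner and more conceptual rendering of the claim that the paper dispatches with ``one may verify, by considering the different cases of parity.''
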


\begin{proof}
One may verify, by considering the different cases of parity, that 
\begin{align*}
(-1)^{l-1} \left(\frac{1}{2}+\left\lfloor\frac{l}{2}\right\rfloor\right) + (-1)^k \left\lfloor\frac{k+1}{2}\right\rfloor + \frac{1}{2} \equiv 0 \mod (r+1)
\end{align*}
only has $k=l$ as a solution (under the assumption that $0\leq k\leq r$). Combining this with Definition \ref{ch4:def:fm}, we get for $\pm\operatorname{Im}(z)>0$ that
\begin{align*}
-&\sum_{m=1}^r \omega^{\pm (-1)^{l-1} (\frac{1}{2}+\lfloor\frac{l}{2}\rfloor) m} z^\frac{m}{r+1} f_m(z)\\
&= \sum_{j=0}^{r-1} \sum_{k=0}^{j} \sum_{m=1}^r \omega^{\pm m ((-1)^{l-1} (\frac{1}{2}+\lfloor\frac{l}{2}\rfloor)+(-1)^k \lfloor \frac{k+1}{2}\rfloor)} \varphi_j(z)\\
&= \sum_{j=0}^{l-1} \sum_{k=0}^j (-1) \varphi_j(z) + \sum_{j=l}^{r-1} \left(r+\sum_{k=0, k\neq l}^{j} (-1)\right) \varphi_j(z)\\
&= -\sum_{j=0}^{l-1} (j+1) \varphi_j(z) + \sum_{j=l}^{r-1} (r-j) \varphi_j(z)\\
&= -\sum_{j=0}^{r-1} (j+1) \varphi_j(z) + (r+1) \sum_{j=l}^{r-1} \varphi_j(z).
\end{align*}
\end{proof}

\subsubsection{A local parametrix problem with constant jumps} \label{ch4:sec:szegoconstantjumps}

We define the following function. 

\begin{definition} \label{ch4:def:D0}
We define the $(r+1)\times (r+1)$ diagonal matrix
\begin{align} \label{ch4:eq:defD0}
D_0(z) &= \exp\left(\frac{2}{r+1} \sum_{j=0}^{r-1} (j+1) \varphi_j(z)\right) \bigoplus_{l=0}^{r} 
\exp\left(-2\sum_{j=l}^{r-1} \varphi_j(z)\right).
\end{align}
\end{definition}

This function is the equivalent of (5.15) in \cite{KuMo}. The function $D_0(z)$ relates RH-$\mathring{\text{P}}$ to a RHP with constant jumps, in the following way. 

\begin{proposition} \label{ch4:prop:RHPfromPtoQ}
Suppose that
\begin{align*}
\widetilde P(z) = \mathring P(z) 
\operatorname{diag}(1,z^{-\beta},\ldots,z^{-\beta}) 
D_0(z)^n.
\end{align*}
Then $\mathring P$ satisfies RHP-$\mathring{\text{P}}$ if and only if $\widetilde P$ satisfies RH-$\widetilde{\text{P}}$, as defined below. 
\end{proposition}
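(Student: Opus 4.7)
The plan is to verify directly that right-multiplication by $M(z) := \operatorname{diag}(1,z^{-\beta},\ldots,z^{-\beta})\,D_0(z)^n$ converts each jump of $\mathring P$ into a constant jump, while preserving analyticity off the contour and not worsening the behavior at the origin. Because $M$ is diagonal, the jumps transform as $\widetilde J = M_-^{-1} J M_+$, so everything reduces to tracking the boundary ratios $[M]_{l,l,+}/[M]_{l',l',-}$ on each piece of $\Sigma_S\cap D(0,r_0)$. The reverse implication then follows from the forward one by applying it to $M^{-1}$, so it suffices to prove one direction.

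First I would record the boundary values of the diagonal entries of $D_0$ using Lemma \ref{ch4:prop:phiRelations}: on $\Delta_j$ one has $\varphi_{j,+}=-\varphi_{j,-}$, while off $\Delta_j$ one has the telescoping relation $\varphi_{j,+}=\varphi_{j-1,-}+\varphi_{j,-}+\varphi_{j+1,-}$. Substituting these into $[D_0]_{l,l}=\exp\left(\frac{2}{r+1}\sum_{j=0}^{r-1}(j+1)\varphi_j-2\sum_{j=l}^{r-1}\varphi_j\right)$ gives explicit formulas for the relevant ratios $[D_0]_{l,l,+}/[D_0]_{l',l',-}$. A direct calculation, for instance, yields $[D_0]_{0,0}/[D_0]_{1,1}=e^{-2\varphi_0}$, and analogous identities hold for the other neighbouring pairs.

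Next I would go contour by contour inside $D(0,r_0)$. On the interval $(0,q)$ the jump of $\mathring P$ (after lens opening) is a skew-diagonal $2\times 2$ block with entries $x^{\pm\beta}$, together with the standard skew $2\times 2$ blocks and $1\times 1$ identities further down the diagonal; the $z^{-\beta}$ prefactor in $M$ absorbs the $x^{\pm\beta}$ powers, and the jump of $[D_0^n]_{0,0}$ induced by $\varphi_{0,+}=-\varphi_{0,-}$ combines with $[D_0^n]_{1,1}$ (which is continuous there, since only $\varphi_0$ jumps on $(0,q)$) to give constant entries. On the lens lips $\Delta_0^\pm$ the matrix $M$ is analytic and the transformation is conjugation by the diagonal $M$; the off-diagonal entry $z^{-\beta}e^{2n\varphi(z)}$ of the lens jump acquires the factor $M_{0,0}/M_{1,1}=z^\beta\,e^{-2n\varphi_0(z)}=z^\beta e^{-2n\varphi(z)}$, using $\varphi_0=\varphi\pm\pi i$ and $n\in\mathbb Z$, so the entry collapses to $1$. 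On the segment $(-r_0,0)$ the jump of $\mathring P$ is a direct sum of skew $2\times 2$ blocks with $\pm 1$ entries together with $1\times 1$ identity blocks; in each $2\times 2$ block the $z^{-\beta}$ factors cancel pairwise, and the required constancy reduces to an identity for $[D_0]_{l,l,+}[D_0]_{l+1,l+1,-}^{-1}$ which follows from the off-$\Delta_l$ recursion applied to the telescoping difference $\sum_{j=l}^{r-1}\varphi_j - \sum_{j=l+1}^{r-1}\varphi_j=\varphi_l$, combined with the global contribution $\frac{2}{r+1}\sum_j(j+1)\varphi_j$ that is common to every diagonal entry.

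The main obstacle is this last case: the exponents in $[D_0]_{l,l}$ mix all $r$ functions $\varphi_0,\ldots,\varphi_{r-1}$, and one must verify that the jumps of all of them (not just $\varphi_l$) conspire correctly when evaluated on the appropriate piece of $\mathbb R\setminus\Delta_l$. This is essentially the same telescoping bookkeeping that underlies the analyticity of the $f_m$'s in Proposition \ref{ch4:prop:fmanalytic}; a clean organization is to split into cases by the parity of $l$ and of $r$ and apply the appropriate identity from Lemma \ref{ch4:prop:phiRelations}. The analyticity of $\widetilde P$ off the contour is immediate since $M$ is analytic on $D(0,r_0)\setminus\mathbb R$, and the behavior of $\widetilde P$ near the origin follows from the fact that the $g$-functions, and hence $D_0$, are bounded there by Proposition \ref{ch4:prop:gfunctionsbounded0}, so that $M$ contributes only the explicit $z^{-\beta}$ singularity expected in the transformed local problem.
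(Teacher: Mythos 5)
Your proposal is conceptually on the right track, and the overall strategy (tracking the boundary ratios of the diagonal conjugating matrix on each piece of $\Sigma_S\cap D(0,r_0)$, reducing to the other direction by inverting $M$) is sound. However, it takes a genuinely different route than the paper: you bypass Propositions \ref{ch4:prop:fmanalytic} and \ref{ch4:prop:sumfm} entirely and propose to compute the boundary ratios of $[D_0]_{l,l}$ directly from Lemma~\ref{ch4:prop:phiRelations}. The paper instead substitutes the identity of Proposition~\ref{ch4:prop:sumfm} into the exponent of $D_0$, so that $[D_0]_{l,l}=\exp\bigl(\tfrac{2}{r+1}\sum_m\omega^{\pm(\ldots)m}z^{m/(r+1)}f_m(z)\bigr)$ with $f_m$ analytic; the jump then falls out from the elementary boundary values of $z^{m/(r+1)}$, one short computation per block. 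Your route is in principle valid but recreates inline the telescoping bookkeeping that the paper deliberately packaged once into Proposition~\ref{ch4:prop:fmanalytic}.

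The concrete problem is the intermediate claim in your treatment of $(0,q)$: you assert that $[D_0^n]_{1,1}$ is continuous there ``since only $\varphi_0$ jumps on $(0,q)$.'' This is false for $r>2$ (and in fact already for $r=2$). The even-indexed $\varphi_j$ with $j\geq 2$ satisfy $\varphi_{j,+}=-\varphi_{j,-}$ on $\Delta_j\supset(0,q)$, and the odd-indexed $\varphi_j$ satisfy the telescoping relation $\varphi_{j,+}=\varphi_{j-1,-}+\varphi_{j,-}+\varphi_{j+1,-}$ on $(0,q)$; moreover the common prefactor $\exp\bigl(\tfrac{2}{r+1}\sum_j(j+1)\varphi_j\bigr)$ itself jumps. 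One can check for $r=2$ that $[D_0]_{1,1,+}=\exp(-\tfrac43\varphi_{0,-}-\tfrac23\varphi_{1,-})\neq [D_0]_{1,1,-}=\exp(\tfrac23\varphi_{0,-}-\tfrac23\varphi_{1,-})$. The cross-ratios $[D_0]_{l,l,-}^{-1}[D_0]_{l+1,l+1,+}$ that actually appear in the transformed jump do turn out constant, but the reason is a nontrivial cancellation across all $\varphi_j$'s, not the continuity of any individual diagonal entry. To make your approach rigorous you would need to carry out this cancellation systematically, which amounts to re-proving Proposition~\ref{ch4:prop:sumfm}; citing it directly (as the paper does) is cleaner and less prone to precisely this kind of slip. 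Your handling of the lens-lip case and of RH-$\widetilde{\text{P}}$3 is correct.
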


\begin{rhproblem} \label{ch4:RHPforQ} \
\begin{description}
\item[RH-$\widetilde{\text{P}}$1] $\widetilde P$ is analytic on $D(0,r_0) \setminus \Sigma_S$.
\item[RH-$\widetilde{\text{P}}$2] $\widetilde P$ has boundary values for $x\in D(0,r_0) \cap \Sigma_S$
\begin{align} \nonumber
\widetilde P_+(x) &= \widetilde P_-(x)\\ \nonumber
&\hspace{2cm}\times \left\{\begin{array}{ll} 
\bigoplus_{j=1}^{\frac{r}{2}} 
\begin{pmatrix}
0 & 1\\
-1 & 0
\end{pmatrix}
\oplus 1, & r \equiv 0 \mod 2,\\
\bigoplus_{j=1}^{\frac{r+1}{2}} 
\begin{pmatrix}
0 & 1\\
-1 & 0
\end{pmatrix}
, & r \equiv 1 \mod 2,
\end{array}
\right. \\ \label{ch4:RHS2}
&\hspace{5.5cm} x \in (0,r_0),\\ \nonumber
\widetilde P_+(x) &= \widetilde P_-(x)\\ \nonumber
&\times \left\{\begin{array}{ll}
1 \oplus
\bigoplus_{j=1}^{\frac{r}{2}} e^{2\pi i\beta}
\begin{pmatrix}
0 & 1\\
-1 & 0
\end{pmatrix}, & r \equiv 0 \mod 2,\\
1\oplus
\bigoplus_{j=1}^{\frac{r-1}{2}} e^{2\pi i\beta}
\begin{pmatrix}
0 & 1\\
-1 & 0
\end{pmatrix}
\oplus e^{2\pi i\beta}, & r \equiv 1 \mod 2,\\
\end{array}
\right. \\
&\hspace{5.5cm} x \in (-r_0,0),\\ \nonumber
\widetilde P_+(z) &= \widetilde P_-(z)  
\begin{pmatrix}
1 & 0\\
1 & 1
\end{pmatrix} 
\oplus \mathbb I_{(r-1)\times (r-1)}\\
&\hspace{4cm} z \in \Delta_0^\pm \cap D(0,r_0).
\end{align}
\item[RH-$\widetilde{\text{P}}$3] As $z\to 0$
\begin{align}
\nonumber
\widetilde P(z) &= \mathcal{O}\begin{pmatrix} h_{-\alpha-\frac{r-1}{r}}(z) & h_{-\alpha-\frac{r-1}{r}}(z) & \ldots &  h_{-\alpha-\frac{r-1}{r}}(z)\\ 
\vdots & & & \vdots\\
h_{-\alpha-\frac{r-1}{r}}(z) &  h_{-\alpha-\frac{r-1}{r}}(z) & \ldots & h_{-\alpha-\frac{r-1}{r}}(z)\end{pmatrix}\\ \label{ch4:RHQ3a}
&\hspace{4cm} \text{for $z$ to the right of }\Delta_0^\pm,\\ \nonumber
\label{ch4:RHQ3b}
\widetilde P(z) &= \mathcal{O}\begin{pmatrix} 1 & h_{-\alpha-\frac{r-1}{r}}(z) & \ldots &  h_{-\alpha-\frac{r-1}{r}}(z)\\ 
\vdots & & & \vdots\\
1 & h_{-\alpha-\frac{r-1}{r}}(z) & \ldots &  h_{-\alpha-\frac{r-1}{r}}(z)\end{pmatrix}\\
&\hspace{4cm} \text{for $z$ to the left of }\Delta_0^\pm.
\end{align}
\end{description}
\end{rhproblem}

\begin{proof}
So let us suppose that $\mathring P$ satisfies RH-$\mathring{\text{P}}$. For the positive and negative real axis the jumps of $\widetilde P$ follow from Proposition \ref{ch4:prop:sumfm}. For example, for $x<0$ the upper-right component of the $l$-th $2\times 2$ block of the jump matrix equals
\begin{align*}
& e^{\pi i\beta} |x|^\beta \exp\left(\sum_{m=1}^r \omega^{(-1)^{2l-2} (\frac{1}{2}+\lfloor\frac{2l-1}{2}\rfloor m} \omega^\frac{m}{2} |x|^\frac{m}{r+1} f_m(x)\right)\\
&\quad e^{\pi i\beta} |x|^{-\beta} \exp\left(\sum_{m=1}^r \omega^{-(-1)^{2l-1} (\frac{1}{2}+\lfloor\frac{2l}{2}\rfloor m} \omega^{-\frac{m}{2}} |x|^\frac{m}{r+1} f_m(x)\right)\\
&= e^{2\pi i\beta} 
\exp \left(\sum_{m=1}^r \omega^{(\frac{1}{2}+l-1) m} \omega^\frac{m}{2} |x|^\frac{m}{r+1} f_m(x) - \sum_{m=1}^r \omega^{(\frac{1}{2}+l) m} \omega^{-\frac{m}{2}} |x|^\frac{m}{r+1} f_m(x)\right)\\
&= e^{2\pi i\beta}.
\end{align*}
The other components, both for the jump for $x<0$ and $x>0$, follow with an analogous argument. For the jump on the lips of the lens we notice that
\begin{align} \nonumber
\widetilde P_-(z)^{-1} \widetilde P_+(z) &= 
\operatorname{diag}(1,z^\beta,\ldots,z^\beta) \\ \nonumber
& \qquad D_0(z)^{-n}
\left(\begin{pmatrix}
1 & 0\\
z^{-\beta} e^{2 n \varphi(z)} & 1
\end{pmatrix} 
\oplus \mathbb I_{(r-1)\times (r-1)}\right) D_0(z)^n\\ \nonumber
& \hspace{5cm} \operatorname{diag}(1,z^{-\beta},\ldots,z^{-\beta})\\ \label{ch4:eq:jumpstildeliplens}
&= \begin{pmatrix}
1 & 0\\
D_{0,11}(z)^{-1} e^{2 n \varphi(z)} D_{0,00}(z) & 1
\end{pmatrix} 
\oplus \mathbb I_{(r-1)\times (r-1)}
\end{align}
We see that
\begin{align*}
D_{0,11}(z)^{-n} e^{2 n \varphi(z)} D_{0,00}(z)^n
&= \exp 2n\left(\varphi(z) + \sum_{j=1}^{r-1} \varphi_j(z) - \sum_{j=0}^{r-1} \varphi_j(z)\right)\\
&= \exp 2n(\varphi(z) - \varphi_0(z)) = 1. 
\end{align*}
Plugging this in \eqref{ch4:eq:jumpstildeliplens}, we conclude that $\widetilde P$ satisfies RH-$\widetilde{\text{P}}$2. To see that it satisfies RH-$\widetilde{\text{P}}$3 we first notice that the components of $D_0(z)$ are bounded around $z=0$, which follows from Proposition \ref{ch4:prop:fmanalytic} for example. Now RH-$\widetilde{\text{P}}$3 follows from the observation that
\begin{align*}
z^{-\frac{r-1}{r}} h_{\alpha+\frac{r-1}{r}}(z) z^\beta = z^{-\alpha-\frac{r-1}{r}} h_{\alpha+\frac{r-1}{r}}(z) = h_{-\alpha-\frac{r-1}{r}}(z). 
\end{align*}
We conclude that $\widetilde P$ satisfies RH-$\widetilde{\text{P}}$. The opposite implication is more or less analogous. 
\end{proof}

We will eventually find a solution to RH-$\widetilde{\text{P}}$ in the form
\begin{align*}
\widetilde P(z) = \Psi\left(n^{r+1} f(z)\right).
\end{align*} 
Here $\Psi$ is a solution to a so-called bare parametrix problem, that we define explicitly in Section \ref{ch4:sec:bareParametrix}. This problem has the same behavior as in RH-$\widetilde{\text{P}}$, except that the jump contours are extended to infinity, i.e., we have jumps on the positive and negative real axis and on the positive and negative imaginary axis. We denote these by $\mathbb R^\pm$ and $i\mathbb R^\pm$. Additionally, $\Psi$ has a specific asymptotic behavior at $\infty$. The function $f$ is a conformal map with $f(0)=0$ that maps positive numbers to positive numbers. Without loss of generality, by slightly deforming the lips of the lens, we may assume that it maps the jump contours of RH-$\widetilde{\text{P}}$2 into the jump contours of $\Psi$.

\subsubsection{The conformal map $f$} \label{ch4:sec:conformalf}

The conformal map that we use in the construction of the local parametrix is defined as follows. 
\begin{definition} \label{ch4:def:conformalf}
We define the map
\begin{align} \label{ch4:eq:conformalf}
f(z) = z \left(\frac{2 f_1(z)}{(r+1)^2}\right)^{r+1}
\end{align}
\end{definition}

We will be more precise about the domain of $f$ in a moment. 

\begin{proposition} \label{ch4:prop:conformalf}
In a small enough neighborhood of $0$ the function $f$, as defined in \eqref{ch4:eq:conformalf}, is a conformal map with $f(0)=0$ that maps positive numbers to positive numbers. Furthermore, with $c_{0,V}$ as in \eqref{ch4:eq:onecutthetabehav}, we have
\begin{align} \label{ch4:eq:constf0}
f'(0) = \left(\frac{\pi c_{0,V}}{\sin\left(\frac{\pi}{r+1}\right)}\right)^{r+1}.
\end{align}
\end{proposition}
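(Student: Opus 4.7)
The plan is to first reduce the proposition to a computation of $f_1(0)$, and then to extract that value from the hard-edge behavior of $\mu_0$.

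Paragraph one (structure, analyticity, reality). By Proposition~\ref{ch4:prop:fmanalytic}, $f_1$ extends to a function analytic in a neighborhood of $0$, so the factorization $f(z)=z\bigl(\tfrac{2f_1(z)}{(r+1)^2}\bigr)^{r+1}$ is analytic there with $f(0)=0$ and
\[
f'(0)=\left(\frac{2f_1(0)}{(r+1)^2}\right)^{r+1}.
\]
To ensure $f$ is real on the real axis (and conformality at $0$ will follow from $f'(0)\neq 0$), I would verify the Schwarz reflection identity $f_1(\bar z)=\overline{f_1(z)}$. This rests on two observations: (i) each $g_j$ is the log potential of a real measure, so $g_j(\bar z)=\overline{g_j(z)}$, which combined with the fact that the additive constants in \eqref{ch4:eq:defvarphi0}--\eqref{ch4:eq:defvarphir-1} flip sign between the upper and lower half planes gives $\varphi_j(\bar z)=\overline{\varphi_j(z)}$; and (ii) the coefficients $\omega^{m((-1)^k\lfloor(k+1)/2\rfloor+1/2)}$ appearing in Definition~\ref{ch4:def:fm} for $\operatorname{Im} z>0$ are the complex conjugates of those for $\operatorname{Im} z<0$. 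Hence $f_1(0)\in\mathbb{R}$ and $f$ is real on the real axis.

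Paragraph two (evaluation of $f_1(0)$). Since $f_1$ is continuous at $0$, I would compute $f_1(0)=\lim_{x\downarrow 0}f_1(x)$ via the upper-half-plane formula. The crucial input is the asymptotic of the boundary values $\varphi_{j,+}(x)$ as $x\downarrow 0$. From the proof of Lemma~\ref{ch4:prop:phiRelations}, $\varphi_{0,+}(x)=\pi i\,\mu_0([0,x])$, which together with the one-cut $\frac{1}{r}$-regularity \eqref{ch4:eq:onecutthetabehav} yields
\[
\varphi_{0,+}(x)=\pi i\,c_{0,V}(r+1)\,x^{1/(r+1)}\bigl(1+o(1)\bigr).
\]
The remaining $\varphi_{j,+}(x)$ contribute at the same order $x^{1/(r+1)}$: for even $j\geq 2$ one has $\varphi_{j,+}(x)=\pi i\,\mu_j([0,x])$ by the same variational argument, and the densities $\rho_j(s)\sim d_j\,c_{0,V}\,s^{-r/(r+1)}$ near $s=0$ can be computed from \eqref{ch4:eq:defPsijExplicit}--\eqref{ch4:explicitMeasures} by the change of variable $a=su^{r}$ in the integral defining $\mu_j$; for odd $j$ the same order of decay is deduced from the jump relation \eqref{ch4:eq:varphijcrelation}. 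Substituting these asymptotics in Definition~\ref{ch4:def:fm} with $m=1$, the coefficients $\sum_{k=0}^{j}\omega^{(-1)^{k}\lfloor(k+1)/2\rfloor+1/2}$ combine via a cyclic-sum identity analogous to \eqref{ch4:eq:firstWeirdOmegaIdentity}--\eqref{ch4:eq:secondWeirdOmegaIdentity} to collapse everything to the single factor $\sin(\pi/(r+1))^{-1}$, giving
\[
f_1(0)=\frac{\pi c_{0,V}(r+1)^{2}}{2\sin(\pi/(r+1))}.
\]
Plugging this back into $f'(0)=(2f_1(0)/(r+1)^2)^{r+1}$ yields \eqref{ch4:eq:constf0}.

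Paragraph three (main obstacle, and the finish). The principal difficulty is the second step: extracting the constants $d_j$ in $\rho_j(s)\sim d_j c_{0,V}s^{-r/(r+1)}$ and verifying that the weighted sum over $j$ collapses to $\sin(\pi/(r+1))^{-1}$. A cleaner alternative, which I would prefer, is to avoid these densities altogether by exploiting the $r+1$ sheeted Riemann surface for $\xi$ developed in Section~\ref{ch4:sec:normalization}: Corollary~\ref{ch4:cor:behavxi0} gives the exact behavior of $\xi_j(z)^{-1}$ near $0$ in terms of the constant $c_q$, and one can read off the leading singular part of a suitable linear combination of $\varphi_j$ (which on the linear-case is built directly from $\xi$). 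Either route being in place, positivity of $f'(0)$ is immediate from \eqref{ch4:eq:constf0} because $c_{0,V}>0$ and $\sin(\pi/(r+1))>0$, so the inverse function theorem gives conformality in a small disk around $0$; combined with the Schwarz reflection of paragraph one, this disk can be chosen so that $f$ maps $[0,\delta)$ into $[0,\infty)$, which completes the proof.
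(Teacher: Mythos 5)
Your plan misses the decisive step the paper actually uses: inverting the linear relation between the $\varphi_j$ and the $f_m$. The paper subtracts the identity in Proposition~\ref{ch4:prop:sumfm} at $l=0$ from the one at $l=1$, which immediately collapses the $\varphi_j$ with $j\geq 1$ and yields, for $\operatorname{Im} z>0$,
\begin{align*}
2i\sum_{m=1}^{r}\sin\!\left(\frac{\pi m}{r+1}\right) z^{\frac{m}{r+1}} f_m(z)=(r+1)\,\varphi_0(z).
\end{align*}
Reading off the coefficient of $z^{1/(r+1)}$ as $z\to 0$ then pins $f_1(0)$ to $(r+1)^2\pi c_{0,V}/(2\sin(\pi/(r+1)))$ using only the asymptotics $\varphi_{0,+}(x)=\pi i\,\mu_0([0,x])\sim (r+1)\pi i\, c_{0,V}\,x^{1/(r+1)}$, which you already derive correctly. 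No information about the hard-edge densities of the auxiliary measures $\mu_1,\ldots,\mu_{r-1}$ is needed.

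By contrast, both routes you sketch in paragraphs two and three have genuine gaps. Plugging the $\varphi_j$-asymptotics directly into Definition~\ref{ch4:def:fm} requires the exact leading constants of $d\mu_j/ds$ as $s\to 0$ for every $j$, not merely the $\mathcal O$-bounds that the paper establishes in the proof of Proposition~\ref{ch4:prop:gfunctionsbounded0}; moreover for odd $j$ the boundary value $\varphi_{j,+}(x)$ at $x>0$ is not $\pi i\,\mu_j([0,x])$ (the jump of $\varphi_j$ on $(0,\infty)$ comes from the neighboring $g_{j\pm1}$, since $\Delta_j\subset(-\infty,0]$), so the bookkeeping is substantially more delicate than what you describe. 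The claim that an identity "analogous to \eqref{ch4:eq:firstWeirdOmegaIdentity}--\eqref{ch4:eq:secondWeirdOmegaIdentity}" collapses the weighted sum is precisely the content that would need to be proved, and is in fact what Proposition~\ref{ch4:prop:sumfm} supplies. The $\xi$-based alternative in paragraph three is not carried out and is not an obvious detour: the $\xi_j$ encode the linear-$V$ solution at global scale, while $f_1(0)$ is controlled by the hard-edge constant $c_{0,V}$ of the general $V$; a correct bridge would again funnel back through $\varphi_0$. Your first paragraph (analyticity via Proposition~\ref{ch4:prop:fmanalytic}, the factorization $f'(0)=(2f_1(0)/(r+1)^2)^{r+1}$, and the Schwarz reflection argument) is sound, and positivity/conformality once $f_1(0)>0$ is known is fine; the missing ingredient is that you never isolate $\varphi_0$, which is the whole point of the paper's argument.
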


\begin{proof}
By Proposition \ref{ch4:prop:fmanalytic} we already know that $f_1$ is analytic on $D(0,q)\cap O_V$, then $f$, too, is analytic on $D(0,q)\cap O_V$. It suffices to prove that $f_1(0)>0$. If we subtract the formula in Proposition \ref{ch4:prop:sumfm} for $l=0$ from the one for $l=1$, then we obtain for $\operatorname{Im}(z)>0$ that
\begin{align} \label{ch4:eq:behavfr+1phi0}
2i \sum_{m=1}^r \sin\left(\frac{\pi m}{r+1}\right) z^\frac{m}{r+1} f_m(z) = (r+1) \varphi_0(z). 
\end{align}
By \eqref{ch4:eq:varphieven0} and the one-cut $\frac{1}{r}$-regularity of we have for $x>0$ that
\begin{align*}
\varphi_{0,+}(z) = \mu([0,x]) = (r+1)\pi i c_{0,V} x^\frac{1}{r+1} (1+o(1))
\end{align*}
as $x\to 0$. Then it follows for $x>0$ that
\begin{align*}
2 i \sin\left(\frac{\pi}{r+1}\right) x^\frac{1}{r+1} f_1(x) + \mathcal O\left(x^{\frac{2}{r+1}}\right) = (r+1)^2 \pi i c_{0,V} x^\frac{1}{r+1} (1+o(1))
\end{align*}
as $z\to 0$. This can only be true if 
\begin{align*}
f_1(0) = (r+1)^2 \frac{\pi c_{0,V}}{2\sin\left(\frac{\pi}{r+1}\right)}. 
\end{align*}
Then we conclude that $f_1(0)>0$, and furthermore
\begin{align*}
f'(0) = \left(\frac{\pi c_{0,V}}{\sin\left(\frac{\pi}{r+1}\right)}\right)^{r+1}.
\end{align*}
\end{proof}

We infer that there exists an $r_0>0$ such that $f$ is a conformal map if we take it to have domain $D(0,r_0)$. We fix such an $r_0$ and we will use it as the radius of the disk on which our local parametrix is defined. 

\subsection{Bare local parametrix problem} \label{ch4:sec:bareParametrix}

As mentioned before the bare parametrix problem has the same behavior as in RH-$\widetilde{\text{P}}$, but with the jump contours extended to infinity, and with an additional behavior at $\infty$. It is the generalization of the RHP in Section 3 in \cite{KuMo}. The bare parametrix problem takes the following form. 

\begin{rhproblem} \label{ch4:RHPforQ} \
\begin{description}
\item[RH-$\Psi$1] $\Psi$ is analytic on $\mathbb C\setminus (\mathbb R \cup i\mathbb R)$.
\item[RH-$\Psi$2] $\Psi$ has boundary values for $x\in (\mathbb R \cup i\mathbb R)\setminus \{0\}$
\begin{align} \nonumber
\Psi_+(x) &= \Psi_-(x) \times \left\{\begin{array}{ll} 
\bigoplus_{j=1}^{\frac{r}{2}} 
\begin{pmatrix}
0 & 1\\
-1 & 0
\end{pmatrix}
\oplus 1, & r \equiv 0 \mod 2,\\
\bigoplus_{j=1}^{\frac{r+1}{2}} 
\begin{pmatrix}
0 & 1\\
-1 & 0
\end{pmatrix}
, & r \equiv 1 \mod 2,
\end{array}
\right. \\  \label{ch4:RHPsi2}
&\hspace{6cm} x \in \mathbb R^+,\\ \nonumber
\Psi_+(x) &= \Psi_-(x) \\ \nonumber
&\hspace{-0.6cm}\times \left\{\begin{array}{ll}
1 \oplus
\bigoplus_{j=1}^{\frac{r}{2}} e^{2\pi i\beta}
\begin{pmatrix}
0 & 1\\
-1 & 0
\end{pmatrix}, & r \equiv 0 \mod 2,\\
1\oplus
\bigoplus_{j=1}^{\frac{r-1}{2}} e^{2\pi i\beta}
\begin{pmatrix}
0 & 1\\
-1 & 0
\end{pmatrix}
\oplus e^{2\pi i\beta}, & r \equiv 1 \mod 2,\\
\end{array}
\right. \\
&\hspace{6cm} x \in \mathbb R^{-},\\ \nonumber
\Psi_+(x) &= \Psi_-(x)  
\begin{pmatrix}
1 & 0\\
1 & 1
\end{pmatrix} 
\oplus \mathbb I_{(r-1)\times (r-1)}\\ 
&\hspace{6cm} x \in i\mathbb R^{\pm}.
\end{align}
\item[RH-$\Psi$3] As $z\to\infty$ we have for $\pm\operatorname{Im}(z)>0$
\begin{multline} \label{ch4:RHPsi3}
\Psi_\alpha(z) = \left(\mathbb I + \mathcal O\left(\frac{1}{z}\right)\right) L_\alpha(z)\\
\left\{ \begin{array}{l} \displaystyle\bigoplus_{j=1}^\frac{r}{2} 
\begin{pmatrix} e^{-(r+1) \omega^{\pm (\frac{r}{2}-j)}z^\frac{1}{r+1}} & 0\\
0 & e^{-(r+1) \omega^{\mp (\frac{r}{2}-j)}z^\frac{1}{r+1}}
\end{pmatrix} \oplus e^{-(r+1)z^\frac{1}{r+1}}, \\ 
\hspace{7cm}r\equiv 0\mod 2,\\
\displaystyle\bigoplus_{j=1}^\frac{r+1}{2} 
\begin{pmatrix} e^{-(r+1) \omega^{\pm (\frac{r}{2}-j)}z^\frac{1}{r+1}} & 0\\
0 & e^{-(r+1) \omega^{\mp (\frac{r}{2}-j)}z^\frac{1}{r+1}}
\end{pmatrix}, \\ 
\hspace{7cm}r\equiv 1\mod 2, \end{array}\right.
\end{multline}
where $L_\alpha(z)$ is as in Definition \ref{ch4:def:Lalpha} below. 
\item[RH-$\Psi$4] As $z\to 0$
\begin{align}
\label{ch4:RHPsi4a}
\Psi(z) &= \mathcal{O}\begin{pmatrix} h_{-\alpha-\frac{r-1}{r}}(z) & \ldots &  h_{-\alpha-\frac{r-1}{r}}(z)\\ 
\vdots & & \vdots\\
h_{-\alpha-\frac{r-1}{r}}(z) & \ldots & h_{-\alpha-\frac{r-1}{r}}(z)\end{pmatrix},
& \operatorname{Re}(z)>0,\\ \label{ch4:RHPsi4b}
\Psi(z) &= \mathcal{O}\begin{pmatrix} 1 & h_{-\alpha-\frac{r-1}{r}}(z) & \ldots &  h_{-\alpha-\frac{r-1}{r}}(z)\\ 
\vdots & & & \vdots\\
1 & h_{-\alpha-\frac{r-1}{r}}(z) & \ldots &  h_{-\alpha-\frac{r-1}{r}}(z)\end{pmatrix},
& \operatorname{Re}(z) < 0.
\end{align}
\end{description}
\end{rhproblem}

It will be practical to choose a notation for the consecutive powers of $\omega$ in RH-$\Psi$3. We set
\begin{align} \label{ch4:eq:defkj}
k_j &= (-1)^{j-1} \left(\frac{r}{2} - \left\lfloor \frac{j-1}{2}\right\rfloor\right), & j=1,\ldots,r+1.
\end{align}
With this definition we can rewrite RH-$\Psi$3 as 
\begin{align} \label{ch4:RHPsi3rewritten}
\Psi(z) = \left(\mathbb I +\mathcal O\left(\frac{1}{z}\right)\right) L_\alpha(z) \bigoplus_{j=1}^{r+1} e^{-(r+1) \omega^{\pm k_j} z^\frac{1}{r+1}}
\end{align}
as $z\to\infty$, for $\pm\operatorname{Im}(z)>0$. 

\begin{definition} \label{ch4:def:Lalpha}
We define
\begin{align} \label{ch4:eq:defLalpha}
L_\alpha(z) = \frac{(2\pi)^\frac{r}{2}}{\sqrt{r+1}} z^{-\frac{r}{r+1}\beta}
\bigoplus_{j=0}^r z^{-\frac{r}{2(r+1)}+\frac{j}{r+1}} 
\left\{\begin{array}{ll} M_\alpha^+, & \operatorname{Im}(z)>0,\\
M_\alpha^-, & \operatorname{Im}(z)<0,
\end{array} \right.
\end{align}
where $M^+$ and $M^-$ are $(r+1)\times (r+1)$ matrices given by
\begin{align} \label{ch4:eq:defMalpha+}
\hspace{-0.5cm}\left(M_\alpha^+\right)_{kl} &= 
\operatorname{diag}(1,-1,1,\ldots) 
\begin{pmatrix}
1 & 1 & \cdots\\
\omega^{k_1} & \omega^{k_2} & \cdots & \omega^{k_{r+1}}\\
\omega^{2 k_1} & \omega^{2 k_2} & \cdots & \omega^{2 k_{r+1}}\\
\vdots & \vdots & & \vdots\\
\omega^{r k_1} & \omega^{r k_2} & \cdots & \omega^{r k_{r+1}}
\end{pmatrix}
\left(\bigoplus_{j=1}^{r+1} e^{2\pi i(\beta+\eta) k_j}\right)
\times \left(\operatorname{diag}\left(1, -1, 1,\ldots\right)\right)^r
\end{align}
\begin{align} \nonumber
\hspace{-0.5cm}\left(M_\alpha^-\right)_{kl} &= 
\operatorname{diag}(1,-1,1,\ldots) 
\begin{pmatrix}
1 & 1 & \cdots\\
\omega^{k_1} & \omega^{k_2} & \cdots & \omega^{k_{r+1}}\\
\omega^{2 k_1} & \omega^{2 k_2} & \cdots & \omega^{2 k_{r+1}}\\
\vdots & \vdots & & \vdots\\
\omega^{r k_1} & \omega^{r k_2} & \cdots & \omega^{r k_{r+1}}
\end{pmatrix}
\left(\bigoplus_{j=1}^{r+1} e^{2\pi i(\beta+\eta) k_j}\right)
\left(\operatorname{diag}\left(1, -1, 1,\ldots\right)\right)^r\\ \label{ch4:eq:defMalpha-}
& \hspace{2cm}\times\left\{\begin{array}{ll} 
\displaystyle\bigoplus_{j=1}^{\frac{r}{2}} 
\begin{pmatrix}
0 & -1\\
1 & 0
\end{pmatrix}
\oplus 1, & r \equiv 0 \mod 2,\\
\displaystyle\bigoplus_{j=1}^{\frac{r+1}{2}} 
\begin{pmatrix}
0 & -1\\
1 & 0
\end{pmatrix}
, & r \equiv 1 \mod 2,
\end{array}
\right.
\end{align}
where
\begin{align} \label{ch4:eq:defTheta}
\eta = -\frac{r}{r+1} \left(\beta+\frac{1}{2}\right). 
\end{align}
\end{definition}

We will eventually prove that RH-$\Psi$ has a unique solution. In the next section we construct the solution. 
It follows by standard arguments from Riemann-Hilbert theory that $\det \Psi(z)$ is a constant multiple of $z^{-r\beta}$.

\begin{remark}
RH-$\Psi$ shows a great similarity with the bare Meijer-G parametrix for $p$-chain from \cite[p. 34]{BeBo} (in our case $p=r$). Its solution is constructed with Meijer G-functions, and our solution will also be constructed with Meijer G-functions, as we shall see in the next section. It is natural to wonder if these bare parametrix problems are somehow related. In \cite{BeBo} the bare parametrix problem was obtained after a RH analysis where two lenses, rather than one lens, had to be opened. Another difference is that the jumps on the lenses are not simply a direct product of a $2\times 2$ block and an $(r+1)\times (r+1)$ unit matrix, as in our case. Initially, we suspected that it is possible to map RH-$\Psi$ to the bare Meijer-G parametrix problem, by artificially adding jumps. It appears that this does not work. Then perhaps, these two problems are actually inherently different.  
\end{remark}

%DOE OOK PLAATJE. KIJK OF IN ANDERE SECTIES NOG PLAATJES MOETEN. 

\subsubsection{Definition of $\Psi$ with Meijer G-functions}
%ZEG IETS OVERAL ALTERNATIEVE BESSEL VOOR R=1, MINDER SPECIALE FUNCTIES NODIG. 
We will find a solution of RHP-$\Psi$ in terms of Meijer G-functions. See \cite{BeSm} for an introduction on Meijer G-functions. One may also consult Appendix \ref{ch:appendixB} for general information on the Meijer G-function. We will use the particular Meijer G-function defined by
\begin{align} \label{ch4:eq:defMeijerG}
G_{0,r+1}^{r+1,0}\left(\left. \begin{array}{c} -\\ 0,-\alpha,-\alpha-\frac{1}{r},\ldots,-\alpha-\frac{r-1}{r}\end{array}\right| z\right)
= \frac{1}{2\pi i} \int_L \Gamma\left(s\right) \prod_{j=0}^{r-1} \Gamma\left(s-\alpha-\frac{j}{r}\right) z^{-s} ds,
\end{align}
where $L$ encircles the interval $(-\infty,\max(0,\alpha+\frac{r-1}{r})]$ in the complex $s$-plane. 
We shall simply abbreviate the left-hand side of \eqref{ch4:eq:defMeijerG} with the notation $G_{0,r+1}^{r+1,0}(z)$, suppressing the parameters. $G_{0,r+1}^{r+1,0}(z)$ can be viewed as a multi-valued function. By \cite[p. 144]{Lu} the definition \eqref{ch4:eq:defMeijerG} makes sense for $z$ with argument between $-\frac{r+1}{2}\pi$ and $\frac{r+1}{2}\pi$. We will consider $G_{0,r+1}^{r+1}(z)$ as a function where $z$ has argument between $-\pi$ and $\pi$ though, to preserve our convention that (non-integer) powers of $z$ have a cut at $(-\infty,0]$, and that the argument should lie between $-\pi$ and $\pi$. If we want to consider values of $G_{0,r+1}^{r+1,0}$ for $z$ with argument outside this range, then we use a notation that we introduce in a moment. 
%wil ik nog product formule van de gamma functie toepassen?

The function in \eqref{ch4:eq:defMeijerG} is a solution to the $(r+1)$-th order linear differential equation
\begin{align} \label{ch4:eq:MeijerGdvgl}
\vartheta \prod_{j=0}^{r-1} \left(\vartheta+\alpha+\frac{j}{r}\right) \psi(z) + (-1)^r z \psi(z) &= 0, & \vartheta = z\frac{d}{dz}.
\end{align}
Around $z=0$ a basis of solutions can be expressed in terms of generalized hypergeometric functions. A particular basis of solutions, practical for our purposes, to \eqref{ch4:eq:MeijerGdvgl} is given by
\begin{align} \label{ch4:eq:defpsij}
\psi_j(z) &= \gamma_j  G_{0,r+1}^{r+1,0}\left(z e^{2\pi i k_j}\right), & j=1,2,\ldots, r+1,
\end{align}
where
\begin{align} \nonumber
k_j &= (-1)^{j-1} \left(\frac{r}{2} - \left\lfloor \frac{j-1}{2}\right\rfloor\right),\\ \label{ch4:eq:defgammaj}
\gamma_j &= (-1)^{r (j-1)} e^{2\pi i\beta k_j}.
\end{align}
We have repeated the definition for $k_j$ (see \eqref{ch4:eq:defkj}) as a convenience to the reader. The notation $z e^{2\pi i k_j}$ means that we have analytically continued $G_{0,r+1}^{r+1,0}$ along a circle a total of $|k_j|$ times, in positive direction if $k_j>0$ and in negative direction if $k_j<0$. Notice that $k_j$ may be a half-integer. We remark that the functions $\psi_j$ have a cut at $(-\infty,0]$. 
It will in some cases be convenient to notice that we can write $k_j$ alternatively as
\begin{align} \label{ch4:eq:defkjAlternative}
k_j = \frac{(-1)^{j-1}\left(r+1 - 2\left\lfloor\frac{j}{2}\right\rfloor\right)-1}{2}.
\end{align}
Additionally, we define 
\begin{align} \label{ch4:eq:defpsi0}
\psi_0(z) &= \psi_1(z) + \psi_2(z)
= e^{\pi i r\beta} G_{0,r+1}^{r+1,0}(z e^{\pi i r}) + (-1)^r e^{-\pi i r\beta} G_{0,r+1}^{r+1,0}(z e^{-\pi i r}).
\end{align}

\begin{lemma} \label{ch4:prop:psi0entire}
$\psi_0$ is an entire function.
\end{lemma}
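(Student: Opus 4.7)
The plan is to use the classical series decomposition of the Meijer $G$-function to separate its branched power-law prefactors from its entire hypergeometric factors, and then observe that in the combination defining $\psi_0$ all the branched pieces cancel, leaving only an entire piece.

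First, I would recall (see e.g.\ \cite{BeSm}) the standard expansion
\[
G^{r+1,0}_{0,r+1}(w) = \sum_{k=0}^{r} A_k\, w^{b_k}\, \Phi_k(w),
\]
where $b_0=0$, $b_k = -\alpha - (k-1)/r$ for $k=1,\ldots,r$, each $A_k$ is an explicit constant (a ratio of $\Gamma$-values), and each $\Phi_k$ is a ${}_0F_r$-hypergeometric series in $(-1)^{r+1} w$, hence entire as a function of $w$. All the branching of $G^{r+1,0}_{0,r+1}$ at the origin is concentrated in the prefactors $w^{b_k}$. Under the continuation $w \mapsto w e^{2\pi i k_j}$ appearing in \eqref{ch4:eq:defpsij}, each $w^{b_k}$ picks up a factor $e^{2\pi i k_j b_k}$, while each $\Phi_k$ is invariant, because $k_j(r+1)$ is always an integer: for $r$ even, $k_j = \pm r/2$ is itself an integer, and for $r$ odd, $r+1$ is even so $k_j(r+1) = \pm r(r+1)/2 \in \mathbb{Z}$.

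Plugging this into \eqref{ch4:eq:defpsi0} with $\gamma_1 = e^{i\pi r\beta}$, $\gamma_2 = (-1)^r e^{-i\pi r\beta}$, $k_1 = r/2$ and $k_2 = -r/2$, the coefficient of $z^{b_k}\Phi_k(z)$ in $\psi_0(z)$ becomes $A_k\bigl(e^{i\pi r(\beta+b_k)} + (-1)^r e^{-i\pi r(\beta+b_k)}\bigr)$. The heart of the argument is the parity computation $r(\beta+b_k) = (r-2k+1)/2$ for $k = 1,\ldots, r$, which follows from $\beta = \alpha + (r-1)/(2r)$. When $r$ is even, $(r-2k+1)/2$ is an odd multiple of $1/2$, so the bracket equals twice the cosine of an odd multiple of $\pi/2$ and vanishes; when $r$ is odd, $(r-2k+1)/2$ is an integer, so the bracket equals $2i\sin$ of an integer multiple of $\pi$ and again vanishes. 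In both parities, only the $k=0$ term survives, giving
\[
\psi_0(z) = A_0\bigl(e^{i\pi r\beta} + (-1)^r e^{-i\pi r\beta}\bigr)\Phi_0(z),
\]
which is manifestly entire. The main technical point requiring care is verifying that the analytic continuation preserves each $\Phi_k$ (i.e.\ checking $k_j(r+1) \in \mathbb{Z}$) so that the parity-based cancellation operates cleanly on the branched prefactors; once this is in place, the remainder is a straightforward bookkeeping exercise in the parity of $r$.
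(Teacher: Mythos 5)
Your route is genuinely different from the paper's: the paper combines the two terms of \eqref{ch4:eq:defpsi0} inside a single Mellin--Barnes integral, producing a $\sin(\pi r(s-\alpha))$ factor that cancels the poles of $\prod_{j}\Gamma(s-\alpha-j/r)$ and leaves a ${}_0F_r$ residue series with only integer powers of $z$, whereas you expand each $G$-factor in its Frobenius basis near the origin and then cancel the branched Frobenius components via the parity computation $r(\beta+b_k)=(r-2k+1)/2$. Both approaches track the same cancellation; yours makes visible which Frobenius component survives, while the paper's is a bit shorter.

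However, one step is stated incorrectly. You assert that ``each $\Phi_k$ is invariant, because $k_j(r+1)$ is always an integer.'' The entire factor $\Phi_k$ is a power series in $w$ (with argument $(-1)^{r+1}w$), not in $w^{r+1}$, so the relevant condition for invariance under $w\mapsto we^{2\pi ik_j}$ is $k_j\in\mathbb{Z}$, not $(r+1)k_j\in\mathbb{Z}$. For $r$ odd, $k_1=r/2$ and $k_2=-r/2$ are half-integers and $e^{2\pi ik_j}=-1$, so the $\Phi_k$ argument changes from $(-1)^{r+1}z=z$ to $-z$: $\Phi_k$ is \emph{not} invariant. Fortunately this does not break your proof: since $e^{\pi ir}=e^{-\pi ir}=(-1)^r$, the $\Phi_k$ arguments in $\psi_1$ and $\psi_2$ coincide (both equal $-z$ in either parity), so the coefficient of $z^{b_k}\Phi_k(-z)$ in $\psi_0$ is still $A_k\bigl(e^{i\pi r(\beta+b_k)}+(-1)^re^{-i\pi r(\beta+b_k)}\bigr)$ and your parity computation applies verbatim; entireness of $\Phi_0(-z)$ is all that is needed. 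The fix is simply to drop the invariance claim and the $k_j(r+1)\in\mathbb{Z}$ remark, and observe instead that the two terms of \eqref{ch4:eq:defpsi0} are continued by opposite angles $\pm\pi r$, so after continuation the entire Frobenius factors in both terms carry the same argument.
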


\begin{proof}
Using \eqref{ch4:eq:defpsi0} and \eqref{ch4:eq:defMeijerG} we find that
\begin{align} \nonumber
\psi_0(z) &= 
\frac{1}{2 \pi i} \int_L \Gamma(s) \prod_{j=0}^{r-1} \Gamma\left(s-\alpha-\frac{j}{r}\right)
\left(e^{\pi i r\beta} e^{-\pi i r s} z^{-s} + (-1)^r e^{-\pi i r\beta} e^{\pi i r s} z^{-s}\right) ds\\ \label{ch4:eq:residueCalc}
&=-\frac{i^{r-1}}{\pi} \int_L \Gamma(s) \prod_{j=0}^{r-1} \Gamma\left(s-\alpha-\frac{j}{r}\right)
\sin\left(\pi r (s-\alpha)\right) z^{-s} ds.
\end{align}
The sine factor removes all the poles of the Gamma functions to the right of the product symbol. That means that only the poles of $\Gamma(s)$ survive, and these are located at $\ldots, -2, -1, 0$. Then, using Euler's reflection formula for the Gamma function and the well-known trigonometric identity 
\begin{align*}
\sin(r x) = 2^{r-1} \prod_{j=0}^{r-1} \sin\left(x+\frac{j \pi}{r}\right),
\end{align*}
we find with a residue calculation of \eqref{ch4:eq:residueCalc} that
\begin{align*}
\psi_0(z) &= -(2\pi i)^r \sum_{m=0}^\infty \frac{(-1)^{m(r+1)}}{m!} \left(\prod_{j=0}^{r-1} \frac{1}{\Gamma\left(1+m+\alpha+\frac{j}{r}\right)}\right) z^m\\
&= (-1)^{r+1} (2\pi i)^r \prod_{j=0}^{r-1} \frac{1}{\Gamma\left(1+\alpha+\frac{j}{r}\right)} 
{}_{0}F_{r}\left(\left. \begin{array}{c} -\\ 1+\alpha,1+\alpha+\frac{1}{r},\ldots,1+\alpha+\frac{r-1}{r}\end{array}\right| (-1)^{r+1} z\right),
\end{align*}
%MERK OP DAT ER EEN MINTEKENFOUT STAAT IN HET ARTIKEL VAN MIJ EN ARNO. 
where the function ${}_{0}F_{r}$ is a generalized hypergeometric function. Since hypergeometric functions of this type are entire, we conclude that $\psi_0$ is an entire function. 
\end{proof}

\begin{definition} \label{ch4:def:Psi}
We define, with $\psi_0,\psi_1,\ldots,\psi_{r+1}$ as above,
\begin{align} \label{ch4:eq:defPsialpha}
\Psi_\alpha(z) = \left\{\begin{array}{ll}
\begin{pmatrix}
\psi_1(z) & \psi_2(z) & \psi_3(z) & \psi_4(z) & \ldots\\
\vartheta\psi_1(z) & \vartheta\psi_2(z) & \vartheta\psi_3(z) & \vartheta\psi_4(z) & \ldots\\
\vdots & & & & \vdots\\
\vartheta^r\psi_1(z) & \vartheta^r\psi_2(z) & \vartheta^r\psi_3(z) & \vartheta^r\psi_4(z) & \ldots
\end{pmatrix},
& 0<\operatorname{arg}(z) < \frac{\pi}{2},\\
\begin{pmatrix}
\psi_0(z) & \psi_2(z) & \psi_3(z) & \psi_4(z) & \ldots\\
\vartheta\psi_0(z) & \vartheta\psi_2(z) & \vartheta\psi_3(z) & \vartheta\psi_4(z) & \ldots\\
\vdots & & & & \vdots\\
\vartheta^r\psi_0(z) & \vartheta^r\psi_2(z) & \vartheta^r\psi_3(z) & \vartheta^r\psi_4(z) & \ldots\\
\end{pmatrix},
& \frac{\pi}{2} <\operatorname{arg}(z) < \pi,\\
\begin{pmatrix}
\psi_2(z) & -\psi_1(z) & \psi_{4}(z) & -\psi_3(z) & \ldots\\
\vartheta\psi_2(z) & -\vartheta\psi_1(z) &\vartheta \psi_4(z) & -\vartheta \psi_3(z) & \ldots\\
\vdots & & & & \vdots\\
\vartheta^r\psi_2(z) & -\vartheta^r\psi_1(z) &\vartheta^r \psi_4(z) & -\vartheta^r \psi_3(z) & \ldots
\end{pmatrix},
& -\frac{\pi}{2} <\operatorname{arg}(z) < 0,\\
\begin{pmatrix}
\psi_0(z) & -\psi_1(z) & \psi_{4}(z) & -\psi_3(z) & \ldots\\
\vartheta\psi_0(z) & -\vartheta\psi_1(z) &\vartheta \psi_4(z) & -\vartheta \psi_3(z) & \ldots\\
\vdots & & & & \vdots\\
\vartheta^r\psi_0(z) & -\vartheta^r\psi_1(z) &\vartheta^r \psi_4(z) & -\vartheta^r \psi_3(z) & \ldots
\end{pmatrix}
& -\pi<\operatorname{arg}(z) < -\frac{\pi}{2}.
\end{array}\right. 
\end{align}
\end{definition}

We will eventually prove that $\Psi_\alpha$ is the unique solution to RH-$\Psi$ (see Theorem \ref{ch4:thm:PsiaexistUnique}). 

\begin{lemma} \label{ch4:prop:PsiandQ}
$\Psi_\alpha$ satisfies RHP-$\Psi$1, RH-$\Psi$2 and RH-$\Psi$4.
\end{lemma}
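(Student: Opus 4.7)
The plan is to verify RH-$\Psi$1, RH-$\Psi$2 and RH-$\Psi$4 in turn, using the Mellin--Barnes representation \eqref{ch4:eq:defMeijerG} of $G_{0,r+1}^{r+1,0}$, its monodromy around the origin, and the entireness of $\psi_0$ established in Lemma \ref{ch4:prop:psi0entire}. RH-$\Psi$1 is immediate: the functions $\psi_1,\psi_2,\psi_3,\ldots$ are defined as analytic continuations of the principal-branch $G_{0,r+1}^{r+1,0}$, each carrying its cut on $(-\infty,0]$ only, while $\psi_0$ is entire. Therefore in each of the four open quadrants of $\mathbb{C}\setminus (\mathbb{R}\cup i\mathbb{R})$ the columns of $\Psi_\alpha(z)$ and their $\vartheta$-derivatives are analytic.

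For RH-$\Psi$2 I would check each of the four rays, keeping Convention \ref{ch4:con:orientation} in mind. On $\mathbb{R}^+$ the $+$-side lies in the first quadrant and the $-$-side in the fourth; comparing the two formulas in Definition \ref{ch4:def:Psi} shows that each consecutive pair of columns is transformed by right multiplication with $\begin{pmatrix}0 & 1 \\ -1 & 0\end{pmatrix}$, reproducing the jump in \eqref{ch4:RHPsi2}. On $i\mathbb{R}^+$ the only column that changes is the first: $\psi_1$ becomes $\psi_0=\psi_1+\psi_2$, giving the prescribed unipotent jump $\begin{pmatrix}1 & 0 \\ 1 & 1\end{pmatrix}\oplus \mathbb{I}_{(r-1)\times(r-1)}$; on $i\mathbb{R}^-$ the same identity, combined with the fourth-quadrant column-swap, yields the same jump.

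The hardest case is the jump on $\mathbb{R}^-$, where the first column is $\psi_0$ on both sides and contributes nothing, while the remaining columns involve boundary values of $G_{0,r+1}^{r+1,0}$ at $xe^{\pm i\pi + 2\pi ik_j}$. I would compute the ratio of these boundary values via the monodromy of $G_{0,r+1}^{r+1,0}$ around $0$, which follows from the Mellin--Barnes integral and from the identity $\psi_0=\psi_1+\psi_2$ applied to suitable analytic continuations. The factor $e^{2\pi i\beta}$ appearing in the prescribed jump is produced by ratios of the coefficients $\gamma_j=(-1)^{r(j-1)}e^{2\pi i\beta k_j}$ from \eqref{ch4:eq:defgammaj} after relabelling, the signs inside the $2\times 2$ blocks come from the explicit column-swap pattern in the third-quadrant representation of Definition \ref{ch4:def:Psi}, and the two parity cases $r\equiv 0,1\pmod 2$ must be treated separately because the last column (or $1\times 1$ block) behaves differently in each. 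This is the main obstacle of the proof, the one step that really exploits the particular choices of $k_j$ and $\gamma_j$ in \eqref{ch4:eq:defkj}--\eqref{ch4:eq:defgammaj}.

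For RH-$\Psi$4 I would close the Mellin--Barnes contour in \eqref{ch4:eq:defMeijerG} to the right, picking up residues at $s=-m$ and $s=\alpha+j/r-m$ for $m\in\mathbb{Z}_{\geq 0}$ and $j=0,\ldots,r-1$. The resulting convergent expansion of $G_{0,r+1}^{r+1,0}(z)$ near $z=0$ has leading exponents $0$ and $-\alpha-j/r$, with smallest real exponent equal to $\min\{0,-\alpha-(r-1)/r\}$ (a logarithm appears at a coincidence). In every case this gives $G_{0,r+1}^{r+1,0}(z)=\mathcal{O}(h_{-\alpha-(r-1)/r}(z))$ as $z\to 0$, and because $\vartheta=z\,d/dz$ acts diagonally on each power of $z$ it preserves this bound. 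Hence every column $\psi_j$ with $j\ge 1$ and every entry $\vartheta^k\psi_j$ carries the same bound, yielding \eqref{ch4:RHPsi4a} in the two right-hand quadrants. In the two left-hand quadrants the only change is that the first column becomes $\psi_0$ and its $\vartheta$-derivatives, which are $\mathcal{O}(1)$ by Lemma \ref{ch4:prop:psi0entire}, producing \eqref{ch4:RHPsi4b}.
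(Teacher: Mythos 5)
Your plan is correct and follows essentially the same route as the paper: analyticity is immediate, the jumps on $\mathbb{R}^+$ and $i\mathbb{R}^\pm$ are read off from Definition \ref{ch4:def:Psi} and the identity $\psi_0=\psi_1+\psi_2$, the jump on $\mathbb{R}^-$ is where the choices of $k_j$ and $\gamma_j$ do the work, and the origin asymptotics come from the local solution basis. Two small inaccuracies worth flagging. First, the mechanism for the $\mathbb{R}^-$ jump is not really ``monodromy plus $\psi_0=\psi_1+\psi_2$''; it is simply that the two boundary values live at the same point of the universal cover: for $j$ even, $k_{j+1}+1=k_{j-1}$ gives $|x|e^{-\pi i+2\pi i k_{j-1}}=|x|e^{\pi i+2\pi i k_{j+1}}$, so $\psi_{j-1,+}(x)$ and $\psi_{j+1,-}(x)$ differ only by $\gamma_{j-1}/\gamma_{j+1}=e^{2\pi i\beta}$, and similarly with $k_{j+2}=k_j+1$; the identity $\psi_0=\psi_1+\psi_2$ is not used here at all (only on $i\mathbb{R}^\pm$). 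Second, for RH-$\Psi$4 the paper argues via the indicial equation of the hypergeometric ODE rather than residue expansion of the Mellin--Barnes integral; note also that the contour $L$ in \eqref{ch4:eq:defMeijerG} is already a loop encircling $(-\infty,\max(0,\alpha+\tfrac{r-1}{r})]$, so there is no ``closing to the right'' --- one just sums residues inside $L$. Both derivations give the same bound $\mathcal{O}(h_{-\alpha-\frac{r-1}{r}}(z))$, and the entireness of $\psi_0$ handles the first column in $\operatorname{Re}(z)<0$ exactly as you state.
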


\begin{proof}
For RHP-$\Psi$2 we only have to check that the jump on the negative axis is satisfied.  The other jumps are satisfied by construction. By Lemma \ref{ch4:prop:psi0entire} we know that $\psi_0$ does not have a jump on the negative axis. Hence there is no jump in the $1\times 1$ block in the upper-left corner, i.e., the corresponding component equals $1$. The last block, i.e., in the lower-right corner, is $2\times 2$ when $r\equiv 0\mod 2$ and $1\times 1$ when $r\equiv 1\mod 2$. Let us verify the jumps for the $2\times 2$ blocks, excluding the lower-right $2\times 2$ block when $r\equiv 0\mod 2$.  In the next few arguments we use that $G_{0,r+1}^{r+1,0}$ can be viewed as a multi-valued function (with argument between $-\frac{r+1}{2}\pi$ and $\frac{r+1}{2}$). Now let $x<0$. For every even $2\leq j\leq r$ we have 
\begin{align} \nonumber
-\psi_{j-1,+}(x) &= - \gamma_{j-1} G_{0,r+1}^{r+1,0}(|x| e^{-\pi i +2\pi i k_{j-1}})
= -\frac{\gamma_{j-1}}{\gamma_{j+1}} C_{j+1} G_{0,r+1}^{r+1,0}(|x| e^{\pi i +2\pi i k_{j+1}})\\ \label{ch4:eq:psij-1+psij+1}
&=  -\frac{\gamma_{j-1}}{\gamma_{j+1}} \psi_{j+1,-}(x),
\end{align}
and for every even $2\leq j\leq r-1$ we have
\begin{align} \nonumber
\psi_{j+2,+}(x) &= - \gamma_{j+2} G_{0,r+1}^{r+1,0}(|x| e^{-\pi i+ 2\pi i k_{j+2}})
= \frac{\gamma_{j+2}}{\gamma_j} C_j G_{0,r+1}^{r+1,0}(|x| e^{\pi i +2\pi i k_{j}})\\ \label{ch4:eq:psij-1+psij+1b}
&= \frac{\gamma_{j+2}}{\gamma_j} \psi_{j,-}(x).
\end{align}
Here we have used for the first equality that $k_{j+1}+1 = k_{j-1}$ and for the second equality that $k_{j+2}=k_{j}+1$, which is clear from \eqref{ch4:eq:defkj}. Indeed, using these two identities for the $k_j$, and \eqref{ch4:eq:defgammaj}, we also have
\begin{align} \label{ch4:eq:gammaj+1etc}
\frac{\gamma_{j-1}}{\gamma_{j+1}} = \frac{\gamma_{j+2}}{\gamma_j} = e^{2\pi i\beta}.
\end{align}
Combining \eqref{ch4:eq:psij-1+psij+1}, \eqref{ch4:eq:psij-1+psij+1b} and \eqref{ch4:eq:gammaj+1etc}, we conclude that we get the correct jump in the $2\times 2$ blocks, i.e., for every $1\leq j\leq r-1$
\begin{align*}
\begin{pmatrix} 
-\psi_{j-1,+}(x) & \psi_{j+2,+}(x)
\end{pmatrix}
=  
\begin{pmatrix}
\psi_{j,-}(x) & \psi_{j+1,-}(x)
\end{pmatrix}
\begin{pmatrix}
0 & e^{2\pi i\beta}\\
-e^{2\pi i\beta} & 0
\end{pmatrix}
\end{align*}
Let us move on to the lower-right block. Let us first consider the case where $r\equiv 0\mod 2$.
Then the last block is a $2\times 2$ block. We should have
\begin{align} \label{ch4:eq:psilast2x2block}
\begin{pmatrix} 
-\psi_{r-1,+}(x) & \psi_{r+1,+}(x)
\end{pmatrix}
= 
\begin{pmatrix}
\psi_{r,-}(x) & \psi_{r+1,-}(x)
\end{pmatrix}
\begin{pmatrix}
0 & e^{2\pi i\beta}\\
-e^{2\pi i\beta} & 0
\end{pmatrix}
\end{align}
Then it suffices (because the other case has already been treated in \eqref{ch4:eq:psij-1+psij+1}) to show that
\begin{align*}
\psi_{r+1,+}(x) = e^{2\pi i\beta} \psi_{r,-}(x).
\end{align*}
Indeed, we have
\begin{align*}
\psi_{r+1,+}(x) &= \gamma_{r+1} G_{0,r+1}^{r+1,0}(|x| e^{-\pi i+2\pi i k_{r+1}})
= \frac{\gamma_{r+1}}{\gamma_r} \gamma_r G_{0,r+1}^{r+1,0}(|x| e^{\pi i+2\pi i k_{r}})\\
&= \frac{\gamma_{r+1}}{\gamma_r} \psi_{r,-}(x),
\end{align*}
where we have used that $k_{r+1}=0=k_r + 1$ according to \eqref{ch4:eq:defkj}. Indeed, using \eqref{ch4:eq:defgammaj}, we also have
\begin{align*}
\frac{\gamma_{r+1}}{\gamma_r} = \frac{1}{e^{-2\pi i\beta}} = e^{2\pi i\beta},
\end{align*}
as it should be, and we obtain \eqref{ch4:eq:psilast2x2block}. Now we consider the case where $r\equiv 1\mod 2$. Then the last block is a $1\times 1$ block. Indeed, we have
\begin{align*}
-\psi_{r,+}(x) &= -\gamma_{r} G_{0,r+1}^{r+1,0}(|x| e^{-\pi i+2\pi i k_{r}})
= -\frac{\gamma_{r}}{\gamma_{r+1}} \gamma_{r+1} G_{0,r+1}^{r+1,0}(|x| e^{\pi i+2\pi i k_{r+1}})\\
&=  -\frac{\gamma_{r}}{\gamma_{r+1}} \psi_{r+1,+}(x).
\end{align*}
Here we have used that $k_{r+1}=-\frac{1}{2}$ and $k_r=\frac{1}{2}$, as follows from \eqref{ch4:eq:defkj}. Indeed, by \eqref{ch4:eq:defgammaj}, we have that
\begin{align*}
\frac{\gamma_{r}}{\gamma_{r+1}} = \frac{e^{\pi i\beta}}{-e^{-\pi i\beta}} = -e^{2\pi i\beta}. 
\end{align*}
This proves that RH-$\Psi$2 is satisfied. We should still prove that RH-$\Psi$4 is satisfied. The $\psi_j$ are solutions to the linear differential equation \eqref{ch4:eq:MeijerGdvgl} of order $r+1$. We can write them in the corresponding Frobenius basis. Since the indicial equation has roots $0,-\alpha, -\alpha-\frac{1}{r},\ldots,-\alpha-\frac{r-1}{r}$, we infer that all solutions behave as $\mathcal O(z^{-\alpha-\frac{r-1}{r}})$ as $z\to 0$. It remains to show that the behavior in the first column for $\operatorname{Re}(z)<0$ is only $\mathcal O(1)$ as $z\to 0$. This is a direct consequence of Lemma \ref{ch4:prop:psi0entire} however. 
\end{proof}

\subsubsection{Asymptotics of $\Psi$ as $z\to\infty$}

We also investigate the asymptotics of $\Psi_\alpha$ as $z\to\infty$. To this end we first collect some properties of $L_\alpha$ as defined in Definition \ref{ch4:def:Lalpha}. 

\begin{proposition} \label{ch4:eq:sameJumpsLPsi}
$L_\alpha$ has the same jumps as $\Psi_\alpha$ has on the positive and negative real axis. 
\end{proposition}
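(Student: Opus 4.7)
The plan is to verify the jump relations directly from the definitions \eqref{ch4:eq:defLalpha}--\eqref{ch4:eq:defMalpha-} and RH-$\Psi$2. I first handle the positive axis, which is easy, and then the negative axis, where the branch cuts of the power factors interact nontrivially with the Vandermonde structure of $M_\alpha^\pm$.

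\textbf{Positive axis.} On $(0,\infty)$ the scalar factor $z^{-r\beta/(r+1)}$ and the diagonal factor $\bigoplus_{j=0}^r z^{-\frac{r}{2(r+1)}+\frac{j}{r+1}}$ are analytic (principal branches), so
\begin{equation*}
L_{\alpha,-}(x)^{-1} L_{\alpha,+}(x) = (M_\alpha^-)^{-1} M_\alpha^+, \qquad x>0.
\end{equation*}
Comparing \eqref{ch4:eq:defMalpha+} and \eqref{ch4:eq:defMalpha-} one sees that $M_\alpha^- = M_\alpha^+ \cdot R$, where $R$ is the matrix $\bigoplus \bigl(\begin{smallmatrix} 0 & -1 \\ 1 & 0 \end{smallmatrix}\bigr)$ (with the appropriate trailing $\oplus 1$ for $r$ even) appearing in the last factor of \eqref{ch4:eq:defMalpha-}. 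Its inverse $R^{-1}$ is exactly the jump matrix of $\Psi_\alpha$ on $\mathbb{R}^+$ from RH-$\Psi$2.

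\textbf{Negative axis.} On $(-\infty,0)$ the branch cuts contribute. Since $x_+^a/x_-^a = e^{2\pi i a}$, one gets
\begin{equation*}
L_{\alpha,-}(x)^{-1} L_{\alpha,+}(x) = e^{-\tfrac{2\pi i r\beta}{r+1}} e^{-\tfrac{\pi i r}{r+1}} (M_\alpha^-)^{-1} \Omega_d M_\alpha^+, \qquad \Omega_d := \operatorname{diag}(1,\omega,\omega^2,\ldots,\omega^r).
\end{equation*}
The central observation is that $\Omega_d$ acts on the Vandermonde-type matrix $V = (\omega^{(k-1)k_l})_{k,l=1}^{r+1}$ by column permutation: $(\Omega_d V)_{k,l}=\omega^{(k-1)(k_l+1)}$, and from the explicit definition \eqref{ch4:eq:defkj} the multiset $\{k_l+1 \bmod (r+1)\}$ equals $\{k_l \bmod (r+1)\}$. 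The permutation $\sigma$ with $k_{\sigma(l)} \equiv k_l+1 \pmod{r+1}$ sends odd indices two down ($\sigma(2m-1)=2m-3$, wrapping $1\mapsto 2$) and even indices two up ($\sigma(2m) = 2m+2$, with a wraparound at the top whose precise shape depends on the parity of $r$). Thus $\Omega_d V = V P_\sigma$, and conjugating this identity through the diagonal factors $D_1$ and $D_2$ in \eqref{ch4:eq:defMalpha+} converts $(M_\alpha^-)^{-1}\Omega_d M_\alpha^+$ into a block matrix whose $2\times 2$ anti-diagonal blocks are $\begin{pmatrix} 0 & 1 \\ -1 & 0 \end{pmatrix}$ up to scalar phases.

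\textbf{Collecting phases and endpoints.} It remains to check that the scalar prefactor and the phases released by $D_2$ under the wraparound combine to give precisely the $e^{2\pi i\beta}$ factors in RH-$\Psi$2. Here the specific choice \eqref{ch4:eq:defTheta}, $\eta = -\tfrac{r}{r+1}(\beta+\tfrac12)$, is crucial: it yields $(r+1)(\beta+\eta) = \beta - r/2$, so each wraparound of $\sigma$ contributes $e^{2\pi i(\beta+\eta)(r+1)} = e^{2\pi i\beta}(-1)^r$, which absorbs the prefactor $e^{-2\pi i r\beta/(r+1)} e^{-\pi i r/(r+1)}$ and the signs from $D_1$, $D_1^r$. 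The two parity cases are treated separately: for $r$ even, the orbit of $\sigma$ terminates at the fixed index $r+1$ (where $k_{r+1}=0$), producing the $1\oplus$ head of the jump on $\mathbb{R}^-$, while for $r$ odd, $\sigma$ transposes the top pair, producing the trailing $\oplus e^{2\pi i\beta}$. The main obstacle is precisely this bookkeeping—matching the nontrivial action of $\sigma$ on indices to the simple block structure of the RH-$\Psi$2 jump, and verifying that the endpoint behaviour of the cycle reproduces the correct tail in each parity case.
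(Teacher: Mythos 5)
Your strategy is the same as the paper's: verify the positive axis jump by construction, then on the negative axis decompose $L_{\alpha,-}^{-1}L_{\alpha,+}$ using the Vandermonde-type matrix $V$, recognize $V^{-1}\Omega_d^{\pm 1}V$ as a permutation matrix, and chase the scalar phases. The central observation about the column permutation of $V$ is correct. However, there are two concrete problems.

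First, the orientation. The paper's Convention~2.2 puts the $+$ boundary value in the \emph{lower} half-plane on the negative axis, so for a principal-branch power $x_+^a/x_-^a = e^{-2\pi i a}$, not $e^{2\pi i a}$. Consistently with this, the paper obtains $L_{\alpha,-}^{-1}L_{\alpha,+} = e^{2\pi i\frac{r}{r+1}(\beta+\frac12)}(M_\alpha^+)^{-1}\operatorname{diag}(1,\omega^{-1},\ldots)M_\alpha^-$, whereas you wrote $e^{-\frac{2\pi ir\beta}{r+1}}e^{-\frac{\pi ir}{r+1}}(M_\alpha^-)^{-1}\Omega_d M_\alpha^+$ with $\Omega_d = \operatorname{diag}(1,\omega,\ldots,\omega^r)$. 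Your expression is the inverse of the paper's. Since the target jump $1\oplus\bigoplus e^{2\pi i\beta}\begin{pmatrix}0&1\\-1&0\end{pmatrix}(\cdots)$ is not its own inverse, a correct computation under your convention would produce $1\oplus\bigoplus e^{-2\pi i\beta}\begin{pmatrix}0&-1\\1&0\end{pmatrix}(\cdots)$, which does not literally match RH-$\Psi$2 unless you also reinterpret that jump under the same flipped convention. You need to resolve this explicitly.

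Second, and more seriously, your account of the block structure is wrong. The permutation $\sigma$ with $k_{\sigma(l)}\equiv k_l+1\ (\mathrm{mod}\ r+1)$ is always a single $(r+1)$-cycle, because $\{k_1,\ldots,k_{r+1}\}$ is a complete residue system modulo $r+1$ and adding $1$ cycles it. There is never a fixed point: for $r$ even one has $\sigma(r)=r+1$ but $\sigma(r+1)=r-1\neq r+1$, so your claim that ``the orbit of $\sigma$ terminates at the fixed index $r+1$'' is false. Similarly, for $r$ odd $\sigma$ does not transpose the top pair ($\sigma(r+1)=r$ but $\sigma(r)=r-2$). The $1\oplus$ head and the trailing $\oplus e^{2\pi i\beta}$ therefore cannot be explained by the cycle decomposition of $\sigma$, and since conjugation by the diagonal factors $D_1,D_2$ cannot change the support pattern of $P_\sigma$, the mechanism you describe does not produce the block-anti-diagonal pattern. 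What actually converts the full $(r+1)$-cycle into the desired block pattern is multiplication by the block-transposition factor $R$ (resp.\ $R^{-1}$) that distinguishes $M_\alpha^-$ from $M_\alpha^+$ in \eqref{ch4:eq:defMalpha-}; equivalently, the paper factorizes $P_\sigma$ as a product of two block-transposition matrices, \eqref{ch4:eq:weirdPermutationMatrix1} and \eqref{ch4:eq:weirdPermutationMatrix2}, and the composition with $R$ collapses this to the correct $1\oplus(\cdots)$ form. Since this is exactly the ``bookkeeping'' you identify as the main obstacle, and your description of it would fail, this is a genuine gap in the argument.
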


\begin{proof}
The jump on the positive real axis is satisfied by construction, so let us focus on the jump on the negative ray. Set $x<0$. We have that 
\begin{align} \nonumber
L_{\alpha,-}(x)^{-1} L_{\alpha,+}(x) &= (M_\alpha^+)^{-1}
e^{2\pi i\frac{r}{r+1}\beta}  
\left(\bigoplus_{j=0}^r e^{\pi i \frac{r}{r+1}-\frac{2\pi i j}{r+1}}\right)
M_\alpha^-\\ \label{ch4:eq:LadirectprodLa}
&= e^{2\pi i \frac{r}{r+1}(\beta+\frac{1}{2})} (M_\alpha^+)^{-1}
\operatorname{diag}\left(1, \omega^{-1}, \omega^{-2}, \ldots\right)
M_\alpha^-.
\end{align}
The factor $\operatorname{diag}(1,-1,1,\ldots)$ in the left of \eqref{ch4:eq:defMalpha+} and \eqref{ch4:eq:defMalpha-} has no effect on the jump. We notice that
\begin{align*}
\begin{pmatrix}
1 & 1 & \cdots & 1\\
\omega^{k_1} & \omega^{k_2} & \cdots & \omega^{k_{r+1}}\\
\omega^{2 k_1} & \omega^{2 k_2} & \cdots & \omega^{2 k_{r+1}}\\
\vdots & \vdots & & \vdots\\
\omega^{r k_1} & \omega^{r k_2} & \cdots & \omega^{r k_{r+1}}
\end{pmatrix}^{-1}
= \frac{1}{r+1}
\begin{pmatrix}
1 & \omega^{-k_1} & \omega^{-2 k_1} & \cdots & \omega^{-r k_1}\\
1 & \omega^{-k_2} & \omega^{-2 k_2} & \cdots & \omega^{-r k_2}\\
\vdots & \vdots & \vdots & & \vdots\\
1 & \omega^{-k_{r+1}} & \omega^{-2 k_{r+1}} & \cdots & \omega^{-r k_{r+1}}
\end{pmatrix}.
\end{align*}
Then we see that
\begin{multline}
\begin{pmatrix}
1 & 1 & \cdots & 1\\
\omega^{k_1} & \omega^{k_2} & \cdots & \omega^{k_{r+1}}\\
\omega^{2 k_1} & \omega^{2 k_2} & \cdots & \omega^{2 k_{r+1}}\\
\vdots & \vdots & & \vdots\\
\omega^{r k_1} & \omega^{r k_2} & \cdots & \omega^{r k_{r+1}}
\end{pmatrix}^{-1}
\operatorname{diag}\left(1, \omega^{-1}, \omega^{-2}, \ldots\right)
\begin{pmatrix}
1 & 1 & \cdots & 1\\
\omega^{k_1} & \omega^{k_2} & \cdots & \omega^{k_{r+1}}\\
\omega^{2 k_1} & \omega^{2 k_2} & \cdots & \omega^{2 k_{r+1}}\\
\vdots & \vdots & & \vdots\\
\omega^{r k_1} & \omega^{r k_2} & \cdots & \omega^{r k_{r+1}}
\end{pmatrix}\\ \label{ch4:eq:weirdPermutationMatrix}
= \frac{1}{r+1}
\begin{pmatrix}
1 & \omega^{-k_1} & \omega^{-2 k_1} & \cdots & \omega^{-r k_1}\\
1 & \omega^{-k_2} & \omega^{-2 k_2} & \cdots & \omega^{-r k_2}\\
\vdots & \vdots & \vdots & & \vdots\\
1 & \omega^{-k_{r+1}} & \omega^{-2 k_{r+1}} & \cdots & \omega^{-r k_{r+1}}
\end{pmatrix}
\begin{pmatrix}
1 & 1 & \cdots\\
\omega^{k_1-1} & \omega^{k_2-1} & \cdots & \omega^{k_{r+1}-1}\\
\omega^{2(k_1-1)} & \omega^{2 (k_2-1)} & \cdots & \omega^{2 (k_{r+1}-1)}\\
\vdots & \vdots & & \vdots\\
\omega^{r(k_1-1)} & \omega^{r (k_2-1)} & \cdots & \omega^{r (k_{r+1}-1)}
\end{pmatrix}
\end{multline}
Looking at \eqref{ch4:eq:LadirectprodLa} and the definitions \eqref{ch4:eq:defMalpha+} and \eqref{ch4:eq:defMalpha-} of $M_{\alpha}^\pm$, it is important to investigate the matrix in \eqref{ch4:eq:weirdPermutationMatrix}. The $(j,l)$ component of the matrix in \eqref{ch4:eq:weirdPermutationMatrix} is given by
\begin{align*}
\frac{1}{r+1}\sum_{m=0}^{r} \omega^{-m k_j} \omega^{m (k_l-1)}
= \frac{1}{r+1} \sum_{m=0}^{r} \omega^{(k_l-k_j-1) m}
\end{align*}
This sum is $1$ when $k_j-k_l-1\equiv 0\mod (r+1)$ and $0$ otherwise. For every $1\leq j\leq r+1$ there is exactly one $1\leq l\leq r+1$ such that $k_j-k_l-1\equiv 0\mod (r+1)$ is satisfied. This is because $\{k_1,k_2,\ldots,k_{r+1}\}=\{-\frac{r}{2},-\frac{r}{2}+1,\ldots,\frac{r}{2}\}$, i.e., there cannot be two different solutions $l$ since the difference between the two corresponding $k_j$'s would have to be more than $r$ apart. Then we infer that \eqref{ch4:eq:weirdPermutationMatrix} is a permutation matrix. In fact, we claim that \eqref{ch4:eq:weirdPermutationMatrix} equals the permutation matrix
\begin{align} \label{ch4:eq:weirdPermutationMatrix1}
\left(1 \oplus \bigoplus_{j=1}^\frac{r}{2} \begin{pmatrix} 0 & 1\\ 1 & 0\end{pmatrix}\right)
\left(\bigoplus_{j=1}^\frac{r}{2} \begin{pmatrix} 0 & 1\\ 1 & 0\end{pmatrix} \oplus 1\right)
\end{align}
when $r \equiv 0\mod 2$ and 
\begin{align} \label{ch4:eq:weirdPermutationMatrix2}
\left(1 \oplus \bigoplus_{j=1}^\frac{r-1}{2} \begin{pmatrix} 0 & 1\\ 1 & 0\end{pmatrix} \oplus 1\right)
\left(\bigoplus_{j=1}^\frac{r+1}{2} \begin{pmatrix} 0 & 1\\ 1 & 0\end{pmatrix}\right)
\end{align}
when $r \equiv 1\mod 2$. To prove this we remark that $k_j-k_l-1\equiv 0\mod (r+1)$ happens exactly when
\begin{align} \label{ch4:eq:weirdRelation}
(-1)^l \left\lfloor\frac{l}{2}\right\rfloor - (-1)^j \left\lfloor\frac{j}{2}\right\rfloor \equiv 1 \mod (r+1). 
\end{align}
We have used \eqref{ch4:eq:defkjAlternative} to arrive at \eqref{ch4:eq:weirdRelation}.

We first consider the case that $r \equiv 0\mod 2$. The way to argue this is as follows. Since both \eqref{ch4:eq:weirdPermutationMatrix} and \eqref{ch4:eq:weirdPermutationMatrix1} are permutation matrices, it suffices to show that their non-zero elements, i.e., the ones, are in the same position. So let us consider a non-zero entry of \eqref{ch4:eq:weirdPermutationMatrix1}. Let us assume that it is in row $j$, with $j$ ranging from $1$ to $r+1$.

If $j$ is even and $j<r$, then our non-zero entry is made by the $(j,j+1)$ component of the left matrix in \eqref{ch4:eq:weirdPermutationMatrix1}. Then it has to be multiplied by a component in the $j+1$-th row of the right matrix in \eqref{ch4:eq:weirdPermutationMatrix}. The only non-zero component there is at position $(j+1,j+2)$. So we have $l=j+2$. Hence we see that \eqref{ch4:eq:weirdRelation} is satisfied. If $j=r$, then we should have that $l=r+1$, and then \eqref{ch4:eq:weirdRelation} is also satisfied. 

Let us now consider the case where $j$ is odd and $j\geq 3$. Then the non-zero component in the $j$-th row of the left matrix in \eqref{ch4:eq:weirdPermutationMatrix1} is at position $(j,j-1)$. Then this component is multiplied by the non-zero component in the $j-1$-th row of the right matrix in \eqref{ch4:eq:weirdPermutationMatrix1}, which is at position $(j-1,j-2)$. Thus we infer that $l=j-2$ and again we see that \eqref{ch4:eq:weirdRelation} is satisfied. In the case that $j=1$ we find that $l=2$ and then \eqref{ch4:eq:weirdRelation} is also satisfied. 

The claim is proved for $r\equiv 0\mod 2$. Replacing \eqref{ch4:eq:weirdPermutationMatrix} by \eqref{ch4:eq:weirdPermutationMatrix1}, and then inserting it into \eqref{ch4:eq:LadirectprodLa} with the help of \eqref{ch4:eq:defMalpha+} and \eqref{ch4:eq:defMalpha-}, we must conclude that
\begin{align} \nonumber
& L_{\alpha,-}(x)^{-1} L_{\alpha,+}(x)\\ \nonumber
&= e^{2\pi i \frac{r}{r+1}(\beta+\frac{1}{2})} 
\left(\bigoplus_{j=1}^{r+1} e^{2\pi i(\beta+\eta) k_j}\right)^{-1}
\left(1 \oplus \bigoplus_{j=1}^\frac{r}{2} \begin{pmatrix} 0 & 1\\ -1 & 0\end{pmatrix}\right)\\ \nonumber
& \hspace{2.2cm}\left(\bigoplus_{j=1}^\frac{r}{2} \begin{pmatrix} e^{2\pi i (\beta+\eta) k_{2j}} & 0\\ 0 & e^{2\pi i (\beta+\eta) k_{2j-1}}\end{pmatrix} \oplus e^{2\pi i (\beta+\eta) k_{r+1}}\right)\\ \label{ch4:eq:jumpwithkjs}
&= e^{2\pi i \frac{r}{r+1}(\beta+\frac{1}{2})}  
\left(e^{2\pi i(\beta+\eta) (k_2-k_1)} \oplus \bigoplus_{j=1}^\frac{r}{2} \begin{pmatrix} 0 \hspace{1.4cm} e^{2\pi i(\beta+\eta) (k_{2j+2}-k_{2j})}\\ -e^{2\pi i(\beta+\eta) (k_{2j-1}-k_{2j+1})} \hspace{0.8cm} 0\end{pmatrix}\right)
\end{align}
We have $k_2-k_1 = -\frac{r}{2}-\frac{r}{2} = -r$ and, as one can check with \eqref{ch4:eq:defkj}, we have for all $j=1,\ldots,r-1$
\begin{align*}
(-1)^{j} ( k_{j+2}-k_j) = 1. 
\end{align*}
Indeed, using the definition of $\eta$ in \eqref{ch4:eq:defTheta} we have
\begin{align*}
\frac{r}{r+1} (\beta+\frac{1}{2}) - r (\beta+ \eta) = \frac{r}{2}.
\end{align*}
and we have
\begin{align*}
\frac{r}{r+1} (\beta+\frac{1}{2}) + \beta+ \eta = \beta,
\end{align*}
Thus, plugging these in \eqref{ch4:eq:jumpwithkjs}, we obtain the correct jump \eqref{ch4:RHPsi2} when $r \equiv 0 \mod 2$. 
A similar argument works for $r \equiv 1\mod 2$, we omit the details. 
\end{proof}

\begin{lemma} \label{ch4:prop:behavInftyPsi}
For $\pm\operatorname{Im}(z)>0$ we have as $z\to\infty$ the asymptotic expansion
\begin{align} \label{ch4:eq:prop:behavInftyPsi}
\Psi_\alpha(z) 
\sim  L_\alpha(z) \sum_{m=0}^\infty A_{\alpha,m} z^{-\frac{m}{r+1}}
\bigoplus_{j=1}^{r+1} e^{-(r+1) \omega^{\pm k_j} z^\frac{1}{r+1}} 
\end{align}
for $(r+1)\times (r+1)$ matrices $A_{\alpha,0}=\mathbb I$ and $A_{\alpha,1}, A_{\alpha,2}, \ldots$ that depend only on $\alpha$. 
\end{lemma}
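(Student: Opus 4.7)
The plan is to deduce the claimed matrix asymptotic from the classical scalar asymptotic of the Meijer G-function $G^{r+1,0}_{0,r+1}$. From Luke's treatise \cite{Lu}, one has uniformly on any closed subsector of $|\arg z|<\pi$
\[
G^{r+1,0}_{0,r+1}\!\left(\begin{matrix}-\\ 0,-\alpha,\ldots,-\alpha-\tfrac{r-1}{r}\end{matrix}\Big|\, z\right) \sim \frac{(2\pi)^{r/2}}{\sqrt{r+1}}\, z^{-\frac{r}{2(r+1)}-\frac{r\beta}{r+1}}\, e^{-(r+1)z^{1/(r+1)}} \sum_{m=0}^{\infty} c_m z^{-m/(r+1)},
\]
with $c_0=1$ and $c_m$ depending only on $\alpha$; the exponent of $z$ is the standard $(2\sigma+1-q)/(2q)$ evaluated at $q=r+1$ and $\sigma=\sum b_j = -r\beta$.

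Applying this with $z$ replaced by $ze^{2\pi ik_j}$ and using $(ze^{2\pi ik_j})^{1/(r+1)}=\omega^{k_j}z^{1/(r+1)}$, I obtain a scalar asymptotic for each $\psi_j(z)=\gamma_jG^{r+1,0}_{0,r+1}(ze^{2\pi ik_j})$ in which the combined prefactor $\gamma_j\,e^{2\pi ik_j(-r/(2(r+1))-r\beta/(r+1))}$ simplifies to $e^{2\pi i(\beta+\eta)k_j}$ with $\eta$ as in \eqref{ch4:eq:defTheta}, matching the diagonal factor inside $M_\alpha^\pm$. Since $\vartheta\,e^{-(r+1)\omega^{k_j}z^{1/(r+1)}} = -\omega^{k_j}z^{1/(r+1)}e^{-(r+1)\omega^{k_j}z^{1/(r+1)}}$, iteration gives $\vartheta^l\psi_j\sim(-\omega^{k_j})^l z^{l/(r+1)}\psi_j$ to leading order, the subleading contributions being power series in $\omega^{-k_j}z^{-1/(r+1)}$. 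Assembling rows $l=0,1,\ldots,r$ and columns $j$ according to Definition \ref{ch4:def:Psi} produces the Vandermonde-like matrix $(\omega^{k_jl})_{l,j}$, sandwiched by the diagonal $\bigoplus_l z^{-r/(2(r+1))+l/(r+1)}$ on the left and $\operatorname{diag}(1,-1,1,\ldots)^r$ on the right -- exactly the structure of $L_\alpha(z)$ from Definition \ref{ch4:def:Lalpha}. This yields $A_{\alpha,0}=\mathbb I$, and the higher-order matrices $A_{\alpha,m}$ are produced by the Cauchy products of the scalar series.

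The four cases of Definition \ref{ch4:def:Psi} correspond to the sectors $\frac{k\pi}{2}<\arg z<\frac{(k+1)\pi}{2}$ for $k=-2,-1,0,1$. For $|\arg z|<\pi/2$ the scalar asymptotic for $\psi_1$ (resp.\ $\psi_2$ for $\operatorname{Im}(z)<0$) applies directly to the first column. For $\pi/2<|\arg z|<\pi$ the first column uses $\psi_0=\psi_1+\psi_2$; here $k_1$ and $k_2=-k_1$ give conjugate exponentials, and a direct calculation of $\operatorname{Re}(\omega^{\pm k_1}z^{1/(r+1)})$ shows that in this sector exactly one of them is exponentially dominant and agrees with the exponential prescribed by \eqref{ch4:eq:prop:behavInftyPsi}. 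Hence $\psi_0$ has the same asymptotic expansion as the relevant $\psi_1$ or $\psi_2$ up to an exponentially small error, and the factor $L_\alpha(z)$ emerges identically. The column swaps in the last two cases of \eqref{ch4:eq:defPsialpha} are absorbed by the permutation block appearing on the right of $M_\alpha^-$ in \eqref{ch4:eq:defMalpha-}, which is already identified with the correct jump structure via Proposition \ref{ch4:eq:sameJumpsLPsi}.

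The main obstacle is the bookkeeping: tracking the signs $(-\omega^{k_j})^l=(-1)^l\omega^{k_jl}$ and checking that they line up with the two factors of $\operatorname{diag}(1,-1,1,\ldots)$ flanking $M_\alpha^\pm$; verifying that the scalar prefactor combinations yield $e^{2\pi i(\beta+\eta)k_j}$ with $\eta$ as defined; and checking the Stokes discontinuity at $\arg z=\pm\pi/2$ is exactly matched by the switch to $\psi_0$. None of these is deep, but together they are delicate. Once they are done, linearity promotes each scalar expansion to the matrix expansion \eqref{ch4:eq:prop:behavInftyPsi}.
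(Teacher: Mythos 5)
Your proof follows the same approach as the paper: invoke Luke's scalar asymptotic expansion for $G^{r+1,0}_{0,r+1}$, translate it to each $\psi_j$ via the rotations $z \mapsto ze^{2\pi ik_j}$, apply $\vartheta$ row by row, and assemble the resulting Vandermonde-like structure into $L_\alpha(z)$. You are slightly more careful than the paper on one point: the paper dispatches the sectors $\pi/2<|\arg z|<\pi$ with the remark that ``one can verify'' the same expansion holds, whereas you spell out the Stokes argument, namely that in $\psi_0=\psi_1+\psi_2$ the two exponentials $e^{-(r+1)\omega^{\pm k_1}z^{1/(r+1)}}$ are distinguished by $\operatorname{Re}\big((\omega^{k_1}-\omega^{-k_1})z^{1/(r+1)}\big)=-2\sin\!\big(\tfrac{\pi}{r+1}\big)\operatorname{Im}\!\big(z^{1/(r+1)}\big)$, so that the one prescribed by \eqref{ch4:eq:prop:behavInftyPsi} is dominant and the other is beyond all orders. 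One minor imprecision: you state that $\gamma_j\,e^{2\pi ik_j\eta}$ ``simplifies to $e^{2\pi i(\beta+\eta)k_j}$,'' but $\gamma_j$ carries the extra sign $(-1)^{r(j-1)}$ from \eqref{ch4:eq:defgammaj}; you do account for it a sentence later when you mention the two flanking $\operatorname{diag}(1,-1,\ldots)$ factors, but the earlier ``simplifies to'' overstates the cancellation. This is bookkeeping, not a gap.
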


\begin{proof}
Due to \cite{Lu} (Theorem 5 on p. 179)  we have as $z\to\infty$ that
\begin{align} \label{ch4:eq:MeijerGasympLuke}
G_{0,r+1}^{r+1,0}(z) \sim \frac{(2\pi)^\frac{r}{2}}{\sqrt{r+1}} e^{-(r+1) z^\frac{1}{r+1}} z^\eta \sum_{m=0}^\infty M_m z^{-\frac{m}{r+1}},
\end{align}
where $\eta$ is as in \eqref{ch4:eq:defTheta} and where $M_m$ are some coefficients that depend on $\alpha$, and $M_0=1$. 
The $M_m$ can be calculated with a cumbersome procedure, in principle. 
The asymptotic expansion \eqref{ch4:eq:MeijerGasympLuke} allows us to find the asymptotic behavior of the $\psi_j$. Namely, we have as $z\to\infty$
\begin{align} \label{ch4:eq:notvarthetapsij}
\psi_j(z) \sim (-1)^{r(j-1)} e^{2\pi i(\beta+\eta) k_j} \frac{(2\pi)^\frac{r}{2}}{\sqrt{r+1}} 
 e^{-(r+1) \omega^{k_j} z^\frac{1}{r+1}} z^\eta \sum_{m=0}^\infty M_m \omega^{-k_j m} z^{-\frac{m}{r+1}}.
\end{align}
These asymptotics follow simply by analytic continuation of \eqref{ch4:eq:MeijerGasympLuke}. Let us now look at the derivatives. Applying powers of the operator $\vartheta$ to \eqref{ch4:eq:notvarthetapsij}, we infer that there exist coefficients $M^{[l]}_m$ such that
\begin{align} \label{ch4:eq:varthetapsij}
\vartheta^l\psi_j(z) \sim
(-1)^{r(j-1)} e^{2\pi i(\beta+\eta) k_j} \frac{(2\pi)^\frac{r}{2}}{\sqrt{r+1}}
e^{-(r+1) \omega^{k_j} z^\frac{1}{r+1}} \omega^{k_j l} z^{\eta+\frac{l}{r+1}} \sum_{m=0}^\infty M_m^{[l]} \omega^{-k_j m} z^{-\frac{m}{r+1}}.
\end{align}
In fact, it is not hard to see that we should have $M_0^{[l]}=(-1)^l$. A closed form expression for the $M^{[l]}_m$ with $m>0$ seems hard to find, but we will not need it anyway.
Plugging \eqref{ch4:eq:varthetapsij} in the definition of $\Psi_\alpha$ of the first quadrant, we have as $z\to\infty$
\begin{multline*}
\Psi_\alpha(z) \sim \frac{(2\pi)^\frac{r}{2}}{\sqrt{r+1}} z^\eta\\
\hspace{-0.25cm}\begin{pmatrix}
\sum_{m=0}^\infty M^{[0]}_m \omega^{-k_1 m} z^{-\frac{m}{r+1}} & \sum_{m=0}^\infty M^{[0]}_m \omega^{-k_2 m} z^{-\frac{m}{r+1}} & \cdots\\
z^\frac{1}{r+1}\sum_{m=0}^\infty M^{[1]}_m \omega^{k_1 (1-m)} z^{-\frac{m}{r+1}} & z^\frac{1}{r+1}\sum_{m=0}^\infty M^{[1]}_m \omega^{k_2 (1-m)} z^{-\frac{m}{r+1}} & \cdots\\
z^\frac{2}{r+1}\sum_{m=0}^\infty M^{[2]}_m \omega^{k_1 (2-m)} z^{-\frac{m}{r+1}} & z^\frac{2}{r+1}\sum_{m=0}^\infty M^{[2]}_m \omega^{k_2 (2-m)} z^{-\frac{m}{r+1}} & \cdots\\
\vdots & \vdots &  \\
z^\frac{2}{r+1}\sum_{m=0}^\infty M^{[r]}_m \omega^{k_1 (r-m)} z^{-\frac{m}{r+1}} & z^\frac{2}{r+1}\sum_{m=0}^\infty M^{[r]}_m \omega^{k_2 (r-m)} z^{-\frac{m}{r+1}} & \cdots
\end{pmatrix}\\
\left(\operatorname{diag}\left(1, -1, 1,\ldots, (-1)^r\right)\right)^r 
\left(\bigoplus_{j=1}^{r+1} e^{2\pi i(\beta+\eta) k_j}\right)
\bigoplus_{j=1}^{r+1} e^{-(r+1) \omega^{k_j} z^\frac{1}{r+1}}.
\end{multline*}
We can rewrite this as
\begin{multline} \label{ch4:eq:PsialphaMatrix}
\Psi_\alpha(z) \sim \frac{(2\pi)^\frac{r}{2}}{\sqrt{r+1}} z^{\eta+\frac{r}{2(r+1)}} 
\left(\bigoplus_{j=0}^r z^{-\frac{r}{2(r+1)}+\frac{j}{r+1}}\right)\\
\begin{pmatrix}
\sum_{m=0}^\infty M^{[0]}_m \omega^{-k_1 m} z^{-\frac{m}{r+1}} & \sum_{m=0}^\infty M^{[0]}_m \omega^{-k_2 m} z^{-\frac{m}{r+1}} & \cdots & \sum_{m=0}^\infty M^{[0]}_m \omega^{-k_{r+1} m} z^{-\frac{m}{r+1}}\\
\sum_{m=0}^\infty M^{[1]}_m \omega^{k_1 (1-m)} z^{-\frac{m}{r+1}} & \sum_{m=0}^\infty M^{[1]}_m \omega^{k_2 (1-m)} z^{-\frac{m}{r+1}} & \cdots & \sum_{m=0}^\infty M^{[1]}_m \omega^{k_{r+1} (1-m)} z^{-\frac{m}{r+1}}\\
\sum_{m=0}^\infty M^{[2]}_m \omega^{k_1 (2-m)} z^{-\frac{m}{r+1}} & \sum_{m=0}^\infty M^{[2]}_m \omega^{k_2 (2-m)} z^{-\frac{m}{r+1}} & \cdots & \sum_{m=0}^\infty M^{[2]}_m \omega^{k_{r+1} (2-m)} z^{-\frac{m}{r+1}}\\
\vdots & \vdots & & \vdots\\
\sum_{m=0}^\infty M^{[r]}_m \omega^{k_1 (r-m)} z^{-\frac{m}{r+1}} & \sum_{m=0}^\infty M^{[r]}_m \omega^{k_2 (r-m)} z^{-\frac{m}{r+1}} & \cdots & \sum_{m=0}^\infty M^{[r]}_m \omega^{k_{r+1} (r-m)} z^{-\frac{m}{r+1}}
\end{pmatrix}\\
\left(\operatorname{diag}\left(1, -1, 1,\ldots, (-1)^r\right)\right)^r
\left(\bigoplus_{j=1}^{r+1} e^{2\pi i(\beta+\eta) k_j}\right)
\bigoplus_{j=1}^{r+1} e^{-(r+1) \omega^{k_j} z^\frac{1}{r+1}}.
\end{multline}
Using $M_0^{[l]}=(-1)^l$ we see that the matrix in the second line of \eqref{ch4:eq:PsialphaMatrix} equals
\begin{multline*}
\operatorname{diag}(1,-1,1,\ldots, (-1)^r) 
\begin{pmatrix}
1 & 1 & \cdots & 1\\
\omega^{k_1} & \omega^{k_2} & \cdots & \omega^{k_{r+1}}\\
\omega^{2 k_1} & \omega^{2 k_2} & \cdots & \omega^{2 k_{r+1}}\\
\vdots & \vdots & & \vdots\\
\omega^{r k_1} & \omega^{r k_2} & \cdots & \omega^{r k_{r+1}}
\end{pmatrix}\\
+ 
\operatorname{diag}(M^{[0]}_1,M^{[1]}_1,\ldots) 
\begin{pmatrix}
\omega^{-k_1} & \omega^{-k_2} & \cdots & \omega^{-k_{r+1}}\\
1 & 1 & \cdots & 1\\
\omega^{k_1} & \omega^{k_2} & \cdots & \omega^{k_{r+1}}\\
\vdots & \vdots & & \vdots\\
\omega^{(r-1) k_1} & \omega^{(r-1) k_2} & \cdots & \omega^{(r-1) k_{r+1}}
\end{pmatrix} z^{-\frac{1}{r+1}} + \ldots
\end{multline*}
Then there exist coefficients $\mathring A_{\alpha,0}=\mathbb I$ and $\mathring A_{\alpha,1}, \mathring A_{\alpha,2},\ldots$ such that the matrix in the second line of \eqref{ch4:eq:PsialphaMatrix} equals as a formal series
\begin{align*}
\operatorname{diag}(1,-1,1,\ldots, (-1)^r) 
\begin{pmatrix}
1 & 1 & \cdots & 1\\
\omega^{k_1} & \omega^{k_2} & \cdots & \omega^{k_{r+1}}\\
\omega^{2 k_1} & \omega^{2 k_2} & \cdots & \omega^{2 k_{r+1}}\\
\vdots & \vdots & & \vdots\\
\omega^{r k_1} & \omega^{r k_2} & \cdots & \omega^{r k_{r+1}}
\end{pmatrix}
\sum_{m=0}^\infty \mathring A_{\alpha,m} z^{-\frac{m}{r+1}}.
\end{align*}
Using that $\eta+\frac{r}{2(r+1)} = -\frac{r}{r+1}\beta$ (see \eqref{ch4:eq:defTheta}) and comparing with Definition \ref{ch4:def:Lalpha}, we see now that \eqref{ch4:eq:PsialphaMatrix} turns into
\begin{align*}
\Psi_\alpha(z) \sim L_\alpha(z) \sum_{m=0}^\infty A_{\alpha,m} z^{-\frac{m}{r+1}}
\bigoplus_{j=1}^{r+1} e^{-(r+1) \omega^{k_j} z^\frac{1}{r+1}},
\end{align*}
as $z\to\infty$, where
\begin{multline*}
A_{\alpha,m} = \left(\operatorname{diag}\left(1, -1, 1,\ldots, (-1)^r\right)\right)^r
\left(\bigoplus_{j=1}^{r+1} e^{2\pi i(\beta+\eta) k_j}\right)^{-1} \mathring A_{\alpha,m}\\
\left(\bigoplus_{j=1}^{r+1} e^{2\pi i(\beta+\eta) k_j}\right) 
\left(\operatorname{diag}\left(1, -1, 1,\ldots, (-1)^r\right)\right)^r.
\end{multline*}
As one can verify, we get a similar expression in the three other quadrants, with the same $A_{\alpha,m}$. 
\end{proof}

In fact, Lemma \ref{ch4:prop:behavInftyPsi} can be improved upon. The next proposition, combined with Lemma \ref{ch4:prop:behavInftyPsi2}, shows us that $\Psi$ solves RH-$\Psi$.

\begin{lemma} \label{ch4:prop:behavInftyPsi2}
For $\pm\operatorname{Im}(z)>0$ we have as $z\to\infty$ the asymptotic expansion
\begin{align} \label{ch4:eq:propbehavInftyPsi2}
\Psi_\alpha(z) \sim 
\sum_{m=0}^\infty \frac{C_{\alpha,m}}{z^m}
L_\alpha(z)
\bigoplus_{j=1}^{r+1} e^{-(r+1) \omega^{\pm k_j} z^\frac{1}{r+1}}
\end{align}
for $(r+1)\times (r+1)$ matrices $C_{\alpha,0}=\mathbb I$ and $C_{\alpha,1}, C_{\alpha,2}, \ldots$ that depend only on $\alpha$. 
\end{lemma}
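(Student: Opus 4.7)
The plan is to upgrade the asymptotic expansion of Lemma \ref{ch4:prop:behavInftyPsi}, which has fractional powers $z^{-m/(r+1)}$ to the right of $L_\alpha(z)$, into the desired expansion with only integer powers $z^{-m}$ on the left. The underlying reason this rearrangement is possible is that each column of $\Psi_\alpha(z) E(z)^{-1}$, where $E(z) := \bigoplus_{j=1}^{r+1} e^{-(r+1)\omega^{\pm k_j} z^{1/(r+1)}}$, depends on $z$ only through the single variable $\lambda_j := z^{1/(r+1)} \omega^{\pm k_j}$ (the $j$-th branch of $z^{1/(r+1)}$). The matrix $L_\alpha(z)$ is essentially the Vandermonde that diagonalizes the companion matrix of $\lambda^{r+1} - z = 0$, and this Vandermonde structure converts column-wise dependence on $\lambda_j$ into integer powers of $z$ via the relation $\Lambda(z)^{r+1} = z\mathbb{I}$ for the companion $\Lambda(z)$.

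First I would factor $L_\alpha(z) = c \cdot z^{-r\beta/(r+1) - r/(2(r+1))} \cdot D(z) \cdot M_\alpha^\pm$ with $D(z) = \bigoplus_{j=0}^r z^{j/(r+1)}$ and $c = (2\pi)^{r/2}/\sqrt{r+1}$, and observe that in the conjugation $L_\alpha(z) (\sum_m A_{\alpha,m} z^{-m/(r+1)}) L_\alpha(z)^{-1}$ the scalar prefactors cancel. The nontrivial matrix $D(z) M_\alpha^\pm$ coincides, up to sign-correcting diagonal matrices and the diagonal twist $\bigoplus_j e^{2\pi i(\beta+\eta) k_j}$, with the Vandermonde matrix $V(\lambda_1,\ldots,\lambda_{r+1})$ whose entries are $\lambda_j^l$. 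Since $\lambda_1,\ldots,\lambda_{r+1}$ are exactly the eigenvalues of the companion matrix $\Lambda(z)$ of $\lambda^{r+1} - z$, the Vandermonde $V$ diagonalizes $\Lambda(z)$, and in particular $V \, \bigl(\bigoplus_j \lambda_j^{-m}\bigr) V^{-1} = \Lambda(z)^{-m}$ for every integer $m \geq 0$, whose entries are monomials in $z^{-1}$ with integer exponents (because $\Lambda(z)^{r+1}= z\mathbb{I}$).

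Next I would revisit the proof of Lemma \ref{ch4:prop:behavInftyPsi}: inspecting \eqref{ch4:eq:notvarthetapsij}--\eqref{ch4:eq:varthetapsij}, the $(l,j)$-entry of $\Psi_\alpha(z) E(z)^{-1}$ is, up to $j$-independent scalar prefactors and the $j$-dependent phase $(-1)^{r(j-1)} e^{2\pi i(\beta+\eta)k_j}$, equal to $\lambda_j^{l+(r+1)\eta} \sum_{m=0}^\infty M_m^{[l]} \lambda_j^{-m}$. Thus $\Psi_\alpha(z) E(z)^{-1}$ factors as a constant phase matrix times a matrix whose $j$-th column is a universal vector-valued function of $\lambda_j$ alone. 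Applying the Vandermonde reduction of the previous paragraph column by column, and grouping each infinite series by the residue class of its index modulo $r+1$, every partial sum of the form $\sum_{m \equiv k \,(\mathrm{mod}\, r+1)} (\cdots) z^{-m/(r+1)}$ collapses into $\Lambda(z)^{-k}$ times an analytic series in $z^{-1}$. Regrouping by total integer power of $z^{-1}$ produces matrices $C_{\alpha,m}$ with $C_{\alpha,0} = \mathbb{I}$ (inherited from the leading $m=0$ term of Lemma \ref{ch4:prop:behavInftyPsi}) and the desired expansion $\Psi_\alpha(z) \sim \bigl(\sum_m C_{\alpha,m} z^{-m}\bigr) L_\alpha(z) E(z)$.

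The main technical obstacle will be the careful bookkeeping of the various constant phase factors --- the signs $(-1)^{r(j-1)}$, the powers $\omega^{k_j l}$ introduced by the $\vartheta$-derivatives, and the twist $e^{2\pi i(\beta+\eta) k_j}$ --- so that the matrices $M_\alpha^\pm$ from Definition \ref{ch4:def:Lalpha} genuinely implement the desired Vandermonde reduction once all factors are assembled. Once this alignment is verified, the conversion of fractional powers into integer powers via $\Lambda(z)^{r+1} = z \mathbb{I}$ is a purely algebraic manipulation of formal power series, and the claim follows.
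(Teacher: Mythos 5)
Your approach is genuinely different from the paper's. The paper proves this lemma with a \emph{jump-based} argument from Riemann--Hilbert theory: it first observes that $\Psi_\alpha(z)\bigoplus_j e^{(r+1)\omega^{\pm k_j}z^{1/(r+1)}}$ and $L_\alpha(z)$ have identical jumps on $\mathbb R^\pm$ (using Proposition \ref{ch4:eq:sameJumpsLPsi}), concludes that a suitable product of the two has no jumps, and invokes the principle that a jump-free function analytic away from the origin must have an asymptotic expansion in integer powers of $z^{-1}$. The same ``no jump $\Rightarrow$ integer powers'' argument is then applied a second time to move the resulting series past $L_\alpha$ to the left, producing the $C_{\alpha,m}$. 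Your proposal instead pursues a \emph{constructive, algebraic} route: you identify $L_\alpha(z)$ (up to scalars and constant twists) with the Vandermonde matrix in the $(r+1)$-th roots $\lambda_j = \omega^{\pm k_j}z^{1/(r+1)}$, recognize that each column of $\Psi_\alpha E^{-1}$ is a universal function of the single variable $\lambda_j$, and reduce the conjugation $L_\alpha(\cdot)L_\alpha^{-1}$ to powers of the companion matrix $\Lambda(z)$ via $\Lambda(z)^{r+1}=z\mathbb I$. This is a genuinely distinct and more explicit argument; it makes the mechanism by which $r+1$ fractional-power residue classes collapse into integer powers completely transparent, at the price of substantially more matrix bookkeeping (the signs $(-1)^{r(j-1)}$, the twist $\Theta = \bigoplus_j e^{2\pi i(\beta+\eta)k_j}$, and the row-dependent coefficients $M_m^{[l]}$ from $\vartheta^l$ all have to be tracked). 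The paper's jump argument avoids that bookkeeping entirely.

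The one point that deserves more care in your write-up than you currently give it is the claim $C_{\alpha,0}=\mathbb I$. In the jump-based proof, the leading term is cosmetically free: it is ``inherited'' because the paper first reduces to integer powers and then observes the $m=0$ coefficient gives $\mathbb I$. In your Vandermonde framework, however, the constant ($z^0$) coefficient of $\Psi_\alpha E^{-1}L_\alpha^{-1}$ that you obtain by tracking entries is a priori \emph{lower-triangular} with $1$'s on the diagonal: the $(l,k)$-entry for $l>k$ picks up the coefficient $M_{l-k}^{[l]}$ from the $\vartheta^l$-expansion of the Meijer $G$-function, which has no obvious reason to vanish. So your framework actually exposes a nontrivial identity among the coefficients $M^{[l]}_m$ that must hold for the off-diagonal part of $C_{\alpha,0}$ to vanish. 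You should either verify this directly (which seems hard), or fall back on the paper's jump argument just for the leading term: $\Psi_\alpha E^{-1}L_\alpha^{-1}$ is jump-free and tends to $\mathbb I$ at infinity along the diagonal by the $m=0$ term of Lemma \ref{ch4:prop:behavInftyPsi}, so by Liouville-type reasoning (and the removability of the singularity at $0$) its constant term is forced. As written, the statement ``$C_{\alpha,0}=\mathbb{I}$, inherited from the $m=0$ term'' glosses over exactly the point your framework brings to the surface.
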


\begin{proof}
It's a simple exercise to verify that
\begin{align*}
\Psi_\alpha(z) &\bigoplus_{j=1}^{r+1} e^{(r+1) \omega^{\pm k_j} z^\frac{1}{r+1}}, & \pm\operatorname{Im}(z)>0,
\end{align*}
has the same jumps as $\Psi_\alpha(z)$ has on the positive and negative real axis. Then it follows from \text{Proposition \ref{ch4:eq:sameJumpsLPsi}} that
\begin{align*}
L_\alpha(z)^{-1} & \Psi_\alpha(z) \bigoplus_{j=1}^{r+1} e^{(r+1) \omega^{\pm k_j} z^\frac{1}{r+1}}, & \pm\operatorname{Im}(z)>0,
\end{align*}
has no jumps on the positive and negative real axis. This can only be true if the expansion in \eqref{ch4:eq:prop:behavInftyPsi} consists solely of integer powers of $z$, i.e., we have for $\pm\operatorname{Im}(z)>0$
\begin{align*}
\Psi_\alpha(z) \sim L_\alpha(z) \sum_{m=0}^\infty \frac{A_{\alpha,(r+1) m}}{z^{m}}
\bigoplus_{j=1}^{r+1} e^{-(r+1) \omega^{\pm k_j} z^\frac{1}{r+1}}
\end{align*}
as $z\to\infty$. We can rewrite this as
\begin{align} \nonumber
\Psi_\alpha(z) \sim& L_\alpha(z) \sum_{m=0}^\infty \frac{A_{\alpha,(r+1) m}}{z^{m}} L_\alpha(z)^{-1}\\ \label{ch4:eq:PsialphaSimintegerpowers}
& \hspace{2cm} L_\alpha(z)
\bigoplus_{j=1}^{r+1} e^{-(r+1) \omega^{\pm k_j} z^\frac{1}{r+1}}
\end{align}
as $z\to\infty$, for $\pm\operatorname{Im}(z)>0$. 
For any positive integer $k$ we know that $L_\alpha(z)^{-1}$ and 
\begin{align*}
\left(\mathbb I + \frac{A_{\alpha,r+1}}{z}+ \frac{A_{\alpha,2(r+1)}}{z^2} +\ldots+\frac{A_{\alpha,k(r+1)}}{z^k}\right) L_\alpha(z)^{-1}
\end{align*}
have the same jumps on the positive and negative real axis. Thus it follows that
\begin{align*}
L_\alpha(z) \left(\mathbb I + \frac{A_{\alpha,r+1}}{z}+ \frac{A_{\alpha,2(r+1)}}{z^2} +\ldots+\frac{A_{\alpha,k(r+1)}}{z^k}\right) L_\alpha(z)^{-1}
\end{align*}
has no jumps. This means that there exist $C_{\alpha,0}=\mathbb I$ and $C_{\alpha,1}, C_{\alpha,2},\ldots$ such that
\begin{align*}
L_\alpha(z) \sum_{m=0}^\infty \frac{A_{\alpha,(r+1) m}}{z^{m}} L_\alpha(z)^{-1}
= \sum_{m=0}^\infty \frac{C_{\alpha,m}}{z^m}
\end{align*}
as formal series, and, inserting this in \eqref{ch4:eq:PsialphaSimintegerpowers}, we are done. 
\end{proof}

\subsubsection{Asymptotic behavior of $\Psi^{-1}$ as $z\to 0$} \label{ch4:sec:inversePsi}

In principle, we may use the argumentation from \cite{KuMo} to find an expression for the inverse of $\Psi_\alpha$. The explicit construction then uses
\begin{align*} 
G_{0,r+1}^{r+1,0}\left(\left. \begin{array}{c} -\\ 0,\alpha,\alpha+\frac{1}{r},\ldots,\alpha+\frac{r-1}{r}\end{array}\right| -z\right)
\end{align*}
and its analytic continuations along circular arcs. In \cite{KuMo} we used the explicit form of $\Psi_\alpha(z)^{-1}$ to eventually show that the scaling limit for the correlation kernel coincides with \eqref{ch4:eq:scalingLimitIK}. This was technically not necessary (although it was a nice way of verification) as there is a faster way to show this. Since the associated formulae tend to become big for $r>2$, we opt to omit the explicit form of $\Psi_\alpha(z)^{-1}$. 

All that really turns out to be important to us about the inverse of $\Psi_\alpha$, is the asymptotic behavior of $\Psi_\alpha(z)^{-1}$ as $z\to 0$ in the left half-plane.

\begin{lemma}
For $\operatorname{Re}(z)<0$ we have as $z\to 0$ that
\begin{align} \label{ch4:eq:behavPsiinv0}
\Psi_\alpha(z)^{-1} = \left\{\begin{array}{ll} 
\mathcal O
\begin{pmatrix}
1 & 1 & \hdots & 1\\
z^\alpha & z^\alpha & \hdots & z^\alpha\\
\vdots & & & \vdots\\
z^\alpha & z^\alpha & \hdots & z^\alpha
\end{pmatrix}, & \alpha\neq 0,\\
\mathcal O
\begin{pmatrix}
1 & 1 & \hdots & 1\\
\log z & \log z & \hdots & \log z\\
\vdots & & & \vdots\\
\log z & \log z & \hdots & \log z
\end{pmatrix}, & \alpha= 0.
\end{array}\right.
\end{align}
\end{lemma}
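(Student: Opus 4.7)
The plan is to derive the asymptotic behavior of $\Psi_\alpha(z)^{-1}$ from a Frobenius analysis of the $(r+1)$-th order ODE \eqref{ch4:eq:MeijerGdvgl}, with the key structural input being the entireness of $\psi_0$ (Lemma~\ref{ch4:prop:psi0entire}), which forces a block triangular structure on the change-of-basis matrix relating $\Psi_\alpha$ to a Frobenius fundamental matrix.

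First I would fix a Frobenius basis $\{\phi_0,\phi_1,\ldots,\phi_r\}$ for solutions of \eqref{ch4:eq:MeijerGdvgl} near $z=0$. The indicial roots are $\rho_0=0$ and $\rho_k=-\alpha-(k-1)/r$ for $k=1,\ldots,r$, and for generic $\alpha$ I can choose $\phi_0$ entire (normalised by $\phi_0(0)=1$) and $\phi_k(z)=z^{\rho_k}(1+O(z))$ for $k\geq 1$. Since every column of $\Psi_\alpha$ is a vector of iterated $\vartheta$-derivatives of a solution of \eqref{ch4:eq:MeijerGdvgl}, we have a factorisation
\begin{align*}
\Psi_\alpha(z)=\tilde{\Phi}_\alpha(z)M=B(z)D(z)M,
\end{align*}
where $\tilde{\Phi}_\alpha(z)$ is the Wronskian-type matrix with entries $(\tilde{\Phi}_\alpha)_{l+1,k+1}=\vartheta^l\phi_k(z)$, $M$ is the constant matrix of coefficients of the $\psi_j$ in the basis $\{\phi_k\}$, $D(z)=\operatorname{diag}(1,z^{-\alpha},z^{-\alpha-1/r},\ldots,z^{-\alpha-(r-1)/r})$, and $B(z)=\tilde{\Phi}_\alpha(z)D(z)^{-1}$ is analytic at $0$. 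The matrix $B(0)$ is a Vandermonde-like matrix with first column $e_1$ and columns $(1,\rho_k,\rho_k^2,\ldots,\rho_k^r)^T$ for $k\geq 1$; its determinant is non-zero, so $B(z)^{-1}$ is analytic at $0$ too.

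The critical ingredient from Lemma~\ref{ch4:prop:psi0entire} is that for $\operatorname{Re}(z)<0$ the first column of $\Psi_\alpha$ consists of $\psi_0$ and its $\vartheta$-iterates, and $\psi_0$ is entire; hence $\psi_0$ is a scalar multiple $c_{0,0}\phi_0$ of the analytic basis element. This translates into $Me_1=c_{0,0}e_1$, i.e.\ $M$ is block upper triangular with $1\times 1$ and $r\times r$ diagonal blocks. Inverting preserves this block structure, so $M^{-1}e_1=c_{0,0}^{-1}e_1$. Combining, $\Psi_\alpha(z)^{-1}=M^{-1}D(z)^{-1}B(z)^{-1}$, and the rows of $D(z)^{-1}B(z)^{-1}$ have sizes $O(1),O(z^\alpha),O(z^{\alpha+1/r}),\ldots,O(z^{\alpha+(r-1)/r})$ from top to bottom. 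Since the first column of $M^{-1}$ equals $c_{0,0}^{-1}e_1$, rows $i\geq 2$ of $\Psi_\alpha^{-1}$ do not see the analytic top row of $D^{-1}B^{-1}$ and inherit only contributions bounded by $O(z^\alpha)$; row $1$ picks up a clean $O(1)$ contribution from the $M^{-1}_{1,1}$ term.

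The main obstacle I expect is ensuring that for $\alpha<0$ the lower-row contributions to row $1$ of $\Psi_\alpha^{-1}$ (a priori of size $O(z^\alpha)$, which blows up as $z\to 0$) do not degrade the bound from $O(1)$ to $O(z^\alpha)$. I would handle this by using the identity $\Psi_\alpha(z)\Psi_\alpha^{-1}(z)=\mathbb{I}$ applied to its first column and exploiting the refined structure of column $1$ of $\Psi_\alpha$: its top entry tends to $c_{0,0}\neq 0$ while its lower $r$ entries are $O(z)$ since $\vartheta^l\phi_0=O(z)$ for $l\geq 1$. Reading the identity order by order in $z$ then forces the leading singular coefficients of row $1$ of $\Psi_\alpha^{-1}$ to cancel and preserves the $O(1)$ bound. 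The case $\alpha=0$ is handled by modifying the Frobenius basis to accommodate the coincident roots $\rho_0=\rho_1=0$, which introduces a logarithmic solution and replaces $z^\alpha$ by $\log z$ in the bound. Remaining non-generic values of $\alpha$ (where two indicial roots differ by an integer) follow either by the appropriate modified Frobenius basis or by a continuity argument in $\alpha$.
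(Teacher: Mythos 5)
Your decomposition $\Psi_\alpha = B(z)D(z)M$ is the same factorisation the paper uses (with $\Gamma_j=M^{-1}$ and $M_j=B$ in \eqref{ch4:eq:behavPsiinv0pre}), and your identification of $\psi_0$ being entire as the structural input from Lemma~\ref{ch4:prop:psi0entire} is also the same. The difference is what is extracted from that input. Entireness of $\psi_0$ tells you that the first \emph{column} of $M$ is proportional to $e_1$, hence $M$ and $M^{-1}$ are block upper triangular — exactly as you say. The paper instead asserts the stronger statement $\Gamma_j = 1\oplus \tfrac{1}{r+1}U$, i.e.\ that the connection matrix is block \emph{diagonal}, so that the first row of $\Gamma_j$ is also $\propto e_1^T$. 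With block diagonality the $O(1)$ bound on row~1 of $\Psi_\alpha^{-1}=\Gamma_jD^{-1}M_j^{-1}$ is immediate; with only block upper triangularity it is not, and you correctly flag the resulting obstacle.

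The proposed repair, however, does not close the gap. Applying $\Psi_\alpha^{-1}(z)\Psi_\alpha(z)=\mathbb I$ to the first column of $\Psi_\alpha$ gives for row~1 of $\Psi_\alpha^{-1}$ the single scalar relation
\begin{align*}
\sum_{k\ge 1}\bigl(\Psi_\alpha^{-1}\bigr)_{1k}\,\vartheta^{k-1}\psi_0(z) \;=\; 1.
\end{align*}
Because $\psi_0(0)\neq 0$ while $\vartheta^{k-1}\psi_0=O(z)$ for $k\ge 2$, this forces $(\Psi_\alpha^{-1})_{11}=O(1)$, but it places \emph{no} constraint on the entries $(\Psi_\alpha^{-1})_{1k}$ with $k\ge 2$: in the identity above those terms are multiplied by the small factor $O(z)$, so a contribution of size $O(z^\alpha)$ with $-1<\alpha<0$ is simply invisible at leading order rather than forced to cancel. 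In other words, the argument establishes the $(1,1)$ entry but not the rest of the first row, which is precisely where the lemma's content lies when $\alpha<0$. To obtain the full row~1 bound one has to actually establish the vanishing of the first row of $\Gamma_j$ off the $(1,1)$ entry — which is a statement about the $\phi_0$-free nature of the Frobenius connection coefficients of the continued Meijer $G$-functions, not a consequence of the entireness of $\psi_0$ alone — or else invoke the dual Meijer $G$ representation of $\Psi_\alpha^{-1}$ hinted at in Section~\ref{ch4:sec:inversePsi}, under which row~1 of $\Psi_\alpha^{-1}$ is the Wronskian vector of the entire solution of the adjoint equation. As written, your argument does not prove the lemma for $\alpha<0$.

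Your treatment of the lower rows (blocked by the zero first column of $M^{-1}$, giving $O(z^\alpha)$) is correct, and the remarks about $\alpha=0$ and resonant $\alpha$ are reasonable.
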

%KLOPT DIT WEL ALS ALPHA 0 IS?
\begin{proof}
We only prove it for $\alpha\neq 0$. In the $j$-th quadrant we have a connection matrix $\Gamma_j$ such that
\begin{align} \label{ch4:eq:behavPsiinv0pre}
\Psi_\alpha(z) \Gamma_j = 
M_j(z)
\operatorname{diag}(1,z^{-\alpha},z^{-\alpha-\frac{1}{r}},\ldots,z^{-\alpha-\frac{r-1}{r}})
\end{align}
for some non-singular analytic function $M_j(z)$. This follows from the fact that the indicial equation associated to \eqref{ch4:eq:MeijerGdvgl} has solutions $0, -\alpha, -\alpha-\frac{1}{2},\ldots,-\alpha-\frac{r-1}{r}$ at $z=0$. In fact it follows from the asymptotics of $\Psi_\alpha(z)$ as $z\to \infty$, the fact that its jumps all have determinant $1$, and an application of Liouville's theorem that $M_j$ should have a constant determinant.  For $z$ in the left half-plane, i.e., for $j=2$ and $j=3$, we have by Lemma \ref{ch4:prop:psi0entire} that
\begin{align*}
\Gamma_j = 1 \oplus \frac{1}{r+1} U,
\end{align*}
where $U$ is some $r\times r$ matrix consisting of powers of $\Omega$ and powers of $e^{2\pi i\alpha}$, that we could determine explicitly in principle. We conclude that as $z\to 0$ we have
\begin{align*}
\Psi_\alpha(z)^{-1} &= 
\Gamma_j
\operatorname{diag}(1,z^{\alpha},z^{\alpha+\frac{1}{r}},\ldots,z^{\alpha+\frac{r-1}{r}})
M_j(z)^{-1}\\
&= \left(1 \oplus \frac{1}{r+1} U\right) \mathcal O\begin{pmatrix}
1 & 1 & \hdots & 1\\
z^\alpha & z^\alpha & \hdots & z^\alpha\\
z^{\alpha+\frac{1}{r}} & z^{\alpha+\frac{1}{r}} & \hdots & z^{\alpha+\frac{1}{r}}\\
\vdots & & & \vdots\\
z^{\alpha+\frac{r-1}{r}} & z^{\alpha+\frac{r-1}{r}} & \hdots & z^{\alpha+\frac{r-1}{r}}
\end{pmatrix}\\
&= \mathcal O\begin{pmatrix}
1 & 1 & \hdots & 1\\
z^\alpha & z^\alpha & \hdots & z^\alpha\\
\vdots & & & \vdots\\
z^\alpha & z^\alpha & \hdots & z^\alpha
\end{pmatrix}.
\end{align*}
\end{proof}

\subsubsection{Uniqueness of $\Psi_\alpha$}

In this section we prove that $\Psi_\alpha$ is the only solution to RH-$\Psi$.

\begin{theorem} \label{ch4:thm:PsiaexistUnique}
$\Psi_\alpha$ is the unique solution to RH-$\Psi$.
\end{theorem}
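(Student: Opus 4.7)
The existence of a solution is already furnished by $\Psi_\alpha$ in Definition \ref{ch4:def:Psi}: Lemma \ref{ch4:prop:PsiandQ} verifies RH-$\Psi$1, RH-$\Psi$2 and RH-$\Psi$4, while Lemma \ref{ch4:prop:behavInftyPsi} together with Lemma \ref{ch4:prop:behavInftyPsi2} verifies RH-$\Psi$3. Thus only uniqueness requires an argument. The plan is to use the standard ratio trick: if $\widetilde\Psi$ is any other solution, set
\begin{equation*}
R(z) = \widetilde\Psi(z) \, \Psi_\alpha(z)^{-1},
\end{equation*}
and show $R \equiv \mathbb I$ by Liouville's theorem.

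First I would check that $R$ has no jumps on $\mathbb R \cup i\mathbb R$. Since $\widetilde\Psi$ and $\Psi_\alpha$ both satisfy RH-$\Psi$2 with identical (constant) jump matrices $J$, we have $R_+ = \widetilde\Psi_- J \cdot J^{-1}\Psi_{\alpha,-}^{-1} = R_-$ on every jump ray, so $R$ is analytic on $\mathbb C \setminus \{0\}$. Next, by Lemma \ref{ch4:prop:behavInftyPsi2} applied to both $\widetilde\Psi$ and $\Psi_\alpha$, one obtains
\begin{equation*}
R(z) = \left(\mathbb I + \mathcal O\!\left(\tfrac{1}{z}\right)\right)\left(\mathbb I + \mathcal O\!\left(\tfrac{1}{z}\right)\right) = \mathbb I + \mathcal O\!\left(\tfrac{1}{z}\right), \qquad z \to \infty,
\end{equation*}
since the factors $L_\alpha(z)\bigoplus e^{-(r+1)\omega^{\pm k_j}z^{1/(r+1)}}$ cancel between $\widetilde\Psi$ and $\Psi_\alpha^{-1}$.

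The key step is to show that the isolated singularity of $R$ at $z = 0$ is removable. For $\operatorname{Re}(z) < 0$ this follows by directly multiplying the bounds RH-$\Psi$4b for $\widetilde\Psi$ with the bound \eqref{ch4:eq:behavPsiinv0} for $\Psi_\alpha^{-1}$: the first column of $\widetilde\Psi$ is $\mathcal O(1)$ and the first row of $\Psi_\alpha^{-1}$ is $\mathcal O(1)$, while the remaining $(r\times r)$ block contributes $\mathcal O(h_{-\alpha-(r-1)/r}(z)\, z^\alpha)$, which in every case $\alpha > -1$ gives an entry-wise bound $\mathcal O(|z|^{-(r-1)/r}(\log |z|)^{\epsilon})$ with exponent strictly greater than $-1$. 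For $\operatorname{Re}(z) > 0$ I would establish the analogous bound for $\Psi_\alpha^{-1}$ by repeating the argument leading to \eqref{ch4:eq:behavPsiinv0pre}: the $(r+1)$-th order ODE \eqref{ch4:eq:MeijerGdvgl} has indicial exponents $0,-\alpha,-\alpha-\tfrac{1}{r},\ldots,-\alpha-\tfrac{r-1}{r}$ at the origin, so one may factor $\Psi_\alpha(z)\Gamma = M(z)\operatorname{diag}(1,z^{-\alpha},\ldots,z^{-\alpha-(r-1)/r})$ in each quadrant with $M$ holomorphic and non-singular at $0$ and $\Gamma$ a constant connection matrix; inverting gives the same type of row-wise bound as in \eqref{ch4:eq:behavPsiinv0}. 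Combined with RH-$\Psi$4a for $\widetilde\Psi$, one again obtains $R(z) = o(|z|^{-1})$ as $z\to 0$.

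With $R$ analytic on $\mathbb C \setminus \{0\}$ and $R(z) = o(|z|^{-1})$ near $0$, Riemann's removable-singularity theorem (applied entry-wise after multiplying by $z$) shows $R$ extends to an entire matrix-valued function. Since $R(z) \to \mathbb I$ as $z \to \infty$, Liouville's theorem gives $R \equiv \mathbb I$, i.e., $\widetilde\Psi = \Psi_\alpha$. The main technical obstacle is the right-half-plane estimate for $\Psi_\alpha^{-1}$, since the excerpt only explicitly worked out the left half-plane; this is overcome by the Frobenius/connection-matrix argument sketched above, which is clean because the roots of the indicial equation are all non-resonant modulo integers (they differ by multiples of $1/r$, and none of these differences is a nonzero integer for $r\ge 1$).
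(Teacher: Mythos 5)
Your overall strategy matches the paper's exactly: set $R = \widetilde\Psi\,\Psi_\alpha^{-1}$, observe it is jump-free, that $R=\mathbb I + \mathcal O(1/z)$ at infinity, then establish removability at $0$ and apply Liouville. The existence part is also identical, citing the right lemmas. The one substantive difference is how the removability at $0$ is argued, and there you introduce a claim that is incorrect.

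You assert that the Frobenius factorization $\Psi_\alpha(z)\Gamma = M(z)\operatorname{diag}(1, z^{-\alpha},\ldots,z^{-\alpha-\frac{r-1}{r}})$ in the \emph{right} half-plane yields ``the same type of row-wise bound as in \eqref{ch4:eq:behavPsiinv0}.'' It does not. The bound \eqref{ch4:eq:behavPsiinv0} has a first row of $\mathcal O(1)$ precisely because the connection matrix in the left half-plane is block-diagonal, $\Gamma_j = 1 \oplus \frac{1}{r+1}U$, which in turn is because the first column of $\Psi_\alpha$ there is built from the \emph{entire} solution $\psi_0$ (Lemma \ref{ch4:prop:psi0entire}). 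In the right half-plane the first column of $\Psi_\alpha$ is $(\psi_1,\vartheta\psi_1,\ldots)$ or $(\psi_2,\vartheta\psi_2,\ldots)$ (see Definition \ref{ch4:def:Psi}), a generic Frobenius solution; the connection matrix is no longer block-diagonal, and the first row of $\Psi_\alpha^{-1}$ is only $\mathcal O(\max(1,|z|^\alpha))$ entry-wise. Multiplying with RH-$\Psi$4a's bound $\mathcal O(h_{-\alpha-\frac{r-1}{r}}(z))$ on $\widetilde\Psi$ gives, for $\alpha\ge 0$, the estimate $R(z)=\mathcal O(z^{-\alpha-\frac{r-1}{r}})$, which fails to be $o(|z|^{-1})$ once $\alpha\ge\frac{1}{r}$. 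So the assertion ``one again obtains $R(z)=o(|z|^{-1})$'' in the right half-plane does not hold in general.

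The gap is repairable without changing your structure. The right-half-plane contribution need not be subcritical: it is enough that the Frobenius factorization gives polynomial (power-of-$z$, possibly with a $\log$) control there, so that $R$ has at most a pole at $z=0$ rather than an essential singularity. Once $R-\mathbb I$ is known to be a finite principal part $\sum_{n\ge 1} a_{-n}z^{-n}$, the subcritical bound $\mathcal O(z^{-1+\frac{1}{r}}\log z)$ in the \emph{left} half-plane alone forces all $a_{-n}$ to vanish (send $z\to 0$ along the negative real axis). This is, in essence, what the paper's own proof is tacitly relying on: it cites \emph{only} the left-half-plane bound \eqref{ch4:eq:behavPsiinv0}, and this is licensed because the Frobenius structure already rules out an essential singularity. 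With this small adjustment—weaken the right-half-plane goal from ``$o(|z|^{-1})$'' to ``at most polynomial growth''—your proposal closes the argument.
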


\begin{proof}
It follows from Lemma \ref{ch4:prop:PsiandQ} and Lemma \ref{ch4:prop:behavInftyPsi2} (see \eqref{ch4:RHPsi3rewritten} also) that $\Psi_\alpha$ does indeed solve RH-$\Psi$. To prove uniqueness, suppose that $\Psi(z)$ is a solution to RH-$\Psi$. Then $\Psi(z)\Psi_\alpha(z)^{-1}$ has no jumps and it behaves like $\mathbb I+\mathcal O(1/z)$ as $z\to \infty$. For $z$ in the left half-plane we have by \eqref{ch4:eq:behavPsiinv0} and RH-$\Psi$4 that 
\begin{align*}
\Psi(z)\Psi_\alpha(z)^{-1}
= \left\{\begin{array}{lr}
\mathcal O(z^\alpha), & -1<\alpha<-1+\frac{1}{r},\\
\mathcal O(z^{-1+\frac{1}{r}} \log z), & \alpha = -1+\frac{1}{r},\\
\mathcal O(z^{-1+\frac{1}{r}}), & \alpha>-1+\frac{1}{r}, \alpha\neq 0,\\
\mathcal O(z^{-1+\frac{1}{r}}\log z), & \alpha=0,
\end{array}\right.
\end{align*}
as $z\to 0$. Either way, we conclude that the singularity at $z=0$ is removable. Then Liouville's theorem shows that $\Psi(z)\Psi_\alpha(z)^{-1}=\mathbb I$, and we are done. 
\end{proof}

\subsection{Definition of the local parametrix at the hard edge} \label{ch4:sec:defLocalP}

\subsubsection{Definition of the initial local parametrix $\mathring P$}

In what follows, we will assume that the lips of the lens are slightly deformed around $z=0$, such that,  in $D(0,r_0)$, $f$ maps the lips of the lens into the positive and negative imaginary axis. Notice that we indeed have the freedom to do this. Our initial local parametrix is defined as follows.  
\begin{definition} \label{ch4:def:mathringP}
With $\Psi, f(z)$ and $D_0$ as in Definition \ref{ch4:def:Psi}, Definition \ref{ch4:def:conformalf} and Definition \ref{ch4:def:D0}, we define the initial local parametrix by
\begin{align} \label{ch4:eq:defMathringP}
\mathring P(z) &= \mathring E(z) \Psi\left(n^{r+1} f(z)\right) \operatorname{diag}(1,z^\beta,\ldots,z^\beta) D_0(z)^{-n}, & z\in D(0,r_0),
\end{align}
where we take
\begin{align} \label{ch4:def:mathringE}
\mathring E(z) = n^{-r\beta} \left(\frac{f(z)}{z}\right)^{-\frac{r\beta}{r+1}}.
\end{align}
\end{definition}

\begin{proposition}
$\mathring P$, as in Definition \ref{ch4:def:mathringP}, satisfies RH-$\mathring{\text{P}}$.
\end{proposition}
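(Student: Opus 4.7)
The plan is to reduce the verification to the constant-jump problem RH-$\widetilde{\text{P}}$ established in Proposition \ref{ch4:prop:RHPfromPtoQ}. Setting
\begin{align*}
\widetilde P(z) = \mathring P(z)\operatorname{diag}(1,z^{-\beta},\ldots,z^{-\beta})D_0(z)^n = \mathring E(z)\,\Psi\bigl(n^{r+1}f(z)\bigr),
\end{align*}
that proposition tells us it suffices to show $\widetilde P$ satisfies RH-$\widetilde{\text{P}}$1, RH-$\widetilde{\text{P}}$2 and RH-$\widetilde{\text{P}}$3. Since $\mathring E(z)$ is a scalar prefactor it will not influence the jumps, and its purpose here is only to ensure analyticity and the correct order of vanishing/growth at the origin; I verify each of the three conditions in turn.

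First, for RH-$\widetilde{\text{P}}$1, Proposition \ref{ch4:prop:conformalf} guarantees that on $D(0,r_0)$ the map $f$ is conformal with $f(0)=0$, so $f(z)/z$ is analytic and non-vanishing there; hence $\mathring E(z) = n^{-r\beta}(f(z)/z)^{-r\beta/(r+1)}$ is a well-defined analytic scalar on $D(0,r_0)$. By construction (the deformation of the lens at the start of Section \ref{ch4:sec:defLocalP}) the conformal map $f$ sends the positive (resp.\ negative) real axis inside $D(0,r_0)$ to the positive (resp.\ negative) real axis, and the lips $\Delta_0^{\pm}\cap D(0,r_0)$ to the positive and negative imaginary axes; therefore $z\mapsto n^{r+1}f(z)$ sends $\Sigma_S\cap D(0,r_0)$ into $\mathbb R\cup i\mathbb R$. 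Composing with $\Psi$, which is analytic off $\mathbb R\cup i\mathbb R$ by RH-$\Psi$1, gives analyticity of $\widetilde P$ on $D(0,r_0)\setminus \Sigma_S$.

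Second, for RH-$\widetilde{\text{P}}$2, the scalar factor $\mathring E$ cancels in $\widetilde P_-^{-1}\widetilde P_+$, so the jump of $\widetilde P$ is exactly the pullback under $n^{r+1}f$ of the jump of $\Psi$ stated in RH-$\Psi$2. The conformal map $f$ is orientation-preserving and (by Convention \ref{ch4:con:orientation} together with the lens deformation) respects the orientations: $(0,r_0)$ maps to $\mathbb R^+$ with the correct direction, $(-r_0,0)$ to $\mathbb R^-$, and the lips $\Delta_0^{\pm}$ to $i\mathbb R^{\pm}$. Comparing the three jump matrices appearing in RH-$\widetilde{\text{P}}$2 with RH-$\Psi$2 block by block, one sees they coincide verbatim (the $2\times 2$ and $1\times 1$ blocks match, and the lens-jump is the same lower-triangular matrix in both problems). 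The main bookkeeping step here is to check the final $1\times 1$ or $2\times 2$ block depending on the parity of $r$, but this is a direct comparison with the formulas already used in Lemma \ref{ch4:prop:PsiandQ}.

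Third, for RH-$\widetilde{\text{P}}$3, I use that $\mathring E(z)$ is analytic and non-vanishing at $z=0$, hence bounded and bounded away from $0$ on a neighborhood of the origin. Since $n^{r+1}f(z) = \mathcal O(z)$ and also $\asymp z$ as $z\to 0$, the entries of $\Psi(n^{r+1}f(z))$ inherit the $\mathcal O$-behavior of RH-$\Psi$4 with $z$ in place of the $\Psi$-variable. The splitting of \eqref{ch4:RHQ3a}–\eqref{ch4:RHQ3b} into the right and left sides of $\Delta_0^{\pm}$ corresponds, under $f$, to $\operatorname{Re}>0$ and $\operatorname{Re}<0$ in RH-$\Psi$4, so the asymptotic bounds transfer directly to $\widetilde P$.

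The only potential obstacle is making sure the lens deformation is compatible with $f$ mapping $\Delta_0^{\pm}$ into $i\mathbb R^{\pm}$ with the correct orientations and signs; this is essentially the reason for the deformation remark in Section \ref{ch4:sec:defLocalP}, and once this geometric fact is in place the rest of the verification is mechanical. By Proposition \ref{ch4:prop:RHPfromPtoQ}, $\mathring P$ then satisfies RH-$\mathring{\text{P}}$.
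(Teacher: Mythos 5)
Your proof is correct and follows essentially the same route as the paper's: reduce via Proposition \ref{ch4:prop:RHPfromPtoQ} to checking RH-$\widetilde{\text{P}}$ for $\widetilde P(z)=\mathring E(z)\Psi(n^{r+1}f(z))$, use the conformality of $f$ (Proposition \ref{ch4:prop:conformalf}) to transfer the jumps and the $z\to 0$ behavior of $\Psi$ from Lemma \ref{ch4:prop:PsiandQ}, and note that the analytic scalar prefactor $\mathring E$ is harmless. You spell out each sub-condition in more detail than the paper's short proof; the only slight misstatement is the remark that $\mathring E$ ``ensures analyticity and the correct order of vanishing/growth at the origin'' — RH-$\mathring{\text{P}}$ would hold without $\mathring E$; its real role is to arrange the matching via $E$ in Section \ref{ch4:sec:matching} — but this does not affect the validity of the argument.
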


\begin{proof}
By Proposition \ref{ch4:prop:conformalf} we know that composing $\Psi$ with $f$ does not change its jump matrices or asymptotic behavior as $z\to 0$. Then by Lemma \ref{ch4:prop:PsiandQ} and Proposition \ref{ch4:prop:RHPfromPtoQ} we know that 
\begin{align*}
\Psi_\alpha(n^b f(z)) \operatorname{diag}(1,z^\beta,\ldots,z^\beta) D_0(z)^{-n}
\end{align*}
satisfies RH-$\mathring{\text{P}}$. By Proposition \ref{ch4:prop:conformalf} we also know that $\mathring E(z)$ is analytic on $D(0,r_0)$. Then multiplication on the left with $\mathring E(z)$ does not change any of the conditions in RH-$\mathring{\text{P}}$. 
\end{proof}

\subsubsection{The double matching} \label{ch4:sec:matching}

As indicated in Section \ref{ch4:sec:localParamSetUp} we will apply the double matching procedure from \cite{Mo}. In this section we will show that the conditions of \cite[Theorem 1.2]{Mo} can be met, we repeat this theorem for convenience. 

\begin{theorem}\label{lem:matching}
Let $\mathring P$ and $N$ be defined in a neighborhood of $\overline{D(0,\rho)}$ for some $\rho>0$. These are matrix-valued functions of size $m\times m$ that may vary with $n$. 
Let $a, b, c, d, e \geq 0$ satisfy
\begin{align} \label{eq:assumpabcde}
a\leq e < b\quad\quad\text{ and }\quad\quad  d<\min(b,c).
\end{align}
Suppose that uniformly for $z\in\partial D(0,n^{-a})$ as $n\to\infty$
\begin{align}\label{eq:almostMatching}
\mathring P(z) N(z)^{-1} E(z) &= \mathbb I + \frac{C(z)}{n^b z} + \mathcal O\left(n^{-c}\right),
\end{align}
where $C$ and $E$ are $m\times m$ functions in a neighborhood of $\overline{D(0,\rho)}$ that may vary with $n$, and
\begin{itemize}
\item[(i)] $C$ is meromorphic with only a possible pole at $z=0$, whose order is bounded by some non-negative integer $p$ for all $n$, and $C$ is uniformly bounded for $z\in \partial D(0,n^{-a})$ as $n\to\infty$,
\item[(ii)] $E$ is non-singular, analytic, and uniformly for $z,w\in \partial D(0,n^{-a})$ we have as $n\to\infty$
\begin{align} \label{eq:assumpE}
E(z) = \mathcal O(n^\frac{d}{2}), \quad\quad E(z)^{-1} = \mathcal O(n^{\frac{d}{2}}),\quad\quad \text{ and }\quad E(z)^{-1} E(w) &= \mathbb I+\mathcal O(n^e (z-w)).
\end{align}
\end{itemize}
Then there are non-singular analytic functions  ${E_n^0:\overline{D(0,n^{-a})}\to\mathbb C^{m\times m}}$, ${E_n^\infty:\overline{A(0;n^{-a},\infty)}\to\mathbb C^{m\times m}}$ such that as $n\to\infty$ 
\begin{align*} %\label{eq:matchingonna}
E_n^0(z) \mathring P(z) &= \left(\mathbb I + \mathcal O(n^{d-c})\right) E_n^\infty(z) N(z), &\text{uniformly for }z\in \partial D(0,n^{-a}),\\ 
%\label{eq:matchingonr}
E_n^\infty(z) &= \mathbb I + \mathcal O(n^{d-b}), &\text{uniformly for }z\in \partial D(0,\rho).
\end{align*}
\end{theorem}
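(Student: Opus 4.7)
The plan is to construct $E_n^0$ and $E_n^\infty$ by an additive splitting of the correction in \eqref{eq:almostMatching} at $z=0$, combined with a constant freezing of the prefactor $E$. The starting observation is that $C(z)/(n^b z)$ is meromorphic in a neighborhood of $\overline{D(0,\rho)}$ with a pole only at $z=0$ of order at most $p+1$, so it decomposes uniquely as
\begin{equation*}
\frac{C(z)}{n^b z} = H_-(z) + H_+(z),
\end{equation*}
where $H_-$ is the principal part at $z=0$ (a matrix polynomial in $1/z$ that is analytic on $\mathbb{C}\setminus\{0\}$ and vanishes at $\infty$) and $H_+$ is analytic in a neighborhood of $\overline{D(0,\rho)}$. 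Cauchy estimates for the Laurent coefficients of $C$, combined with the uniform bound $\|C(z)\|=\mathcal{O}(1)$ on $\partial D(0,n^{-a})$, yield $\|H_-(z)\|=\mathcal{O}(n^{a-b})$ on $\partial D(0,n^{-a})$, $\|H_-(z)\|=\mathcal{O}(n^{-b})$ on $\partial D(0,\rho)$, and an analogous $\mathcal{O}(n^{a-b})$ bound for $H_+$ on all of $\overline{D(0,n^{-a})}$ by the maximum principle.

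To accommodate the non-commutativity of the prefactor $E$, I would fix a reference point $z_0 \in D(0,n^{-a})$ (say $z_0=0$) and define
\begin{equation*}
E_n^\infty(z) := \mathbb{I} + E(z_0)\, H_-(z)\, E(z_0)^{-1},
\end{equation*}
which is analytic on $\overline{A(0;n^{-a},\infty)}$ since $H_-$ is analytic off $z=0$, and, using $\|E(z_0)\|\,\|E(z_0)^{-1}\|=\mathcal{O}(n^d)$, satisfies $E_n^\infty(z) = \mathbb{I}+\mathcal{O}(n^{d-b})$ on $\partial D(0,\rho)$, delivering the second conclusion at once. The companion $E_n^0$ is then chosen so that, on $\partial D(0,n^{-a})$, the product $E_n^0(z)\,\mathring P(z)\,N(z)^{-1}\,(E_n^\infty(z))^{-1}$ equals $\mathbb{I}$ up to the prescribed error; substituting \eqref{eq:almostMatching} and matching term by term in the additive splitting leads to an explicit analytic formula for $E_n^0$ built out of $E(z)$, $H_+(z)$, and constants. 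Analyticity on $\overline{D(0,n^{-a})}$ follows from the analyticity of each building block, and non-singularity for large $n$ from the smallness of $H_+$.

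The main technical obstacle will be the error bookkeeping. One must verify that the four sources of error --- the $\mathcal{O}(n^{-c})$ remainder in \eqref{eq:almostMatching}, the $\mathcal{O}(n^{a-b})$ sizes of $H_\pm$, the freezing deviation $E(z)^{-1}E(z_0)=\mathbb{I}+\mathcal{O}(n^{e-a})$ from \eqref{eq:assumpE} on $\partial D(0,n^{-a})$, and the conjugation cost $\mathcal{O}(n^d)$ from $\|E\|\,\|E^{-1}\|$ --- combine multiplicatively into $\mathcal{O}(n^{d-c})$ on the matching circle, with no term dominating. The hypotheses enter tightly here: $a\le e$ makes the freezing essentially cost-free, $d<b$ keeps $E_n^\infty$ close to $\mathbb{I}$ in the required sense on $\partial D(0,\rho)$, and $d<c$ allows the residual error on $\partial D(0,n^{-a})$ to be absorbed into the stated bound. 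If the first Ansatz produces an uncancelled higher-order error from the non-commutativity of $E$ with $H_-$, one iterates --- equivalently, solves the resulting small-norm RHP on $\partial D(0,n^{-a})$ --- and obtains a geometric series whose sum converges to the genuine $E_n^0, E_n^\infty$ satisfying both conclusions.
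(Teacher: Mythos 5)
The paper does not prove this statement; it is reproduced verbatim from Theorem 1.2 of \cite{Mo} (as the surrounding text explains), so there is no in-paper argument to compare against. Evaluating your sketch on its own merits, there is a genuine gap in the error bookkeeping around the constant freezing of $E$.

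The freezing deviation $E(z)^{-1}E(z_0) = \mathbb I + \mathcal O(n^{e-a})$ on $\partial D(0,n^{-a})$ is \emph{not} small: the hypothesis only gives $a\leq e$, so $e-a\geq 0$ (and in this paper's application $e-a = r/2$). Your remark that the four listed error sources ``combine multiplicatively into $\mathcal O(n^{d-c})$'' implicitly assumes every term in the final expansion carries all four factors, but that is not the case. After substituting \eqref{eq:almostMatching} into $E_n^0\,\mathring P\,N^{-1}\,(E_n^\infty)^{-1}$ with $E_n^\infty(z) = \mathbb I + E(z_0)H_-(z)E(z_0)^{-1}$, one unavoidably meets the commutator $E(z)H_-(z)E(z)^{-1} - E(z_0)H_-(z)E(z_0)^{-1}$. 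Using $\|E(z)-E(z_0)\| = \mathcal O(n^{d/2+e-a})$ and $\|E(z)^{-1}-E(z_0)^{-1}\| = \mathcal O(n^{d/2+e-a})$ (from \eqref{eq:assumpE}), $\|H_-\| = \mathcal O(n^{a-b})$, and $\|E\|,\|E^{-1}\|,\|E(z_0)\|,\|E(z_0)^{-1}\| = \mathcal O(n^{d/2})$, this commutator is $\mathcal O(n^{d+e-b})$ on $\partial D(0,n^{-a})$; it does \emph{not} pick up the extra $\mathcal O(n^{-c})$ factor. The hypotheses guarantee $e<b$ but not $e\leq b-c$, so $\mathcal O(n^{d+e-b})$ is in general far larger than the target $\mathcal O(n^{d-c})$: with this paper's parameters ($a=\frac{r+1}{2}$, $b=r+1$, $c=2(r+1)$, $d=r$, $e=r+\frac12$) it is $\mathcal O(n^{r-1/2})$ against a target of $\mathcal O(n^{-r-2})$. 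Relatedly, with your ansatz the quantity $E_n^\infty(z)-\mathbb I$ is $\mathcal O(n^{d+a-b})$ on $\partial D(0,n^{-a})$, and here $d+a-b=\frac{r-1}{2}\geq 0$, so the inner circle is not a small-norm regime and the geometric-series iteration in your closing sentence cannot converge. Any correct construction has to avoid the constant freezing of $E$, or exploit more of its structure than the three bounds in \eqref{eq:assumpE}, precisely to dodge this $\mathcal O(n^{d+e-b})$ loss.
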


Obviously, our present situation requires that one takes $m=r+1$. The main objective of the current section is to show that the assumptions of Theorem \ref{lem:matching}, and in particular the estimates in (ii), hold for some choice of the constants $a, b, c, d$ and $e$. We shall determine the explicit values of the constants $a, b, c, d$ and $e$ as we go along. As indicated before in \text{Section \ref{ch4:sec:localParamSetUp}}, we want the double matching on the circle with radius $\rho = r_0$ and the circle of radius $r_n$. We remind the reader that
\begin{align*}
r_n &= n^{-\frac{r+1}{2}}, & n=1,2,\ldots
\end{align*}
As mentioned in Section \ref{ch4:sec:localParamSetUp}, we can use the analytic prefactors $E_n^0$ and $E_n^\infty$ from Theorem \ref{lem:matching} and construct the local parametrix $P$ as
\begin{align*}
P(z) = \left\{\begin{array}{ll}
E_n^0(z) \mathring P(z), & z\in D(0,r_n),\\
E_n^\infty(z) N(z), & z\in A(0;r_n,r_0),
\end{array}\right.
\end{align*}
and $P$ will then satisfy a matching condition on both the inner and the outer circle.\\

We know that $|n^{r+1} f(z)|\to \infty$ uniformly for $z\in \partial D(0,r_n)$ as $n\to\infty$. Then, by Lemma \ref{ch4:prop:behavInftyPsi2} and \eqref{ch4:eq:defMathringP}, we have uniformly for $z\in\partial D(0,r_n)$ that
\begin{align} \label{ch4:eq:almostMatching}
\mathring P(z) N(z)^{-1} \sim 
\left(\mathbb I + \frac{C_{\alpha,1}}{n^{r+1} f(z)} +\frac{C_{\alpha,2}}{(n^{r+1}f(z))^2} + \ldots\right) E(z)^{-1},
\end{align}
as $n\to\infty$, where $E$ is defined as follows.
\begin{definition} \label{ch4:def:E}
For $z\in D(0,r_0)$, we define the function
\begin{align} \label{ch4:eq:defE}
E(z) = \mathring E(z)^{-1} N(z) \operatorname{diag}(1,z^{-\beta},\ldots,z^{-\beta}) D_0(z)^n 
\left(\bigoplus_{j=1}^{r+1} e^{n (r+1) \omega^{k_j} f(z)^\frac{1}{r+1}}\right)
L_\alpha\left(n^{r+1}f(z)\right)^{-1}.
\end{align}
\end{definition}
Notice that $E$ depends on $n$. When $r=1$, i.e., when we have a $2\times 2$ RH analysis, the factor
\begin{align*}
D_0(z)^n 
\left(\bigoplus_{j=1}^{r+1} e^{n (r+1) \omega^{k_j} f(z)^\frac{1}{r+1}}\right)
\end{align*}
in \eqref{ch4:eq:defE} equals the unit matrix and, as it turns out, $E$ can then be used as an analytic prefactor (for an ordinary matching, that is). When the size of the RHP is larger, $E$ cannot serve as an analytic prefactor, unfortunately. See Section 1.3 in \cite{Mo} for more on where the difficulty of the matching comes from in larger size RHPs. The function $E$ is a central ingredient for the double matching procedure though. We prove some properties for $E$ in the next three propositions. These statements together with their proofs are straightforward generalizations of their counterparts for $r=1$ in \cite{KuMo} (see Lemma 5.10(c) and \text{Lemma 5.13}). 

\begin{proposition} \label{ch4:prop:Eanalytic}
$E(z)$ is an analytic non-singular function.
\end{proposition}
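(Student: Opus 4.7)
The plan is to decompose $E$ into factors whose combined jumps on $(-r_0,r_0)\setminus\{0\}$ cancel, and then to show that the potential singularity at $z=0$ is removable. Write $E(z)=\mathring E(z)^{-1}\widetilde N(z)\Phi(z)^{-1}$, with
$$\widetilde N(z)=N(z)\operatorname{diag}(1,z^{-\beta},\ldots,z^{-\beta})D_0(z)^n,\qquad \Phi(z)=L_\alpha(n^{r+1}f(z))\bigoplus_{j=1}^{r+1}e^{-n(r+1)\omega^{k_j}f(z)^{1/(r+1)}}.$$
By Proposition \ref{ch4:prop:conformalf} the prefactor $\mathring E(z)^{-1}$ is analytic and non-vanishing on $D(0,r_0)$, so the task reduces to showing $\widetilde N\Phi^{-1}$ is analytic and non-singular.

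For the jump matching, the computation carried out in the proof of Proposition \ref{ch4:prop:RHPfromPtoQ} applies verbatim with $\mathring P$ replaced by $N$: since $N$ shares the real-axis jumps of $S$ (and hence of $\mathring P$), conjugation by the diagonal $z^{-\beta}$-factor and by the Szegő function $D_0^n$ turns them into the constant jumps of RH-$\widetilde{\text{P}}$2. Thus $\widetilde N$ carries the jumps of $\widetilde P$ on $(-r_0,r_0)\setminus\{0\}$. For $\Phi$, Proposition \ref{ch4:eq:sameJumpsLPsi}, together with the fact that $f$ is conformal at $0$ with $f((0,r_0))\subset(0,\infty)$ (Proposition \ref{ch4:prop:conformalf}), gives that $L_\alpha(n^{r+1}f(z))$ carries the $\mathbb R^\pm$-jumps of $\Psi_\alpha$, which coincide with the real-axis jumps of $\widetilde P$ by RH-$\Psi$2. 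The exponential direct-sum factor is analytic on $(0,r_0)$, and on $(-r_0,0)$ it contributes a branch jump of $f^{1/(r+1)}$; invoking the parity identities among the $k_j$ that are used in the proofs of Lemma \ref{ch4:prop:PsiandQ} and Proposition \ref{ch4:eq:sameJumpsLPsi}, this branch jump is precisely absorbed by the antidiagonal structure of $L_\alpha$'s jump there. Hence $\Phi$ too carries the jumps of $\widetilde P$, and $\widetilde N\Phi^{-1}$ is jump-free on $(-r_0,r_0)\setminus\{0\}$. Verifying this last cancellation is the principal technical obstacle of the proof.

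To rule out a genuine singularity at $z=0$, combine the bounds: from \eqref{ch4:eq:behavNasz0}, $N(z)\,\operatorname{diag}(z^{r\beta/(r+1)},z^{-\beta/(r+1)},\ldots,z^{-\beta/(r+1)})=\mathcal O(z^{-r/(2(r+1))})$, while $D_0(z)^n$ and the factor $\bigoplus_j e^{-n(r+1)\omega^{k_j}f(z)^{1/(r+1)}}$ are bounded near $z=0$ by Proposition \ref{ch4:prop:fmanalytic} and the continuity of the exponential. Substituting $w=n^{r+1}f(z)=\mathcal O(z)$ into Definition \ref{ch4:def:Lalpha} shows $L_\alpha(w)^{-1}$ contributes matching positive powers $z^{r/(2(r+1))-j/(r+1)}$, while the $\beta$-powers from $\mathring E^{-1}$, the diagonal, $N$, and $L_\alpha$ combine to a net bounded algebraic prefactor. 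Hence $E(z)=\mathcal O(1)$ as $z\to 0$, and Riemann's removable singularity theorem extends $E$ analytically to $D(0,r_0)$. Finally, since every jump in sight has determinant $\pm 1$ and the determinants of $\mathring E,N,L_\alpha,D_0$ combine to a non-vanishing analytic function on $D(0,r_0)$, $\det E(z)$ is a nonzero analytic function, so $E$ is non-singular.
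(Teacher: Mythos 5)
Your proof follows essentially the same route as the paper: decompose $E$, cancel the jumps by invoking Propositions \ref{ch4:prop:RHPfromPtoQ} and \ref{ch4:eq:sameJumpsLPsi}, and control the singularity at the origin with \eqref{ch4:eq:behavNasz0} and the form of $L_\alpha$. There is, however, one quantitative error in the singularity estimate. You assert $E(z)=\mathcal O(1)$ as $z\to 0$ because the $\beta$-powers cancel and the remaining powers ``match''. The $\beta$-powers do cancel, but the algebraic powers of $z$ from the two remaining pieces \emph{reinforce} rather than cancel: from \eqref{ch4:eq:behavNasz0} the factor $N(z)\operatorname{diag}(z^{r\beta/(r+1)},z^{-\beta/(r+1)},\ldots)$ is $\mathcal O(z^{-r/(2(r+1))})$, and from Definition \ref{ch4:def:Lalpha}, after stripping the $\beta$-power, $z^{-\frac{r}{r+1}\beta}L_\alpha(z)^{-1}=\mathcal O(z^{-r/(2(r+1))})$ as well, since the direct sum $\bigoplus_j z^{r/(2(r+1))-j/(r+1)}$ attains the exponent $-r/(2(r+1))$ at $j=r$. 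The bounded middle factors do not change this. Hence the best one obtains is
\begin{align*}
E(z)=\mathcal O\!\left(z^{-\frac{r}{2(r+1)}}\cdot z^{-\frac{r}{2(r+1)}}\right)=\mathcal O\!\left(z^{-1+\frac{1}{r+1}}\right),
\end{align*}
not $\mathcal O(1)$. This does not damage the conclusion: since $E$ has no jumps it has a genuine Laurent expansion about $0$, and $-1+\frac{1}{r+1}>-1$ rules out any nonzero term of negative degree, so the singularity is removable. But Riemann's removable singularity theorem in the form ``$\mathcal O(1)$ implies removable'' is not what is being used; you need the sharper argument that a Laurent singularity of order strictly less than $1$ cannot occur. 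Apart from this, your jump-cancellation reasoning matches the paper, and your determinant argument for non-singularity is a fine (slightly more explicit) variant of the paper's one-line observation that each factor in \eqref{ch4:eq:defE} is itself non-singular.
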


\begin{proof}
It is clear that all the factors in the right-hand side of \eqref{ch4:eq:defE} are non-singular, hence $E$ is singular as well. As one can easily verify, $L_\alpha\left(n^{r+1}f(z)\right)$ and 
\begin{align*}
L_\alpha(n^{r+1}f(z)) \bigoplus_{j=1}^{r+1} e^{-n (r+1) \omega^{k_j} f(z)^\frac{1}{r+1}}
\end{align*}
have the same jumps, namely those in RH-$\widetilde{\text{P}}$2 according to Proposition \ref{ch4:eq:sameJumpsLPsi}. 
On the other hand, we also know that
\begin{align*}
N(z) \operatorname{diag}(1,z^{-\beta},\ldots,z^{-\beta}) D_0(z)^n
\end{align*}
must have the same jumps on the positive and negative real axis as in RH-$\widetilde{\text{P}}$2. This is due to Proposition \ref{ch4:prop:RHPfromPtoQ} and the fact that $N$ has the same jumps as $\mathring P$ has on the positive and negative real axis. With these two insights, it follows that 
\begin{align*}
N(z) \operatorname{diag}(1,z^{-\beta},\ldots,z^{-\beta}) D_0(z)^n 
\left(\bigoplus_{j=1}^{r+1} e^{n (r+1) \omega^{k_j} f(z)^\frac{1}{r+1}}\right)
L_\alpha\left(n^{r+1}f(z)\right)^{-1}
\end{align*}
does not have any jumps. Then $E$ also does not have any jumps, by Proposition \ref{ch4:prop:Eanalytic} and \eqref{ch4:eq:defE}. This implies that $E$ has a Laurent series around $z=0$. It is clear from Definition \ref{ch4:def:Lalpha} that as $z\to 0$
\begin{align} \label{ch4:eq:behavzLalphaO}
z^{-\frac{r}{r+1}\beta} L_\alpha(z)^{-1} = \mathcal O\left(z^{-\frac{r}{2(r+1)}}\right).
\end{align}
The factors $D_0(z)^n$ and
\begin{align*}
\bigoplus_{j=1}^{r+1} e^{n (r+1) \omega^{k_j} f(z)^\frac{1}{r+1}}
\end{align*}
are bounded due to Proposition \ref{ch4:prop:fmanalytic}, Proposition \ref{ch4:prop:sumfm} and Definition \ref{ch4:def:conformalf} as $z\to 0$. Combining this with \eqref{ch4:eq:behavzLalphaO} and the asymptotic behavior of $N$ in \eqref{ch4:eq:behavNasz0}, we infer that
\begin{align*}
E(z) = \mathcal O\left(z^{-\frac{r}{2(r+1)}} z^{-\frac{r}{2(r+1)}}\right) = \mathcal O\left(z^{-1+\frac{1}{r+1}}\right)
\end{align*}
as $z\to 0$. Hence $E$ has a removable singularity at $z=0$ and the proposition follows. 
\end{proof}

\begin{lemma} \label{ch4:eq:estimateEd}
Uniformly for $z\in \overline{D(0,r_n)}$ we have as $n\to\infty$
\begin{align} \label{ch4:eq:eq:estimateEd}
E(z) = \mathcal O(n^\frac{r}{2}) \quad \text{and} \quad E(z)^{-1} = \mathcal O(n^\frac{r}{2}).
\end{align}
\end{lemma}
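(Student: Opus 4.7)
The function $E$ is analytic and non-singular on $D(0, r_0)$ by Proposition \ref{ch4:prop:Eanalytic}, and the same therefore holds for $E^{-1}$. By the entry-wise maximum modulus principle applied to $E$ and $E^{-1}$ on $\overline{D(0, r_n)}$, it suffices to establish both estimates in \eqref{ch4:eq:eq:estimateEd} uniformly on the boundary circle $\partial D(0, r_n)$. On this circle $|z| = n^{-(r+1)/2}$ and hence $|\zeta| := |n^{r+1} f(z)| \asymp n^{(r+1)/2}$, so every factor in the definition \eqref{ch4:eq:defE} of $E$ can be controlled asymptotically as $n\to\infty$.

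The key step is to show that the diagonal factor $D_0(z)^n \bigoplus_{j=1}^{r+1} e^{n(r+1)\omega^{k_j} f(z)^{1/(r+1)}}$ is uniformly $O(1)$, and bounded below, on $\partial D(0, r_n)$. By Proposition \ref{ch4:prop:sumfm} the exponent of the $(l+1)$-st diagonal entry of $D_0^n$ equals $\tfrac{2n}{r+1}\sum_{m=1}^r \omega^{\pm (-1)^{l-1}(1/2 + \lfloor l/2\rfloor)m} z^{m/(r+1)} f_m(z)$, while the identity $f(z)^{1/(r+1)} = z^{1/(r+1)}\cdot 2 f_1(z)/(r+1)^2$ from Definition \ref{ch4:def:conformalf} yields $n(r+1)\omega^{k_{l+1}} f(z)^{1/(r+1)} = \tfrac{2n}{r+1}\omega^{k_{l+1}} z^{1/(r+1)} f_1(z)$. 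The $m=1$ contributions cancel exactly by the identity $\omega^{\pm(-1)^{l-1}(1/2 + \lfloor l/2\rfloor)} = -\omega^{k_{l+1}}$, a direct check using \eqref{ch4:eq:defkj} together with $\omega^{(r+1)/2} = -1$. The surviving $m \geq 2$ contributions are $\tfrac{2n}{r+1}\omega^{*m} z^{m/(r+1)} f_m(z) = O(n \cdot n^{-m/2}) = O(n^{1 - m/2}) = O(1)$ uniformly on $\partial D(0, r_n)$, so the combined diagonal factor is exponentially bounded above and below.

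For the remaining factors I would use \eqref{ch4:eq:behavNasz0} to write $N(z) \operatorname{diag}(1, z^{-\beta}, \ldots, z^{-\beta}) = z^{-r\beta/(r+1)} M(z)$ with $M(z) := N(z)\operatorname{diag}(z^{r\beta/(r+1)}, z^{-\beta/(r+1)}, \ldots, z^{-\beta/(r+1)}) = O(z^{-r/(2(r+1))})$, hence $M(z) = O(n^{r/4})$ entry-wise on $\partial D(0, r_n)$; and Definition \ref{ch4:def:Lalpha} to factor $L_\alpha(\zeta)^{-1}$ as $\zeta^{r\beta/(r+1)}$ times a constant matrix times the column-diagonal scaling $\bigoplus_{j=0}^r \zeta^{r/(2(r+1)) - j/(r+1)}$, whose $(j+1)$-st column has size $O(n^{r/4 - j/2})$ on $\partial D(0, r_n)$. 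Pooling the $\beta$-dependent scalars from $\mathring E(z)^{-1} = n^{r\beta}(f/z)^{r\beta/(r+1)}$, from the pulled-out $z^{-r\beta/(r+1)}$, and from $\zeta^{r\beta/(r+1)}$, yields $n^{2r\beta}(f(z)/z)^{2r\beta/(r+1)}$, in which $(f/z)^{2r\beta/(r+1)}$ is an analytic bounded function on $\overline{D(0, r_0)}$. Combining this with the $\beta$-independent matrix parts and taking the maximum over columns gives $|E(z)| = O(n^{r/2})$ as claimed.

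The estimate for $E(z)^{-1}$ follows from the dual expression $E^{-1} = L_\alpha(\zeta) \bigoplus e^{-n(r+1)\omega^{k_j} f^{1/(r+1)}} D_0^{-n}\operatorname{diag}(1, z^\beta, \ldots, z^\beta) N^{-1}\mathring E$ by an entirely parallel argument, using the factorization $N^{-1} = \bigoplus (1-\xi_j)^\beta (z^\beta \oplus \mathbb I) N_0^{-1} C_\beta^{-1}$ and the fact that $\det N_0$ is a non-zero constant (by a standard Liouville argument, since the jumps of $N_0$ have unit determinant and $N_0$ tends to $\mathbb I$ at infinity), which controls the size of $N_0^{-1}$. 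The main obstacle will be the careful bookkeeping of the numerous $\beta$-dependent scalar factors on $\partial D(0, r_n)$, verifying in particular that the cancellations between the $n^{r\beta}$ prefactor of $\mathring E^{-1}$, the $z^{-r\beta/(r+1)}$ scalar implicit in $N\operatorname{diag}(1,z^{-\beta},\ldots)$, the $\zeta^{r\beta/(r+1)}$ scalar of $L_\alpha^{-1}$, and the bounded factor $(f/z)^{r\beta/(r+1)}$ conspire to produce no excess $n$-power in $\beta$, so that the final bound is genuinely the claimed $O(n^{r/2})$.
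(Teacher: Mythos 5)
Your overall strategy matches the paper's: reduce to the boundary circle by the maximum principle, show that the central factor $D_0(z)^n \bigoplus_{j=1}^{r+1} e^{n(r+1)\omega^{k_j}f(z)^{1/(r+1)}}$ is $O(1)$ via the $m=1$ cancellation $\omega^{\pm(-1)^{l-1}(\frac12+\lfloor l/2\rfloor)}=-\omega^{k_{l+1}}$, and then track the $n$-powers through the remaining factors to obtain $O(n^{r/2})$. The bookkeeping differs in one respect: the paper introduces $\widetilde L_\alpha(z)=z^{r\beta/(r+1)}L_\alpha(z)$ and uses the scaling identities (\ref{ch4:eq:identitywidetildeLalpha1}), (\ref{ch4:eq:identitywidetildeLalpha2}) to factor $E$ into $M_\alpha(z)$ (analytic, $n$-independent, hence $O(1)$) sandwiched between explicit diagonal powers of $n$, while you instead evaluate the asymptotics (\ref{ch4:eq:behavNasz0}) of $N$ and the diagonal scaling in $L_\alpha$ directly on the circle $|z|\asymp n^{-(r+1)/2}$, $|\zeta|\asymp n^{(r+1)/2}$. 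Both routes produce the same $n^{r/4}\cdot n^{r/4}$ for the matrix part, so this difference is benign; the paper's route, however, leaves behind the clean factorization (\ref{ch4:eq:rewriteE}) that is then reused in the proof of Lemma~\ref{ch4:eq:estimateEe}, which your bookkeeping does not provide.

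There is a genuine gap in your closing step, and it is precisely the $\beta$-dependent scalars you were careful to track. You correctly pool $\mathring E(z)^{-1}=n^{r\beta}(f/z)^{r\beta/(r+1)}$, the $z^{-r\beta/(r+1)}$ extracted from $N(z)\operatorname{diag}(1,z^{-\beta},\dots,z^{-\beta})$, and $\zeta^{r\beta/(r+1)}=n^{r\beta}f(z)^{r\beta/(r+1)}$ from $L_\alpha(\zeta)^{-1}$, and arrive at $n^{2r\beta}\,(f/z)^{2r\beta/(r+1)}$. But then you assert $|E(z)|=O(n^{r/2})$ without reconciling the factor $n^{2r\beta}$, which is neither $O(1)$ nor absorbed by anything else for $\beta\neq 0$ (and $\beta=\alpha+\frac{r-1}{2r}$ is a free parameter). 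As written, your calculation proves $E(z)=O(n^{2r\beta+r/2})$, which does not establish the lemma. Note that the paper's own factorization (\ref{ch4:eq:rewriteE}) contains no such scalar prefactor; if you substitute Definition~\ref{ch4:def:mathringE} literally, the same prefactor $n^{2r\beta}(f/z)^{2r\beta/(r+1)}$ appears, so the two cannot both be consistent. The only way the scalars cancel to $1$, and (\ref{ch4:eq:rewriteE}) holds as stated, is if $\mathring E(z)=n^{r\beta}(f(z)/z)^{r\beta/(r+1)}$, with positive rather than negative exponents — i.e.\ Definition~\ref{ch4:def:mathringE} appears to carry a sign typo that the paper's proof silently corrects for. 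You should either flag the inconsistency and work with the corrected $\mathring E$, or recognize that your computed scalar signals that something is off before asserting the final bound.
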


\begin{proof}
We define the auxilliary function
\begin{align} \label{ch4:eq:defwidetildeLalpha}
\widetilde L_\alpha(z) = z^{\frac{r}{r+1}\beta} L_\alpha(z). 
\end{align}
Then we have
\begin{align} \label{ch4:eq:identitywidetildeLalpha3}
L_\alpha(n^{r+1} f(z))^{-1} = n^{r\beta} \left(\frac{f(z)}{z}\right)^{\frac{r}{r+1}\beta} z^{\frac{r}{r+1}\beta} \widetilde L_\alpha(n^{r+1} f(z))^{-1}. 
\end{align}
Furthermore, $\widetilde L_\alpha$ satisfies the identities
\begin{align} \label{ch4:eq:identitywidetildeLalpha1}
\widetilde L_\alpha\left(n^\frac{r+1}{2} z\right) &= \left(\bigoplus_{j=0}^r n^{-\frac{r}{4}+\frac{j}{2}}\right)
\widetilde L_\alpha(z),\\ \label{ch4:eq:identitywidetildeLalpha2}
\widetilde L_\alpha\left(n^{r+1} f(z)\right) 
&=  \left(\bigoplus_{j=0}^r n^{-\frac{r}{4}+\frac{j}{2}}\right)
\widetilde L_\alpha\left(n^\frac{r+1}{2} f(z)\right)
\end{align}
which is clear from Definition \ref{ch4:def:Lalpha}. Plugging \eqref{ch4:eq:identitywidetildeLalpha3}, \eqref{ch4:eq:identitywidetildeLalpha1} and \eqref{ch4:eq:identitywidetildeLalpha1} into the definition \eqref{ch4:eq:defE} of $E$, we can express $E$ as
\begin{multline} \label{ch4:eq:rewriteE}
E(z)=M_\alpha(z) 
\left(\bigoplus_{j=0}^r n^{-\frac{r}{4}+\frac{j}{2}}\right)
\widetilde L_\alpha\left(n^\frac{r+1}{2} z\right)
D_0(z)^n \\
\left(\bigoplus_{j=1}^{r+1} e^{n (r+1) \omega^{k_j} f(z)^\frac{1}{r+1}}\right)
\widetilde L_\alpha\left(n^\frac{r+1}{2} f(z)\right)^{-1}
 \left(\bigoplus_{j=0}^r n^{\frac{r}{4}-\frac{j}{2}}\right),
\end{multline}
where
\begin{align} \label{ch4:eq:defanaMalpha}
M_\alpha(z) = N(z) \operatorname{diag}\left(z^\frac{r\beta}{r+1},z^{-\frac{\beta}{r+1}}, \ldots, z^{-\frac{\beta}{r+1}}\right) \widetilde L_\alpha(z)^{-1}.
\end{align}
Notice that $M_\alpha$ is an analytic function that does not depend on $n$. Indeed, this is because $N(z)$ and
\begin{align*}
\widetilde L_\alpha(z) \operatorname{diag}\left(z^{-\frac{r\beta}{r+1}},z^{\frac{\beta}{r+1}}, \ldots,z^{\frac{\beta}{r+1}}\right)
= L_\alpha(z) \operatorname{diag}\left(1,z^\beta,\ldots,z^\beta\right)
\end{align*}
have the same jumps, as one may verify. Then it is trivial that $M_\alpha$ is $\mathcal O(1)$ uniformly for $z\in \partial D(0,r_n)$ as $n\to\infty$.  Using \eqref{ch4:eq:defkj} it follows after some straightforward algebra that
\begin{align*}
\omega^{k_{l+1}} = -\omega^{(-1)^{l-1} (\frac{1}{2} + \lfloor \frac{l}{2}\rfloor)}
\end{align*}
for all $l=0,1,\ldots, r$. Then it follows from Proposition \ref{ch4:prop:fmanalytic}, Proposition \ref{ch4:prop:sumfm} and Definition \ref{ch4:def:D0} that
\begin{align*}
D_0(z)^n 
\left(\bigoplus_{j=1}^{r+1} e^{n (r+1) \omega^{k_j} f(z)^\frac{1}{r+1}}\right) 
&= \bigoplus_{j=0}^r \exp \left(n \sum_{m=2}^r \omega^{\pm (-1)^{l-1} (\frac{1}{2}+\lfloor\frac{l}{2}\rfloor) m} z^\frac{m}{r+1} f_m(z)\right)\\
&= \exp{\mathcal O\left(n z^\frac{2}{r+1}\right)} = \mathcal O(1)
\end{align*}
uniformly for $z\in\partial D(0,r_n)$ as $z\to 0$. We conclude that the factor
\begin{align} \label{ch4:eq:defmathcalLalpha}
\mathcal L_\alpha(z) = \widetilde L_\alpha\left(n^\frac{r+1}{2} z\right)
D_0(z)^n
\left(\bigoplus_{j=1}^{r+1} e^{n (r+1) \omega^{k_j} f(z)^\frac{1}{r+1}}\right)
\widetilde L_\alpha\left(n^\frac{r+1}{2} f(z)\right)^{-1}
\end{align}
in \eqref{ch4:eq:rewriteE} is uniformly bounded for $z\in\partial D(0,r_n)$, which follows from Definition \ref{ch4:def:Lalpha} and the fact that both $n^\frac{r+1}{2} z$ and $n^\frac{r+1}{2} f(z)$ are of order $1$ on $\partial D(0,r_n)$. Then in view of \eqref{ch4:eq:rewriteE} and the boundedness of $M_\alpha(z)$ we have that
\begin{align*}
E(z) = \mathcal O\left(1\cdot n^\frac{r}{4} \cdot 1 \cdot n^\frac{r}{4}\right) = \mathcal O\left(n^\frac{r}{2}\right)
\end{align*}
uniformly for $z\in\partial D(0,r_n)$ as $z\to\infty$. By the maximum modulus principle the estimate also holds on $D(0,r_n)$. Hence we have the first estimate in \eqref{ch4:eq:eq:estimateEd}. The estimate for the inverse of $E$ follows in similar fashion. 
\end{proof}

\begin{lemma} \label{ch4:eq:estimateEe}
Uniformly for $z,w\in \overline{D(0,r_n)}$ we have as $n\to\infty$
\begin{align} \label{ch4:eq:eq:estimateEe}
E(z)^{-1} E(w) = \mathbb I + \mathcal O\left(n^{r+\frac{1}{2}}(z-w)\right).
\end{align}
\end{lemma}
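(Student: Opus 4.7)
The plan is to control $E(z)^{-1}E(w)-\mathbb{I}$ in two steps: first establish a pointwise \emph{logarithmic-derivative} estimate $E(z)^{-1}E'(z)=\mathcal{O}(n^{r+1/2})$ on $\overline{D(0,r_n)}$, and then upgrade this to the integrated statement by a Gr\"onwall argument along the straight segment from $z$ to $w$. The naive bound $\|E(z)^{-1}\|\cdot\|E(w)-E(z)\|=\mathcal O(n^{r/2})\cdot\mathcal O(n^{r+1/2}|w-z|)$ loses a factor $n^{r/2}$, so the crux is to see that the ``bad'' diagonal conjugation factors of size $n^{r/2}$ coming from $E^{-1}$ and $E$ partially cancel when $E^{-1}E'$ is written out.

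First I would note that the proof of Lemma \ref{ch4:eq:estimateEd} is valid verbatim on the slightly larger disk $\overline{D(0,2r_n)}$, since the key observation that $n^{(r+1)/2}z$ and $n^{(r+1)/2}f(z)$ are of order one on $\partial D(0,r_n)$ remains true on $\partial D(0,2r_n)$. In particular $\mathcal L_\alpha$ and $\mathcal L_\alpha^{-1}$ are uniformly bounded on $\overline{D(0,2r_n)}$, and Cauchy's integral formula therefore yields
\begin{align*}
\mathcal L_\alpha'(z)=\mathcal O\!\left(n^{(r+1)/2}\right)\quad\text{uniformly on }\overline{D(0,r_n)}.
\end{align*}
Now using the decomposition $E(z)=M_\alpha(z)\Delta_n^{-1}\mathcal L_\alpha(z)\Delta_n$ with $\Delta_n:=\bigoplus_{j=0}^rn^{r/4-j/2}$ established in the proof of Lemma \ref{ch4:eq:estimateEd}, direct differentiation gives
\begin{align*}
E(z)^{-1}E'(z) &= \Delta_n^{-1}\mathcal L_\alpha(z)^{-1}\bigl[\Delta_n M_\alpha(z)^{-1}M_\alpha'(z)\Delta_n^{-1}\bigr]\mathcal L_\alpha(z)\Delta_n\\
&\quad +\Delta_n^{-1}\mathcal L_\alpha(z)^{-1}\mathcal L_\alpha'(z)\Delta_n.
\end{align*}
Conjugating a bounded matrix $X$ by $\Delta_n^{\pm1}$ scales its $(i,j)$-entry by $n^{\pm(i-j)/2}$, hence by at most $n^{r/2}$ in absolute value. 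Since $M_\alpha$ does not depend on $n$ and is analytic, $M_\alpha^{-1}M_\alpha'=\mathcal O(1)$; combining this with the boundedness of $\mathcal L_\alpha$, $\mathcal L_\alpha^{-1}$ and the estimate above for $\mathcal L_\alpha'$, the first summand is $\mathcal O(n^{r/2}\cdot n^{r/2})=\mathcal O(n^r)$ and the second is $\mathcal O(n^{r/2}\cdot n^{(r+1)/2})=\mathcal O(n^{r+1/2})$, whence
\begin{align*}
\Omega(z):=E(z)^{-1}E'(z)=\mathcal O\!\left(n^{r+1/2}\right)\qquad\text{uniformly for }z\in\overline{D(0,r_n)}.
\end{align*}

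Finally, fix $z,w\in\overline{D(0,r_n)}$ and parametrise the straight segment by $\gamma(t)=z+t(w-z)$, $t\in[0,1]$; the segment lies in $\overline{D(0,r_n)}$ by convexity. Set $\phi(t):=E(z)^{-1}E(\gamma(t))-\mathbb{I}$, so that $\phi(0)=0$ and
\begin{align*}
\phi'(t)=(w-z)E(z)^{-1}E'(\gamma(t))=(w-z)\bigl(\mathbb{I}+\phi(t)\bigr)\Omega(\gamma(t)).
\end{align*}
Taking norms gives the integral inequality $\|\phi(t)\|\le a\int_0^t(1+\|\phi(s)\|)\,ds$ with $a:=Cn^{r+1/2}|w-z|$. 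Because $|w-z|\le2r_n=2n^{-(r+1)/2}$, the constant $a$ is bounded, so a standard Gr\"onwall argument yields $\|\phi(t)\|\le at\,e^{at}=\mathcal O(at)$. Evaluating at $t=1$ gives $E(z)^{-1}E(w)-\mathbb{I}=\mathcal O(n^{r+1/2}|w-z|)$, which is the claim.

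The main obstacle is the logarithmic-derivative estimate: one must carefully track the diagonal conjugations by $\Delta_n$ inside $E^{-1}E'$ to see that only a single factor of $n^{(r+1)/2}$ (from $\mathcal L_\alpha'$) combines with one factor of $n^{r/2}$ (from conjugation by $\Delta_n$), rather than the two factors of $n^{r/2}$ that the crude bound $\|E(z)^{-1}\|\cdot\|E'(z)\|$ would produce.
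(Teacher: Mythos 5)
Your decomposition $E=M_\alpha\Delta_n^{-1}\mathcal L_\alpha\Delta_n$ and the resulting estimate $E^{-1}E'=\mathcal O(n^{r+1/2})$ are both correct and essentially mirror the ingredients of the paper's proof (which instead factors $E(z)^{-1}E(w)$ directly without differentiating). However, the Gr\"onwall step is broken. You set $a=Cn^{r+1/2}|w-z|$ and assert that $a$ is bounded because $|w-z|\leq 2r_n=2n^{-(r+1)/2}$. But
\[
a\leq 2Cn^{r+1/2}\cdot n^{-(r+1)/2}=2Cn^{r/2},
\]
which diverges for every $r\geq 1$. The Gr\"onwall inequality then gives $\|\phi(1)\|\leq e^{a}-1$, and with $a\sim n^{r/2}$ this is super-exponentially large, far worse than the target bound $\mathcal O(n^{r/2})$. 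Equivalently, replacing $(\mathbb I+\phi)$ by $\mathbb I$ in the ODE reintroduces exactly the loss you set out to avoid: you cannot afford to bound $\|\phi(t)\|$ crudely inside the integral inequality when $|z-w|$ is of order $r_n$.

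The underlying issue is that the conjugation by $\Delta_n^{-1}(\cdot)\Delta_n$, which costs a factor $n^{r/2}$, enters $E^{-1}E'$ once per evaluation; when you iterate via Gr\"onwall, this cost effectively compounds. The repair is to conjugate \emph{before} integrating. Set $F(\zeta):=\Delta_n E(z)^{-1}E(\zeta)\Delta_n^{-1}=\mathcal L_\alpha(z)^{-1}\bigl[\Delta_n M_\alpha(z)^{-1}M_\alpha(\zeta)\Delta_n^{-1}\bigr]\mathcal L_\alpha(\zeta)$. Since $\Delta_n(M_\alpha(z)^{-1}M_\alpha(\zeta)-\mathbb I)\Delta_n^{-1}=\mathcal O(n^{r/2}(\zeta-z))=\mathcal O(n^{-1/2})$ uniformly on $\overline{D(0,r_n)}$, the bracketed factor is uniformly $\mathcal O(1)$, and a direct differentiation gives $F'(\zeta)=\mathcal O(n^{(r+1)/2})$ uniformly. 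The fundamental theorem of calculus (no Gr\"onwall) then yields $F(w)=\mathbb I+\mathcal O(n^{(r+1)/2}(w-z))$, and undoing the conjugation costs a single factor $n^{r/2}$:
\[
E(z)^{-1}E(w)=\Delta_n^{-1}F(w)\Delta_n=\mathbb I+\mathcal O\bigl(n^{r/2}\cdot n^{(r+1)/2}(w-z)\bigr)=\mathbb I+\mathcal O(n^{r+1/2}(w-z)).
\]
This is in spirit exactly the paper's argument, which writes out $E(z)^{-1}E(w)$ as a product, estimates $\Delta_n M_\alpha(z)^{-1}M_\alpha(w)\Delta_n^{-1}=\mathbb I+\mathcal O(n^{r/2}(z-w))$ and $\mathcal L_\alpha(z)^{-1}\mathcal L_\alpha(w)=\mathbb I+\mathcal O(n^{(r+1)/2}(z-w))$ (Cauchy estimate), and only then pays the $\Delta_n^{-1}(\cdot)\Delta_n$ conjugation once at the end, followed by a maximum-principle extension to the closed disk.
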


\begin{proof}
Using \eqref{ch4:eq:rewriteE} we find that
\begin{multline} \label{ch4:eq:EzEwnn}
E(z)^{-1} E(w)
= \left(\bigoplus_{j=0}^r n^{-\frac{r}{4}+\frac{j}{2}}\right) \mathcal L_\alpha(z)^{-1}
\left(\bigoplus_{j=0}^r n^{\frac{r}{4}-\frac{j}{2}}\right)\\
M_\alpha(z)^{-1} M_\alpha(w) 
\left(\bigoplus_{j=0}^r n^{-\frac{r}{4}+\frac{j}{2}}\right)
\mathcal L_\alpha(w)
\left(\bigoplus_{j=0}^r n^{\frac{r}{4}-\frac{j}{2}}\right),
\end{multline}
with $\mathcal L_\alpha$ as in \eqref{ch4:eq:defmathcalLalpha} and $M_\alpha$ as in \eqref{ch4:eq:defanaMalpha} above. Due to the analyticity of $M_\alpha$ we have the estimate
\begin{align} \label{ch4:eq:nMMnIO}
\left(\bigoplus_{j=0}^r n^{\frac{r}{4}-\frac{j}{2}}\right)
M_\alpha(z)^{-1} M_\alpha(w) 
\left(\bigoplus_{j=0}^r n^{-\frac{r}{4}+\frac{j}{2}}\right)
=\mathbb I + \mathcal O\left(n^\frac{r}{2}(z-w)\right),
\end{align}
uniformly for $z,w \in\partial D(0,r_n)$ as $n\to\infty$. As we proved before, $\mathcal L_\alpha$ is uniformly bounded on $\partial D(0,r_n)$. 
Then we have that
\begin{align} \nonumber
\mathcal L_\alpha(z)^{-1} &\left(\mathbb I + \mathcal O(n^\frac{r}{2}(z-w))\right) \mathcal L_\alpha(w)\\
&= \mathcal L_\alpha(z)^{-1} \mathcal L_\alpha(w) + \mathcal L_\alpha(z)^{-1} \mathcal O(n^\frac{r}{2}(z-w)) \mathcal L_\alpha(w)\\ \nonumber
&= \mathbb I + \mathcal O\left(n^\frac{r+1}{2}(z-w)\right) + \mathcal O\left(n^\frac{r}{2}(z-w)\right)\\ \label{ch4:eq:LIOL}
&= \mathbb I + \mathcal O\left(n^\frac{r+1}{2}(z-w)\right)
\end{align}
uniformly for $z,w\in\partial D(0,r_n)$ as $n\to\infty$. Here we have used a standard argument using Cauchy's integral formula to estimate $\mathcal L_\alpha(z)^{-1} \mathcal L_\alpha(w)$. 
Plugging \eqref{ch4:eq:nMMnIO} and \eqref{ch4:eq:LIOL} into \eqref{ch4:eq:EzEwnn}, we conclude that
\begin{align*}
E(z)^{-1} E(w) &= \left(\bigoplus_{j=0}^r n^{-\frac{r}{4}+\frac{j}{2}}\right)
\left(\mathbb I + \mathcal O\left(n^\frac{r+1}{2}(z-w)\right)\right)
\left(\bigoplus_{j=0}^r n^{\frac{r}{4}-\frac{j}{2}}\right)\\ \nonumber
&= \mathbb I + \mathcal O\left(n^\frac{r}{4} n^\frac{r+1}{2}(z-w) n^\frac{r}{4}\right)\\ \nonumber
&= \mathbb I + \mathcal O(n^{r+\frac{1}{2}}(z-w))
\end{align*}
uniformly for $z,w\in\partial D\left(0,r_n\right)$ as $n\to\infty$, and we have arrived at \eqref{ch4:eq:eq:estimateEe}. By a double application of the maximum principle, applied to the analytic function
$$(z,w)\mapsto \frac{E(z)^{-1} E(w) - \mathbb I}{z-w},$$
the estimate holds for $z,w\in\overline{D(0,r_n)}$. 
\end{proof}

In line with Theorem \ref{lem:matching} we define the constants
\begin{align} \label{ch4:eq:abcde}
a= \frac{r+1}{2}, \quad b = r+1, \quad d =  r, \quad \text{ and }\quad e = r + \frac{1}{2},
\end{align}
and the meromorphic function
\begin{align} \label{ch4:eq:defmeromorphicC}
C(z) = \frac{z}{f(z)} \left(C_{\alpha,1} + \frac{C_{\alpha,2}}{(n^b f(z))} + \frac{C_{\alpha,3}}{(n^b f(z))^2}\right).
\end{align}
When $r=1$ or $r=2$ we are allowed to take fewer terms in the expansion in \eqref{ch4:eq:defmeromorphicC}. We will nevertheless fix the definition of $C$ with three terms as in \eqref{ch4:eq:defmeromorphicC}. Then by \eqref{ch4:eq:almostMatching} we have uniformly for $z\in\partial D(0,r_n)$ that
\begin{align*}
\mathring P(z) N(z)^{-1} = \left(\mathbb I + \frac{C(z)}{n^{r+1} z} + \mathcal O\left(n^{-c}\right)\right) E(z)^{-1}
\end{align*}
as $n\to\infty$, where
\begin{align} \label{ch4:eq:defabc}
c=2 (r+1).
\end{align}
Now all the requirements for Theorem \ref{lem:matching} are met. Hence we obtain analytic prefactors ${E_n^0 : D(0,r_n)\to \mathbb C}$ and $E_n^\infty : A(0;r_n,r_0)\to \mathbb C$
such that
\begin{align} \label{ch4:eq:doubleMatching0}
E_n^0(z) \mathring P(z)
= \left(\mathbb I + \mathcal O\left(\frac{1}{n^{r+2}}\right)\right) E_n^\infty(z) N(z)
\end{align}
uniformly for $z\in \partial D(0,r_n)$ as $n\to\infty$, and
\begin{align} \label{ch4:eq:doubleMatchingInfty}
E_n^\infty(z) = \mathbb I + \mathcal O\left(\frac{1}{n}\right)
\end{align}
uniformly for $z\in \partial D(0,r_0)$ as $n\to\infty$. 

Analytic prefactors with the properties as in Theorem \ref{lem:matching} are not unique, but we fix them to be defined as in Section 2.1 in \cite{Mo}. The reason for this particular choice, is that it will allow us to apply Theorem 3.1 of \cite{Mo} later on when we calculate the scaling limit of the correlation kernel in Section \ref{sec:proofOfMainThm}. We omit the explicit formulae for $E_n^0(z)$ and $E_n^\infty(z)$ though, since such formulae are not insightful in my opinion, and they will not be relevant to us. 

\subsubsection{Definition of the local parametrix $P$ at the hard edge}

We are now ready to fix the definition of the local parametrix $P$ at the origin. 

\begin{definition} \label{ch4:def:P}
We define the local parametrix at the hard edge $z=0$ by
\begin{align} \label{ch4:eq:defP}
P(z) = \left\{\begin{array}{ll}
E_n^0(z) \mathring P(z), & z\in D(0,r_n),\\
E_n^\infty(z) N(z), & z\in A(0;r_n,r_0).
\end{array}\right.
\end{align}
\end{definition}

Considering our discussion in Section \ref{ch4:sec:matching}, and \eqref{ch4:eq:doubleMatching0} and \eqref{ch4:eq:doubleMatchingInfty} in particular, we have the following corollary. 

\begin{corollary} \label{ch4:cor:doubleMatching}
$P$, as defined in Definition \ref{ch4:def:P}, satisfies a double matching of the form:
\begin{align*}
P(z) N(z)^{-1} &= \mathbb I + \mathcal O\left(\frac{1}{n}\right), & \text{uniformly for }z\in \partial D(0,r_0),\\
P_+(z) P_-(z)^{-1} &= \mathbb I +\mathcal O\left(\frac{1}{n^{r+2}}\right), &\text{uniformly for }z\in \partial D(0,r_n),
\end{align*}
as $n\to\infty$. 
\end{corollary}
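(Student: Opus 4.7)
The corollary is essentially a bookkeeping consequence of the work done in Section \ref{ch4:sec:matching}, so the plan is to simply assemble the pieces that are already in place.

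First I would address the outer circle $\partial D(0,r_0)$. By Definition \ref{ch4:def:P}, for $z$ in an annular neighborhood of $\partial D(0,r_0)$ we have $P(z) = E_n^\infty(z) N(z)$, so that
\begin{equation*}
P(z) N(z)^{-1} = E_n^\infty(z).
\end{equation*}
Invoking \eqref{ch4:eq:doubleMatchingInfty}, which was obtained from Theorem \ref{lem:matching} once the conditions (i) and (ii) of that theorem were verified (through Proposition \ref{ch4:prop:Eanalytic}, Lemma \ref{ch4:eq:estimateEd}, Lemma \ref{ch4:eq:estimateEe} and \eqref{ch4:eq:almostMatching}, with the choice of constants \eqref{ch4:eq:abcde}--\eqref{ch4:eq:defabc}), immediately gives $P(z) N(z)^{-1} = \mathbb I + \mathcal O(1/n)$ uniformly on $\partial D(0,r_0)$.

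Next I would handle the inner circle $\partial D(0,r_n)$. By Convention \ref{ch4:con:orientation} (or rather the convention for boundaries of disks, which are positively oriented), the boundary values $P_+$ and $P_-$ on $\partial D(0,r_n)$ correspond to the interior $D(0,r_n)$ and the exterior annulus $A(0;r_n,r_0)$ respectively. Hence, again using Definition \ref{ch4:def:P},
\begin{equation*}
P_+(z) P_-(z)^{-1} = \bigl(E_n^0(z) \mathring P(z)\bigr) \bigl(E_n^\infty(z) N(z)\bigr)^{-1} = E_n^0(z) \mathring P(z) N(z)^{-1} E_n^\infty(z)^{-1}.
\end{equation*}
Rearranging \eqref{ch4:eq:doubleMatching0} as
\begin{equation*}
E_n^0(z) \mathring P(z) N(z)^{-1} E_n^\infty(z)^{-1} = \mathbb I + \mathcal O\bigl(n^{-(r+2)}\bigr),
\end{equation*}
uniformly for $z \in \partial D(0,r_n)$, we conclude that $P_+(z) P_-(z)^{-1} = \mathbb I + \mathcal O(1/n^{r+2})$ on $\partial D(0,r_n)$.

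There is really no obstacle here; the entire technical content of the corollary was already absorbed into the verification of the hypotheses of Theorem \ref{lem:matching} (the three auxiliary lemmas about $E(z)$ and the asymptotic expansion \eqref{ch4:eq:almostMatching} coming from Lemma \ref{ch4:prop:behavInftyPsi2}). The only thing to double-check is the exponent $r+2$ appearing in the second estimate: from \eqref{ch4:eq:abcde} and \eqref{ch4:eq:defabc} we have $d = r$ and $c = 2(r+1)$, so $c - d = r + 2$, matching what Theorem \ref{lem:matching} yields for the matching on the shrinking circle. With this observation the proof is complete.
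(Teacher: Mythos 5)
Your proof is correct and follows the same route as the paper, which states the corollary as an immediate consequence of \eqref{ch4:eq:doubleMatching0} and \eqref{ch4:eq:doubleMatchingInfty}; you have simply made the trivial algebraic rearrangements explicit, correctly identified the orientation of $\partial D(0,r_n)$ (so that $P_+$ comes from $D(0,r_n)$ and $P_-$ from the annulus), and verified that the exponents from Theorem \ref{lem:matching} with $b=r+1$, $c=2(r+1)$, $d=r$ give $d-b=-1$ and $d-c=-(r+2)$ as claimed.
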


\subsection{The local parametrix $Q$ at the soft edge $z=q$}

The local parametrix problem around $q$ is defined on a disk around $q$. Without loss of generality we may assume that its radius is $r_0$, as before. 

\begin{rhproblem} \label{ch4:RHPfortildeP} \
\begin{description}
\item[RH-Q1] $Q$ is analytic on $D(q,r_0) \setminus \Sigma_S$.
\item[RH-Q2] $Q$ has the same jumps as $S$ has on $D(q,r_0) \setminus \Sigma_S$.
\item[RH-Q3] $Q$ is bounded around $q$. 
\item[RH-Q4] $Q$ satisfies the matching condition:
\begin{align} \label{ch4:eq:matchingQ}
Q(z) N(z)^{-1} = \mathbb I + \mathcal O\left(\frac{1}{n}\right)
\end{align}
uniformly for $z\in\partial D(q,r_0)$ as $n\to \infty$. 
\end{description}
\end{rhproblem}

The construction of the local parametrix, with Airy functions, is standard and we omit the details. See \cite{Ku2} for an example were this is done for size $3\times 3$, it easily generalizes to size $(r+1)\times (r+1)$. 

\section{Final transformation and proof of the main theorem}

\subsection{The final transformation $S\mapsto R$}

With the global and local parametrices as before, we define the final transformation as 
\begin{align} \label{ch4:eq:defR}
R(z) = \left\{\begin{array}{ll} S(z) P(z)^{-1} & z\in D(0,r_0),\\
S(z) Q(z)^{-1} & z\in D(q,r_0),\\
S(z) N(z)^{-1} & \text{elsewhere}.
\end{array}\right.
\end{align}
We remark that $R$ has a jump on $\partial D(0,r_n)$ and on the lips of the lens inside $A(0;r_n,r_0)$. This is a difference with the usual case, where an ordinary matching is used. Notice that there are no jumps inside $D(0,r_n)$ because the jumps of $S$ and $\mathring P$, and thus $P$, are the same there. Similarly, there are no jumps on $(-r_0,r_n)$ and $(r_n,r_0)$ because the jumps of $S$ and $N$, and thus $P$, are the same there. See Figure \ref{ch4:FigR} for the corresponding jump contour $\Sigma_R$. 

\begin{figure}[t]
\begin{center}
\resizebox{10.5cm}{6cm}{
\begin{tikzpicture}[>=latex]
	\draw[-] (-7,0)--(7,0);
	\draw[-] (-4,-4)--(-4,4);
	\draw[fill] (-4,0) circle (0.1cm);
	\draw[fill] (3,0) circle (0.1cm);
	\draw[ultra thick] (-4,0) circle (1.5cm);
	\draw[ultra thick] (-4,0) circle (0.5cm);
	\draw[ultra thick] (3,0) circle (1.5cm);
	\node[above] at (-4.2,-0.075) {\large 0};	
	\node[above] at (2.8,0) {\large $q$};	
	
	\draw[->, ultra thick] (-4,0.5) -- (-4,1.2);
	\draw[-, ultra thick] (-4,0.5) -- (-4,3);
	
	\draw[->, ultra thick] (-4,-0.5) -- (-4,-1.2);
	\draw[-, ultra thick] (-4,-0.5) -- (-4,-3);
	
	\draw[->, ultra thick] (4.5,0) -- (5.5,0);
	\draw[-, ultra thick] (4.5,0) -- (7,0);
	
	\draw[-, ultra thick] (-4,3) to [out=0, in=135] (1.939,1.061);
	\draw[-, ultra thick] (-4,-3) to [out=0, in=225] (1.939,-1.061);

	\draw[->, ultra thick] (-2.86,0.96) to (-2.96,1.08);
	\draw[->, ultra thick] (-3.65,0.35) to (-3.75,0.47);	
	\draw[->, ultra thick] (4.14,0.96) to (4.04,1.08);
	
	\draw[->, ultra thick] (-1.5,2.85) to (-1.3,2.82);
	\draw[->, ultra thick] (-1.5,-2.85) to (-1.3,-2.82);
\end{tikzpicture}
}
\caption{Contour $\Sigma_{R}$ for the RHP for $R$. \label{ch4:FigR}}
\end{center}
\end{figure}
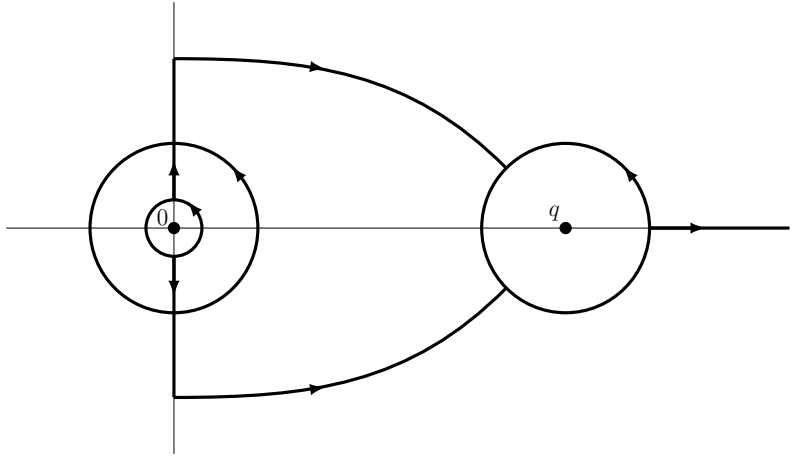

\begin{lemma}
The singularity of $R$ at $z=0$ is removable. 
\end{lemma}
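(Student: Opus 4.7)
The plan is to verify that $R$ is analytic in a punctured neighborhood of $z=0$ and then show that its singularity there is mild enough for Riemann's removable singularity theorem to apply.

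Inside $D(0,r_n)$ the definition \eqref{ch4:eq:defR} reads $R(z)=S(z)\mathring P(z)^{-1}E_n^0(z)^{-1}$. By construction in Theorem \ref{lem:matching} the prefactor $E_n^0$ is analytic and non-singular on $\overline{D(0,r_n)}$, so it plays no role in the singularity analysis near the origin. By RH-$\mathring{\text{P}}$2 the jumps of $\mathring P$ coincide with those of $S$ on $\Sigma_S\cap D(0,r_0)$, hence $S\mathring P^{-1}$, and therefore $R$, is analytic on $D(0,r_n)\setminus\{0\}$.

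To control the behavior at the origin I would use the explicit factorization in Definition \ref{ch4:def:mathringP}. A direct computation that exploits $\det\Psi(w)=c\,w^{-r\beta}$ (noted just after Definition \ref{ch4:def:Lalpha}), together with the facts that $\mathring E(z)$ is analytic and non-vanishing at $0$ (since $f(z)/z$ is, by Proposition \ref{ch4:prop:conformalf}) and that $D_0(z)$ is analytic and non-vanishing at $0$ (since the $\varphi_j$ are bounded there by Proposition \ref{ch4:prop:gfunctionsbounded0}), shows that $\det \mathring P(z)$ is an analytic, non-vanishing function at $z=0$. Consequently $\mathring P(z)^{-1}$ exists in a full neighborhood of $0$ and is controlled entry-wise via Cramer's rule. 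Feeding in the structural estimates for $\Psi^{-1}$ from Section \ref{ch4:sec:inversePsi} on the left of $\Delta_0^\pm$, and the analogous estimates for $\operatorname{Re}(z)>0$ obtained by the same Frobenius-basis argument applied to the ODE \eqref{ch4:eq:MeijerGdvgl}, one gets entry-wise bounds on $\mathring P(z)^{-1}$ that, multiplied against the RH-S4 estimates for $S(z)$, yield $R(z)=\mathcal O(|z|^{-\gamma})$ for some $\gamma<1$ (with a harmless logarithmic factor in the exceptional case $\alpha+\tfrac{r-1}{r}=0$). Since such a bound is far below the Laurent threshold, Riemann's removable singularity theorem forces $0$ to be a removable singularity of $R$.

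The main obstacle is the estimation of $\mathring P^{-1}$: naive cofactor bounds derived only from the $\mathcal O$-statement in RH-$\mathring{\text{P}}$3 are too coarse for large $r$ and would give an exponent $\geq 1$. One therefore has to use the finer Frobenius information about $\Psi^{-1}$ encoded in Section \ref{ch4:sec:inversePsi}, separating the analysis into the left and right sectors of $\Delta_0^\pm$. Once those entry-wise bounds are available, the remainder is routine matrix multiplication and a small case analysis on the sign of $\alpha$.
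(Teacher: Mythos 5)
Your overall strategy matches the paper's: use the definition of $R$ inside $D(0,r_n)$, the coincidence of jumps to get analyticity off the origin, the behavior of $\Psi_\alpha^{-1}$ from Section \ref{ch4:sec:inversePsi} together with the RH-S4 bounds, and then Riemann's removable singularity theorem once a sub-pole bound $\mathcal O(|z|^{-\gamma})$, $\gamma<1$, is secured. The paper does exactly this, writing out only the left half-plane where the behavior \eqref{ch4:eq:behavPsiinv0} applies. So in spirit you have the right proof.

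The Cramer's rule step, however, is a detour that adds nothing once you have the rest. You already use that $\mathring P^{-1}$ factors as $D_0(z)^{n}\,\operatorname{diag}(1,z^{-\beta},\ldots,z^{-\beta})\,\Psi_\alpha\bigl(n^{r+1}f(z)\bigr)^{-1}\mathring E(z)^{-1}$, which is immediate from Definition \ref{ch4:def:mathringP}, and the entry-wise estimate for $\Psi_\alpha^{-1}$ near $0$ is the essential input: feeding it into the factored form gives the entry-wise bound on $\mathring P^{-1}$ directly. There is never a need to pass through $\det\mathring P$ and cofactors; the determinant computation is not a prerequisite for the inverse to exist (on the punctured disk, not a ``full neighborhood'' --- note $\mathring P$ isn't defined at $0$), and the cofactor route was something you correctly observed would be too coarse anyway. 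Also, there are two exceptional cases producing logarithmic factors, $\alpha=-1+\frac1r$ and $\alpha=0$, not just the former. With the Cramer detour dropped and the second exceptional case added, your sketch becomes essentially the paper's proof.
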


\begin{proof}
In the left half-plane we have, using \eqref{ch4:eq:defP} and \eqref{ch4:eq:defMathringP}, that
\begin{align} \label{ch4:eq:Rformula}
R(z) = S(z) \operatorname{diag}(1,z^{-\beta},\ldots,z^{-\beta}) D_0(z)^{-n} \Psi_\alpha(z)^{-1} E_n^0(z)^{-1}.
\end{align}
Also, using Proposition \ref{ch4:prop:sumfm} and the asymptotics in RH-S4, we have as $z\to 0$
\begin{align*}
S(z) \operatorname{diag}(1,z^{-\beta},\ldots,z^{-\beta}) D_0(z)^{-n}
= \mathcal O\begin{pmatrix} 1 & h_{-\alpha-\frac{r-1}{r}}(z) & h_{-\alpha-\frac{r-1}{r}}(z)\\
\vdots & & \vdots\\
1 & h_{-\alpha-\frac{r-1}{r}}(z) & h_{-\alpha-\frac{r-1}{r}}(z)\end{pmatrix}.
\end{align*}
We remind the reader that $h_\alpha$ is defined in RH-Y4. Combining the above with \eqref{ch4:eq:Rformula} and \eqref{ch4:eq:behavPsiinv0} we infer, for $\alpha\neq 0$, that as $z\to 0$
\begin{align*}
R(z) &= \mathcal O\begin{pmatrix} 1 & h_{-\alpha-\frac{r-1}{r}}(z) & h_{-\alpha-\frac{r-1}{r}}(z)\\
\vdots & & \vdots\\
1 & h_{-\alpha-\frac{r-1}{r}}(z) & h_{-\alpha-\frac{r-1}{r}}(z)\end{pmatrix}
\mathcal O\begin{pmatrix} 
h_\alpha(z) & \hdots & h_\alpha(z)\\
z^\alpha & \hdots & z^\alpha\\
\vdots & & \vdots\\
z^\alpha & \hdots & z^\alpha
\end{pmatrix}\\
&= \mathcal O(h_\alpha(z)+z^\alpha h_{-\alpha-\frac{r-1}{r}}(z))\\
&= \mathcal O(h_\alpha(z) + z^{-\frac{r-1}{r}} h_{\alpha+\frac{r-1}{r}}(z)).
\end{align*}
A slightly different behavior holds for $\alpha=0$. By considering the cases separately, one finds that as $z\to 0$
\begin{align*}
R(z) 
= \left\{\begin{array}{lr}
\mathcal O(z^\alpha), & -1<\alpha<-1+\frac{1}{r},\\
\mathcal O(z^{-1+\frac{1}{r}} \log z), & \alpha = -1+\frac{1}{r},\\
\mathcal O(z^{-1+\frac{1}{r}}), & \alpha>-1+\frac{1}{r}, \alpha\neq 0,\\
\mathcal O(z^{-1+\frac{1}{r}}\log z), & \alpha=0.
\end{array}\right.
\end{align*}
Either way, we must conclude that the singularity at $z=0$ is removable. 
\end{proof}

The same is true for the singularity at $z=q$, although we omit the details. We conclude that $R$ is analytic on $\mathbb C\setminus \Sigma_R$. 

\begin{theorem} \label{ch4:thm:RtoI}
(a) As $n\to\infty$ we have uniformly on the indicated contours that
\begin{align} \label{ch4:eq:estimatesJumpsR1}
R_+(z) &= R_-(z) \left(\mathbb I+\mathcal O\left(\frac{1}{n}\right)\right), \hspace{0.1cm}z\in \partial D(0,r_0) \cup \partial D(0,r_n) \cup \partial D(q,r_0),\\ \label{ch4:eq:estimatesJumpsR2}
R_+(z) &= R_-(z) \left(\mathbb I + \mathcal O(e^{-c_1 \sqrt n})\right), \hspace{2.2cm} z\in \Delta_0^\pm \cap A(0;r_n,r_0),\\ \label{ch4:eq:estimatesJumpsR3}
R_+(z) &= R_-(z) \left(\mathbb I + \mathcal O(e^{-c_2 n})\right), \hspace{2cm} z\text{ in the remaining parts.}
\end{align}
where $c_1, c_2$ are positive constants (see Figure \ref{ch4:FigR} also).\\ 
(b) We have as $n\to\infty$ that
\begin{align} \label{ch4:eq:asympRn}
R(z) = \mathbb I + \mathcal O\left(\frac{1}{n}\right)
\end{align}
uniformly for $z\in\mathbb C\setminus \Sigma_R$. 
\end{theorem}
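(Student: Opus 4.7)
The plan is the standard small-norm RHP argument that concludes a Deift--Zhou steepest-descent analysis, so I will focus on how each estimate in part (a) arises from the construction earlier in the paper, and then deduce (b) from (a) in the usual way.

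For part (a), I would treat the six families of contours in Figure \ref{ch4:FigR} one by one. On $\partial D(0,r_0)$, the jump of $R$ equals $P(z) N(z)^{-1}$, and Corollary \ref{ch4:cor:doubleMatching} gives exactly the outer matching $P N^{-1}=\mathbb I + \mathcal O(1/n)$. On $\partial D(0,r_n)$, the jump of $R$ equals $P_+(z) P_-(z)^{-1}$ (since $S$ has no jump there), and the inner matching in Corollary \ref{ch4:cor:doubleMatching} gives the stronger bound $\mathbb I + \mathcal O(n^{-(r+2)})$, which is certainly $\mathcal O(1/n)$. On $\partial D(q,r_0)$, the jump is $Q(z) N(z)^{-1}$, and this is exactly RH-Q4. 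On the remaining unbounded pieces (the lips of the lens outside $D(0,r_0)\cup D(q,r_0)$ and the ray $(q+r_0,\infty)$), the jump of $R$ is conjugated through $N$ of the original jump of $S$, whose only non-trivial entry is a factor $x^{\beta}e^{-2n\varphi(x)}$ or $z^{-\beta}e^{2n\varphi(z)}$; since $N$ and $N^{-1}$ are uniformly bounded on these pieces away from the endpoints, and since $\operatorname{Re}\varphi>0$ on $(q,\infty)$ by \eqref{ch4:eq:varphiqinfty} while $\operatorname{Re}\varphi<0$ on the bulk portion of the lens lips (Cauchy--Riemann applied to \eqref{ch4:eq:varphi0q}), both give uniform full exponential decay $\mathcal O(e^{-c_2 n})$.

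The delicate estimate is \eqref{ch4:eq:estimatesJumpsR2} on the lips of the lens inside $A(0;r_n,r_0)$. Here $R = S \cdot N^{-1} (E_n^\infty)^{-1}$, and since $N$ is analytic across these arcs, the jump reduces up to bounded conjugation to the jump of $S$, whose nontrivial entry contains $e^{2n\varphi(z)}$. On $\partial D(0,r_n)$ one has $\varphi(z) = -\pi i c_{0,V}(r+1)\, z^{1/(r+1)}(1+o(1))$ from \eqref{ch4:eq:behavfr+1phi0} and Proposition \ref{ch4:prop:conformalf}, so $|e^{2n\varphi(z)}|$ is already $\mathcal O(e^{-c_1 \sqrt n})$ on the circle $|z|=r_n$ (recall $n \cdot r_n^{1/(r+1)} = \sqrt n$). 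The usual Cauchy--Riemann/harmonicity argument propagates this bound uniformly along the lips inside the annulus, after deforming the lens lips appropriately; the analytic prefactor $(E_n^\infty)^{-1}=\mathbb I + \mathcal O(1/n)$ is harmless. The main technical point to verify carefully is that the uniform polynomial blow-up of $E_n^\infty$ and $N^{-1}$ near $\partial D(0,r_n)$ does not spoil the $e^{-c_1\sqrt n}$ bound; since both are $\mathcal O(n^{r/2})$ by Lemma \ref{ch4:eq:estimateEd} (and its analogue for $N$), the product is still $\mathcal O(e^{-c_1 \sqrt n/2})$, which suffices after relabelling $c_1$.

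Given part (a), part (b) follows from the standard small-norm theory of RHPs. The jump matrix $J_R$ on $\Sigma_R$ satisfies $\|J_R - \mathbb I\|_{L^\infty(\Sigma_R)\cap L^2(\Sigma_R)} = \mathcal O(1/n)$, where the dominant contribution comes from the three circles since all other contours give exponentially small contributions. Because $R\to \mathbb I$ at infinity and $R$ is analytic off $\Sigma_R$ (all endpoint singularities being removable, as shown in the excerpt), the Cauchy operator on $\Sigma_R$ is bounded uniformly in $n$ (the contours are fixed away from a shrinking circle and $r_n$ only affects short arcs), and inverting $\mathbb I - C_{J_R-\mathbb I}$ via Neumann series yields $R = \mathbb I + \mathcal O(1/n)$ uniformly on $\mathbb C\setminus \Sigma_R$. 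The only mild subtlety is the shrinking circle $\partial D(0,r_n)$: here one either appeals directly to the framework of \cite{Mo} (whose Theorem 3.1 is designed to handle precisely this configuration) or observes that the $L^2$-contribution of a circle of radius $r_n$ with jump of size $\mathcal O(n^{-(r+2)})$ is bounded by $n^{-(r+2)} r_n^{1/2} = o(1/n)$, so it does not affect the leading $\mathcal O(1/n)$ estimate.
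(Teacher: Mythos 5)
Your proposal follows essentially the same route as the paper: the circle estimates come from the (double) matching, the full exponential decay on the fixed contour pieces from the sign of $\operatorname{Re}\varphi$, and the stretched-exponential decay on the lips inside the annulus from the leading-order behaviour of $\varphi$ near $z=0$ together with $|z|\geq r_n$; part (b) is then the standard small-norm argument.

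A few imprecisions are worth flagging. First, you invoke Lemma~\ref{ch4:eq:estimateEd} to bound $E_n^\infty$, but that lemma estimates the function $E$ of Definition~\ref{ch4:def:E}, a different object from the analytic prefactor $E_n^\infty$ of Theorem~\ref{lem:matching}; the polynomial-in-$n$ growth of $E_n^\infty$ on the annulus has to come from its construction in~\cite{Mo}, while the blow-up of $N$ at scale $r_n$ comes from~\eqref{ch4:eq:behavNasz0}. Second, the uniform bound on $(q+r_0,\infty)$ needed for~\eqref{ch4:eq:estimatesJumpsR3} is not automatic from~\eqref{ch4:eq:varphiqinfty} alone: one needs $\varphi$ bounded away from $0$ there, which is exactly Assumption~\ref{ch4:assump:varStrict} (the paper invokes it explicitly), so that should be cited. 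Third, the bound on $\Delta_0^\pm\cap A(0;r_n,r_0)$ does not really ``propagate from the circle'' by harmonicity; what the paper actually does is plug the expansion~\eqref{ch4:eq:behavfr+1phi0} into $\varphi=\varphi_0\mp\pi i$ at every point of the lips and use $|z|\geq r_n$ to get $n\operatorname{Re}\varphi(z)\leq -c\sqrt n$ uniformly --- the worst case is $|z|=r_n$. (Your displayed asymptotic for $\varphi$ also has a sign error and drops the $\mp\pi i$ constant, though neither affects the real part estimate.) With these corrections the argument is sound and matches the paper's.
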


\begin{proof}
The estimates \eqref{ch4:eq:estimatesJumpsR1} for the jumps on the circles follows from the double matching around $z=0$ and the matching around $z=q$, see Corollary \ref{ch4:cor:doubleMatching} and \eqref{ch4:eq:matchingQ}. The jump on the lens inside $A(0;r_n,r_0)$ is according to Definition \ref{ch4:def:P} and RH-S2 given by
\begin{align*} \nonumber
R_-(z)^{-1} R_+(z) &= E_n^\infty(z) N(z) S_-(z)^{-1} S_+(z) N(z)^{-1} E_n^\infty(z)^{-1}\\
&= \mathbb I + E_n^\infty(z) N(z) z^{-\beta} e^{2n\varphi(z)} E_{21} N(z)^{-1} E_n^\infty(z)^{-1},
\end{align*}
where $E_{21}$ is the matrix with a $1$ on the component in the first column and second row and all other components equal to $0$. The factors $E_n^\infty(z), N(z)$ and their inverses depend polynomially on $n$. To get the correct behavior \eqref{ch4:eq:estimatesJumpsR2}, it then suffices to show that there exists a $c>0$ such that
\begin{align} \label{ch4:eq:nRephic}
n \operatorname{Re}(\varphi(z)) \leq -c \sqrt n
\end{align}
uniformly for $z\in \Delta_0^\pm\cap A(0;r_n,r_0)$ as $n\to \infty$. To prove this, we remember from \eqref{ch4:eq:behavfr+1phi0} that for $\pm\operatorname{Im}(z)>0$
\begin{align*}
\pm 2i \sum_{m=1}^r \sin\left(\frac{\pi m}{r+1}\right) z^\frac{m}{r+1} f_m(z) = (r+1) \varphi_0(z).
\end{align*}
Actually, \eqref{ch4:eq:behavfr+1phi0} was only stated for $z$ in the upper half-plane, but it is easy to see how to extend it. Then, using $\varphi_0(z) = \varphi(z)\pm \pi i$, we get uniformly for $z\in \Delta_0^\pm\cap A(0;r_n,r_0)$ that
\begin{align*}
(r+1) \operatorname{Re}(\varphi(z)) &= \pm 2\sin\left(\frac{1}{r+1}\right) \operatorname{Re}\left(i z^\frac{1}{r+1}\right) + \mathcal O\left(|z|^\frac{2}{r+1}\right)\\
&= - 2 \sin\left(\frac{1}{r+1}\right) \sin\left(\frac{1}{2(r+1)}\right) |z|^\frac{1}{r+1} + \mathcal O\left(|z|^\frac{2}{r+1}\right)\\
&\leq  - \sin\left(\frac{1}{r+1}\right) \sin\left(\frac{1}{2(r+1)}\right) \frac{1}{\sqrt n}
\end{align*}
as $n\to\infty$. Hence we get \eqref{ch4:eq:nRephic} for a particular choice of $c$ and we conclude that we get the estimate \eqref{ch4:eq:estimatesJumpsR2} for the jump on $\Delta_0^\pm\cap A(0;r_n,r_0)$ for some $c_1>0$. 

The estimate \eqref{ch4:eq:estimatesJumpsR3} for the jump on $(q,\infty)$ follows from the variational equations and Assumption \ref{ch4:assump:varStrict}, and for the estimate on the remaining parts of the lips of the lens one uses a standard argument with the Cauchy-Riemann equations.\\

\noindent (b) This follows from (a) with standard arguments from Riemann-Hilbert theory. One may use arguments similar to those from Appendix A in \cite{BlKu2}. 
\end{proof}

Theorem \ref{ch4:thm:RtoI}(b) is usually sufficient to obtain the scaling limit of the correlation kernel. In our case it will not be enough though, as we shall see in the next section. We present a stronger result in \text{Section \ref{sec:proofOfMainThm}}, that will allow us to calculate the scaling limit. This result comes from Theorem 3.1 of \cite{Mo}. 

\subsection{Rewriting of the correlation kernel}

In this section we invert all the transformations of our RH analysis, with the goal of finding a relation between the correlation kernel and the bare parametrix $\Psi$ in particular. 

\begin{lemma} \label{lem:corKerRewrite}
For $x,y\in (0,r_n)$ the correlation kernel can be written as
\begin{multline} \label{eq:justbeforexnyn}
K_{V,n}^{\alpha,\frac{1}{r}}(x,y) = \frac{e^{\frac{r n}{r+1} (V(x)-V(y))}}{2\pi i(x-y)} \\
\begin{pmatrix} - 1 & 1 & 0 & \cdots & 0\end{pmatrix} 
\Psi_{\alpha,+}\left(n^{r+1} f(y)\right)^{-1} E_n^0(y)^{-1} R(y)^{-1}
R(x) E_n^0(x) \Psi_{\alpha,+}\left(n^{r+1} f(x)\right)
\begin{pmatrix} 1 \\ 1 \\ 0 \\ \vdots \\ 0\end{pmatrix}.
\end{multline}
\end{lemma}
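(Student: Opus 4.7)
My plan is to invert the chain of transformations $Y \to X \to T \to S$ from Definitions \ref{ch4:def:X}, \ref{ch4:def:T}, \ref{ch4:def:S} and, inside $D(0, r_n)$, to further decompose $S_+(z) = R(z) E_n^0(z) \mathring E(z) \Psi_{\alpha, +}(n^{r+1}f(z)) \operatorname{diag}(1, z^\beta, \ldots, z^\beta) D_0(z)^{-n}$ via Definitions \ref{ch4:def:mathringP}--\ref{ch4:def:P}. Substituting each step into \eqref{ch4:eq:KinY} (with $w_0(y) = e^{-nV(y)}$), one tracks how the weight row $(0, y^\alpha, y^{\alpha+1/r}, \ldots)$ and the seed column $(1, 0, \ldots, 0)^T$ successively collapse to the compact $(-1, 1, 0, \ldots, 0)$ and $(1, 1, 0, \ldots, 0)^T$ of the statement.

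In the $Y \to X$ step, the diagonal factor in \eqref{ch4:defX} combines the entries $y^{\alpha+(j-1)/r}$ into the uniform power $y^\beta$ (with $\beta = \alpha + \frac{r-1}{2r}$), then $[1 \oplus U^+ D^+]$ acts on $(0, 1, 1, \ldots, 1)$ via the column-sum identity $(1, 1, \ldots, 1) U^+ = (\sqrt r, 0, \ldots, 0)$---immediate from \eqref{ch4:eq:U+-evenandodd} and the vanishing of nontrivial geometric sums of $\Omega^k$---while $e_0$ on the right passes through $[1 \oplus (U^+ D^+)^{-1}]$ essentially untouched. The two resulting $r$-power prefactors cancel and one arrives at
\[
K_{V,n}^{\alpha,\frac{1}{r}}(x, y) = \frac{y^\beta e^{-nV(y)}}{2\pi i(x - y)}\, [X_+^{-1}(y) X_+(x)]_{1, 0}.
\]

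Next I would substitute $X = L(1 \oplus C_n) T L^{-1} G^{-1}$ and, for $z$ approached from the upper lens, $T_+(z) = S_+(z) L_u(z)^{-1}$. The diagonal factors $G_\pm$ and $L$ produce scalar prefactors $c_y = e^{n(g_{0,+}(y) - g_{1,+}(y)) - n\ell/(r+1)}$ and $c_x = e^{ng_{0,+}(x) - nr\ell/(r+1)}$ against the end vectors, and $L_u(x)^{-1} e_0 = (1, x^{-\beta} e^{2n\varphi_+(x)}, 0, \ldots, 0)^T$. A useful observation is that $(1 \oplus C_n^{\pm 1})$ commutes with $L_u^{\pm 1}(z)$---the only potential obstruction comes from the $(1, 0)$-entry of $L_u$ multiplied by $(C_n^{-1})_{1, 0} = 0$ (upper triangularity of $C_n^{-1}$ with unit diagonal)---so $(1 \oplus C_n^{-1})$ can be moved past $L_u(y)$ on the left and $(1 \oplus C_n)$ past $L_u(x)^{-1}$ on the right; combined with $(1 \oplus C_n) e_0 = e_0$ the right column collapses to $c_x(1, x^{-\beta} e^{2n\varphi_+(x)}, 0, \ldots, 0)^T$. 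The extra off-diagonal entries produced on the left by $(1 \oplus C_n^{-1})$ then pair-cancel against the remaining $(1 \oplus C_n)$-factor in the middle $S_+^{-1}(y)(1 \oplus C_n) S_+(x)$ product, leaving the clean sandwich $(-y^{-\beta} e^{2n\varphi_+(y)}, 1, 0, \ldots, 0)\, S_+^{-1}(y) S_+(x)\, (1, x^{-\beta} e^{2n\varphi_+(x)}, 0, \ldots, 0)^T$. Substituting $S_+$ via its $R$-$\Psi$ decomposition, the identity $D_0(z)^n_{1, 1}/D_0(z)^n_{0, 0} = e^{2n\varphi_0(z)} = e^{2n\varphi(z)}$ (read off \eqref{ch4:eq:defD0}, using $e^{\pm 2\pi i n} = 1$) and the subsequent $\operatorname{diag}(1, z^\beta, \ldots)$ convert the right column to $D_0(x)^{-n}_{0, 0}(1, 1, 0, \ldots, 0)^T$, and a mirror computation produces $D_0(y)^n_{0, 0} e^{2n\varphi_+(y)} y^{-\beta} \mathring E(y)^{-1}(-1, 1, 0, \ldots, 0) \Psi_{\alpha,+}^{-1}(n^{r+1}f(y)) E_n^0(y)^{-1} R(y)^{-1}$ on the left.

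For the closing scalar bookkeeping, using $2\varphi = -2g_0 + g_1 + V + \ell$ directly gives $c_y\, e^{2n\varphi_+(y)}\, e^{-nV(y)}\, c_x = e^{n(g_{0,+}(x) - g_{0,+}(y))}$; collecting coefficients of $g_0, g_1, \ldots, g_{r-1}$ in \eqref{ch4:eq:defvarphi0}--\eqref{ch4:eq:defvarphir-1} yields
\[
\sum_{j=0}^{r-1}(r - j)\, \varphi_j(z) = -\tfrac{r+1}{2}\, g_0(z) + \tfrac{r}{2}(V(z) + \ell) + i\Lambda,
\]
with $\Lambda$ a $z$-independent constant that cancels between $x$ and $y$. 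The ratio $D_0(y)^n_{0, 0}/D_0(x)^n_{0, 0}$ then contributes $\exp\!\bigl[n(g_{0,+}(y) - g_{0,+}(x)) + \tfrac{rn}{r+1}(V(x) - V(y))\bigr]$, and combined with the previously accumulated $e^{n(g_{0,+}(x) - g_{0,+}(y))}$ all $g$-function and $\ell$-terms telescope to leave exactly $e^{rn(V(x) - V(y))/(r+1)}$. The residual scalar $\mathring E(x)/\mathring E(y)$ is absorbed into $E_n^0(x) E_n^0(y)^{-1}$ via the explicit construction of the analytic prefactors in Section~2.1 of \cite{Mo}. The principal obstacle is the $(1 \oplus C_n)$-cancellation across the middle $S_+^{-1}(y) \cdots S_+(x)$ product: once the $L_u^{\pm 1}$-commutation is in hand, this reduces to a direct but bookkeeping-heavy entry-wise check that the off-diagonal entries of $C_n^{-1}$ surviving on the left are annihilated against the matching entries of $C_n$ sitting in the middle.
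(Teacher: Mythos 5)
Your overall strategy coincides with the paper's: invert $Y\mapsto X\mapsto T\mapsto S$, then unpack $S_+$ via $R$, $P=E_n^0\mathring P$, and $\mathring P=\mathring E\,\Psi_\alpha(n^{r+1}f(\cdot))\operatorname{diag}(1,\cdot^\beta,\dots)D_0(\cdot)^{-n}$, tracking the scalar bookkeeping to collapse the weight row and seed column to $(-1,1,0,\dots,0)$ and $(1,1,0,\dots,0)^T$. The $Y\mapsto X$ step (the $y^\beta e^{-nV(y)}$ prefactor via the column-sum identity $(1,\dots,1)U^+=(\sqrt r,0,\dots,0)$), the identity $D_{0,11}/D_{0,00}=e^{2\varphi}$, the relation $2\varphi=-2g_0+g_1+V+\ell$ closing the scalar accounting, and the explicit evaluation of $\sum_j(r-j)\varphi_j$ all match the paper's proof, which arrives at the same intermediate formula in terms of $S$ and then $P$. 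Your observation that a $z$-independent imaginary constant $i\Lambda$ survives in $\sum_j(r-j)\varphi_j$ but cancels between $x$ and $y$ is actually a slightly more careful treatment than the paper's, which suppresses this constant.

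The one place where your write-up takes a wrong turn is the handling of $(1\oplus C_n)$: there is no ``bookkeeping-heavy entry-wise check'' required. When you form $X_+^{-1}(y)X_+(x)$ using $X=(1\oplus C_n)\,L\,T\,L^{-1}G^{-1}$, the two factors $(1\oplus C_n^{-1})$ and $(1\oplus C_n)$ meet adjacently in the middle (separated only by $L^{-1}L=\mathbb I$, and $L$ commutes with $1\oplus C_n$ anyway because its $r\times r$ block is scalar) and cancel identically, before you ever substitute $T=S\,L_u^{-1}$. There is thus no residual $(1\oplus C_n)$ inside $S_+^{-1}(y)\cdots S_+(x)$ and no off-diagonal entries to pair-cancel; your commutation observation with $L_u$ is correct but unused, and the ``principal obstacle'' you flag does not arise. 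This is in fact exactly what the paper does: it writes $Y(z)e_0$ and $(0,w_\alpha(z),\dots)Y(z)^{-1}$ in terms of $S$, with $(1\oplus C_n)L$ on one side and $L^{-1}(1\oplus C_n^{-1})$ on the other, and these annihilate trivially upon concatenation. Once you replace the $C_n$ discussion by this one-line cancellation, your argument is essentially the paper's proof.
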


\begin{proof}
A simple calculation, where we invert the transformations $Y\mapsto X\mapsto T \mapsto S$, shows that for $z$ in $D(0,r_n)$ in the first quadrant
\begin{align*}
Y(z) \begin{pmatrix} 1 \\ 0 \\ \vdots \\ 0\end{pmatrix}
= e^{-n\frac{r\ell}{r+1}} r^{-\frac{r}{2r+2}} e^{n g_{0}(z)} (1\oplus C_n) L S(z) \begin{pmatrix} 1 \\ z^{-\beta} e^{2 n\varphi(z)} \\ 0 \\ \vdots \\ 0\end{pmatrix} 
\end{align*}
and
\begin{multline*}
\begin{pmatrix} 0 & w_\alpha(z) & w_{\alpha+\frac{1}{r}}(z) & \cdots \end{pmatrix} Y(z)^{-1} = \\
r^\frac{r}{2r+2} e^{-n\frac{\ell}{r+1}} w_\alpha(z) z^\frac{r-1}{2 r} e^{n (g_0(z)-g_1(z))}
\begin{pmatrix} -z^{-\beta} e^{2n\varphi(z)} & 1 & 0 & \cdots & 0\end{pmatrix} S(z)^{-1} L^{-1} (1\oplus C_n^{-1}). 
\end{multline*}
Then with the help of \eqref{ch4:eq:KinY} we can write the correlation kernel for $x,y\in (0,r_n)$ as
\begin{multline*}
K_{V,n}^{\alpha,\frac{1}{r}}(x,y) = \frac{1}{2\pi i(x-y)} 
|y|^\frac{r-1}{2r} w_\alpha(y) e^{-n \ell} e^{n (g_{0+}(x)+g_{0+}(y) - g_{1+}(y))} \\
\begin{pmatrix} -|y|^{-\beta} e^{2n\varphi_+(y)} & 1 & 0 & \cdots & 0\end{pmatrix} P_+(y)^{-1} R(y)^{-1}
R(x) P_+(x) \begin{pmatrix} 1 \\ |x|^{-\beta} e^{2 n\varphi_+(x)} \\ 0 \\ \vdots \\ 0\end{pmatrix}
\end{multline*}
(This formula is also valid when $n$ is not divisible by $r$, see Proposition \ref{prop:corKerWidetildeY} in Appendix \ref{ch:appendixA}). Using \eqref{ch4:eq:defP} and \eqref{ch4:eq:defMathringP}, we can express this as
\begin{multline*}
K_{V,n}^{\alpha,\frac{1}{r}}(x,y) = \frac{1}{2\pi i(x-y)} 
|y|^{-\alpha} w_\alpha(y) e^{-n \ell} e^{n (g_{0+}(x)+g_{0+}(y) - g_{1+}(y))} \\
\begin{pmatrix} - D_{0+,00}(y)^n e^{2n\varphi_+(y)} & D_{0+,11}(y)^n & 0 & \cdots & 0\end{pmatrix} \Psi_{\alpha,+}\left(n^{r+1} f(y)\right)^{-1} E_n^0(y)^{-1} R(y)^{-1}
R(x) E_n^0(x)\\
\Psi_{\alpha,+}\left(n^{r+1} f(x)\right) \begin{pmatrix} D_{0+,00}(x)^{-n} \\ D_{0+,11}(x)^{-n} e^{2 n\varphi_+(x)} \\ 0 \\ \vdots \\ 0\end{pmatrix}.
\end{multline*}
Now we use that $D_{0+,11}(z)=D_{0+,00}(z) e^{2 \varphi_{0,+}(z)}=D_{0+,00}(z) e^{2 \varphi_+(z)}$, which follows from Definition \ref{ch4:def:D0} and the relation $\varphi_0(z) = \varphi(z) \pm \pi i$. Then we obtain
\begin{multline*}
K_{V,n}^{\alpha,\frac{1}{r}}(x,y) = \frac{1}{2\pi i(x-y)} \frac{D_{0+,00}(y)^n}{D_{0+,00}(x)^n}
w_0(y) e^{-n \ell} e^{n (g_{0+}(x)+g_{0+}(y) - g_{1+}(y)+2\varphi_+(y))} \\ 
\begin{pmatrix} - 1 & 1 & 0 & \cdots & 0\end{pmatrix} 
\Psi_{\alpha,+}\left(n^{r+1} f(y)\right)^{-1} E_n^0(y)^{-1} R(y)^{-1}
R(x) E_n^0(x) \Psi_{\alpha,+}\left(n^{r+1} f(x)\right)
\begin{pmatrix} 1 \\ 1 \\ 0 \\ \vdots \\ 0\end{pmatrix}.
\end{multline*}
By \eqref{ch4:eq:defgfunctions} and \eqref{ch4:eq:defvarphi0} we have that
\begin{align*}
g_{0+}(y) - g_{1+}(y) + 2\varphi_{0,+}(y) = -g_{0+}(y) + V(y) + \ell.
\end{align*}
Hence we have
\begin{multline*}
K_{V,n}^{\alpha,\frac{1}{r}}(x,y) = \frac{1}{2\pi i(x-y)} \frac{D_{0+,00}(y)^n}{D_{0+,00}(x)^n}
e^{n (g_{0+}(x)-g_{0+}(y))} \\ 
\begin{pmatrix} - 1 & 1 & 0 & \cdots & 0\end{pmatrix} 
\Psi_{\alpha,+}\left(n^{r+1} f(y)\right)^{-1} E_n^0(y)^{-1} R(y)^{-1}
R(x) E_n^0(x) \Psi_{\alpha,+}\left(n^{r+1} f(x)\right)
\begin{pmatrix} 1 \\ 1 \\ 0 \\ \vdots \\ 0\end{pmatrix}.
\end{multline*}
Now the Lemma follows, if we can show that
\begin{align} \label{eq:D0+gVl}
D_{0+,00}(x) = g_{0,+}(x) - \frac{r}{r+1} (V(x)+\ell). 
\end{align}
Indeed, using \eqref{ch4:eq:defvarphi0}, \eqref{ch4:eq:defvarphij} and \eqref{ch4:eq:defvarphir-1} we find that
\begin{align*}
-2 \sum_{j=0}^{r-1} (r-j) \varphi_j(z) =&
-2 r\varphi_0(z) + \sum_{j=1}^{r-1} (r-j) (-g_{j-1}(z) + 2 g_j(z) - g_{j+1}(z))\\
=& -2 r\varphi_0(z) - (r-1) g_0(z) +r g_1(z)\\
&+ \sum_{j=1}^{r-1} \left(-(r-j-1) + 2(r-j) - (r-j+1)\right) g_j(z)\\
=& - 2r \varphi_0(z) - (r-1) g_0(z) + r g_1(z)\\
=& (r+1) g_0(z) - r (V(z) + \ell)
\end{align*}
for any $z\in O_V$. Now using Definition \ref{ch4:def:D0}, we get \eqref{eq:D0+gVl}, and we are done. 
\end{proof}

\subsection{Proof of the main theorem} \label{sec:proofOfMainThm}

To obtain the scaling limit of the correlation kernel at the hard edge $z=0$ it will be convenient to introduce, for any $x,y>0$, the notation
\begin{align} \label{eq:defxnyn}
x_n = \frac{x}{f'(0) n^{r+1}}\quad\quad\text{and}\quad\quad y_n = \frac{y}{f'(0) n^{r+1}}.
\end{align}
Remember (see Proposition \ref{ch4:prop:conformalf}) that
\begin{align*}
f'(0) = \left(\frac{\pi c_{0,V}}{\sin\left(\frac{\pi}{r+1}\right)}\right)^{r+1}.
\end{align*}
Hence \eqref{eq:defxnyn} can also be written as
\begin{align} \label{eq:defxnyn2}
x_n = \frac{x}{(c n)^{r+1}}\quad\quad\text{and}\quad\quad y_n = \frac{y}{(c n)^{r+1}},
\hspace{2cm} c = \frac{\pi c_{0,V}}{\sin\left(\frac{\pi}{r+1}\right)}.
\end{align}
Notice that $x_n, y_n$ are in $(0,r_n)$ for $n$ big enough. In view of \eqref{eq:justbeforexnyn} we would like
\begin{align*}
E_n^0(y_n)^{-1} R(y_n)^{-1} R(x_n) E_n^0(x_n)
\end{align*}
to be close to the identity matrix. In \cite{KuMo} we used a method to show this when $r=2$. Following this method, we find using standard arguments with Cauchy's formula (see \cite[Lemma 6.5]{KuMo}) that
\begin{align*}
R(y_n)^{-1} R(x_n) = \mathbb I + \mathcal O\left(n^{-\frac{r+3}{2}}(x-y)\right)
\end{align*}
uniformly for $x,y$ in compact sets as $n\to\infty$. Then we find, using Lemma \ref{ch4:eq:estimateEd} and Lemma \ref{ch4:eq:estimateEe},  that uniformly for $x,y$ in compact sets
\begin{align*}
E_n^0(y_n)^{-1} R(y_n)^{-1} R(x_n) E_n^0(x_n)
&= \mathbb I + \mathcal O\left(n^{r+\frac{1}{2}}(x_n-y_n)\right) + \mathcal O\left(n^\frac{r}{2} n^{-\frac{r+3}{2}} (x-y) n^\frac{r}{2}\right)\\
&= \mathbb I + \mathcal O\left(\frac{x-y}{\sqrt n}\right) + \mathcal O\left(n^\frac{r-3}{2}(x-y)\right)
\end{align*}
as $n\to\infty$. For $r=1$ and $r=2$, this is enough, but for $r\geq 3$ we run into a problem. We conclude that we can not use the approach from \cite{KuMo}, unfortunately. Instead we will use \cite[Theorem 3.1]{Mo}, which is specifically designed for scaling limits of correlation kernels.

\begin{lemma}
Uniformly for $x,y>0$ in compact sets we have as $n\to\infty$ that
\begin{align} \label{eq:estimateERRE}
E_n^0(y_n)^{-1} R(y_n)^{-1} R(x_n) E_n^0(x_n) = \mathbb I + \mathcal O\left(\frac{x-y}{\sqrt n}\right).
\end{align}
\end{lemma}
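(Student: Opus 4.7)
The plan is to invoke \cite[Theorem 3.1]{Mo} directly. That theorem is designed precisely to produce estimates of the form \eqref{eq:estimateERRE} for the $E_n^0$-corrected $R$-factor that arises in correlation kernel computations after a double matching via \text{Theorem \ref{lem:matching}}. Its hypotheses are: the jump estimates on $R$ from \text{Theorem \ref{ch4:thm:RtoI}(a)}, the boundedness $E(z), E(z)^{-1} = \mathcal{O}(n^{d/2})$ on the inner disk from \text{Lemma \ref{ch4:eq:estimateEd}}, the Lipschitz bound $E(z)^{-1} E(w) = \mathbb{I} + \mathcal{O}(n^{e}(z-w))$ from \text{Lemma \ref{ch4:eq:estimateEe}}, the meromorphic function $C$ from \eqref{ch4:eq:defmeromorphicC}, and the compatibility conditions \eqref{eq:assumpabcde} on the parameters $a,b,c,d,e$ from \eqref{ch4:eq:abcde}. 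All of this has been arranged in \text{Section \ref{ch4:sec:matching}}, precisely because this is the application we had in mind when setting up the double matching.

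First I would check that the points $x_n, y_n$ are admissible for the theorem: since $x_n, y_n \sim n^{-(r+1)}$ whereas the inner disk has radius $r_n = n^{-(r+1)/2} = n^{-a}$, both points lie well inside $D(0, r_n)$ for large $n$, which is the region in which \cite[Theorem 3.1]{Mo} operates. The theorem then delivers an estimate of the form
\begin{align*}
E_n^0(y_n)^{-1} R(y_n)^{-1} R(x_n) E_n^0(x_n) = \mathbb{I} + \mathcal{O}\bigl(n^{e-b}(x - y)\bigr)
\end{align*}
uniformly for $x,y$ in compact sets, where the factor $n^{-b}$ absorbs the scaling $x_n - y_n = (x-y)/(f'(0) n^{b})$ and the factor $n^e$ is the Lipschitz rate of $E$. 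With our values $e = r + \tfrac{1}{2}$ and $b = r+1$ we obtain $n^{e-b} = n^{-1/2}$, which is exactly \eqref{eq:estimateERRE}.

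The key conceptual point—and the reason the naive splitting used in \cite{KuMo} for $r \leq 2$ is insufficient here—is that a direct application of Cauchy's formula gives $R(y_n)^{-1} R(x_n) - \mathbb{I} = \mathcal{O}(n^{-(r+3)/2}(x-y))$, which when conjugated by $E_n^0 = \mathcal{O}(n^{r/2})$ produces an error $\mathcal{O}(n^{(r-3)/2}(x-y))$ that blows up for $r \geq 3$. \cite[Theorem 3.1]{Mo} circumvents this by exploiting the specific Cauchy-integral construction of $E_n^0$ from \text{Theorem \ref{lem:matching}}: instead of treating the $E_n^0$ norms multiplicatively, it shows that $R(z) E_n^0(z)$ inherits the Lipschitz behavior of the underlying analytic function $E$, so that only the Lipschitz rate $n^e$ of $E$—and not the product of two individual prefactor norms—enters the final bound. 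The main obstacle in writing the proof is therefore just a careful matching of notation with \cite{Mo}: once the inputs are identified, \eqref{eq:estimateERRE} follows directly from the cited theorem, with no further RH analysis needed.
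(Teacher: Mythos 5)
Your strategy is exactly the paper's: both proofs simply verify the hypotheses of \cite[Theorem~3.1]{Mo} and invoke it, and your explanation of why the naive Cauchy-formula splitting from \cite{KuMo} fails for $r\geq 3$ matches the paper's motivation precisely, including the intermediate rate $\mathcal O(n^{-(r+3)/2}(x-y))$ and the conjugation blow-up $\mathcal O(n^{(r-3)/2}(x-y))$.

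However, you misidentify the hypotheses being checked. You attribute to \cite[Theorem~3.1]{Mo} the parameter condition \eqref{eq:assumpabcde}, but that is the hypothesis of Theorem~\ref{lem:matching} (which was already used earlier to construct $E_n^0$ and $E_n^\infty$); Theorem~3.1 of \cite{Mo} is a separate result with its own conditions. The paper instead verifies the additional inequality
\begin{align*}
c \geq \min\left(\tfrac{3}{2}a+d,\;\tfrac{3}{2}a+2d-e\right) = \tfrac{7r+1}{4},
\end{align*}
which with $c=2(r+1)$ indeed holds for all $r\geq 1$; this numerical check is the essential new input and should be stated. The paper also records that $C$ is uniformly bounded on the inner circle, that the jump estimates \eqref{ch4:eq:estimatesJumpsR1}--\eqref{ch4:eq:estimatesJumpsR3} and $R\to\mathbb I$ from Theorem~\ref{ch4:thm:RtoI} hold, and that inversion is bounded in $L^2(\Sigma_R\setminus D(0,r_0))$. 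Your heuristic for the rate $n^{e-b}=n^{-1/2}$ is plausible and gives the right answer, but the rigorous justification is whatever Theorem~3.1 delivers under the hypotheses just listed, not an argument you can rederive from the Lipschitz bound on $E$ alone.
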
 

\begin{proof}
We argue that the conditions of the theorem are met. With $a, b, c, d, e$ as in \eqref{ch4:eq:abcde} and \eqref{ch4:eq:defabc}, we should have the inequality
\begin{align*}
2(r+1) = c \geq \min\left(\frac{3}{2}a+d,\frac{3}{2}a+2d-e\right) = \frac{7}{4}r+\frac{1}{4},
\end{align*}
which indeed holds. It is clear that $C$ is uniformly bounded on $\partial D(0,n^{-e})$. 
Another condition for \cite[Theorem 3.1]{Mo} to hold, is that the jumps of $R$ should satisfy specific estimates, and that $R\to \mathbb I$ uniformly as $n\to\infty$. Indeed, these conditions are provided by Theorem \ref{ch4:thm:RtoI}, the estimates on the jumps by \eqref{ch4:eq:estimatesJumpsR1}-\eqref{ch4:eq:estimatesJumpsR3} and the large $n$ behavior of $R$ by \eqref{ch4:eq:asympRn}. That the inversion $s\mapsto s^{-1}$ is bounded in $L^2\left(\Sigma_R\setminus D(0,r_0)\right)$ as $n\to\infty$ is obvious. The remaining conditions for \cite[Theorem 3.1]{Mo} follow from standard Riemann-Hilbert theory (see Appendix A from \cite{BlKu2} for example). Hence we may apply \cite[Theorem 3.1]{Mo} and the lemma follows. 
\end{proof}

We are ready to give the proof of the main result.\\

\noindent\textit{Proof of Theorem \ref{ch4:mainThm}.} By standard analysis arguments we have that 
\begin{align} \label{eq:err+1VV}
e^{\frac{r n}{r+1} (V(x_n)-V(y_n))} = 1 + \mathcal O\left(\frac{x-y}{n^r}\right)
\end{align}
and
\begin{align} \label{Psir+1fPsi}
\Psi_{\alpha,+}\left(n^{r+1} f(y_n)\right)^{-1} \Psi_{\alpha,+}\left(n^{r+1} f(x_n)\right)
= \Psi_{\alpha,+}(y)^{-1} \Psi_{\alpha,+}(x) + \mathcal O\left(\frac{x-y}{n^{r+1}}\right)
\end{align}
uniformly for $x,y$ in compact sets as $n\to\infty$. 
Plugging \eqref{eq:estimateERRE}, \eqref{eq:err+1VV} and \eqref{Psir+1fPsi} into \eqref{eq:justbeforexnyn} for $x=x_n$ and $y=y_n$, we get
\begin{multline*}
\frac{1}{f'(0) n^{r+1}}K_{V,n}^{\alpha,\frac{1}{r}}(x_n,y_n) = \frac{1}{2\pi i(x-y)} 
\begin{pmatrix} - 1 & 1 & 0 & \cdots & 0\end{pmatrix} 
\Psi_{\alpha,+}(y)^{-1} \Psi_{\alpha,+}(x)
\begin{pmatrix} 1 \\ 1 \\ 0 \\ \vdots \\ 0\end{pmatrix}+\mathcal O\left(\frac{1}{\sqrt n}\right).
\end{multline*}
We know the value of $f'(0)$ from Proposition \ref{ch4:prop:conformalf} and we arrive at
\begin{multline} \label{eq:scalingLimEnd}
\lim_{n\to\infty} \frac{1}{\mathfrak (c n)^{r+1}}K_{V,n}^{\alpha,\frac{1}{r}}\left(\frac{x}{(c n)^{r+1}},\frac{y}{(c n)^{r+1}}\right) = \frac{1}{2\pi i(x-y)} 
\begin{pmatrix} - 1 & 1 & 0 & \cdots & 0\end{pmatrix} 
\Psi_{\alpha,+}(y)^{-1} \Psi_{\alpha,+}(x)
\begin{pmatrix} 1 \\ 1 \\ 0 \\ \vdots \\ 0\end{pmatrix}
\end{multline}
uniformly for $x,y>0$ in compact sets, where $c$ is as in \eqref{eq:defxnyn2} (and as in Theorem \ref{ch4:mainThm}).

We know that this scaling limit must coincide with the one for $V(x)=x$, a case which has already been treated by Borodin \cite{Bo} (for general $\theta>0$ actually). Since the limit, i.e., the right-hand side of \eqref{eq:scalingLimEnd}, is independent of $V$, the limit must hold for all one-cut $\frac{1}{r}$-regular external fields $V$. Theorem \ref{ch4:mainThm} is proved. \qed

\phantomsection
\cleardoublepage
 
\appendix

%\addcontentsline{toc}{section}{\hspace{0.5cm}Appendix A}
\section{Removal of the restriction that $r$ divides $n$}\label{ch:appendixA}

We will show that the restriction that $n$ is divisible by $r$ can be removed. Let us fix an integer $p\in\{1,2,\ldots,r-1\}$. We consider all $n = r m+p$, where $m$ runs over the natural numbers. Then the corresponding RHP is the same as RH-Y from Section \ref{ch4:sec:theRHP}, but with RH-Y3 replaced by
\begin{itemize}
\item[RH-Y3] As $|z|\to\infty$
\begin{align} \label{RHY2widetilde}
Y(z) = \left(\mathbb{I}+\mathcal{O}\left(\frac{1}{z}\right)\right) 
\left(1\oplus z^{-\frac{n+r-p}{r}} \mathbb{I}_{p\times p} \oplus z^{-\frac{n-p}{r}} \mathbb{I}_{(r-p)\times (r-p)}\right).
\end{align}
\end{itemize}
Instead of defining the first transformation as in Section \ref{ch4:sec:firstTransformation} directly, we first apply an intermediate transformation $Y\mapsto \widetilde Y$. 
\begin{definition}
We define
\begin{align} \label{eq:defWidetildeY}
\widetilde Y(x) = (1\oplus \sigma) Y(z) 
\left(1\oplus z^{\frac{r-p}{r}} \mathbb{I}_{p\times p} \oplus z^{-\frac{p}{r}} \mathbb{I}_{(r-p)\times (r-p)}\right)
(1\oplus \sigma)
\end{align}
where $\sigma$ is the cyclic permutation matrix whose components are given by
\begin{align}
\sigma_{kj} = \left\{\begin{array}{ll}
1, & \text{if }k \equiv j+p \mod r,\\
0, & \text{otherwise,}
\end{array}\right.
\end{align}
where the indices range from $1$ to $r$.
\end{definition}

\begin{proposition}
$\widetilde Y$ satisfies RH-Y from Section \ref{ch4:sec:theRHP}, but with an additional jump for $x<0$, given by
\begin{align*}
\widetilde Y_+(x) &= \widetilde Y_-(x) \left(1\oplus \Omega^p \mathbb{I}_{r\times r}\right).
\end{align*}
\end{proposition}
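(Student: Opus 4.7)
I will verify the four standard conditions of RH-Y one at a time. The transformation $\widetilde Y = (1\oplus\sigma)\,Y(z)\,D(z)\,(1\oplus\sigma)$, with $D(z) = 1\oplus z^{(r-p)/r}\mathbb I_p\oplus z^{-p/r}\mathbb I_{r-p}$, is designed so that right multiplication by $D$ absorbs the asymmetric $z$-powers in \eqref{RHY2widetilde} into a uniform $z^{-n/r}$ on the lower $r\times r$ block, while the sandwiching by $1\oplus\sigma$ restores the correct cyclic ordering of the weights on $(0,\infty)$.

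Analyticity on $\mathbb C\setminus\mathbb R$ is immediate, since $Y$ is analytic off $[0,\infty)$ and $D$ off $(-\infty,0]$, while the constant permutations contribute no singularities; this also explains why only one extra jump appears, on $(-\infty,0)$. For the asymptotic at infinity I would multiply the diagonal in \eqref{RHY2widetilde} by $D(z)$ to obtain $z^n\oplus z^{-n/r}\mathbb I_r$, after which the constant permutations $1\oplus\sigma$ on either side commute with the $\mathbb I+\mathcal O(1/z)$ prefactor modulo an error of the same order and preserve the scalar structure of the lower block. Near the origin $D$ has at most algebraic growth and the permutations only relabel columns, so the condition RH-Y4 is inherited directly from $Y$.

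The main work is the jump on $(0,\infty)$. Since $D(x)$ is continuous there, the jump equals $(1\oplus\sigma)^{-1} D(x)^{-1} J_Y(x) D(x) (1\oplus\sigma)$. Conjugation by $D(x)$ scales the $k$-th entry of the top row of $J_Y$ by $D_{kk}(x)$, converting $w_{\alpha+(k-1)/r}(x)$ into $w_{\alpha+((k-1-p)\bmod r)/r}(x)$, while the identity block is left invariant. The outer conjugation by $\sigma$ then cyclically relabels the top-row columns by the shift $k\mapsto(k-p)\bmod r$, exactly undoing the previous shift, so the jump reduces to the original $J_Y(x)$. For the new jump on $(-\infty,0)$ only $D$ jumps; the orientation convention of Convention \ref{ch4:con:orientation} places the $(+)$-side in the lower half-plane, so that both $z^{(r-p)/r}$ and $z^{-p/r}$ produce the common scalar jump $e^{-2\pi i(r-p)/r}=\Omega^p$ on the lower block, and this being scalar is invariant under $\sigma$-conjugation, yielding the asserted form $1\oplus\Omega^p\mathbb I_r$.

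\textbf{Main obstacle.} The only step requiring genuine care is the $(0,\infty)$ jump: one must check that the cyclic shift induced by the two distinct fractional powers of $D$ on the disjoint index ranges $\{1,\dots,p\}$ and $\{p+1,\dots,r\}$ is compensated by a single permutation $\sigma$ on both. This ultimately reduces to the modular identity $(k-1-p)+p \equiv k-1\pmod r$ applied uniformly across the block split, and the sign of $\Omega^p$ in the new jump similarly hinges on the orientation convention.
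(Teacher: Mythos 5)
Your proof is correct and takes essentially the same approach as the paper: the jump on $(0,\infty)$ is preserved because the $D$-scaling and the conjugation by $1\oplus\sigma$ induce mutually cancelling cyclic shifts of the weight index (the modular identity you isolate), and the new jump on $(-\infty,0)$ arises solely from the branch cut of $D$, with the two blocks $z^{(r-p)/r}$ and $z^{-p/r}$ producing the common scalar factor $\Omega^p$ under the paper's orientation convention. The paper carries out the $(0,\infty)$ verification as an explicit entry-by-entry sum, which is the same computation you describe more compactly as a composition of two cyclic relabelings.
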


\begin{proof}
RH-Y3 is clear. Let $x>0$. We let the indices of the matrices range from $0$ to $1$. Then for $j\geq 1$ we have
\begin{align} \nonumber
&\left(\widetilde Y_-(x)^{-1} \widetilde Y_+(x)\right)_{0j}\\
&= \sum_{k=1}^{p} \left(Y_-(x)^{-1} Y_+(x)\right)_{0k} z^\frac{r-p}{r} \sigma_{kj}
+  \sum_{k=p+1}^r \left(Y_-(x)^{-1} Y_+(x)\right)_{0k} z^{-\frac{p}{r}}\sigma_{kj}\\ \label{eq:YsigmaJump0}
&= \sum_{k=1}^{p} w_{\alpha+\frac{k-1+r-p}{r}}(x) \sigma_{kj}
+ \sum_{k=p+1}^r w_{\alpha+\frac{k-1-p}{r}}(x) \sigma_{kj}.
\end{align}
The term $w_{\alpha+\frac{j-1}{r}}(x)$ corresponds to either $k=j+p-r$ or $k=j+p$ depending on which one of the two is among $1,\ldots,r$. Then by the definition of $\sigma$ we infer that \eqref{eq:YsigmaJump0} equals $w_{\alpha+\frac{j-1}{r}}(x)$. It is clear that other non-zero components of the jump for $x>0$ must be on the diagonal. Then we have for $j=1,\ldots,r$
\begin{align*}
\left(\widetilde Y_-(x)^{-1} \widetilde Y_+(x)\right)_{jj} &= \sum_{k=1}^{r} (\sigma^{-1})_{jk} \left(Y_-(x)^{-1} Y_+(x)\right)_{kl} \sigma_{kj}\\
&=  \sum_{k=1}^{r} (\sigma^{-1})_{jk} \sigma_{kj} = 1. 
\end{align*}
where we have ignored the factors $z^\frac{r-p}{r}$ and $z^{-\frac{p}{r}}$ from the beginning, because they must come from the same block and thus cancel each other. One can also verify that the upper-left component of the jump equals $1$. We conclude that we get the jump from RH-Y2 for $x>0$. 

Let us now look at $x<0$. Indeed, then we get
\begin{align*}
\widetilde Y_-(&x)^{-1} \widetilde Y_+(x)\\
&= 
(1\oplus \sigma)^{-1} \left(1\oplus e^{\frac{r-p}{r} \pi i}|x|^\frac{r-p}{r} \mathbb I_{p\times p} \oplus e^{-\frac{p}{r}\pi i} |x|^{-\frac{p}{r}}\mathbb I_{(r-p)\times (r-p)}\right)^{-1}\\
&\hspace{1cm}\left(1\oplus e^{-\frac{r-p}{r} \pi i}|x|^\frac{r-p}{r} I_{p\times p} \oplus e^{\frac{p}{r}\pi i} |x|^{-\frac{p}{r}} I_{(r-p)\times (r-p)}\right) (1\oplus \sigma)\\
&= (1\oplus \sigma)^{-1} \left(1\oplus \Omega^{p-r} \mathbb I_{p\times p}\oplus \Omega^p \mathbb I_{(r-p)\times (r-p)}\right) (1\oplus \sigma)\\
&= 1\oplus \Omega^p \sigma^{-1} \sigma\\
&= 1 \oplus \Omega^p \mathbb I_{r\times r}. 
\end{align*}
\end{proof}

\begin{proposition} \label{prop:corKerWidetildeY}
The correlation kernel can be expressed as
\begin{align*}
K_{V,n}^{\alpha,\theta}(x,y)=\frac{1}{2\pi i(x-y)}
\begin{pmatrix}
0 & w_\alpha(y) & w_{\alpha+\frac{1}{r}}(y) & \hdots & w_{\alpha+\frac{r-1}{r}}(y)
\end{pmatrix}
\widetilde Y_+^{-1}(y) \widetilde Y_+(x)
\begin{pmatrix}
1 \\ 0 \\ \vdots \\ 0
\end{pmatrix}
\end{align*}
\end{proposition}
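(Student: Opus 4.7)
The plan is to invert the defining relation \eqref{eq:defWidetildeY} and substitute into the expression \eqref{ch4:eq:KinY} for $K_{V,n}^{\alpha,\frac{1}{r}}$ in terms of $Y$, then check that the auxiliary factors act trivially on the row and column vectors appearing in the kernel formula. Set
\begin{align*}
M(z) = 1\oplus z^{\frac{r-p}{r}} \mathbb{I}_{p\times p} \oplus z^{-\frac{p}{r}} \mathbb{I}_{(r-p)\times (r-p)},
\end{align*}
so that $Y(z) = (1\oplus \sigma)^{-1} \widetilde Y(z) (1\oplus \sigma)^{-1} M(z)^{-1}$ and consequently
\begin{align*}
Y_+^{-1}(y)\, Y_+(x) = M(y)_+ (1\oplus \sigma)\, \widetilde Y_+^{-1}(y)\, \widetilde Y_+(x)\, (1\oplus \sigma)^{-1} M(x)_+^{-1}.
\end{align*}

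The key step is to verify two identities on the outer factors. For the column on the right, since the $(0,0)$ entry of $M(x)$ is $1$ and since $\sigma$ acts only on the last $r$ coordinates, we have $(1\oplus \sigma)^{-1} M(x)_+^{-1} e_1 = e_1$, where $e_1 = (1,0,\ldots,0)^t$. For the row on the left, write $v(y) = (0, w_\alpha(y), w_{\alpha+\frac{1}{r}}(y), \ldots, w_{\alpha+\frac{r-1}{r}}(y))$. Then for $y>0$, using $w_{\alpha+\frac{k}{r}}(y)\, y^{\frac{r-p}{r}} = w_{\alpha+\frac{k+r-p}{r}}(y)$ for $k=0,\ldots,p-1$ and $w_{\alpha+\frac{k}{r}}(y)\, y^{-\frac{p}{r}} = w_{\alpha+\frac{k-p}{r}}(y)$ for $k=p,\ldots,r-1$, the product $v(y) M(y)_+$ is a cyclic shift by $p$ of $v(y)$ (in the last $r$ entries). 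The permutation $\sigma$ was defined precisely to undo this shift: by the congruence $k \equiv j+p \bmod r$, a direct computation shows $v(y) M(y)_+ (1\oplus \sigma) = v(y)$.

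Combining these two observations yields
\begin{align*}
v(y)\, Y_+^{-1}(y)\, Y_+(x)\, e_1 = v(y)\, \widetilde Y_+^{-1}(y)\, \widetilde Y_+(x)\, e_1,
\end{align*}
and the stated formula for the correlation kernel follows immediately from \eqref{ch4:eq:KinY}.

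The main conceptual point is the second identity: the definition of $\sigma$ as the cyclic permutation by $p$ is tailored so that composition with $M$ acts trivially on $v(y)$. Once this is seen, the rest is bookkeeping. No obstacle of substance is anticipated.
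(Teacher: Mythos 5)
Your proof is correct and follows the same route as the paper: invert \eqref{eq:defWidetildeY} to write $Y_+^{-1}(y)Y_+(x)$ in terms of $\widetilde Y_+^{-1}(y)\widetilde Y_+(x)$, then verify that the conjugating factors act trivially on the row $v(y)$ (via the identity $v(y)M(y)_+(1\oplus\sigma)=v(y)$, which is exactly what the definition of $\sigma$ is designed to produce) and on the column $e_1$. Your bookkeeping is in fact a bit more careful than the published proof: the paper states $\widetilde Y_+(x)e_1 = Y_+(x)e_1$ and $v(y)\widetilde Y_+^{-1}(y)=v(y)Y_+^{-1}(y)$ as two separate identities, but each of these is off by a factor of $(1\oplus\sigma)^{\pm 1}$ that only cancels when the two are multiplied together—a cancellation you make explicit by keeping both sides in a single product.
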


\begin{proof}
We should show that the formula coincides with \eqref{ch4:eq:KinY}. It is clear that
\begin{align} \label{eq:secondPartCorKer}
\widetilde Y_+(x)
\begin{pmatrix}
1 \\ 0 \\ \vdots \\ 0
\end{pmatrix}
= Y_+(x)
\begin{pmatrix}
1 \\ 0 \\ \vdots \\ 0
\end{pmatrix}.
\end{align}
Let us look at
\begin{align*}
\begin{pmatrix}
0 & w_\alpha(y) & w_{\alpha+\frac{1}{r}}(y) & \hdots & w_{\alpha+\frac{r-1}{r}}(y)
\end{pmatrix}
(1\oplus \sigma)^{-1}
\left(1\oplus z^{\frac{r-p}{r}} \mathbb{I}_{p\times p} \oplus z^{-\frac{p}{r}} \mathbb{I}_{(r-p)\times (r-p)}\right)^{-1}.
\end{align*}
We label its components with indices from $0$ to $r$. Then its component with index $j$, when $j>0$, is given by
\begin{align*}
\sum_{k=1}^r w_{\alpha+\frac{k-1}{r}}(y) (\sigma^{-1})_{kj} \times
\left\{\begin{array}{rl} 
z^\frac{p-r}{r}, & j\leq p,\\
z^\frac{p}{r}, & j>p
\end{array}\right.
= \left\{\begin{array}{rl} 
\sum_{k=1}^r w_{\alpha+\frac{k-1+t-r}{r}}(y) \sigma_{jk}, & j\leq p,\\
\sum_{k=1}^r w_{\alpha+\frac{k-1+t}{r}}(y) \sigma_{jk}, & j>p.
\end{array}\right.
\end{align*}
Now, considering both cases $j\leq p$ and $j>p$, and using the definition of $\sigma$, we find that the component equals $w_{\alpha+\frac{j-1}{r}}(y)$. We conclude that
\begin{multline} \label{eq:firstPartCorKer}
\begin{pmatrix}
0 & w_\alpha(y) & w_{\alpha+\frac{1}{r}}(y) & \hdots & w_{\alpha+\frac{r-1}{r}}(y)
\end{pmatrix}
\widetilde Y_+^{-1}(y)\\
=\begin{pmatrix}
0 & w_\alpha(y) & w_{\alpha+\frac{1}{r}}(y) & \hdots & w_{\alpha+\frac{r-1}{r}}(y)
\end{pmatrix}
Y_+^{-1}(y)
\end{multline}
and the proposition now follows by inserting \eqref{eq:firstPartCorKer} and \eqref{eq:secondPartCorKer} in \eqref{ch4:eq:KinY}. 
\end{proof}

From this point onwards the transformations of the RHP are carried out in the same way as before, i.e., we apply the transformation to $\widetilde Y \mapsto X$ as in Definition \ref{ch4:def:X}, but with $Y$ replaced by $\widetilde Y$. After that we appy the transformation $X\mapsto T$, as in Definition \ref{ch4:def:T}. The factor $\Omega^p$ for the jump on the negative real axis will then drop out due to \eqref{ch4:g1g2jump}. Namely, for $x<0$, i.e., for odd $j$, we have
\begin{align*}
n(- g_{j-1,-}(x)+g_{j,-}(x)+g_{j,+}(x) - g_{j+1,+}(x)) &= -\frac{2\pi i n}{r}
\equiv -\frac{2\pi i p}{r} \mod 2\pi i
\end{align*}
when $r \equiv 0 \mod 2$. When $r \equiv 1 \mod 2$ one also has to use \eqref{ch4:grjump2}. From this point onwards there are no differences with the case where $n$ is divisible by $r$ anymore and the analysis of the RHP is identical.

\phantomsection
\cleardoublepage
%\addcontentsline{toc}{section}{\hspace{0.5cm}Appendix B}
\section{The Meijer G-function}\label{ch:appendixB}

The Meijer G-function is defined by the following 
contour integral:
\begin{align}
\label{appendixA:MeijerG}
\MeijerG{m}{n}{p}{q}{a_1, \ldots, a_p}{b_1, \ldots, b_q}{z} 
= 
\frac{1}{2\pi i} \int_{L} \frac{\prod_{j=1}^{m} \Gamma(b_j +s) \prod_{j=1}^{n} \Gamma(1-a_j -s)}{\prod_{j=m+1}^{q} \Gamma(1-b_j -s) \prod_{j=n+1}^{p} \Gamma(a_j +s)} z^{-s} ds,
\end{align}
where $\Gamma$ denotes the gamma function 
and empty products in \eqref{appendixA:MeijerG} should be interpreted as $1$, as usual. The expressions involved satisfy the following conditions (e.g., see \cite[section 5.2]{Lu}). 
\begin{itemize}
 \item $m, n, p$, and $q$ are integers with $0\leq m\leq q$ and $0\leq n \leq p$.
 \item $a_i - b_j$ is not a positive integer, for all $i=1,\ldots, p$ and $j=1,\ldots,q$. 
 \item There are three possible options for the path of integration $L$.
 \begin{itemize}
 \item[(i)] $L$ is a path from $+i\infty$ to $-i\infty$ so that all the poles of $\Gamma(b_j+s)$ lie to the left of the path, and all poles of $\Gamma(1-a_i-s)$ lie to the right of the path. This option works when $\delta= m+n-\frac{1}{2}(p+q)>0$ for $|\operatorname{arg}(z)|<\delta \pi$. 
 \item[(ii)] $L$ is a loop, starting and ending at $-\infty$, and encircling all the poles of $\Gamma(b_j+s)$ in negative direction, but no poles of $\Gamma(1-a_i-s)$. This option works when $q\geq 1$ and either $p<q$ or $p=q$, for $|z|<1$. 
 \item[(iii)] $L$ is a loop, starting and ending at $+\infty$, and encircling all the poles of $\Gamma(1-a_i-s)$ in positive direction, but no poles of $\Gamma(b_j+s)$. This option works when $p\geq 1$ and either $p>q$ or $p=q$, for $|z|>1$. 
 \end{itemize}
\end{itemize}
Variations on the three possibilities for $L$ are possible. In this paper we have $p=n=0$ and $q=m=r+1$ for a positive integer $r$. Then we are in the situation of option (i) and option (ii). Because $p=0$, we may actually consider the contour of option (ii) for all $|\operatorname{arg}(z)<\frac{r+1}{2} \pi|$.

The Meijer G-function satisfies the following higher order linear differential equation, known as the \textit{generalized hypergeometric equation}.
\begin{align*}
\left[\vartheta \prod_{j=1}^{q-1} \left(\vartheta + b_j -1\right) - (-1)^{p-m-n} z \prod_{i=1}^p \left(\vartheta+a_i\right)\right] \psi(z) = 0,
\quad\quad \vartheta = z \frac{d}{dz}.
\end{align*}
Here, an empty product is to be read as $1$. The Meijer G-function is related to the generalized hypergeometric function via 
\begin{multline*}
\MeijerG{m}{n}{p}{q}{a_1, \ldots, a_p}{b_1, \ldots, b_q}{z} 
= \sum_{h=1}^m \frac{\prod_{j=1}^m \Gamma(b_j - b_h)^* \prod_{i=1}^n \Gamma(1+b_h-a_i)}{\prod_{j=m+1}^q \Gamma(1+b_h-b_j) \prod_{j=n+1}^p \Gamma(a_j-b_h)}\\
 z^{b_h} {_{q} F_{p-1}}\left( {1+b_h - a_1, \ldots 1+b_h-a_p \atop 1+b_h - b_1, \ldots, 1+b_h - b_h^*, \ldots, 1+b_h - b_q} ; (-1)^{p-m-n} z\right)
\end{multline*}
when all $b_j$ are pairwise distinct ($\log$ terms will enter if this is not the case) \cite{Lu}. Here the asterisk $^*$ denotes that the factor with $j=h$ should be suppressed in the product in the first line, and similarly for the parameters of the generalized hypergeometric functions in the last line. 

Indeed, these generalized hypergeometric functions, multiplied with their factor $z^{b_h}$, form a basis of solutions to the generalized hypergeometric equation around $z=0$. In particular, the indicial equation at $z=0$ has solutions $b_1,\ldots,b_q$. 

\section*{Acknowledgements}
The author is supported by a PhD fellowship of the Flemish Science Foundation (FWO) and partly by
the long term structural funding-Methusalem grant of the Flemish Government. The author is grateful
to Thomas Bothner, Tom Claeys, Arno Kuijlaars and Walter Van Assche for proofreading and suggestions.  
%EXAM COMMITTEE MEMBERS?
%The author would like to thank Thomas Bothner for useful discussions.

\phantomsection
\cleardoublepage
\addcontentsline{toc}{section}{\hspace{0.5cm}References}

\end{document}